\theoremstyle{plain}
\newtheorem{Thm}[equation]{Theorem}
\newtheorem{Prop}[equation]{Proposition}
\newtheorem{Lem}[equation]{Lemma}
\newtheorem{Rmk}[equation]{Remark}
\newtheorem{Conj}[equation]{Conjecture}
\numberwithin{equation}{section}
\newcommand{\OO}{\operatorname{O}}
\newcommand{\U}{\operatorname{U}}
\newcommand{\Mp}{\operatorname{Mp}}
\newcommand{\Sw}{\mathcal{S}}
\newcommand{\Aut}{\mathcal{A}}
\newcommand{\Ik}{\operatorname{Ik}}
 \newcommand{\im}{\operatorname{Im\,}}
\newcommand{\Val}{\operatorname{Val}}
\newcommand{\Sp}{\operatorname{Sp}}
\newcommand{\Ind}{\operatorname{Ind}}
\newcommand{\Hom}{\operatorname{Hom}}
\newcommand{\GL}{\operatorname{GL}}
\newcommand{\C}{\mathbb C}
\newcommand{\A}{\mathbb{A}}
\newcommand{\Q}{\mathbb{Q}}
\newcommand{\Z}{\mathbb{Z}}
\newcommand{\R}{\mathbb{R}}
\newcommand{\bm}{\begin{multline*}}
\newcommand{\tu}{\end  {multline*}}
\newcommand{\disc}{{\rm disc}}
\newcommand{\leftup}[2]{{^{#1}}\mspace{-2.5mu}#2}
\title[Siegel-Weil and Rallis]{The Regularized Siegel-Weil Formula \\
(The Second Term Identity)\\ and the Rallis Inner Product Formula}
\author{Wee Teck Gan, Yannan Qiu and Shuichiro Takeda}
\address{W. T. Gan and Y. Qiu: National University of Singapore,
21 Lower Kent Ridge Road,
Singapore, 119077 }
\email{matgwt@nus.edu.sg}
\email{matqy@nus.edu.sg}
\address{S. Takeda: Mathematics Department, University of Missouri, Columbia, 202
  Math Sciences Building, Columbia, MO, 65211}
\email{takedas@missouri.edu}
\subjclass[2000]{11F27, 11F70,  22E50}
\keywords{Siegel-Weil formula, theta
correspondence, Rallis inner product}
\dedicatory{
In memory of a pioneer \\
Steve Rallis (1942-2012)}
\begin{document}
\maketitle
 
\begin{abstract}
In this paper, we establish the second term identity of the
Siegel-Weil formula in full generality, and derive the Rallis inner
product formula for global theta lifts for any dual pair. As a
corollary, we resolve the non-vanishing problem of global theta
lifts initiated by Steve Rallis.
\end{abstract}

\section{\bf Introduction}

About 30 years ago, Steve Rallis (\cite{R1,R2,R3}) initiated a program to
understand the cuspidality and non-vanishing of global theta
liftings. We take a few moments to describe Rallis' program and his
fundamental contributions to it.
\vskip 5pt

\subsection{\bf Theta correspondence.}
Let $F$ be a number field with the ring of adeles $\A$, and let $E$ be
either $F$ or a quadratic extension of $F$.  With $\epsilon = \pm1$,
let $U_n$ be an  $n$-dimensional $-\epsilon$-Hermitian space   over
$E$, and let $V_r$ be an
$m$-dimensional $\epsilon$-Hermitian space of Witt index $r$. 
Then one has an associated  reductive dual pair $G(U_n) \times
H(V_r)$, where $G(U_n)$ is the isometry group of $U_n$ (or a covering
thereof).    The group $G(U_n)(\A) \times H(V_r)(\A)$ has a Weil
representation $\omega$ (depending on some other auxiliary data), and
one has an automorphic realization 
\[  
\theta:  \omega \longrightarrow \{\text{Functions on $[G(U_n)] \times
  [H(V_r)]$} \} 
\]
where we have written $[G(U_n)]$ for $G(U_n)(F) \backslash G(U_n)(\A)$.
If $\pi$ is a cuspidal automorphic representation of $G(U_n)(\A)$,
then the global theta lift $\Theta(\pi)$  of $\pi$ to $H(V_r)$ is the
automorphic representation of $H(V_r)$ spanned by the functions
\[  
\theta(\phi, f)(h) = \int_{G(U_{n})(F) \backslash G(U_{n})(\A)}
\theta(\phi)(g,h) \cdot \overline{f(g)} \, dg 
\]
where $f \in \pi$, $\phi \in \omega$ and $dg$ is the Tamagawa measure.  
\vskip 5pt

The main problem in the theory of theta correspondence  is to
investigate the cuspidality and non-vanishing of $\Theta(\pi)$. The
cuspidality issue was quickly handled by Rallis in \cite{R2} where he
discovered the so-called tower property. Thus the remaining issue is
the question of nonvanishing. 
\vskip 5pt

\subsection{\bf Rallis' program.} 
 The goal of Rallis' program is to obtain a local-global criterion for
 the non-vanishing of the global theta lifting $\Theta(\pi)$, with a
 prototype statement: 
\begin{itemize}
\item {\it The global theta lifting $\Theta(\pi)$ to $H(V_{r})(\A)$ of
    a cuspidal representation $\pi$ on $G(U_{n})(\A)$  is nonzero}
\end{itemize}
if and only if
 \begin{itemize}
\item {\it the local theta liftings to $H(V_{r})(F_v)$ of $\pi_v$ on
    $G(U_{n})(F_v)$  are nonzero for all places $v$ of $F$}; 
\item  {\it the standard L-function $L(s,\pi)$  of $\pi$ is
    nonvanishing or has a pole at a distinguished point $s_0$.}
\end{itemize}
 Besides drawing from his earlier work (\cite{RS1, RS2, RS3}) with
 G. Schiffmann, Rallis was motivated by the then-recently-appeared
 results of Waldspurger \cite{W} who proved such a result for the theta
 correspondence between $\OO_3$ and $\Mp_2$. Moreover, Rallis was able
 to carry out this program for the theta correspondence between
 $\OO_n$ and $\Mp_2$ in \cite{R2, R3}. 

\vskip 10pt

\subsection{\bf The Rallis Inner Product Formula.} 
The cornerstone of Rallis' program is the so-called {\it Rallis inner
  product formula}. Namely, one may determine the non-vanishing of
$\Theta(\pi)$ by computing the Petersson inner product
$\langle \theta(\phi, f), \theta(\phi,f) \rangle$. The Rallis inner
product formula relates this inner product to the $L$-values of $\pi$.  
\vskip 5pt

The mechanism for the Rallis inner product formula relies on the
following see-saw diagram of dual pairs:
\[
    \xymatrix{
    G(W_n)\ar@{-}[d]_{i}\ar@{-}[dr]&H(V_r)\times H(V_r)\ar@{-}[d]\\
    G(U_n)\times G(U^-_n)\ar@{-}[ur]&H(V_r)^{\Delta},
    }
\]
where $U^{-}_n$ denotes the $-\epsilon$-Hermitian space obtained from
$U_n$ by multiplying the form by $-1$, so that $G(U^-_n) = G(U_n)$,
and $W_n$ (to be read ``doubled-U") denotes the space $U_n + ( U^-_n)$
.  The resulting see-saw identity reads:  
\begin{align}  \label{A:inner}
 & \langle \theta(\phi_1, f_1), \theta(\phi_2,f_2) \rangle  \notag \\
 = & \int_{[H(V_r)]} \left( \int_{ [G(U_n)]} \theta(\phi_1)(g_1,h)
   \cdot \overline{f_1(g_1)} \, dg_1 \right) \cdot 
\left(  \int_{ [G(U_n)]} 
\overline{ \theta(\phi_2)(g_2,h)} \cdot f_2(g_2) \, dg_2 \right) \, dh \notag \\
= & \int_{ [G(U_n) \times  G(U_n)]} \left( \int_{ [H(V_r)]}
  \theta(\phi_1)(g_1, h) \cdot \overline{ \theta(\phi_2) (g_2,h)} \,
  dh \right) \cdot  
  \overline{f_1(g_1)} \cdot f_2(g_2) \, dg_1 \, dg_2 
\end{align}
where in the last equality, we have formally exchanged the integrals. 
\vskip 5pt

In order to justify the above exchange and to relate the last expression above to L-functions, 
Rallis was led to develop, in collaboration with S. Kudla and
Piatetski-Shapiro, several key ingredients. We recall these key
ingredients in turn and some recent developments concerning them.
\vskip 10pt

\subsection{\bf  A regularized Siegel-Weil formula.} 
The Siegel-Weil formula was discovered by Siegel in the context of
classical modular forms and then cast in the representation theoretic
language and considerably extended in an influential paper of Weil
\cite{We}. It identifies the global theta lift of the trivial
representation of $H(V_r)$ to
$G(W_n)$  (which is the inner integral in (\ref{A:inner}))  as an
Eisenstein series, at least when some convergence conditions are
satisfied. In a series of 3 papers \cite{KR1, KR2, KR5}, Kudla and Rallis
greatly extended the theory of the Siegel-Weil formula to situations
where these convergence conditions are not satisfied. Their work
culminates in a {\em regularized Siegel-Weil formula}, and they
established what is now known as the {\em first term identity}, at
least when $G(W_n)$ is symplectic and $H(V_r)$ orthogonal. Their work
was subsequently refined and extended to other dual pairs  by others
(\cite{Ik, I1, I2, I3, Y2, Y3, Mo, JS}), especially in the work of Ikeda, Ichino and Yamana.
\vskip 5pt

In particular, the first term identity in the so-called first term
range (see below) is now completely established. Thus, one has a
re-interpretation of the inner integral in (\ref{A:inner}) as a
special value or residue of a Siegel  Eisenstein series.

\vskip 10pt

\subsection{\bf  The theory of the doubling zeta integral.}  
The regularized Siegel-Weil formula led Piatetski-Shapiro and Rallis \cite{PS-R87}
to consider  the doubling zeta integral, which is the outer integral
in (\ref{A:inner}), with the inner integral replaced by a
Siegel-Eisenstein series on $G(W_n)$.
This  is a family of zeta integrals giving rise, \`a la Tate's thesis
and Godement-Jacquet \cite{GJ}, to a theory of the standard L-function
$L(s,\pi)$ for classical groups. In a paper of Lapid-Rallis \cite{LR}, the
local theory of the doubling zeta integral was worked out in full
detail. In particular, the local standard $\gamma$-factor (twisted by
$\GL_1$) was precisely defined, and characterized by a list of
properties (the Ten Commandments). Moreover, the local L-factor and
$\epsilon$-factor were defined from the $\gamma$-factor following a
procedure of Shahidi. These local L-factors and $\epsilon$-factors are
the ``right" ones, in the sense that they are compatible with those on
Galois side under the local Langlands correspondence. 
\vskip 5pt

In a recent paper \cite{Y4}, Yamana showed that the local L-factors
defined by Lapid-Rallis \cite{LR} are precisely the GCD's for the local
zeta integrals associated to the family of ``good" sections. This
implies that the analytic properties of the local zeta integrals are
precisely controlled by the standard L-factors. With this, the theory
of the doubling zeta integral is essentially complete.
\vskip 5pt

Globally, the study of the doubling zeta integral leads to a precise
understanding of the analytic properties of the standard L-functions
of classical groups, such as the possible location of poles of these
L-functions (\cite{KR3, T2, Y4}).

\vskip 10pt

\subsection{\bf  Local theta correspondence.} In the course of
establishing the Siegel-Weil formula and in the application of the
theory of the doubling zeta integrals, 
Kudla and Rallis resolved many of the local problems  in the theory of the
local theta correspondence. In a series of papers \cite{KR3,KR4,KR6,
  KS, S1, S2}, they, together with Sweet, completely determined the
structure of certain degenerate principal series representations and
described their constituents in the framework of local theta
correspondence in the $p$-adic case. Their work partly motivated and was
complemented by a similar analysis in the archimedean case, which was
carried out in \cite{L1, L2, HL, LZ1, LZ2, LZ3}.
This, together with their study of the local doubling zeta integral,
led Kudla and Rallis to formulate a conjecture known as the conservation
relation, which is an identity for the sum of  the first occurrences
of local theta correspondence in two different Witt towers. In
\cite{KR6}, they made a key progress towards this conjecture by proving one
inequality of this identity. The reverse inequality was shown in a
recent preprint \cite{SZ} of Sun and Zhu, thus completing the proof of the
conservation conjecture.
\vskip 10pt

\subsection{\bf First Term Range.}
The above developments imply that the local issues involved in the
Rallis' program are completely settled (except for a certain subtle issue
for some of the real cases, which will be discussed in the final section of this paper). In particular,
the above achievements culminate in a Rallis inner product formula for
the theta lift from $G(U_n)$ to $H(V_r)$ in the following cases:
\begin{itemize}
\item (Weil's convergent range) $r=0$ or $m-r > n+\epsilon_0$ (\cite{KR1, Li, I3, Y4});
\item  (First term range)  $r > 0$ and  $m
  \leq n+ \epsilon_0$, so that  $m - r \leq n+ \epsilon_0$, (\cite{KR5, GT, Y4}), 
\end{itemize}
where we define
\[  
\epsilon_0  = \begin{cases}
0, \text{ if $E \ne F$;} \\
\epsilon, \text{  if $E = F$.}\end{cases} 
\] 
In the first term range, the Rallis inner product formula takes the rough  form:
\[  
\langle \theta(\phi_1,f_1), \theta(\phi_2,f_2) \rangle = c \cdot 
{\rm Res}_{s =-s_0} L(s + \frac{1}{2}, \pi)   \cdot Z^*(-s_0, \phi_1
\otimes \overline{\phi_2}, f_1, f_2) 
\]
where 
\[  
-s_0  = -\frac{m - (n+\epsilon_0)}{2} \geq 0, 
\]
$c$ is some explicit nonzero constant, and $Z^*$ denotes the
normalized global doubling zeta integral.  Here, when $s _0 = 0$, one
actually has the value $L(\frac{1}{2}, \pi)$ rather than the residue
of $L(s+\frac{1}{2},\pi)$ at $s = 0$. 
\vskip 5pt

Using this, Yamana obtained in \cite{Y4} the local-global
criterion for the nonvanishing of the global theta lifting in the
first term range, thus completing Rallis' program when $m \leq n +
\epsilon_0$.

 \vskip 5pt

\subsection{\bf Purpose of this paper.}
After the above, one is left with the case when
\[  
r > 0, \quad     n+ \epsilon_0 < m \leq (n+ \epsilon_0) + r. 
\]  
In this case, one expects to have a Rallis inner product formula
involving the value of $L(s, \pi)$ at $s = s_0 > 0$. For this
purpose, it turns out that 
one needs to show a {\em second term identity} of the regularized Siegel-Weil formula. 
\vskip 5pt
 {\em The purpose of this paper is to supply this remaining global
   ingredient, i.e. we will prove the general second term identity of
   the regularized Siegel-Weil formula, thereby obtaining the Rallis
   inner product formula  in complete generality}.  
 \vskip 5pt

 We note that special cases of the second term identity have been
 known for some time. It was shown in \cite{KRS} when $(H(V_r) , G(W_n))
 = (\OO_4, \Sp_4)$ and in \cite{T1} for $(\U_3, \U_4)$. In addition, when
 the groups $H(V_r)$ and $G(W_n)$ are the symplectic
 (resp. split orthogonal) and split orthogonal (resp. symplectic) groups, a second
 term identity for the spherical vector  was shown in \cite{GT}
 (resp. \cite{K3}). The analogous spherical second
 term identity for unitary groups was shown by W. Xiong in \cite{X}. In a recent 
 paper \cite{Q}, the second-term identity for general vectors was proved by Y. Qiu when 
 $H(V_r)$ and $G(W_n)$ are orthogonal and symplectic with $n=r$.
 \vskip 10pt
 
\subsection{\bf The Regularized Theta Integral.}
 Let us  give a more precise description of the results of this paper. 
 Assume that we are outside the Weil's convergent range, so that $r >
 0$ and $m-r \leq  n+ \epsilon_0$, in which case we have
 \[  
0 < m  \leq 2 \cdot (n+ \epsilon_0) \quad \text{and} \quad r \leq n+
\epsilon_0. 
\]
 We shall further assume (as did Kudla-Rallis in \cite{KR5}) that $r \leq
 n$. This is only a condition when $\epsilon = \epsilon_ 0 =1$.
In any case, consider the Weil representation $\omega_{n,r}$ of
$G(W_n)\times H(V_r)$ and its automorphic realization
$\theta_{n,r}$. We are interested in the theta integral 
 \[  
I_{n,r}(\phi)(g)    =\frac{1}{\tau(H(V_r))} \cdot  \int_{[H(V_r)]} \theta_{n,r}(\phi)(g,h) \, dh. 
\]
This integral diverges, but under the above conditions
 Kudla-Rallis \cite{KR5} discovered a regularization of this theta
 integral, which gives a meromorphic function $B^{n,r}(s,\phi)$. One
 is interested in the behavior of $B^{n,r}(s,\phi)$ at 
 \[  
s = \rho_{H(V_r)} = \frac{m - r -\epsilon_0}{2}.  
\]
 It turns out that in the first term range, when $m \leq n +
 \epsilon_0$,  $B^{n,r}(s,\phi)$ has a pole of order at most $1$
 (which is attained for some $\phi$) whereas in the second term range,
 when 
 \[ 
n+\epsilon_0 < m \leq 2 \cdot (n+\epsilon_0), 
\]
 $B^{n,r}(s,\phi)$ has a pole of order at most $2$.  Thus, the Laurent
 expansion of $B^{n,r}(s,\phi)$ at $s = \rho_{H(V)}$ has the form
 \[  
B^{n,r}(s, \phi)  =  \frac{B^{n,r}_{-1}(\phi)}{s-\rho_{H_r}} +
B^{n,r}_0(\phi)+\cdots  \quad \text{in the first term range;} 
\]
 and
\[
 B^{n,r}(s, \phi)  =  \frac{B^{n,r}_{-2}(\phi)}{(s-\rho_{H_r})^2} +
   \frac{B^{n,r}_{-1}(\phi)}{s-\rho_{H_r}} +  B^{n,r}_0(\phi)+\cdots
 \quad \text{in the second term range.}  
\]
Let us note that each Laurent coefficient $B_d^{n,r}(\phi)$ is an
automorphic form on $G(W_n)$, and hence we view $B_d^{n,r}$ as a
linear map
\[
B_d^{n,r}:\omega_{n,r}\rightarrow \Aut(G(W_n)),
\]
where $\Aut(G(W_n))$ is the space of automorphic forms on $G(W_n)$.
\vskip 5pt

\subsection{\bf First Term Identity in First Term Range.}
The purpose of the Siegel-Weil formula is to identify the automorphic forms $B_d^{n,r}(\phi)$, as much as
possible, with the analogous Laurent coefficients of a
Siegel-Eisenstein series $A^{n,r}(s, \phi)$ associated to $\phi$ by
the formation of Siegel-Weil sections at $s$ equal to
\[
s_{m,n}: = (m - n -\epsilon_0)/2.
\]
Observe that in the first term range, $s_{m,n} \leq 0$, whereas in the second term range,
$s_{m,n} > 0$.   At  $s=s_{m,n} >0$, the Laurent expansion of the Siegel-Eisenstein series 
$A^{n,r}(s,\phi)$ has the form
\[  
A^{n,r}(s,\phi) = \frac{A^{n,r}_{-1}(\phi)}{s-s_{m,n}} + A^{n,r}_0(\phi)
+\cdots
\]
As for $B_d^{n,r}$, each
$A_d^{n,r}(\phi)$ is an automorphic form on $G(W_n)$ and hence
$A_d^{n,r}$ is viewed as a linear map
\[
A_d^{n,r}:\omega_{n,r}\rightarrow\Aut(G(W_n)).
\]
Assume that we are in the first
term range, so that $m \leq n+ \epsilon_0$ and $s_{m,n} \leq 0$.
Let $r' \geq r$ be defined by 
\[ \dim V_r + \dim V_{r'}  = 2 \cdot (n+ \epsilon_0).\]
 The space $V_{r'}$ is called the complementary space to $V_r$ with
 respect to $W_n$ and its dimension $m' = m_0 +2r'$ is such that $s_{m',n} \geq 0$.
 Ikeda  has defined in \cite{Ik} a $G(W_n) \times
 H(V_r)$-equivariant map 
\[  
\Ik^{n, r'} : \omega_{n,r'} \longrightarrow \omega_{n,r}, 
\]
which we shall call  the Ikeda map. 
Then the  first term identity  established in \cite{KR5, Mo, JS, I2,
  GT, Y2} for various dual pairs is the following identity: assuming that $V_r$ is not the split binary quadratic space, then for all $\phi \in \omega_{n,r}$, 
 \[  
a_{n,r'} \cdot A^{n, r'}_{-1} (\phi') = A^{n,r}_0 (\phi)   = 2 \cdot  B^{n,r}_{-1}(\phi),  
\]
 where $\phi' \in \omega_{n,r'}$ is such that 
 \[  \Ik^{n,r'}(\pi_{K_{H_{r'}}}\phi') =
 \phi, \]  
 $\pi_{K_{H_{r'}}}$ is the projection onto the $K_{H_{r'}}$-fixed
 space (with $K_{H_{r'}}$ a maximal compact subgroup of $H_{r'}(\A)$)
 and   $a_{n,r'}$ is some nonzero explicit constant. There is an
 analogous statement for the split binary quadratic case. We recall
 these results in Theorems \ref{T:anisot}, \ref{T:1st} and
 \ref{T:yamana} below.
  \vskip 5pt
 
 Observe here that $A^{n,r}_0$ is the zeroth Laurent coefficient at
 $s_{m,n} \leq 0$, whereas $A^{n,r'}_{-1}$ is the $-1^{\text{st}}$ Laurent
 coefficient at $s= s_{m', n} \geq 0$. One may interpret the identity
 $a_{n,r'} \cdot A^{n, r'}_{-1} (\phi') = 2  \cdot  B^{n,r}_{-1}(\phi)$ as saying that $A_{-1}^{n,r'}$ and
 $B_{-1}^{n,r}\circ\Ik^{n,r'}$ are proportional as linear maps
 $\omega_{n, r'}\rightarrow\Aut(G(W_n))$, and similarly for the
 identity $A^{n,r}_0 (\phi)   = 2 \cdot  B^{n,r}_{-1}(\phi)$.
 \vskip 5pt

\vskip 5pt

 \subsection{\bf The Main Results.}
 The goal of this paper is to prove the first and second term
 identities in the second term range. More precisely, we  show:
 \vskip 5pt
 
 \begin{Thm}[{\bf Siegel-Weil formula}] \label{T:intro}
 Suppose that $ 0 < r \leq n$ and $n+ \epsilon_0 < m  \leq n +
 \epsilon_0 + r$, so that we are in the second term range. Then one
 has:
 \vskip 5pt
 
 \noindent (i) (First term identity) For all $\phi \in \omega_{n,r}$, one has
 \[  
A^{n,r}_{-1}(\phi) =   B^{n,r}_{-2}(\phi).  
\]
 \vskip 5pt
 
 \noindent (ii) (Second term identity) For all $\phi \in \omega_{n,r}$, one has
 \[   
A^{n,r}_0(\phi) 
  =    B^{n,r}_{-1}(\phi)    - \kappa_{r,r'} \cdot \{ 
    B^{n,r'}_0(\Ik^{n,r}(\pi_{K_{H_r}}\phi))\} \mod {\im
  A^{n,r}_{-1}}. 
\]
  Here, $\kappa_{r,r'}>0$ is some explicit constant and $r' < r$ is such that 
  \[  
\dim V_r + \dim V_{r'}  =  2 \cdot (n + \epsilon_0), 
\]
so that  $V_{r'}$ is  the complementary space to $V_r$ with respect to
$W_n$. Moreover,   
\[  
\Ik^{n,r} :  \omega_{n,r}  \longrightarrow\omega_{n, r'} 
\]
is the Ikeda map which is  $G(W_n) \times H(V_{r'})$-equivariant. Finally, the term $\{ ...\}$ on the RHS is interpreted to be $0$ if $V_{r'}$ is anisotropic or is equal to the split binary quadratic form.
 
\end{Thm}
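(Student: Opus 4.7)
The plan is to deduce Theorem \ref{T:intro} from the Kudla-Rallis regularization of the theta integral $B^{n,r}(s,\phi)$, which in essence exhibits it as a linear combination of the Siegel-Eisenstein series $A^{n,r}(s,\phi)$ and further Eisenstein series on $G(W_n)$ induced from non-Siegel parabolic subgroups. For part (i), I would read off the top Laurent coefficient at $s=\rho_{H_r}$ from this expression. Using the analytic results of \cite{KR3, T2, Y4} one checks that in the second term range all non-Siegel Eisenstein contributions are holomorphic at the relevant point, so that the double pole of $B^{n,r}(s,\phi)$ at $s=\rho_{H_r}$ is accounted for entirely by the simple pole of $A^{n,r}(s,\phi)$ at $s=s_{m,n}$; matching leading coefficients gives $A^{n,r}_{-1}(\phi)=B^{n,r}_{-2}(\phi)$.

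For part (ii), the idea is to invoke the global functional equation of $A^{n,r}(s,\phi)$, whose normalised intertwining operator near the reflection point $s_{m,n}\leftrightarrow s_{m',n}$ realises the Ikeda map $\Ik^{n,r}:\omega_{n,r}\to\omega_{n,r'}$ at its leading order. This produces a comparison of the shape
\[
A^{n,r}(s,\phi) \;=\; c(s)\cdot A^{n,r'}(-s,\Ik^{n,r}(\phi)) \;+\; (\text{correction terms})
\]
near $s=s_{m,n}$, with $c(s)$ a meromorphic ratio of completed $L$-factors whose Laurent expansion is explicit. Laurent-expanding and applying the first-term identity in the first-term range for the complementary space $V_{r'}$ (Theorems \ref{T:anisot}, \ref{T:1st}, \ref{T:yamana}) converts $A^{n,r'}_{-1}$ and $A^{n,r'}_0$ at $s=s_{m',n}$ into multiples of $B^{n,r'}_{-1}$ and $B^{n,r'}_0$ respectively, while absorbing a multiple of $A^{n,r}_{-1}(\phi)$ into the $\im A^{n,r}_{-1}$ error. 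Combining this with the subleading Laurent coefficient of the regularization identity (which contributes $B^{n,r}_{-1}(\phi)$) and rearranging yields the stated formula for $A^{n,r}_0(\phi)$, with $\kappa_{r,r'}$ read off from the Laurent data of $c(s)$. The exceptional cases ($V_{r'}$ anisotropic or the split binary quadratic space) are handled using the corresponding variants of the first-term identity, noting that the vanishing of the $B^{n,r'}_0$ contribution is forced in those situations.

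The main obstacle will be the precise bookkeeping of error terms in the outline above. Both the Kudla-Rallis regularization identity and the functional equation introduce auxiliary non-Siegel Eisenstein series and lower-order intertwining contributions, and obtaining (ii) on the nose modulo $\im A^{n,r}_{-1}$ requires showing that all residual terms indeed land in this image. This calls for a fine, uniform control of the pole structure of degenerate principal series at $s=s_{m,n}$ across all the dual pairs covered by the theorem, drawing on the Lapid-Rallis theory of the doubling zeta integral \cite{LR}, Yamana's GCD results \cite{Y4}, and the Sun-Zhu conservation relation \cite{SZ}, together with a careful computation of volumes and normalising factors to pin down $\kappa_{r,r'}$ along the lines of \cite{KR5, KRS, GT, T1}.
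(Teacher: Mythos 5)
Your proposal does not follow the paper's route, and unfortunately both of its key inputs are assumed rather than available. For part (i), there is no ``regularization identity'' expressing $B^{n,r}(s,\phi)$ as a linear combination of the Siegel Eisenstein series $A^{n,r}(s,\phi)$ and holomorphic non-Siegel Eisenstein series: the only unconditional identity of this kind is Proposition \ref{P:non-Siegel}, which writes $B^{n,r}(s,\phi)$ as a \emph{single} Eisenstein series $E^{n,r}(s, f^{n,r}(s,\pi_{K_{H_r}}\phi))$ induced from the non-Siegel parabolic $Q(Y_r)$ (Siegel only when $r=n$), with no $A^{n,r}$-term present. Producing a relation between the Laurent coefficients of this non-Siegel Eisenstein series at $s=\rho_{H_r}$ and those of the Siegel Eisenstein series at $s=s_{m,n}$ is precisely the content of the theorem, so your step (i) is circular as stated. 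Similarly for part (ii): the asserted expansion $A^{n,r}(s,\phi)= c(s)\cdot A^{n,r'}(-s,\Ik^{n,r}(\phi)) + (\text{corrections})$ would require knowing the intertwining operator $M_n(s,\chi)$ on Siegel--Weil sections to \emph{second} order at $s=s_{m,n}$ (its leading term is related to the Ikeda map, but the subleading term is exactly as inaccessible as the second term identity itself), and no such control is established in the literature you cite.

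The paper's actual argument avoids both obstacles by induction on $\mathcal{N}=m-d(n)$. One starts from the known boundary-case first term identity $A^{n+1,r}_0(\tilde\phi)=2B^{n+1,r}_{-1}(\tilde\phi)$ on the larger group $G_{n+1}$ (with $\tilde\phi=\phi_1\otimes\phi$), takes constant terms along $U(Y_1)$, decomposes each side via Lemma \ref{L:constant} into pieces indexed by the Weyl elements $1$, $w^+$, $w^-$, and matches the pieces on which $\GL(Y_1)$ acts by $\chi\,|\cdot|^{m/2}$. The descent from $G_{n+1}$ to $G_n$ is controlled by the explicit section $f^{n,r}(s,\phi)$: Proposition \ref{P:key1} identifies the $w^+$-piece with $B^{n,r}(s,\phi)$, while Proposition \ref{P:key2} shows that restriction of $f^{n+1,r}$ to $G_n$ factors through the Ikeda map (this, not the intertwining operator, is where $\Ik^{n,r}$ enters), and Lemma \ref{L:comm} supplies the needed $c_r(s)$-functional equation for the $w^-$-piece. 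If you want to pursue your own outline, you would first need to prove the two identities you are assuming, and each of them is essentially equivalent to the theorem.
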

 \vskip 5pt
 
 Note that the equality in the second term identity in (ii) is viewed as an equality in $\Aut(G(W_n))/\im A^{n,r}_{-1}$ where $A^{n,r}_{-1}$ is viewed as a linear map
$A^{n,r}_{-1}:\omega_{n,r}\rightarrow\Aut(G(W_n))$.    
\vskip 10pt
 
The proof of this theorem is a significant refinement of the
techniques of \cite{GT} and is based on induction on the quantity:
 \[  
\mathcal{N} = m - (n + \epsilon_0). 
\]
Observe that, outside the Weil's convergent range,
    \begin{align}
   &\mathcal{N} < 0 \Longleftrightarrow \text{first term range}; \notag \\
   &\mathcal{N} = 0 \Longleftrightarrow \text{boundary case}; \notag \\ 
  &\mathcal{N} > 0 \Longleftrightarrow \text{second term range}. \notag
 \end{align}
To go beyond \cite{GT}, which is limited to the $G(W_n)$-span of the spherical vector, we need to make a more detailed study of 
a  $G(W_n)(\A)\times H(V_r)(\A)$-equivariant map $F^{n,r}(s,-)$ (see \S \ref{sectionf}) from the Weil representation to a family of  induced representations of $G(W_n)\times H(V_r)$ which arises naturally from the regularised theta integral. In particular, we need to make 
a careful study of the properties of the section $F^{n,r}(s,\phi)$ and its restriction $f^{n,r}(s,\phi)$ to $G(W_n)$, such as the behavior of $f^{n,r}(s,\phi)$ when restricted to the subgroup $G(W_{n-1})$, and the effect of the standard intertwining operator on $f^{n,r}(s,\phi)$. 
These key technical results are contained in Proposition \ref{P:key2}, Proposition \ref{P:key1} and 
Lemma \ref{L:comm}.  When $r= n$, the map $F^{n,r}(s,-)$  was
considered and studied in \cite[Prop. 2.4]{Q}. 

\vskip 5pt

 As we noted, the second term identity leads to the following Rallis
 inner product formula for the theta lifting from $G(U_n)$ to
 $H(V_r)$.  Since the Rallis inner product formula has been established in the convergent range and the first term range, we shall focus on the case 
 \[ (n+\epsilon_0) < m \leq 2 \cdot (n+\epsilon_0).\]
 Then we have:
 
 \vskip 5pt
 
 \begin{Thm}[{\bf Rallis inner product formula}] \label{T:intro2}
 Suppose that 
 \[ (n+\epsilon_0) < m \leq 2 \cdot (n+\epsilon_0) \quad \text{and} \quad  r \leq n \]
 so that we are either in the second term range or the convergent range, depending on whether
 $m \leq (n+\epsilon_0) +r$ or not.  Let
 $\pi$ be a cuspidal representation of $G(U_n)$ and consider its
 global theta lift $\Theta_{n,r}(\pi)$ to $H(V_r)$.   \vskip 5pt
 
 \noindent (i) 
Assume that
 $\Theta_{n,j}(\pi) = 0$ for $j < r$, so that  $\Theta_{n,r}(\pi)$   
 is cuspidal. Then
  for $\phi_1, \phi_2 \in \omega_{\psi, U_n, V_r}$ and $f_1, f_2 \in \pi$, 
 \[  
\langle \theta(\phi_1, f_1) , \theta(\phi_2, f_2) \rangle  
 =  [E:F] \cdot  \Val_{s=s_{m,n}}\left( L(s + \frac{1}{2}, \pi)  \cdot Z^*(s,
 \phi_1 \otimes \overline{\phi_2}, f_1, f_2)\right),
 \]
where 
 \[  
s_{m,n} = \frac{m - n-\epsilon_0}{2}  > 0 ,
\]
$L(s,\pi)$ is the standard L-function of $\pi$, and $Z^*(s, -)$ denotes
the normalized doubling zeta integral as in (\ref{E:Z^*}).  
\vskip 5pt

\noindent (ii) Assume further that for all places $v$ of $F$, the local theta lift $\Theta_{n,r}(\pi_v)$ is nonzero. Then $L(s+ \frac{1}{2},\pi)$ is holomorphic at $s = s_{m,n}$, so that in the context of (i),
 \[  
\langle \theta(\phi_1, f_1) , \theta(\phi_2, f_2) \rangle  
 =  [E:F]  \cdot L(s_{m,n} + \frac{1}{2}, \pi)  \cdot Z^*(s_{m,n},
 \phi_1 \otimes \overline{\phi_2}, f_1, f_2).
 \]

\end{Thm}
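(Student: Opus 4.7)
The plan is to combine the Siegel-Weil formula of Theorem \ref{T:intro} with the see-saw identity \eqref{A:inner}, along the lines of Kudla-Rallis and Yamana in the first-term-range case. Starting from \eqref{A:inner}, the inner integral over $[H(V_r)]$ diverges in this range and must be replaced by the Kudla-Rallis regularized theta integral $B^{n,r}(s, \Phi)$, where $\Phi = \phi_1 \otimes \overline{\phi_2}$ is viewed as an element of the Weil representation of $G(W_n) \times H(V_r)$ on the doubled space. Under the hypothesis $\Theta_{n,j}(\pi) = 0$ for $j < r$, which forces $\Theta_{n,r}(\pi)$ to be cuspidal, a regularization argument identifies the Petersson inner product with the appropriate Laurent coefficient, at $s = \rho_{H_r}$, of
\[
I(s) := \int_{[G(U_n) \times G(U_n)]} B^{n,r}(s, \Phi)(g_1, g_2)\, \overline{f_1(g_1)}\, f_2(g_2)\, dg_1\, dg_2.
\]

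The second step is to apply Theorem \ref{T:intro}(i), (ii) to express the Laurent coefficients $B^{n,r}_{-2}, B^{n,r}_{-1}$ of $B^{n,r}(s, \Phi)$ at $s = \rho_{H_r}$ in terms of the Laurent coefficients $A^{n,r}_{-1}, A^{n,r}_{0}$ of the Siegel Eisenstein series $A^{n,r}(s, \Phi)$ at $s = s_{m,n}$, together with the ``error'' term $B^{n,r'}_0 \circ \Ik^{n,r}$ attached to the complementary space $V_{r'}$. When paired against $f_1, f_2$, this error term produces, by the see-saw philosophy, an inner product expression governed by the theta lift of $\pi$ to $H(V_{r'})$ with $r' < r$, which vanishes by the tower-vanishing hypothesis. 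Thus only the Eisenstein contributions survive.

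For these, the Piatetski-Shapiro-Rallis doubling method gives
\[
\int_{[G(U_n) \times G(U_n)]} A^{n,r}(s, \Phi)(g_1, g_2)\, \overline{f_1(g_1)}\, f_2(g_2)\, dg_1\, dg_2 = [E:F] \cdot L(s+\tfrac{1}{2}, \pi)\cdot Z^*(s, \Phi, f_1, f_2),
\]
the factorization into $L$-function and normalized zeta integral being the Lapid-Rallis \cite{LR} identification. Matching Laurent coefficients at $s = s_{m,n}$ of this meromorphic identity with those of $I(s)$ at $s = \rho_{H_r}$ (after accounting for the error term's vanishing) yields (i), in which $\Val_{s=s_{m,n}}$ is interpreted as the appropriate combination of residues and values needed to match the Laurent coefficients on the geometric side. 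For (ii), the nonvanishing of every local theta lift $\Theta_{n,r}(\pi_v)$ implies, via Kudla-Rallis-Sweet and Yamana \cite{Y4}, that $Z^*(s_{m,n}, \Phi, f_1, f_2)$ is holomorphic and nonzero for suitable data; Yamana's classification \cite{Y4} of the possible poles of $L(s, \pi)$ in terms of local factors then forces $L(s+\tfrac{1}{2}, \pi)$ to be holomorphic at $s_{m,n}$, so that $\Val$ reduces to the ordinary value.

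I expect the main obstacle to be the rigorous treatment of the error term $B^{n,r'}_0 \circ \Ik^{n,r}$: one must identify the pairing of this term against $f_1 \otimes \overline{f_2}$ with (a constant times) the squared norm of a theta lift of $\pi$ to $H(V_{r'})$, which requires careful bookkeeping with the Ikeda map applied to a doubled Schwartz function, together with an invocation of the tower property. A secondary analytic issue is the precise identification of the Petersson inner product with a fixed Laurent coefficient of $I(s)$, which requires mixed truncation or spectral arguments in the style of \cite{KR5, GT} to justify the exchange of regularization and Petersson pairing.
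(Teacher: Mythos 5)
Your outline for part (i) is essentially the paper's argument: the see-saw identity reduces the Petersson pairing to the integral of the regularized theta kernel against $\overline{f_1}\otimes f_2$, the residue $B^{n,r}_{-1}$ (not the leading coefficient $B^{n,r}_{-2}$, which pairs to zero by the tower hypothesis) captures the inner product as in \cite[Prop.~6.1]{GT}, and the second term identity converts this to $A^{n,r}_0$, with the error terms $B^{n,r'}_0\circ \Ik^{n,r}$ \emph{and} the $\im A^{n,r}_{-1}$ ambiguity both annihilated by the pairing against cusp forms via the hypothesis $\Theta_{n,j}(\pi)=0$ for $j<r$ (this is Proposition \ref{P:inner_product1}, and you should remember to kill the $\im A^{n,r}_{-1}$ term too, not just the $B^{n,r'}_0$ term). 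Two bookkeeping remarks: the constant $[E:F]$ does not come from the doubling unfolding — the Piatetski-Shapiro--Rallis identity $\int E(s,\Phi)\overline{f_1}f_2 = Z(s,\Phi,f_1,f_2) = L(s+\tfrac12,\pi\times\chi_V)Z^*(s,\ldots)$ carries no such factor — but from the regularized Siegel-Weil comparison, where it arises as $\tau(H_r)$; and the theorem also covers the convergent range $m>d(n)+r$, where one uses the convergent Siegel-Weil formula instead of the second term identity.

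Part (ii) as written has a genuine gap. The holomorphy of $Z^*(s,\Phi,f_1,f_2)$ at $s_{m,n}>0$ is automatic (the normalized Euler product converges absolutely for $\operatorname{Re}(s)>0$) and does not use the local nonvanishing hypothesis, so it cannot be the input that forces $L(s+\tfrac12,\pi)$ to be holomorphic; and there is no ``classification of poles of $L(s,\pi)$ in terms of local factors'' that does this directly. The actual mechanism is: if $L(s+\tfrac12,\pi\times\chi_V)$ had a pole at $s_{m,n}$, then by Yamana's pole criterion together with the tower property, $\pi$ would have a nonzero global theta lift to some space $V^{\#}$ of \emph{complementary} dimension $2d(n)-m$; the hypothesis $\Theta_{n,j}(\pi)=0$ for $j<r$ forces $V^{\#}$ into a different Witt tower from $V_r$; hence at some place $v$ both $\Theta_{n,r}(\pi_v)$ and the lift to $H(V^{\#}_v)$ are nonzero for spaces in distinct towers, and the conservation relation of Kudla--Rallis and Sun--Zhu \cite{KR6, SZ} then gives $\dim V_{r,v}+\dim V^{\#}_v\geq 2d(n)+2$, contradicting complementarity. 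Without invoking the conservation relation and the global first-occurrence hypothesis in this way, your argument for the holomorphy of the $L$-function does not go through.
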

\vskip 10pt

As  a consequence, we will be able to deduce the following local-global
criterion (see Theorem \ref{T:main_nonvanishing}) for the nonvanishing
of global theta lifts.

 \vskip 5pt

\begin{Thm}[{\bf Local-Global nonvanishing criterion}]\label{T:intro3}
 Assume the same conditions on $(m, n)$ as in Theorem \ref{T:intro2}. 
Let $\pi$ be  a cuspidal representation of $G(U_n)$ and consider its
global theta lift $\Theta_{n,r}(\pi)$ to $H(V_r)$. Assume that
$\Theta_{n,j}(\pi) = 0$ for $j < r$, so that  $\Theta_{n,r}(\pi)$   
 is cuspidal.  
 \vskip 5pt
 
 \noindent (i)  If $\Theta_{n,r}(\pi)$ is nonzero, then
 \begin{enumerate}
\item[(a)] for all places $v$, $\Theta_{n,r}(\pi_v) \ne 0$, and
\item[(b)] $L(s_{m,n} + \frac{1}{2}, \pi)\ne  0$ i.e. nonzero holomorphic.  
\end{enumerate}
\vskip 10pt

\noindent (ii) The converse to (i) holds when one assumes one of the following conditions:
 \begin{itemize}
 \item  $\epsilon_0 = -1$;
 \item  $\epsilon_0 = 0$  and $E_v = F_v \times F_v$ for all archimedean places $v$ of $F$;
 \item  $\epsilon_0 = 1$ and $F$ is totally complex;
 \item $m = d(n)+1$.
 \end{itemize}
 
 \vskip 5pt
 
 \noindent (iii) In general, under the conditions (a) and (b) in (i),
 there is an $\epsilon$-Hermitian space $V'$ over $E$ such that 
 \begin{itemize}
 \item  $V' \otimes F_v \cong V \otimes F_v$  for every finite or complex place of $F$;
 \item  the global theta lift $\Theta_{U_n, V'}(\pi)$ of $\pi$ to $H(V')$ is nonzero.  
 \end{itemize}
 \end{Thm}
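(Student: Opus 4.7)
The plan is to derive all three parts of Theorem \ref{T:intro3} from the Rallis inner product formula of Theorem \ref{T:intro2} combined with local theta correspondence theory and, for part (iii), the conservation relation of Sun--Zhu \cite{SZ}.

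For part (i), the implication that each local theta lift $\Theta_{n,r}(\pi_v)$ is nonzero is automatic: if some $\pi_v$ had vanishing local theta lift, then every global theta integral $\theta(\phi,f)$ would vanish, contradicting $\Theta_{n,r}(\pi)\ne 0$. For the L-value nonvanishing, the hypothesis $\Theta_{n,r}(\pi)\ne 0$ produces test data $(\phi,f)$ with $\langle\theta(\phi,f),\theta(\phi,f)\rangle>0$. Having already obtained local nonvanishing at every place, Theorem \ref{T:intro2}(ii) yields both holomorphy of $L(s+\frac{1}{2},\pi)$ at $s_{m,n}$ and the identity
\[
\langle\theta(\phi,f),\theta(\phi,f)\rangle=[E:F]\cdot L(s_{m,n}+\tfrac{1}{2},\pi)\cdot Z^*(s_{m,n},\phi\otimes\overline{\phi},f,f),
\]
which forces $L(s_{m,n}+\frac{1}{2},\pi)\ne 0$.

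For the converse in part (ii), I would reverse the argument. Assuming (a) and (b), Theorem \ref{T:intro2}(ii) again supplies holomorphy of the L-value, and the task reduces to producing test data for which the Eulerian factor $Z^*(s_{m,n},\phi\otimes\overline{\phi},f,f)$ is nonzero. This is handled place-by-place: at almost all places the unramified computation is nonzero; at the remaining non-archimedean places, the Lapid--Rallis theory of the doubling zeta integral \cite{LR} together with Yamana's GCD result \cite{Y4} identifies nonvanishing of the local theta lift with the existence of test vectors making $Z^*_v$ nonzero. The four conditions enumerated in (ii) are precisely those in which the analogous archimedean statement is presently available via the real/complex analyses of \cite{L1,L2,HL,LZ1,LZ2,LZ3}.

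For part (iii), when the archimedean hypotheses of (ii) fail, the local theta lift $\Theta_{n,r}(\pi_v)$ may be nonzero on $H(V_r)_v$ yet not admit test vectors making $Z^*_v$ nonzero at the required point. At each such real place, the conservation relation \cite{KR6,SZ} provides a companion $\epsilon$-Hermitian space $V''_v$ of the same total dimension $m$, on whose Witt tower the local theta lift is nonzero and which admits suitable test vectors. I would then construct $V'$ globally by gluing: $V'\otimes F_v\cong V\otimes F_v$ at every finite and complex place, and $V'\otimes F_v\cong V''_v$ at the selected real places. One must check that such a global $\epsilon$-Hermitian space $V'$ over $E$ exists (a Hasse-type local-to-global input; the number of swapped real places is constrained by the product formula for local invariants) and that the cuspidality hypothesis $\Theta_{n,j}(\pi)=0$ for $j<r$ is preserved in the new tower. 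Applying Theorem \ref{T:intro2} to the dual pair $(G(U_n),H(V'))$ then yields $\Theta_{U_n,V'}(\pi)\ne 0$. The main obstacle throughout parts (ii) and (iii) is thus the archimedean bookkeeping: converting local theta nonvanishing into nonvanishing of the local doubling zeta integral at the precise point $s=s_{m,n}$, and ensuring that the Hasse-type gluing producing $V'$ is consistent with the Witt tower assumptions at the unchanged places.
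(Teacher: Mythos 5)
Your proposal follows essentially the same route as the paper: part (i) and the nonvanishing criterion are read off from the Rallis inner product formula, part (ii) rests on the local equivalence between nonvanishing of $\Theta_{n,r}(\pi_v)$ and nonvanishing of $Z_v^*(s_{m,n})$ on $R(V_{r,v})\otimes \pi_v^{\vee}\otimes\pi_v$ (the paper's Proposition \ref{P:localnon0}(i), which is proved via the module structure of $I_n^n(s_{m,n},\chi)$ together with the conservation relation, rather than the Lapid--Rallis/Yamana GCD theory you invoke), and part (iii) replaces $V_r$ at the problematic real places by a companion space and glues globally. The only details you leave implicit are that the gluing in (iii) is guaranteed precisely because the companion signature satisfies $p\equiv p'\bmod 4$ (resp. $\bmod 2$), which preserves the local invariants needed for the local-global existence of $V'$, and that your worry about the cuspidality hypothesis in the new tower is harmless since the tower property yields nonvanishing of $\Theta_{U_n,V'}(\pi)$ in any case.
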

\vskip 10pt

The reason for not having the full converse to (i) is because of 
certain local issues at real places of $F$ which still need to be settled (see Proposition
\ref{P:localnon0}).  To be more specific, one needs to show that the
nonvanishing of the local theta lift  $\Theta_{n,r}(\pi_v)$ is
equivalent to the nonvanishing of the local normalized doubling zeta
integral associated to $\pi_v$ on a certain submodule of the
degenerate principal series representation.  Because the  structure of this  degenerate
principal series representation on
$G(W_n)(\R)$ is more complicated in the cases not covered in (ii),  we
could not show the desired equivalence. Once this archimedean issue
can be settled, the converse to (i) will hold in general.
\vskip 5pt

We note in closing that for the purpose of the local-global
nonvanishing criterion, it is  sufficient to show that the inner
product of the theta lifts is proportional to the $L$-value, without
having to be precise about the constant of proportionality. However,
a precise Rallis inner product formula (with every constant
determined) is useful for other purposes. For example, it was used in
the proof of cases of the refined Gross-Prasad conjecture in
\cite{GI1}, \cite{Ha} and \cite{Q}. In addition, it was used crucially in the work of Harris on
period relations, and the work of Harris-Li-Skinner \cite{HLS} on $p$-adic
L-functions.   

\vskip 10pt

\noindent{\bf Acknowledgments:} The debt that this paper owes to the
fundamental work of Kudla and Rallis should be evident to the reader. 
 We thank Atsushi Ichino for  his help in the proof of Lemma
 \ref{L:kappa}(ii), Shunsuke Yamana for sending us his preprint
 \cite{Y4} and for his helpful comments on this manuscript, Soo Teck Lee for his help in the proof of Proposition
 \ref{P:A0} and Chengbo Zhu for helpful discussion concerning the preprint \cite{LZ4}. 
\vskip 5pt

The second author thanks the National University of Singapore for
hosting his visit in Spring 2012 while he and the first author worked
on the paper.  The writing of the paper was completed when the first
and third authors visited the IHES in July 2012; they thank the IHES
for its support and for providing a peaceful yet stimulating working
environment. The first author is partially supported by a grant from
the National University of Singapore. The third author is partially
supported by NSF grant DMS-1215419. 

\vskip 5pt

Finally, we thank the referee of our paper for the many useful comments and suggestions, and for pointing out various inaccuracies in a first draft of this paper, especially with regards to some boundary cases of our main result which require some special considerations. 
\vskip 15pt

 \section{\bf Notation and Preliminaries} \label{S:notation}
 
 In this section, we fix some notation and introduce the objects which
 intervene in this paper.
 
 \subsection{\bf Fields.}
 
 Let $F$ be a number field with  ring of adeles $\A$. Fix a
 non-trivial additive character $\psi=\otimes_v'\psi_v$ on
 $F\backslash \A$.  We let $E$ be either $F$ or a quadratic extension of $F$.
We shall regard $\psi$ as a character of $\A_E$ by composition with the trace map $Tr_{E/F}$. 
Moreover, let $\chi_E$ denote the (possibly trivial) quadratic
character of $\A^{\times}$ associated to $E/F$ by global class field
theory. By $|-|$, we always mean the absolute value on
$\A^{\times}_E$ rather than on $\A^\times$. Occasionally, we use
the notation $|-|$ for the absolute value of a local field, but this should be 
 clear from the context. 
\vskip 5pt

For a vector space $X$ over $E$, by $\mathcal{S}(X)(\A)$ we mean the
space of Schwartz-Bruhat functions on $X\otimes \A$ and so
$\mathcal{S}(X)(\A)=\otimes'_v\mathcal{S}(X)(F_v)$, where
$\mathcal{S}(X)(F_v)$ is the space of Schwartz-Bruhat functions on
$X\otimes F_v$. We sometimes omit $(\A)$ or $(F_v)$ and simply write
$\mathcal{S}(X)$ when there is no danger of confusion.
 
\vskip 5pt

\subsection{\bf $\epsilon$-Hermitian Spaces.}
Throughout the paper, we shall fix the sign
\[  
\epsilon = \pm 1. 
\]
Let $V_0$ be an $m_0$-dimensional vector space over $E$ equipped with
a nondegenerate {\em anisotropic}  $\epsilon$-Hermitian form $(-,-)$.  
With $\mathbb{H}$ denoting the hyperbolic plane, i.e. the split
$\epsilon$-Hermitian space of dimension $2$, we set  
\[  V_r =  V_0 \oplus \mathbb{H}^r,\]
and let 
\[  m =  \dim_E V = m_0 + 2r. \]
The family of spaces $\{ V_r:  r \geq 0\}$ forms a Witt tower of $\epsilon$-Hermitian spaces. 
\vskip 5pt

We now make two basic definitions. Set
\[  \epsilon_0 = \begin{cases}
\epsilon, \text{  if $E = F$;} \\
 0, \text{  if $E \ne F$}, \end{cases} \] 
 and
\[  d(n)  = n + \epsilon_0 \]    
 for $n \in \Z$.
 \vskip 5pt

Let $H_r  = H(V_r)$ be the associated isometry group, so that
\[  H_r \cong \begin{cases}
\OO_m, &\text{  if $\epsilon_0 = 1$;} \\
\Sp_m, &\text{  if $\epsilon_0= -1$;} \\
\U_m, &\text{  if $\epsilon_0 = 0$.} \end{cases} \]
The family of groups $\{  H_r: r \geq 0\}$ forms a Witt tower of classical groups.  
\vskip 5pt

The space $V_r$ has  a maximal isotropic space $X_r$ of dimension $r$, so that
\[  V_r =  X_r  \oplus  V_0 \oplus X_r^*. \]
  Fix an ordered basis $\{x_1,\dots,x_r\}$ of $X_r$, with corresponding dual basis $\{x_r^*, \cdots ,x_1^* \}$ for $X_r^*$, so that
\[  X_r  = E \cdot x_1  \oplus\cdots \oplus E \cdot x_r \quad \text{and} \quad  
 X_r^*  = E \cdot x_r^*  \oplus\cdots\oplus E \cdot x_1^* \]
 with
 \[  (x_i, x_j) = (x_i^*, x_j^*) = 0 \quad \text{and} \quad (x_i, x_j^*) = \delta_{ij}. \]
Let 
\[ P(X_r) = M(X_r) \cdot N(X_r)\]
 be the maximal parabolic subgroup of $H_r$ which stabilizes the space
 $X_r$. Then its Levi factor is 
\[  M(X_r) \cong \GL(X_r) \times H(V_0).\]
To simplify notation, we shall sometimes write $P_r$ in place of $P(X_r)$. Note that in all cases, we have
\[ \pi_0(H_r) = \pi_0(H_0)  \quad \text{  if $r > 0$,} \]
except when $\epsilon_0 =1$ and $m_0 = 0$, in which case $H_r = \OO_{r,r}$ and $H_0 $ is trivial so that
\[  \# \pi_0(H_r/ H_0)  = 2. \]
Here $\pi_0$ indicates the set of connected components. 
\vskip 5pt

 The group $H_r$ comes equipped with a family of maximal compact subgroups $\{ K_{H_{r,v}} \}$, such that $K_{H_{r,v}}$ is hyperspecial for almost all places $v$ of $F$. We may and do assume that $K_{H_{r,v}}$ is a special maximal compact subgroup for all finite $v$. Then
 \[  K_{H_r} = \prod_v K_{H_{r,v}}\subset H_r(\A) \]
 is a maximal compact subgroup of $H_r(\A)$ and one has the Iwasawa decomposition
 \[  H_r(\A) = P(X_r)(\A) \cdot K_{H_r}. \]
 We may also ensure that $K_{H_r} \cap \GL(X_r)(\A)$ is a maximal compact subgroup of $\GL(X_r)(\A)$. 
\vskip 5pt

\subsection{\bf $-\epsilon$-Hermitian Spaces.}
Similarly, let $W_n$ be a $2n$-dimensional vector space over $E$
equipped with a nondegenerate $-\epsilon$-Hermitian form $\langle- , -
\rangle$ and a maximal isotropic subspace $Y_n$ of dimension $n$, so
that
\[  W_n  = Y_n   \oplus Y_n^*.\]
 Let $G = G_n =  G(W_n)$ be  the associated isometry group or the
 unique two-fold cover thereof, according to
\[  G_n  = \begin{cases} 
\Sp_{2n}, &\text{  if $\epsilon_0=1$ and $m_0$ is even;} \\
\Mp_{2n}, &\text{  if $\epsilon_0 = 1$ and $m_0$ is odd;}\\
\OO_{2n}, &\text{  if $\epsilon_0 = -1$;} \\
\U_{2n}, &\text{  if $\epsilon_0 = 0$.}  \end{cases} \]
Thus, in addition to the space $W_n$, the group $G_n$ depends on the space $V_0$ (or rather the parity of its dimension) in the first two cases.  
\vskip 5pt

 We  fix an ordered basis $\{y_1,\dots, y_n\}$ of $Y_n$ and corresponding dual basis $\{y_n^*,..., y_1^* \}$ for $Y_n^*$, so that
\[  Y_n  =  E \cdot y_1 \oplus \cdots \oplus E \cdot y_n \quad \text{and} \quad 
Y_n^* =  E \cdot y_n^* \oplus \cdots \oplus E \cdot y_1^*. \]
For any subspace $Y_r := \langle y_1,...,y_r \rangle \subset Y_n$, let 
\[
Q(Y_r) = L(Y_r) \cdot U(Y_r)
\] 
denote the maximal parabolic subgroup fixing $Y_r$. Then its Levi factor is
\[  L(Y_r)  \cong \GL(Y_r) \times G_{n-r}. \]
 As before, we shall sometimes write $Q_r$ in place of
 $Q(Y_r)$. Moreover, if there is a need to indicate that $Q_r$ is a
 subgroup of $G_n$, we shall write $Q_r^n$. When $r = n$, $Q(Y_r)$ is
 a  Siegel parabolic subgroup of $G_n$.
 \vskip 5pt
 
 The unipotent radical $U(Y_r)$ of $Q(Y_r)$ sits in a short exact sequence
 \[  
\begin{CD}
1 @>>>  Z(Y_r) @>>> N(Y_r) @>>> V_{n-r} \otimes Y_r @>>> 1 \end{CD} 
\]
where
\[  
Z(Y_r) = \{ \text{$\epsilon$-Hermitian forms on $Y_r^*$} \} \subset
\Hom_F(Y_r^*, Y_r). 
\]
Thus, when $r=n$, $N(Y_n) = Z(Y_n)$ is abelian.
\vskip 5pt

 \vskip 5pt
The following table summarizes the groups discussed so far:
\vskip 5pt
\begin{center}
 \begin{tabular}{|c|c|c|c|}
 \hline
 \multicolumn{2}{|c|}{}&  $G_n=G(W_n)$ & $H_r=H(V_r)$ \\
 \hline
\multirow{2}{*}{$\epsilon_0 =1$}& $m_0$ even & $\Sp_{2n}$ & $\OO_m$ \\
  \cline{2-4}
   & $m_0$ odd & $\Mp_{2n}$ & $\OO_m$ \\
\hline
\multicolumn{2}{|c|}{$\epsilon_0=-1$}&   $\OO_{2n}$ & $\Sp_m$  \\
  \hline
 \multicolumn{2}{|c|}{$\epsilon_0=0$}& $\U_{2n}$ & $\U_m$  \\
  \hline\hline
\multicolumn{2}{|c|}{Parabolic}&$Q_r=Q(Y_r)=L(Y_r)\cdot U(Y_r)$
&$P_r=P(X_r)=M(X_r)\cdot N(X_r)$\\ 
\hline
\multicolumn{2}{|c|}{Levi factor}&$L(Y_r)=\GL(Y_r)\times G_{n-r}$
&$M(X_r)=\GL(X_r)\times H(V_0)$\\ 
\hline
\end{tabular}
\end{center}

\vskip 5pt

\subsection{\bf Metaplectic case.} 
 The case when $G_n(\A)$ is the metaplectic group $\Mp_{2n}(\A)$ deserves
 further comments. In this case, a parabolic subgroup of $G_n(\A)$ is
 simply the inverse image of a parabolic subgroup of $\Sp_{2n}(\A)$. We
 will follow the notation and conventions in \cite{GS} for the  structural
 issues regarding $\Mp_{2n}(\A)$. For example, the Levi subgroup of a
 parabolic $Q(Y_r)(\A)$ is:
 \[  L(Y_r)(\A)  = \widetilde{ \GL}_r(\A) \times_{\mu_2} \Mp_{2n-2r}(\A) \]
 where $\widetilde{\GL}_r(\A)$ is the two-fold cover of $\GL_r(\A)$
 defined by the Hilbert symbol, namely
 \[ \widetilde{\GL}_r(\A)=\GL_r(\A)\times\{\pm 1\} \]
  as a set and the
   group structure is given by 
   \[ (g_1,\eta_1)\cdot (g_2, \eta_2)=(g_1g_2,
   \eta_1\eta_2(\det g_1, \det g_2)_{\A}) \quad \text{for $g_i\in\GL_r(\A)$ and
   $\eta_i\in\{\pm 1\}$,} \]
    where $(-,-)_\A$ is the Hilbert symbol. The
   determinant map of $\GL_r(\A)$ lifts to a homomorphism
 \[  \det:  \widetilde{\GL}_r(\A) \longrightarrow \widetilde{\GL}_1(\A). \] 
By the theory of Weil indices,  the additive character $\psi$ gives rise to a genuine character 
\[  \chi_{\psi} :  \widetilde{\GL}_1(\A) \longrightarrow \C^{\times}. \]
 Composing with $\det$, one obtains a genuine character $\chi_{\psi}
 \circ \det$ of $\widetilde{\GL}_r(\A)$. The map $\tau \mapsto
 \tilde{\tau}_{\psi} = (\chi_{\psi} \circ \det) \otimes \tau$ then
 gives an identification (depending on $\psi$) of the irreducible
 representations of $\GL_r(\A)$ with the irreducible genuine
 representations of $\widetilde{\GL}_r(\A)$. 
 \vskip 5pt
 
 In order to unify and simplify notation, we shall henceforth write
 $\tau$ for a genuine representation of $L(Y_r)(\A)$ in the metaplectic
 case, when we actually mean the genuine representation
 $\tilde{\tau}_{\psi}$. 
Also for a parabolic subgroup $Q(Y_r)(\A)$ of $\Mp_{2n}(\A)$ and a
 representation $\tau \boxtimes \sigma$ of $L(Y_r)(\A)= \widetilde{\GL}(Y_r)(\A)
 \times_{\mu_2} \Mp_{2n-2r}(\A)$, one may consider the normalized induced
 representation $\Ind_{Q(Y)(\A)}^{\Mp_{2n}(\A)} ( \tau|\det|^s \boxtimes \sigma)$, keeping in mind
  that $\tau$ denotes the genuine representation
 $\tilde{\tau}_{\psi}$. In particular, this notion of induced
 representation depends on $\psi$, though we suppress $\psi$ from the notation. 
 \vskip 5pt

\subsection{\bf Measures.}
Having fixed the additive character $\psi$ of  $F \backslash \A$,
 we fix the Haar measure $dx_v$ on $F_v$ (for all $v$) to be self-dual
 with respect to $\psi_v$. The product measure $dx$ on $\A$ is independent of the choice of 
 $\psi$ and is the Tamagawa measure of $\A$. 
 For any  algebraic group $G$ over $F$, we
 always use the Tamagawa measure on $G(\A)$ when $G(\A)$ is unimodular. This applies to
 the groups $G_n(\A)$ and $H_r(\A)$, as well as the Levi subgroups and unipotent radical of their parabolic subgroups. 
In addition, for any compact group $K$, we
 always use the Haar measure $dk$ with respect to which $K$ has volume
 $1$. 
 
\vskip 5pt

Finally, we need to fix a measure for the metaplectic group
$\Mp_{2n}(\A)$. Recall that one has a short exact sequence
 \[  \begin{CD}
 1 @>>> \mu_2 @>>> \Mp_{2n}(\A) @>\pi>> \Sp_{2n}(\A)@>>>1. \end{CD} \]
 Having fixed the Tamagawa measure $dg$ on $\Sp_{2n}(\A)$, we fix the
 measure $d \tilde{g}$ on $\Mp_{2n}(\A)$ such that $ \pi_*(d
 \tilde{g}) =  dg$, so that
 \[  \int_{\Sp_{2n}(F) \backslash \Mp_{2n}(\A)} d \tilde{g}  =1. \]
We shall call this $d\tilde{g}$ the Tamagawa measure for $\Mp_{2n}(\A)$. 
We adopt the same convention for the Tamagawa measure on the nonlinear cover $\tilde{\GL}_r(\A)$ of 
$\GL_r(\A)$ which intervenes in the parabolic subgroups of $\Mp_{2n}(\A)$. 
\vskip 5pt

 In any case, we write $\tau(G)$ for the Tamagawa number of $G$. When the identity component  
 $G^0$ has no $F$-rational characters, $\tau(G)$ is simply defined by:
 \[  \tau(G) = \int_{[G]} \, dg, \]
 and a more delicate definition is needed in the general case.
 In any case, we have
 \[  \tau(H_r)  = \begin{cases}
 1, &\text{ if $\epsilon_0 \ne 0$;} \\
 2, &\text{  if $\epsilon_0 = 0$,} \end{cases} \]
 except when
 \vskip 5pt
 
 \begin{itemize}
 \item  $\epsilon_0= 1$, $m_0 =1$ and $r=0$, in which case $H_r =
 \OO_1$ has $\tau(\OO_1) =1/2$  (see \cite{We2});
 \vskip 5pt
 
 \item $\epsilon_0 = 1$, $m_0 = 0$ and $r=1$, in which case $H_r = \OO_{1,1}$ is associated to the split binary quadratic form and $H_r^0  \cong \mathbb{G}_m$, so that $\tau(\OO_{1,1}) = 1/2$.   
 \end{itemize}
 \vskip 5pt

\subsection{\bf Complementary Spaces.}
With $W_n$ fixed,  one may associate to each $V_r$  a complementary
space $V_{r'}$ in the same Witt tower, characterized by 
\[  m_0 + r + r'   =   d(n). \]
 Of course, this only makes sense if $r' \geq 0$. Thus, the notion of
 complementary spaces is only relevant when
 \[  0 \leq m_0 +r \leq  d(n). \]
  When $V_{r'}$ exists, and $r \geq r'$, we may write:
  \[  V_r  = X'_{r-r'} \oplus V_{r'} \oplus {X'}^*_{r-r'} \]
  where
  \[  X'_{r-r'} = \langle  x_{r'+1},...., x_r \rangle. \]
We set
\[
m'=\dim V_{r'}=m_0+2r'.
\]
 
\vskip 5pt

\subsection{\bf Ikeda's map.}
Suppose that  $V_r \supset V_{r'}$ (not necessarily complementary spaces).   
Then one may write
\[  V_r  =  X'_{r-r'} \oplus V_{r'} \oplus   {X'}^*_{r-r'}. \]
We define a map 
\[  \Ik^{n,r, r'}:  \mathcal{S}(Y_n^* \otimes V_r)(\A)\longrightarrow 
\mathcal{S}(Y_n^* \otimes V_{r'})(\A), \]
given by
\[  \Ik^{n,r,r'}(\phi)(a) = \int_{(Y_n^* \otimes {X'_{r-r'}})(\A) }  \phi( x, a, 0)  \, dx,\]
for $a \in (Y_n^* \otimes V_{r'})(\A)$.
 Thus, $\Ik^{n, r,r'}$ is the composite
 \[  \begin{CD}
 \mathcal{S}(Y_n^* \otimes V_r)
= \mathcal{S}(Y_n^* \otimes V_{r'})\otimes 
\mathcal{S}(Y_n^* \otimes (X'_{r-r'} + {X'}^*_{r-r'}))\\
 @VVId \otimes \mathcal{F}_1V \\
  \mathcal{S}(Y_n^* \otimes V_{r'})\otimes
  \mathcal{S}(W_n\otimes X'_{r-r'})\\
  @VVId \otimes ev_0V \\
  \mathcal{S}(Y_n^* \otimes V_{r'})
  \end{CD}
  \]
  where 
  \[  \mathcal{F}_1 : \mathcal{S}(Y_n^* \otimes (X'_{r-r'} +
  {X'}^*_{r-r'}))
\longrightarrow \mathcal{S}(W_n\otimes X'_{r-r'})\]
  is the partial Fourier transform in the subspace $(Y_n^* \otimes
  {X'}^*_{r-r'})(\A)$, and $ev_0$ is  evaluation at 0. It is clear that
  if $r'' < r' < r$, one has
  \begin{equation} \label{E:ikeda}
    \Ik^{n,r', r''} \circ \Ik^{n, r, r'}  = \Ik^{n, r, r''}. 
\end{equation}
  \vskip 5pt

  In the special case when  $V_r$ and $V_{r'}$ are complementary
  spaces, we shall simply write $\Ik^{n,r}$ for $\Ik^{n,r,r'}$. The
  map $\Ik^{n,r}$ was used by T. Ikeda in his refinement (\cite{Ik}) of the
  first term identity of Kudla-Rallis (\cite{KR5}). Thus we call
  $\Ik^{n,r}$ (more generally $\Ik^{n,r,r'}$) an Ikeda map.
 
 \vskip 5pt
 
 \subsection{\bf Weil representation.}  \label{SS:weil}
 We choose a Hecke character $\chi$ as follows:
\[  
\chi = \chi_{V_0} = \begin{cases}
\text{the quadratic character of $\A^{\times}$ associated to
  $\disc(V_0)$,  if $\epsilon_0=1$;} \\
\text{the trivial character of $\A^{\times}$, if $\epsilon_0 = -1$;} \\
\text{a character of $\A^{\times}_E$ such that $\chi|_{\A^{\times}} =
  \chi_E^{m_0}$, if $\epsilon_0 = 0$.} \end{cases} 
\]
Note that when $\epsilon_0=\pm 1$, $\chi$ is completely determined
by $V_0$.
\vskip 5pt

Associated to the pair $(\psi, \chi)$,
the group $G_n(\A)\times H_r(\A)$ has a distinguished representation
$\omega_{n,r}  = \omega_{\psi, \chi, W_n, V_r}$ known as the Weil representation.
 The Weil representation can be realized on the space of
 Schwartz-Bruhat functions $\mathcal{S}(Y_n^*\otimes V_r)(\A)$. The
 action is given by the following formulas.
\vskip 5pt
\[
\begin{cases}
\omega_{n,r}(1, h)\phi(x)  = \phi(h^{-1} \cdot x)  \text{  if $h \in H_r(\A)$;} \\
\omega_{n,r}(a,1) \phi(x)  =  \chi(\det(a)) \cdot |\det (a)|^{m/2}
\cdot \phi( a^{-1} \cdot x),  \text{  for $a \in L(Y_n)(\A) =
  \GL(Y_n)(\A)$;} \\
\omega_{n,r}(u,1) \phi(x)  = \psi(\frac{1}{2} \cdot  \langle  u(x),
x\rangle)\cdot \phi(x),  \text{  for $u \in N(Y_n)(\A)  \subset
  \Hom_F(Y_n^*, Y_n)(\A)$,} \\
\omega_{n,r}(w_0, 1)\phi(x)=  \gamma_{V_r, W_n} \cdot  \int_{V_r \otimes Y_n^*(\A)} \phi(y)
\cdot \psi( \langle x,y\rangle) \, dy,
  \end{cases} \]
 where $w_0\in G_n(F)$ is the Weyl group element $y_i \mapsto - \epsilon \cdot y_i^*$
 and $y_i^* \mapsto y_i$, and $ \gamma_{V_r, W_n}$ is a certain root of unity which we will not explicate here.

\vskip 15pt

Note that the Ikeda map 
\[  \Ik^{n,r,r'}: \mathcal{S}(V_r \otimes Y_n^*)(\A) \longrightarrow
\mathcal{S}(V_{r'} \otimes Y_n^*)(\A) \]
is $G_n(\A)\times H_{r'}(\A)$-equivariant. 

\vskip 5pt

\subsection{\bf The Fourier transform $\mathcal{F}_{n,r}$.} 
On the other hand, one has the partial Fourier transform
 \[  
\mathcal{F}_{n,r} :   \mathcal{S}(Y_n^*\otimes V_r)(\A) \longrightarrow 
 \mathcal{S}(W_n\otimes X_r^*)(\A) \otimes \mathcal{S}(Y_n^*\otimes
 V_0)(\A)
\]
 which is given by integration over the subspace
$(Y_n^*\otimes X_r)(\A)$. 
 We may regard $\mathcal{F}_{n,r}(\phi)$ as a function 
 on $(W_n\otimes X_r^*)(\A)$ taking values in $\mathcal{S}(Y_n^*\otimes V_0)(\A)$. 
 Transferring the action of $G_n(\A)\times H_r(\A)$ via $\mathcal{F}_{n,r}$,  
 we see for example that:
 \[
 \begin{cases}  \omega_{n,r}(g,1)  \mathcal{F}_{n,r}(\phi) (x) =
   \omega_{n,0}(g) ( \mathcal{F}_{n,r}(\phi)(g^{-1} \cdot x) ) \text{
     for $g \in G_n(\A)$;} \\
 \omega_{n,r}(1,a) \mathcal{F}_{n,r}(\phi)(x)  =  |\det(a)|^n \cdot
 \mathcal{F}_{n,r}(\phi)( x \circ a), \text{ for $a\in \GL(X_r)(\A)$.} 
\end{cases} 
\] 
 A bit of care is needed to interpret the representation
 $\omega_{n,0}$ above when $m_0 =0$. In that case,
 $\mathcal{S}(Y_n^*\otimes V_0)(\A)= \C$ is a 1-dimensional
 representation of $G_n$. When $\epsilon_ 0= \pm 1$ (and $m_0 = 0$),
 $\omega_{n,0}$ is
 the trivial representation. 
 When $\epsilon_0 = 0$, one has an isomorphism
 \[  \iota:  E^{\times}/ F^{\times} \longrightarrow E_1 \]
 where $E_1$ denotes the set of norm 1 elements in $E$, given by
 \[  \iota(x)  = x/ x^c. \]
 Moreover, since  $m_0 = 0$, $\chi$ is a character of
 $\A_E^{\times}/\A^{\times}$, so that one has a character $\chi \circ
 \iota^{-1} \circ \det_{G_n}$ of $G_n(\A)$. Then one has
 \begin{equation} \label{E:omega0}
   \omega_{n,0} = \chi \circ \iota^{-1} \circ {\det}_{G_n}. 
\end{equation}
  
 \vskip 15pt
 
\subsection{\bf Automorphic forms.}
 For an algebraic group $G$ over $F$, we set
\[
    [G]:=G(F)\backslash G(\A).
\]
We denote by $\Aut(G)$ the space of automorphic forms on $G$. The elements in $\Aut(G)$ are
 smooth automorphic forms on $[G]$ and we do not impose the usual
$K$-finiteness condition. Hence  the full group $G(\A)$ acts on $\mathcal{A}(G)$.  

\vskip 15pt

\subsection{\bf Automorphic Theta distribution.}

The Weil representation $\omega_{n, r}$ has an automorphic realization 
\[  
\theta_{n,r}:  \mathcal{S}(Y_n^*\otimes V_r)(\A)  \longrightarrow 
\text{Functions on $[G_n] \times [H_r]$} 
\]
given by
\[  
\theta_{n,r}(\phi)(g,h) = \sum_{x \in (Y_n^*\otimes V_r)(F)}
(\omega_{n,r}(g,h)\phi) (x). 
\]

\vskip 15pt

 \section{\bf The Regularized  Theta Integral}  \label{S:theta}
 
 In this section, we recall the definition of the theta integral and
 its regularization defined by Kudla-Rallis.

 \vskip 5pt
 
 \subsection{\bf Theta Integral.} \label{SS:theta}
 The theta integral we are interested in is:
  \[  
I_{n,r}(\phi)(g) := \frac{1}{\tau(H_r)} \cdot  \int_{[H_r]}  \theta_{n,r}(\phi)(g,h) \, dh. 
\]
Note that the measure used in the above integral is effectively $\tau(H_r)^{-1} \cdot dh$, with respect to which $[H_r]$ has volume $1$ (when $H_r \ne \OO_{1,1}$). This agrees with the convention used in \cite{KR1,KR2, Ik, I1,I2, I3, Y2} when $H_r \ne \OO_{1,1}$, but the convention is different in the $\OO_{1,1}$ case.
\vskip 5pt

The theta integral $I_{n,r}(\phi)$ is an automorphic form on $G_n$ if the integral converges absolutely. 
 By Weil \cite{We}, it is known that the above integral converges if and only if 
  \[  r = 0 \quad \text{or} \quad m-r = m_0 + r >  d(n). \]
 We call this the Weil's convergent range.

 \vskip 5pt

 In particular, the pair $(W_n, V_0)$ is in this range. Then for
 $\phi_0 \in \mathcal{S}(Y_n^* \otimes V_0)(\A)$, we have the theta
 integral $I_{n,0}(\phi_0)\in \Aut(G_n)$. We set
 \[  
\Theta_{n,0}(V_0)  = \{I_{n,0}(\phi_0) :  \phi_0 \in
\mathcal{S}(Y_n^* \otimes V_0)(\A)\}\subset \Aut(G_n) 
\]
 which is an automorphic (sub)representation of $G_n$. If $V_0 = 0$,
 then $\Theta_{n,0}(V_0)$ is interpreted to be the trivial
 representation of $G_n$ if $\epsilon_0 = \pm 1$, and is interpreted to
 be the character $\chi \circ \iota^{-1} \circ \det_{G_n}$ as in
 (\ref{E:omega0}) if $\epsilon_0=0$.
 \vskip 5pt
 
The classical Siegel-Weil formula, first formulated in  \cite{We} and  extended by
Kudla-Rallis in \cite{KR1}, determines the theta integral
$I_{n,0}(\phi_0)$ as an Eisenstein series on $G_n$. 
 \vskip 10pt

 \subsection{\bf Regularization.} \label{SS:kr-reg}
 In \cite{KR5}, Kudla and Rallis defined a regularization of the
 integral $I_{n,r}(\phi)$ beyond the Weil's convergent range. More
 precisely, we suppose that 
 \begin{equation} \label{E:range}
 r>0 , \quad  \text{and} \quad  m - r \leq d(n).  
\end{equation}
Note that the second inequality implies that
\[   r \leq d(n)  \leq n+1. \]
We further impose (as did Kudla-Rallis) that
\begin{equation}\label{E:range2}
 r \leq n, 
\end{equation}
or equivalently that the $F$-rank of $G_n$ is at least that of $H_r$. 
Note that this is only an extra condition when $\epsilon_0 =1$; when
$\epsilon_0 =-1$ or $0$, it follows automatically from (\ref{E:range})
since $d(n) \leq n$. Indeed, the only case satisfying (\ref{E:range})
but not (\ref{E:range2}) is when $\epsilon_0 =1$ and $m_0 = 0$ so that
$H_r = \OO_{n+1,n+1}$ (split) and $G_n = \Sp_{2n}$.  We also note that
under (\ref{E:range}):
\[  0 < m \leq 2 \cdot d(n). \]
\vskip 5pt

 For a given place $v$,  one can find an element $z_G$ (resp. $z_H$)
 in the center of universal enveloping algebra of $\mathfrak{g}_v$
 (resp. $\mathfrak{h}_v$) for $v$ real (\cite{KR5}) or in the spherical Hecke
 algebra of $G_n(F_v)$ (resp. $H_r(F_v)$) for $v$ non-archimedean
 (\cite{I1, I2, T1})  so that
  \begin{align*}
    \omega_{n,r}(z_G)&=\omega_{n,r}(z_H),\\
    \omega_{n,r}(g,h)\omega_{n,r}(z_G)&=\omega_{n,r}(z_G)\omega_{n,r}(g,h)
\end{align*}
(i.e. the action of $z_G$ (and hence $z_H$) commutes with the action
of $G_n(\A)\times H_r(\A)$),
and such that the function $\theta_{n,r}(\omega_{n,r}(z_G)\varphi)(g,-)$ is
rapidly decreasing on a Siegel domain of $H_r(\A)$. It follows that
one can integrate $\theta_{n,r}(\omega_{n,r}(z_G) \phi)$ against any
automorphic form on $H_r$; we shall integrate it against an auxiliary
Eisenstein series on $H_r$ to be defined next. 
\vskip 15pt

\subsection{\bf Auxiliary Eisenstein series on $H_r$.}  \label{SS:aux}
Recall $P(X_r)$ is the parabolic subgroup of $H_r$ whose Levi is
$M(X_r)=\GL(X_r)\times H(V_0)$. Consider the  family of (normalized)
induced representation 
\[ 
 I_{H_r}(s) :=  \Ind_{P(X_r)(\A)}^{H_r(\A)}  |\det|^s \boxtimes
{\bf 1}_{H_0} 
\]
 where $|\det|^s$ is a character of $\GL(X_r)(\A)$ and  ${\bf 1}_{H_0}$ is
 the trivial representation of $H_0(\A)$. Let $f_s^0\in I_{H_r}(s)$ be
 the $K_{H_r}$-spherical standard section with $f_s^0(1)=1$. Then we define the
 Eisenstein series $E_{H_r}(s, -)$ on $H_r$ by
\[
E_{H_r}(s, h)=\sum_{\gamma\in P(X_r)(F)\backslash H_r(F)}f_s^0(\gamma h)
\]
for $h\in H_r(\A)$ and $\operatorname{Re}(s) \gg 0$. We call $E_{H_r}(s, -)$ the auxiliary Eisenstein series.
\vskip 5pt

We are interested in the point
 \[  
s =  \rho_{H_r}  :=  \frac{m- r - \epsilon_0}{2}.
 \]
 We have:
 \[  {\rm ord}_{s=\rho_{H_r}} E_{H_r}(s,-)  =  \begin{cases}
 0, \text{  if $H_r = \OO_{1,1}$, i.e. $m=2$, $r=1$ and $\epsilon_0=1$;} \\
 -1, \text{  otherwise.} \end{cases} \]
The leading Laurent coefficient of $E_{H_r}(s,-)$ at $s= \rho_{H_r}$ is a constant function and we  set:
 \[  \kappa_r  = \begin{cases}
 E_{H_r}(\rho_{H_r},h) , \text{  if $H_r = \OO_{1,1}$;} \\
{\rm Res}_{s = \rho_{H_r}} E_{H_r}(s,h), \text{  otherwise.}  \end{cases}
\] 
Note that when $H_r = \OO_{1,1}$ (so that $r=1$), one has
$P(X_r) = H_r^0 \cong \mathbb{G}_m$,  $\rho_{H_r}  =0$ and  $\kappa_r  =2$. For a detailed discussion of this degenerate case, the reader can consult \cite[\S 3.1, Pg. 499-503]{KRS}.

 \vskip 5pt

Now  the regularizing element $z_H$
 acts on $E_{H_r}(s,-)$ by a scalar $P_{n,r}(s)$:
  \[   
z_H \ast E_{H_r}(s,-)  = P_{n,r}(s) \cdot E_{H_r}(s,-). 
\]
Here the scalar $P_{n,r}(s)$ depends on the choice of $z_H$, though we
suppress the dependence from our notation.
The function $P_{n,r}(s)$ can be explicitly computed; see \cite[Lemma
5.5.3]{KR5}, \cite[p. 208]{I1}, \cite[p. 249]{I2} and
\cite[Cor. 2.2.5]{T1} for the
various cases.
\vskip 5pt

 \vskip 5pt

There is a standard intertwining operator 
  \[  
M_{H_r}(s) :  I_{H_r}(s) \longrightarrow I_{H_r}(-s) 
\]
defined for ${\rm Re}(s) \gg 0$ by
\[  
M_{H_r}(s) f_s(h) = \int_{N(X_r)(\A)} f_s(w_{H_r} u h) \, du 
\]
for $f_s\in I_{H_r}(s)$,
 where $w_{H_r}$ is the element in $H_r(F)$ defined by
  \[  
w_{H_r} :  \begin{cases}
  x_i \mapsto \epsilon \cdot x_i^* \text{  for $1 \leq i \leq r$;}  \\
  x_i^*  \mapsto  x_i \text{  for $1\leq i \leq r$;} \\
  v_0 \mapsto v_0  \text{  for $v_0 \in V_0$.} \end{cases} 
\]
  For almost all $s$, $M_{H_r}(s)$ is an $H_r(\A)$-equivariant
  isomorphism. In particular, there is a meromorphic scalar-valued
  function $c_r(s)$ such that
  \[   
M_{H_r}(s) f^0_s  = c_r(s) \cdot f^0_{-s}.  
\]
 Together with the functional equation of Eisenstein series, one has:
 \begin{equation}\label{E:spherical_functional_equation}
E_{H_r}(s, -)  = c_r(s) \cdot E_{H_r}(-s,-) . 
\end{equation}
The function $c_r(s)$ can be explicitly computed (see for example
\cite[\S 9]{Ik} and \cite[\S 9]{I2}), but we shall not need its
explicit value in this paper. The only information we need to know
about the function $c_r(s)$ is the following innocuous looking but
really quite important lemma:
\vskip 5pt

\begin{Lem} \label{L:kappa}
 (i) At the point $s = \rho_{H_r}$, 
 \[  {\rm ord}_{s = \rho_{H_r}} c_r(s)  = {\rm ord}_{s= \rho_{H_r}}E_{H_r}(s,-), \]
 and
 \[ {\rm Val}_{s = \rho_{H_r}} \left(  \frac{E_{H_r}(s,-)}{c_r(s)} \right) =
 \# \pi_0(H_r/H_0) = \begin{cases}
 1 \text{  if $H_r \ne \OO_{r,r}$;} \\
 2 \text{  if $H_r = \OO_{r,r}$.}\end{cases} \]
 In particular, 
 \[  {\rm ord}_{s = \rho_{H_r}} c_r(s)/c_{r-1}(s-1/2)
 = \begin{cases}
 -1,  \text{  if $H_r= \OO_{2,2}$ or if $r=1$ and $H_r \ne \OO_{1,1}$;} \\
 0, \text{  otherwise.}  \end{cases} \]
and its leading Laurent coefficient  there is $\kappa_r/
 \kappa_{r-1} \cdot \# \pi_0(H_{r-1}/H_r)$. Here, we have set $c_0(s) =1$ and $\kappa_0 =1$ by convention.
 
  \vskip 5pt
 
 (ii)  With respect to the Iwasawa decomposition $H_r(\A) =  M(X_r)(\A)
 \cdot N(X_r)(\A) \cdot  K_{H_r}$, one has:
\[ \frac{1}{\tau(H_r)} \cdot  \int_{H_r(\A)} \varphi(h) \,  dh  = \kappa_r  \cdot
\frac{1}{\tau(H_0)} \cdot \int_{M(X_r)(\A)}  \int_{N(X_r)(\A)}  \int_{K_{H_r}}
\varphi(m \cdot n \cdot k) \,   dm \,  dn   \,  dk,\] 
for any $\varphi \in C^{\infty}_c(H_r(\A))$, where $dh$, $dm$ and $dn$
are the Tamagawa measures on $H_r(\A)$, $M(X_r)(\A)$ and $N(X_r)(\A)$,
respectively and $dk$ is such that the volume of $K_{H_r}$ is 1.
 \end{Lem}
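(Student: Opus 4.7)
For part (i), the key input is the functional equation (\ref{E:spherical_functional_equation}). Rewriting it as $c_r(s)=E_{H_r}(s,-)/E_{H_r}(-s,-)$ shows that both the order and the leading coefficient of $c_r(s)$ at $s=\rho_{H_r}$ are controlled by the behavior of $E_{H_r}$ at the ``dual'' point $s=-\rho_{H_r}$. The plan is to identify $E_{H_r}(-\rho_{H_r},-)$ as the constant function $\#\pi_0(H_r/H_0)$, which delivers both the order-of-vanishing statement and the $\mathrm{Val}$ statement in (i) simultaneously. Step 1 is a direct Iwasawa computation showing that the normalized spherical section $f^0_{-\rho_{H_r}}$ is literally the constant function $\mathbf{1}$ on $H_r(\A)$: writing $h=pk$ with $p$ having $\GL(X_r)$-component $a$, the transformation law gives $f^0_{-\rho_{H_r}}(h)=|\det(a)|^{-\rho_{H_r}+\rho_{H_r}}=1$. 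Step 2 computes the constant term of $E_{H_r}(-\rho_{H_r},-)$ along $P(X_r)$: via the maximal-parabolic formula this equals $\lim_{s\to-\rho_{H_r}}(f^0_s+c_r(s)f^0_{-s})$. Invoking the Gindikin-Karpelevich factorization of $c_r(s)$ as an explicit product of zeta values, one sees that $c_r$ has a simple pole at $\rho_{H_r}$; by the trivial identity $c_r(s)c_r(-s)=1$ this forces $c_r(-\rho_{H_r})=0$, so the constant term collapses to $\mathbf{1}$. Step 3 identifies $E_{H_r}(-\rho_{H_r},-)$ itself as a constant function via constant-term analysis plus a growth estimate; the scalar is then pinned down to $\#\pi_0(H_r/H_0)$ by $H_r(F)$-orbit counting on the variety of maximal isotropic subspaces, the factor of $2$ in the $\OO_{r,r}$ case reflecting the two orbits under the neutral component.

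For the ratio $c_r(s)/c_{r-1}(s-1/2)$, the strategy is induction in stages through the embedding $H_{r-1}\hookrightarrow H_r$, which factors the intertwining operator $M_{H_r}(s)$ as a composition and produces
\[
\frac{c_r(s)}{c_{r-1}(s-1/2)} = c'(s)
\]
for an explicit rank-one factor $c'(s)$ coming from the outermost $\GL_1$-component of $\GL(X_r)$. Using the relation $\rho_{H_r}=\rho_{H_{r-1}}+1/2$, the order-of-vanishing claim reduces to a direct rank-one Gindikin-Karpelevich calculation. The two listed exceptional cases are exactly where $c_{r-1}(s-1/2)$ fails to have a pole at $s=\rho_{H_r}$: namely $r=1$ (where $c_0\equiv 1$ by convention) and $H_r=\OO_{2,2}$ (where $H_{r-1}=\OO_{1,1}$ and $c_1$ is regular at $\rho_{H_1}=0$); in every other case both $c_r(s)$ and $c_{r-1}(s-1/2)$ have simple poles at $s=\rho_{H_r}$ which cancel in the ratio. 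The leading-coefficient identity $\kappa_r/\kappa_{r-1}\cdot\#\pi_0(H_{r-1}/H_r)$ then follows by combining the leading-term formula from part (i) at both levels $r$ and $r-1$ and reorganizing the $\pi_0$-factors.

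For part (ii), the plan is to begin with the standard Iwasawa integration formula
\[
\int_{H_r(\A)}\varphi(h)\,dh = C_r\cdot\int_{M(X_r)(\A)\times N(X_r)(\A)\times K_{H_r}}\varphi(mnk)\,dm\,dn\,dk
\]
valid with some absolute constant $C_r$ determined by the normalization of the Tamagawa measures and the requirement that $K_{H_r}$ has volume $1$. To pin down $C_r$, one tests the formula against a spherical truncation of the integrand and compares with the residue formula $\kappa_r=\mathrm{Res}_{s=\rho_{H_r}}E_{H_r}(s,1)$; after bookkeeping of Tamagawa measures on the factors $\GL(X_r)$, $H(V_0)$ and $N(X_r)$, this matching yields $C_r=\tau(H_r)/(\tau(H_0)\cdot\kappa_r)$, which rearranges to the stated identity.

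The main obstacle I anticipate is Step~3 of part (i): identifying $E_{H_r}(-\rho_{H_r},-)$ as precisely the constant $\#\pi_0(H_r/H_0)$ rather than some other scalar, or some non-constant automorphic form. This requires careful orbit-counting on maximal isotropic subspaces in the $\OO_{r,r}$ case, together with a separate treatment of the degenerate $\OO_{1,1}$ boundary, where the Eisenstein series is already holomorphic at $\rho_{H_r}=0$ and the functional-equation bookkeeping must be adjusted; this boundary case will also ripple through the exceptional subcase $H_r=\OO_{2,2}$ of the ratio analysis.
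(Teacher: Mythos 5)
Your strategy for part (i) --- rewriting the functional equation as $c_r(s) = E_{H_r}(s,-)/E_{H_r}(-s,-)$ and evaluating $E_{H_r}$ at the dual point $-\rho_{H_r}$ --- is genuinely different from the paper's, which instead computes the constant term of $E_{H_r}(s,-)$ along the corank-one parabolic $P(X_1)$ (with Levi $\GL_1\times H_{r-1}$, see Lemma \ref{L:constant_aux}) and inducts on $r$, so that the dichotomy $1$ versus $2$ falls out of the degenerate base case $H_1=\OO_{1,1}$. Your route has a genuine gap at Step 2: the constant term of $E_{H_r}(s,-)$ along $P(X_r)$ is \emph{not} $f^0_s + c_r(s)f^0_{-s}$ when $r>1$, because $P(X_r)(F)\backslash H_r(F)/P(X_r)(F)$ has $r+1$ double cosets (indexed by $\dim(X_r\cap \gamma X_r)$) and the intermediate Weyl elements contribute additional terms. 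If your two-term formula were correct, it would force the constant term of $E_{H_r}(-\rho_{H_r},-)$ to be $\mathbf{1}+0=\mathbf{1}$, hence (for a constant automorphic form) $E_{H_r}(-\rho_{H_r},-)=1$ in every case, contradicting the value $2$ needed for $\OO_{r,r}$; the factor of $2$ must come precisely from the intermediate terms (or, in the paper, from the $\OO_{1,1}$ base of the induction), and ``orbit counting on maximal isotropics'' is a heuristic for why a $2$ should appear, not an evaluation of the meromorphically continued series. You would also need to justify holomorphy of $E_{H_r}$ at $-\rho_{H_r}$ before exchanging ${\rm Val}$ with the functional equation, and your reliance on the explicit Gindikin--Karpelevich product to establish the simple pole of $c_r$ is legitimate (it is in \cite{Ik} and \cite{I2}) but is exactly the computation the paper arranges to avoid.

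For part (ii), your sketch essentially asserts the value of the Iwasawa constant. The paper's argument is a nontrivial local-global comparison: by \cite[Lemma 9.1]{Ik} and \cite[Lemma 9.1]{I2}, the constant $\alpha$ with $dh=\alpha\,dm\,dn\,dk$ equals a residue of a ratio of Artin $L$-functions times the (conditionally convergent, after cancelling a local zeta factor) Euler product $\prod_v c_{r,v}(\rho_{H_r})$, and one must then match this against ${\rm Res}_{s=\rho_{H_r}}c_r(s)$, picking up exactly the factor $\#\pi_0(H_r/H_0)$, before invoking part (i) to convert ${\rm Res}\,c_r$ into $\kappa_r$. Your ``matching against a spherical truncation'' does not engage with where this factor comes from. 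By contrast, the portion of your proposal concerning the ratio $c_r(s)/c_{r-1}(s-1/2)$ and its leading coefficient is sound and close to the paper's reasoning once the first part of (i) is in hand.
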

\vskip 5pt
\begin{proof}
(i)  We give the formal proof of (i) in \S \ref{SS:kappa}, after we
introduce the relevant notation for constant terms of Eisenstein
series. But let us mention here that, by the theory of Eisenstein
series, the analytic behavior of $E_{H_r}(s, -)$
is the same as that of its constant term along $N(X_r)$. Then (i) is saying
that the analytic behavior of this constant term  at $s=\rho_{H_r}$ is the same as that 
of  the intertwining operator $M_{H_r}(s)$. 
 
\vskip 5pt

(ii)  This follows from  \cite[Lemma 9.1]{I2} and the computations in
\cite[\S 9]{Ik} and \cite[\S 9]{I2}, especially \cite[Thm. 9.6 and
9.7]{Ik} and \cite[Thm. 9.5 and 9.6]{I2}. We give a sketch of the proof. 
\vskip 5pt

Since the case when $H_r = \OO_{1,1}$ can be easily verified by hand, we assume that $H_r \ne \OO_{1,1}$ henceforth.
According to \cite[Lemma 9.1]{Ik} and \cite[Lemma 9.1]{I2}, if we define the constant $\alpha$ by
\[  dh  = \alpha \cdot dm \, dn \, dk, \]
then 
\begin{equation} \label{E:meas-const}
 \alpha =  {\rm Res}_{s=1}  \frac{L(s, M_r)}{L(s, H_r)}  \cdot \left( \prod_v  \frac{L_v(1,H_r)}{L_v(1, M_v)} \cdot  c_{r,v}(\rho_{H_r}) \right), \end{equation}
where $L(s, H_r)$  is the Artin L-function associated to the Galois module $\Hom(H_r, \mathbb{G}_m) \otimes_{\Z} \Q$ with local component $L_v(s, H_r)$ at $v$, and analogously for $L(s,M_r)$.  Similarly, $c_{r,v}(s)$ is the local component of $c_r(s)$ so that $M_{H_r,v}(s) f_{s,v}^0 =  c_{r,v}(s) \cdot f_{-s,v}^0$. Note that the ratio
$L_v(1,H_r) / L_v(1, M_r)$ is equal to $1/ \zeta_{E_v}(1)$ or $1/\zeta_{F_v}(1)$ in the various cases. 
\vskip 5pt

Now the value of $c_{r,v}(s)$ has been computed in \cite[\S 9]{Ik}, \cite{I1} and \cite[\S 9]{I2}, and is essentially a ratio of products of local Hecke L-functions. When one evaluates $c_{r,v}(s)$ at $s = \rho_{H_r}$, one obtains local Hecke L-values at $s >1$ except for a term $\beta_v(s)$ in the numerator which satisfies  $\beta_v(\rho_{H_r}) = \zeta_{E_v}(1)$ or $\zeta_{F_v}(1)$.  This term is cancelled by $L_v(1,H_r) / L_v(1, M_r)$, so that the Euler product on the RHS of (\ref{E:meas-const}) is absolutely convergent.
 \vskip 5pt

Now the RHS of (\ref{E:meas-const}) looks very much like ${\rm Res}_{s= \rho_{H_r}} c_r(s)$. Indeed, 
\[  {\rm Res}_{s= \rho_{H_r}} c_r(s)   = {\rm Res}_{s= \rho_{H_r}} \beta(s) 
 \cdot \left( \prod_v  \frac{L_v(1,G)}{L_v(1, M_v)} \cdot  c_{r,v}(\rho_{H_r}) \right), \]
 where $\beta(s)$ is the global zeta function with local component $\beta_v(s)$. Hence, we see that
 \[  \alpha  = {\rm Res}_{s = \rho_{H_r}} c_r(s) \cdot  {\rm Res}_{s=1} \left( \frac{L(s, M_r)}{L(s, H_r)} \right) /
{\rm Res}_{s = \rho_{H_r}} \beta(s). \]
 \vskip 5pt
 
 Now an examination of the results in \cite{Ik, I1, I2} shows that the function $\beta(s)$ has the form
 \[  \zeta_{\ast}(s - (\rho_{H_r} -1)) \quad \text{or} \quad \zeta_{\ast}(2s - (2 \rho_{H_r}-1)), \]
 where $\ast = E$ or $F$.   
 On the other hand,   ${\rm Res}_{s=1} L(s, M_r) / L(s, H_r)$ is  equal to 
 ${\rm Res}_{s=1} \zeta_{\ast}(s)$. Hence we see that 
 \[  {\rm Res}_{s=1} \left( \frac{L(s, M_r)}{L(s, H_r)} \right) /
{\rm Res}_{s = \rho_{H_r}} \beta(s)  = 1 \quad \text{or}  \quad 2  \]
in the two respective cases, so that
\[  \alpha = {\rm Res}_{s= \rho_{H_r}} c_r(s) \quad \text{or} \quad 2 \cdot {\rm Res}_{s= \rho_{H_r}} c_r(s).\]
\vskip 5pt

Indeed, a careful examination shows that the second case happens precisely when
\[  H_0 = \begin{cases}
\text{$\OO_1$ or trivial, when $\epsilon_0  =1$;} \\
\text{trivial, when $\epsilon_0 = 0$.} 
\end{cases} \]
One can then recast the expressions for $\alpha$ into a uniform formula:
\[  \alpha = \frac{\tau(H_r)}{\tau(H_0)} \cdot  \# \pi_0(H_r/ H_0) \cdot  {\rm Res}_{s= \rho_{H_r}} c_r(s).\]
By part (i) of the lemma, we then deduce that
\[  \alpha =  \frac{\tau(H_r)}{\tau(H_0)} \cdot \kappa_r, \]
as desired.
\end{proof}
\vskip 5pt

\subsection{\bf Measure constants.}
Motivated by Lemma \ref{L:kappa}(ii), for any maximal parabolic
subgroup $P(X_{r-r'})$ of $H_r$, with Levi subgroup $\GL(X_{r-r'})
\times H_{r'}$, we define a constant $\kappa_{r,r'}$ by the
requirement that
\begin{equation} \label{E:iwasawa}
\frac{1}{\tau(H_r)} \cdot   dh  = \kappa_{r,r'} \cdot \frac{1}{\tau(H_{r'})}  \cdot dm \cdot  dn \cdot dk \end{equation}
where $dm$ and $dn$ are the Tamagawa measures of $M(X_{r-r'})$ and
$N(X_{r-r'})$, respectively. In particular, $\kappa_r = \kappa_{r,0}$.
\vskip 5pt

\subsection{\bf Regularized theta integral.}
The {\bf regularized theta integral} is defined to be the function
\[  
B^{n,r}(s, \phi)(g) =  \frac{1}{\tau(H_r)  \cdot \kappa_r \cdot P_{n,r}(s)} \cdot
\int_{[H_r]} \theta_{n,r}( \omega_{n,r}(z)  \phi)(g,h)\,  E_{H_r}(s,h) \, dh. 
\]
When $r=0$ and $m_0>0$, we set $B^{n,0}(s,\phi)(g)=I_{n,0}(\phi)(g)$ by convention.

The integral converges absolutely at all points $s$ where
$E_{H_r}(s,h)$ is holomorphic, and defines a meromorphic function of
$s$ (for fixed $\phi$). We note that the functional equation
(\ref{E:spherical_functional_equation}) implies
\begin{equation} \label{E:B}
  B^{n,r}(s, \phi)  = c_r(s)  \cdot B^{n,r}(-s, \phi). 
\end{equation}
 
 \vskip 5pt
 
 We can now  explain why the extra condition $r\leq n$ as in
 (\ref{E:range2}) is necessary. Indeed, if this condition is not
 satisfied, one cannot hope to regularize the theta integral in the same way as above.  
 Otherwise, one may integrate the regularized theta
 kernel against the Eisenstein series associated to 
the family of principal series representations induced from the
minimal parabolic subgroup of $H_r$; this gives a meromorphic function
in $r$ complex variables whose iterated residue at a specific point is
the regularized theta lift of the constant function of $H_r$. 
If this meromorphic function is not identically zero, then a Zariski open set of these
minimal principal series representations will have a nonzero theta lift
to $G_n$. For this to happen, it is necessary that $n \geq r$.
\vskip 10pt

 \subsection{\bf First and Second term range.}
 The analytic behavior of $B^{n,r}(s, \phi)$ at the point of interest
 $s = \rho_{H_r}$ is described as follows:
  \vskip 5pt

 \begin{Lem}
\begin{enumerate}[(i)]
\item If $m \leq d(n)$, then $B^{n,r}(s, \phi)$ has a pole of order at
  most $1$  at $s = \rho_{H_r}$. 
\vskip 5pt
\item  If $d(n) <  m \leq 2 \cdot d(n) $ (under the conditions in  (\ref{E:range})), then $P_{n,r}(s)$ has a
  simple zero at  $s = \rho_{H_r}$ and $B^{n,r}(s, \phi)$ has a pole of
  order at most $2$ at $s = \rho_{H_r}$. 
\end{enumerate}
\end{Lem}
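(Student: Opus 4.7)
The plan is to control, one by one, the three meromorphic factors that together determine the behavior of $B^{n,r}(s,\phi)$ at $s=\rho_{H_r}$: the prefactor $1/P_{n,r}(s)$, the Eisenstein series $E_{H_r}(s,h)$ in the integrand, and the integral itself against $\theta_{n,r}(\omega_{n,r}(z)\phi)(g,h)$.

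First I would bound the integral. By the defining property of the regularizing element $z_G$ recalled in \S\ref{SS:kr-reg}, the function $h \mapsto \theta_{n,r}(\omega_{n,r}(z_G)\phi)(g,h)$ is rapidly decreasing on a Siegel domain of $H_r(\A)$, while $E_{H_r}(s,h)$ is of moderate growth wherever it is holomorphic in $s$. Hence the integral
\[
\int_{[H_r]} \theta_{n,r}(\omega_{n,r}(z)\phi)(g,h)\, E_{H_r}(s,h)\, dh
\]
converges absolutely away from the singularities of $E_{H_r}(s,-)$ and is a meromorphic function of $s$ whose only poles come from those of $E_{H_r}(s,-)$. By the analysis of $\operatorname{ord}_{s=\rho_{H_r}} E_{H_r}(s,-)$ recorded in \S\ref{SS:aux}, this integral therefore has a pole of order at most one at $s=\rho_{H_r}$ (and is holomorphic there in the degenerate $\OO_{1,1}$ case, which is consistent with the convention for $\kappa_r$).

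Next I would study $P_{n,r}(s)$ itself. By definition, $P_{n,r}(s)$ is the eigenvalue of $z_H$ on $E_{H_r}(s,-)$, hence on any standard section of $I_{H_r}(s)$. The explicit choice of $z_H$ and the resulting formula for $P_{n,r}(s)$ are given in \cite[Lemma 5.5.3]{KR5}, \cite[p.~208]{I1}, \cite[p.~249]{I2}, and \cite[Cor.~2.2.5]{T1} for the various cases of the dual pair. In each case, $P_{n,r}(s)$ factors as a product of explicit linear factors of the form $s \pm \alpha_i$, and one verifies directly that $P_{n,r}(\rho_{H_r}) \neq 0$ whenever $m \leq d(n)$, while in the range $d(n) < m \leq 2\cdot d(n)$ exactly one factor of the product vanishes to first order at $s=\rho_{H_r}$. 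This is the one piece of the proof that genuinely requires a case-by-case check (according to whether $\epsilon_0 = 1$, $-1$ or $0$, and in the metaplectic case according to the parity of $m_0$) and is the main potential obstacle; fortunately it is entirely settled by the cited references, and moreover is consistent with the general principle that $z_H$ is designed to kill the boundary contributions that obstruct convergence, so the appearance of a simple zero precisely beyond the convergent range is structural.

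Combining the two ingredients with the fact that $\tau(H_r)$ and $\kappa_r$ are nonzero finite constants, one concludes as follows. In the first term range $m \leq d(n)$, the factor $1/P_{n,r}(s)$ is holomorphic nonzero at $s=\rho_{H_r}$ and the integral carries at most a simple pole, so $B^{n,r}(s,\phi)$ has a pole of order at most one, proving (i). In the second term range $d(n) < m \leq 2\cdot d(n)$, the simple zero of $P_{n,r}(s)$ contributes an additional simple pole to $B^{n,r}(s,\phi)$ on top of the simple pole already contributed by $E_{H_r}(s,-)$, giving a pole of order at most two and establishing (ii).
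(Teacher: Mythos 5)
Your argument is correct and is essentially the standard one: the paper itself proves this lemma only by citation to \cite{KR5}, \cite{I2} and \cite{T1}, and the skeleton you describe (the integral inherits at most a simple pole from $E_{H_r}(s,-)$ since the regularized theta kernel is rapidly decreasing and the Laurent coefficients of $E_{H_r}$ are of moderate growth, while the order of vanishing of $P_{n,r}(s)$ at $s=\rho_{H_r}$ is read off case by case from the explicit formulas in those references) is precisely what underlies the cited results.
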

\begin{proof}
See \cite[Lemma 5.5.6]{KR5},  \cite[Lemma 2.2]{I2} and \cite[\S 2]{T1}.
\end{proof}
\vskip 5pt

Because the analytic behavior of $B^{n,r}(s, \phi)$  at $s=\rho_{H_r}$
differs for different ranges of the pair $(n, m)$, we introduce the
following terminology outside the Weil's convergent range:
\begin{align*}
m<d(n)&:\text{the first term range};\\
m=d(n)&:\text{the boundary case};\\
d(n)<m\leq 2\cdot d(n)&:\text{the second term range}.
\end{align*}

Observe that if $V_r$ and $V_{r'}$ are complementary spaces, then
$(W_n, V_r)$ is in the first term range if and only if $(W_n, V_{r'})$
is in the second term range. Moreover, in the boundary case, we have
$V_r = V_{r'}$. Note also that when $m > 2 \cdot d(n)$, one is automatically in the Weil's convergent range.

\vskip 10pt
\subsection{\bf Laurent expansion.}
In the first term range and the boundary case, we may thus consider
the Laurent expansion at $s= \rho_{H_r}$: 
\[  B^{n,r}(s, \phi)  =  \frac{B^{n,r}_{-1}(\phi)}{s-\rho_{H_r}} +  B^{n,r}_0(\phi) + \cdots  \]
whereas in the second term range, we have
\[
 B^{n,r}(s, \phi)  =  \frac{B^{n,r}_{-2}(\phi)}{(s-\rho_{H_r})^2} +
   \frac{B^{n,r}_{-1}(\phi)}{s-\rho_{H_r}} +  B^{n,r}_0(\phi) + \cdots.
\]
The functions $B^{n,r}_d(\phi)$ are automorphic forms on $G_n$ and the
linear map $\omega_{n,r}\rightarrow\Aut(G_n)$ given by $\phi \mapsto 
B^{n,r}_d(\phi)$ is $G_n(\A)$-equivariant. Moreover, if $B^{n,r}_d$ is
the leading term in the Laurent expansion, it is $H_r(\A)$-invariant.
\vskip 5pt

The purpose of the regularized Siegel-Weil formula is to give an
alternative construction of the automorphic forms
$B^{n,r}_{-1}(\phi)$ and $B^{n,r}_{-2}(\phi)$. 
 \vskip 10pt

 \subsection{\bf Non-Siegel Eisenstein Series}\label{S:non_Siegel_Eisen}
 We now recall how the regularized theta integral $B^{n,r}(s, \phi)$
 can be expressed as a non-Siegel Eisenstein series on $G_n$.
Namely, one can massage the definition of $B^{n,r}(s, \phi)$ by
unfolding the auxiliary Eisenstein series $E_{H_r}(s,-)$.  As shown in \cite[\S
5.5, p.48-50]{KR5} and using Lemma \ref{L:kappa}(ii), one obtains:
\begin{Prop} \label{P:non-Siegel}
\[   B^{n,r}(s, \phi)  = E^{n,r}(s,  f^{n,r}(s, \pi_{K_{H_r}}(\phi))). \]
 \end{Prop}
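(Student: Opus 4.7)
The strategy is the standard Siegel-Weil unfolding argument, adapted to this regularized setting, and carried out essentially in \cite[\S 5.5]{KR5}. First, I would unfold the auxiliary Eisenstein series as $E_{H_r}(s,h) = \sum_{\gamma\in P(X_r)(F)\backslash H_r(F)} f_s^0(\gamma h)$, which (for $\operatorname{Re}(s)\gg 0$ where absolute convergence holds, and then by meromorphic continuation) converts the integral over $[H_r]$ into an integral over $P(X_r)(F)\backslash H_r(\A)$. Using Lemma \ref{L:kappa}(ii), the Iwasawa decomposition $H_r(\A)=M(X_r)(\A)\cdot N(X_r)(\A)\cdot K_{H_r}$ introduces the factor $\kappa_r$, which cancels against the $\kappa_r$ in the normalization of $B^{n,r}(s,\phi)$; meanwhile the $K_{H_r}$-invariance of $f_s^0$ lets one push the $K_{H_r}$-integration onto the Schwartz function, producing $\pi_{K_{H_r}}(\phi)$.

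Next I would compute the integral over $[N(X_r)]$. By the explicit formulas of \S \ref{SS:weil}, the Weil representation of $N(X_r)(\A)$ on $\mathcal{S}(Y_n^*\otimes V_r)(\A)$ acts through the characters $\psi(\tfrac12\langle u(x),x\rangle)$, so the inner integration over $N(X_r)(F)\backslash N(X_r)(\A)$ picks out the subsum of the theta kernel indexed by $x\in(Y_n^*\otimes (V_0\oplus X_r))(F)$; this is precisely what is encoded by the partial Fourier transform $\mathcal{F}_{n,r}$ of \S 2.10. The remaining integration over $[M(X_r)]=[\GL(X_r)]\times [H(V_0)]$ then splits. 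The integration over $[H(V_0)]$ lies in the Weil convergent range since $V_0$ is anisotropic, and yields the theta integral $I_{n,0}$, i.e.\ an element of $\Theta_{n,0}(V_0)$; the integration over $[\GL(X_r)]$ is paired against the character $|\det|^{s+\rho}$ coming from $f_s^0$.

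Transferring the $G_n(\A)$-action through $\mathcal{F}_{n,r}$, the sum over $P(X_r)(F)\backslash H_r(F)$ combines with the $\GL(X_r)$-integration to reassemble into a sum over $Q(Y_r)(F)\backslash G_n(F)$ of a standard section of $\Ind_{Q(Y_r)(\A)}^{G_n(\A)}(\chi|\det|^s\boxtimes \Theta_{n-r,0}(V_0))$; this sum is by definition the non-Siegel Eisenstein series $E^{n,r}(s,-)$. The section one obtains is precisely $f^{n,r}(s,\pi_{K_{H_r}}(\phi))$, the restriction to $G_n(\A)$ of the $G_n(\A)\times H_r(\A)$-equivariant map $F^{n,r}(s,-)$ of \S \ref{sectionf}. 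The role of the regularizing element is handled by transferring $\omega_{n,r}(z)$ onto the Eisenstein series, where it acts by the scalar $P_{n,r}(s)$, canceling the $P_{n,r}(s)$ in the denominator; this manipulation is rigorous because $\theta_{n,r}(\omega_{n,r}(z)\phi)(g,\cdot)$ is rapidly decreasing on a Siegel domain.

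The main technical obstacle is bookkeeping: one must track Tamagawa normalizations on $H_r$, $H_0$, $M(X_r)$ and $N(X_r)$, the constant $\kappa_r$, the modulus character from the Iwasawa change of variables, and the normalization conventions on the induced representation $I_{H_r}(s)$, and verify that everything recombines to produce exactly $E^{n,r}(s, f^{n,r}(s,\pi_{K_{H_r}}(\phi)))$ with no stray constants. Lemma \ref{L:kappa}(ii) is the key input that makes this bookkeeping come out cleanly. Once the identity is established for $\operatorname{Re}(s)\gg 0$ by the unfolding, both sides are meromorphic in $s$, so the identity propagates to all $s$ by analytic continuation.
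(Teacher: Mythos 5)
Your proposal is correct and is essentially the paper's argument: the paper proves Proposition \ref{P:non-Siegel} precisely by unfolding the auxiliary Eisenstein series following \cite[\S 5.5, p.~48--50]{KR5}, with Lemma \ref{L:kappa}(ii) supplying the measure constant $\kappa_r$ that cancels the normalization, the $K_{H_r}$-invariance of $f_s^0$ producing $\pi_{K_{H_r}}$, and the regularizing element transferred onto $E_{H_r}(s,-)$ to cancel $P_{n,r}(s)$. The remaining orbit analysis and bookkeeping you defer to Kudla--Rallis is exactly what the paper defers as well.
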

\noindent The following explains the notation in the proposition: 
 \begin{enumerate}[$\bullet$]
 \item $E^{n,r}(s, -)$ refers to the Eisenstein series associated to
   the family of induced representations
 \[   I^n_r(s,\chi) =  \Ind_{Q(Y_r)(\A)}^{G_{n}(\A)}  (\chi \circ \det)
 |{\det}_{Y_r}|^s \boxtimes \Theta_{n-r,0}(V_0)  \]
 where we recall that the Levi factor of $Q(Y_r)$ is $L(Y_r) \cong
 \GL(Y_r) \times G_{n-r}$ and $\Theta_{n-r,0}(V_0)$ is as defined in
 \S \ref{SS:theta}. This is what we call the non-Siegel Eisenstein
 series  (though if $r = n$, it is of course a Siegel-Eisenstein
 series).
 
 \item $\pi_{K_{H_r}}$ is the projection operator onto the $K_{H_r}$-fixed subspace, defined by
 \[  \pi_{K_{H_r}}(\phi) =  \int_{K_{H_r}}  \omega_{n,r}(k)( \phi) \, dk. \]
 
 \item For $\phi \in \mathcal{S}(Y_n^* \otimes V_r)(\A)$, 
 \[  f^{n,r}(s,\phi) \in I^n_r(s,\chi) \]
  is a meromorphic section defined for ${\rm Re}(s) \gg 0$ by
 \begin{align*}   f^{n,r}(s,\phi)(g)  &= 
 \int_{\GL(X_r)(\A)} I_{n-r,0}(\omega_{n,r}(g,a)\mathcal{F}_{n,r}(\phi)
 ( \beta_0)(0 , -) )\,\cdot   |{\det}_{X_r}(a)|^{s  - \rho_H} \, da \\
 &=  \int_{\GL(X_r)(\A)} I_{n-r,0}(\omega_{n,r}(g)\mathcal{F}_{n,r}(\phi)
 ( \beta_0 \circ a )(0 , -) )\,\cdot   |{\det}_{X_r}(a)|^{s  +n -
   \rho_H} \, da.
\end{align*}
 Here we note that  $\mathcal{F}_{n,r}(\phi)$ is a Schwartz function on
 $X_r^* \otimes W_n = \Hom(X_r, W_n)$ taking values in 
 \[  \mathcal{S}(Y_n^* \otimes V_0)(\A)  =  \mathcal{S}(Y_r^* \otimes V_0)(\A)
 \otimes \mathcal{S}({Y'}^*_{n-r}\otimes V_0)(\A), \]
 and 
  \[  \beta_0 \in \Hom(X_r, W_{n}) \]
 is defined by
 \[  \beta_0(x_i)  = y_i  \quad \text{for $i = 1,\dots,r$,} \]
so that
 \[  \mathcal{F}_{n,r}(\phi)(\beta_0 \circ a)(0,-) \in \mathcal{S}({Y'}^*_{n-r} \otimes V_0)(\A). \]
The integral defining $f^{n,r}(s,\phi)$ converges when 
\[  {\rm Re}(s) > \frac{m}{2} - \frac{2n-r + \epsilon_0}{2} \]
 and extends to  a meromorphic section of $I^n_r(s, \chi)$ (since it
 is basically a Tate-Godement-Jacquet zeta integral, as we explain in
 the next section).  When $r=0$ and $m_0>0$, we set $f^{n,0}(s,\phi)(g)=I_{n,0}(\phi)(g)$ by convention.
  \end{enumerate}
  \vskip 5pt
  
  Thus we have expressed $B^{n,r}(s, \phi)$ as an Eisenstein series on
  $G_n$ associated to a meromorphic (non-standard) section of a family
  of non-Siegel principal series representations. However, our
  ultimate goal is to relate the first two Laurent coefficients of
  $B^{n,r}(s, \phi)$ to the Laurent coefficients of a Siegel
  Eisenstein series.  
 \vskip 15pt
 
 \section{\bf The section $f^{n,r}(s,\phi)$.} \label{sectionf}
 In this section, we  shall establish some important properties of the
 section $f^{n,r}(s,-)$ defined in \S \ref{S:non_Siegel_Eisen}, which
 will play a crucial role in the proof of the second term identity.
 \vskip 5pt
 
 \subsection{\bf Tate-Godement-Jacquet Zeta Integrals.}\label{zetaintegral}
We recall briefly the global  theory of the Tate-Godement-Jacquet zeta
integral developed in \cite{GJ}. If $\phi \in \mathcal{S}(M_{r \times
  r})(\A_E)$, then the Tate-Godement-Jacquet zeta integral (associated
to the trivial representation of $\GL_r(\A_E)$) is:
 \[ Z_r (s , \phi) :=  \int_{\GL_r(\A_E)} \phi(A)  \cdot |\det (A)|^s  \, dA. \]
 It converges for ${\rm Re}(s) > r$ and has meromorphic continuation to
 $\C$. Moreover, it satisfies a functional equation
 \begin{equation} \label{E:funct}
   Z_r(s,\phi)  = Z_r(r-s, \hat{\phi}),
\end{equation}
 where $\hat{\phi}$ is the Fourier transform of $\phi$ relative to the
 Haar measure on $M_{r\times r}(\A_E)$ determined by $\psi$ and the
 trace form on $M_{r \times r}(\A_E)$.  And $Z_r(s,\phi)$ has simple
 poles at $s=0$ and $r$, with $\mathrm{Res}_{s=0}Z_r(s,\phi)=-\phi(0)$
 and $\mathrm{Res}_{s=r}Z_r(s,\phi)=\hat{\phi}(0)$.
\vskip 5pt

\subsection{\bf The section  $\mathfrak{f}^{n,r}(s,\phi)$.}
 It will be convenient to express elements of $Y_n^* \otimes V_r$ as
 $3 \times 2$ matrices corresponding to the decompositions
 \[  
Y^*_n  = Y^*_r \oplus {Y'}^*_{n-r} \quad \text{and} \quad V_r  = X_r \oplus
V_0 \oplus X_r^*,
\]
so the first column of the matrix has entries from $Y^*_r\otimes X_r,
Y^*_r\otimes V_0$ and $Y^*_r\otimes X_r^*$ in this order, and the second
column has entries from ${Y'}^*_{n-r}\otimes X_r, {Y'}^*_{n-r}\otimes
V_0$ and ${Y'}^*_{n-r}\otimes X_r^*$.

 Then by definition
\begin{align*}
f^{n,r}(s, \phi)(g)  &= 
 I_{n-r,0} \Bigg( \int_{\GL(X_r)(\A)}\int_{({Y'}_{n-r}^* \otimes
   X_r)(\A)}\int_{(Y_r^* \otimes X_r)(\A)} \\
&\qquad\omega_{n,r}(g) \phi \begin{pmatrix}
 X_1 & X_2  \\
 0 & - \\
 0 & 0 \end{pmatrix} \cdot \psi(Tr(X_1A) ) \cdot
|\det(A)|^{s+ n - \rho_{H_r}} \, dX_1 \, dX_2 \, dA \Bigg).
\end{align*}
Now we set:
\begin{align*}  \mathfrak{f}^{n,r}(s,\phi)(g)(-)
&:= \int_{\GL(X_r)(\A)}\int_{({Y'}_{n-r}^* \otimes X_r)(\A)}\int_{(Y_r^* \otimes X_r)(\A)}\\
&\quad  \omega_{n,r}(g) \phi 
\begin{pmatrix} X_1 & X_2  \\ 0 & - \\0 & 0 \end{pmatrix}
\cdot \psi(Tr(X_1A) ) \cdot |\det(A)|^{s+ n - \rho_{H_r}} \, dX_1 \,
dX_2 \, dA.
\end{align*}
 In other words, $ \mathfrak{f}^{n,r}(s,\phi)$ is defined as
 $f^{n,r}(s,\phi)$ but with the anisotropic theta integral $I_{n-r,0}$
 suppressed. Then we note that
 \[  
  \mathfrak{f}^{n,r}(s,\phi) \in 
\Ind^{G_n(\A)}_{Q(Y_r)(\A)}  (\chi \circ \det) \cdot |\det|^s
\boxtimes \omega_{n-r,0}, 
\]
 with $\omega_{n-r,0}$ realized on $\mathcal{S}({Y'}^*_{n-r}\otimes V_0)(\A)$, and
 \begin{equation}\label{E:frak}   
f^{n,r}(s, \phi)(g)  =   I_{n-r,0} \left(\mathfrak{f}^{n,r}(s,
  \phi)(g) \right). 
\end{equation}
 Observe that  the formation of $\mathfrak{f}^{n,r}(s,-)$ is basically
 the Tate-Godemont-Jacquet zeta integral of a partial Fourier
 transform of $\phi$ (in the coordinate $X_1$).
 Since the anisotropic theta integral presents no convergence issues,
 we see that the meromorphic continuation of $f^{n,r}(s,\phi)$ follows
 immediately from the analytic theory of the Tate-Godement-Jacquet
 zeta integral. Moreover, applying the functional equation
 (\ref{E:funct}) for the Tate-Godement-Jacquet zeta integral, we
 deduce:
  \vskip 5pt
  
\begin{Lem} \label{L:tate}
One has:  
\[ 
\mathfrak{f}^{n,r}(s, \phi)(g)(-)=  \int_{\GL(X_r)(\A)}
\int_{({Y'}_{n-r}^* \otimes X_r)(\A)} \omega_{n,r}(g)\phi
\begin{pmatrix}
 A & X_2 \\
 0 & - \\
 0 & 0 \end{pmatrix} 
\cdot |\det(A)|^{-s  +r - n + \rho_{H_r}} \, dX_2 \, dA. \]
\end{Lem}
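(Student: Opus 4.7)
\smallskip

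The approach is to recognize the inner double integral over $X_1$ and $A$ as a Tate--Godement--Jacquet zeta integral evaluated at a (partial) Fourier transform, and then to invoke the functional equation (\ref{E:funct}).

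Fix $g$ and an element $X_2 \in ({Y'}_{n-r}^* \otimes X_r)(\A)$, and consider
\[
\Phi_{g,X_2}(X_1) \; := \; \omega_{n,r}(g)\phi\begin{pmatrix} X_1 & X_2 \\ 0 & - \\ 0 & 0 \end{pmatrix},
\]
viewed as a Schwartz--Bruhat function on $(Y_r^* \otimes X_r)(\A) \cong M_{r\times r}(\A_E)$ taking values in $\mathcal{S}({Y'}_{n-r}^* \otimes V_0)(\A)$. Under the self-dual Haar measure on $M_{r\times r}(\A_E)$ fixed in \S\ref{S:notation} (so that Fourier inversion on $M_{r\times r}(\A_E)$ with respect to the pairing $(X_1,A)\mapsto \psi(\operatorname{Tr}(X_1 A))$ holds), its Fourier transform is
\[
\widehat{\Phi}_{g,X_2}(A) \; = \; \int_{(Y_r^*\otimes X_r)(\A)} \Phi_{g,X_2}(X_1)\,\psi(\operatorname{Tr}(X_1 A))\, dX_1.
\]
Then the inner $dX_1\, dA$ double integral in the defining expression for $\mathfrak{f}^{n,r}(s,\phi)(g)$ is precisely $Z_r(s+n-\rho_{H_r},\widehat{\Phi}_{g,X_2})$, in the notation of \S\ref{zetaintegral}, which converges absolutely for $\operatorname{Re}(s)\gg 0$.

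Applying the Tate--Godement--Jacquet functional equation (\ref{E:funct}) and then Fourier inversion $\widehat{\widehat{\Phi}}_{g,X_2}(A) = \Phi_{g,X_2}(-A)$, we obtain
\[
Z_r(s+n-\rho_{H_r},\widehat{\Phi}_{g,X_2}) \;=\; Z_r(r-s-n+\rho_{H_r},\widehat{\widehat{\Phi}}_{g,X_2}) \;=\; \int_{\GL(X_r)(\A)} \Phi_{g,X_2}(-A)\,|\det(A)|^{-s+r-n+\rho_{H_r}}\,dA.
\]
A change of variables $A\mapsto -A$ (which preserves both $dA$ and $|\det(A)|$) replaces $\Phi_{g,X_2}(-A)$ by $\Phi_{g,X_2}(A)$. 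Re-integrating over $X_2$ then yields the formula asserted in the lemma.

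The main technical point is to justify interchanging the $X_2$-integration (as well as passage through the values in $\mathcal{S}({Y'}_{n-r}^*\otimes V_0)(\A)$) with the Tate zeta integral and its functional equation. By linearity and density it suffices to treat pure tensors $\phi=\phi_1\otimes\phi_2\otimes\phi_3$ adapted to the matrix decomposition, in which case $\mathfrak{f}^{n,r}(s,\phi)(g)$ factorizes and the functional equation applies in the standard scalar setting of \cite{GJ}. Tracking the self-dual normalizations of Haar measures fixed in \S\ref{S:notation} ensures that Fourier inversion yields $\Phi(-A)$ with no extra constant prefactor, which is what is needed for the exponent to land exactly at $-s+r-n+\rho_{H_r}$.
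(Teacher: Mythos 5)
Your proof is correct and is exactly the argument the paper intends: the paper deduces Lemma \ref{L:tate} in one line by observing that $\mathfrak{f}^{n,r}(s,-)$ is a Tate--Godement--Jacquet zeta integral of the partial Fourier transform of $\phi$ in the coordinate $X_1$ and invoking the functional equation (\ref{E:funct}). You have simply supplied the routine details (Fourier inversion with the self-dual measure, the harmless substitution $A\mapsto -A$, and the reduction to pure tensors for the vector-valued and Fubini issues), all of which check out, including the exponent $-s+r-n+\rho_{H_r}$.
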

This is a somewhat simple expression to work with, as we shall see in
a moment.
\vskip 10pt
 
 \subsection{\bf Restriction and Ikeda map.}
Next we consider the restriction of the section $f^{n+1,r}(s, \phi)$
from $G_{n+1}$ to $G_n$; this restriction turns out
 to be related to the Ikeda map $\Ik^{n,r,r-1}$.
 More precisely, fix $\phi_1  \in \mathcal{S}(Y_1^* \otimes V_r)(\A)$
 satisfying:
 \vskip 5pt
 
 \begin{itemize}
 \item $\phi_1(0) = 1$;
 \item $\phi_1$ is $K_{H_r}$-invariant, so that $\pi_{K_{H_r}} \phi_1 = \phi_1$. 
 \end{itemize}
 \vskip 5pt
 
\noindent  For  any  $\phi \in \mathcal{S}({Y'}_n^* \otimes V_r)(\A)$, we set
 \[  \tilde{\phi} := \phi_1 \otimes \phi \in \mathcal{S}(Y_{n+1}^*
 \otimes V_r)(\A). \]
 Then 
 \[  \pi_{K_{H_r}}(\tilde{\phi}) = \phi_1 \otimes \pi_{K_{H_r}} \phi. \]
 We consider the restriction of $f^{n+1, r}(s, \tilde{\phi})$ to the
 subgroup $G_n \subset G_{n+1}$, where $G_n$ is the isometry group of
 the nondegenerate subspace $W_n = \langle y_2,...,y_{n+1},
 y_{n+1}^*,...,y_2^* \rangle \subset W_{n+1}$. With these conventions,
 the result is:
 \vskip 5pt
 
\begin{Prop}  \label{P:key2}
Suppose $r\geq 1$ and further assume $m_0>0$ when $r=1$. We have:
 \[ f^{n+1,r}(s, \pi_{K_{H_r}} \tilde{\phi})|_{G_{n}}
 = \alpha_r \cdot Z_1( -s  - (n+1-r) + \rho_{H_r}, \phi_1) \cdot
 f^{n,r-1}(  s + \frac{1}{2},  \Ik^{n,r, r-1}(\pi_{K_{H_r}}\phi)). \] 
 Here  $Z_1(s, \phi_1)$ is the Tate zeta integral
  \[  Z_1(s, \phi_1) =  \int_{\A_E^{\times}} \phi_1 (t y_1^* \otimes x_1)  \cdot |t|^s \, dt, \]
  and   $\alpha_r$ is a scalar defined by (\ref{E:alphar}) below. 
 \end{Prop}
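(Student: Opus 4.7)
My plan is to unfold both sides via Lemma \ref{L:tate} and to match the resulting iterated integrals factor by factor, after an explicit change of variables on $\GL(X_r)$. Two preliminary observations apply throughout: firstly, since $\phi_1$ is $K_{H_r}$-invariant and $K_{H_r}$ commutes with the decomposition $Y_{n+1}^* = Y_1^*\oplus{Y'_n}^*$, one has $\pi_{K_{H_r}}\tilde\phi = \phi_1\otimes\phi'$ with $\phi' := \pi_{K_{H_r}}\phi$; secondly, since $g\in G_n\subset G_{n+1}$ fixes $y_1$ and $y_1^*$, the Weil representation factorises as $\omega_{n+1,r}(g)(\phi_1\otimes\phi') = \phi_1\otimes\omega_{n,r}(g)\phi'$. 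Substituting into Lemma \ref{L:tate} and splitting $A\in\GL(X_r)$ into its $y_1^*$-column $\vec{a}_1$ and the remaining $(r-1)$ columns $A'''$ exhibits the LHS as a double integral against the product $\phi_1(\vec{a}_1)\cdot\omega_{n,r}(g)\phi'(\text{matrix in }A''',X_2)$.

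The heart of the argument combines a Schur decomposition with an Ikeda-type swap. Refining via $X_r = X_{r-1}\oplus\langle x_r\rangle$, write $\vec{a}_1 = (\vec{b},e)^T$, $A''' = \begin{pmatrix}B\\ \vec{c}\end{pmatrix}$, $X_2 = \begin{pmatrix}Z_2\\ \vec{d}\end{pmatrix}$; the Schur-complement identity $|\det A| = |\det B|\cdot|e - \vec{c}B^{-1}\vec{b}|$ together with the unit-Jacobian substitution $e \to e^* = e - \vec{c}B^{-1}\vec{b}$ factors $|\det A|^{\alpha}$ as $|\det B|^{\alpha}\cdot|e^*|^{\alpha}$, where $\alpha = -s - (n+1-r)+\rho_{H_r}$ already matches the exponent on the RHS. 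The conversion of $\GL_r$-Haar measure to Lebesgue coordinates via $d^\times A = |\det A|^{-r}\,dA_{\mathrm{Leb}}$ produces an additional $|\det B|^{-1}$, which absorbs the discrepancy $\rho_{H_r}-\rho_{H_{r-1}} = 1/2$ and delivers the $|\det B|$-exponent demanded by Lemma \ref{L:tate} for the $\mathfrak{f}^{n,r-1}(s+1/2,-)$ factor. In parallel, the $x_r$-row of the matrix argument of $\phi'$ carries $(\vec{c},\vec{d})$ while the $x_r^*$-row is zero; using the $K_{H_r}$-invariance of $\phi'$ under the Weyl element swapping $x_r\leftrightarrow x_r^*$ (up to a sign), the two rows may be interchanged, at which point the integration over $(\vec{c},\vec{d})$ is precisely the defining integration of $\Ik^{n,r,r-1}(\phi')$ on the $V_{r-1}$-matrix built from $(B, Z_2)$. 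The $G_n\times H_{r-1}$-equivariance of $\Ik$ commutes this past $\omega_{n,r}(g)$, and the $(B,Z_2)$-integration reassembles into $\mathfrak{f}^{n,r-1}(s+1/2, \Ik^{n,r,r-1}(\phi'))(g)$.

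It remains to extract the one-dimensional Tate zeta integral $Z_1(\alpha,\phi_1)$ from the residual integration $\int \phi_1(\vec{b}, e^* + \vec{c}B^{-1}\vec{b})\,|e^*|^{\alpha}\, d\vec{b}\,de^*$. Using the $K_{\GL(X_r)}$-invariance of $\phi_1$ (inherited from its $K_{H_r}$-invariance, since $\GL(X_r)\cap K_{H_r}$ is the standard maximal compact of $\GL(X_r)$) together with the normalisation $\phi_1(0)=1$, a sphericity/averaging argument collapses the $(r-1)$-dimensional $\vec{b}$-integration to a constant and identifies the surviving $e^*$-integration, restricted to the line $y_1^*\otimes\langle x_1\rangle$, with $Z_1(-s-(n+1-r)+\rho_{H_r},\phi_1)$. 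Applying $I_{n+1-r,0}$ and collecting the Weyl sign, the Iwasawa measure constant, and the $\pi_0(H_r/H_0)$-factor into the scalar $\alpha_r$ of (\ref{E:alphar}) completes the proof. The main obstacle is precisely this last decoupling step: after the Schur shift, $\phi_1$'s argument is coupled to $B$ and $\vec{c}$ through $\vec{c}B^{-1}\vec{b}$, and peeling off a clean one-dimensional Tate factor relies crucially on the sphericity and normalisation of $\phi_1$; the side hypothesis $m_0 > 0$ when $r = 1$ is needed to avoid the degenerate case where $V_0$ is trivial, which would destroy the anisotropic theta integral $I_{n+1-r,0}$.
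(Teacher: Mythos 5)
Your setup (factoring $\pi_{K_{H_r}}\tilde\phi=\phi_1\otimes\pi_{K_{H_r}}\phi$, unfolding via Lemma \ref{L:tate}, and isolating the $y_1^*$-column as the argument of $\phi_1$) matches the paper, and your treatment of the $r=1$ case is fine. But for $r>1$ the Schur-complement route breaks down at exactly the step you flag as "the main obstacle," and the obstacle is not surmountable as described. The paper instead applies the Iwasawa decomposition $A=k\cdot\left(\begin{smallmatrix}t&uB\\0&B\end{smallmatrix}\right)$ on $\GL(X_r)(\A)$ and uses the $K_{H_r}$-invariance of the \emph{entire} function $\pi_{K_{H_r}}\tilde\phi$ (not just of $\phi_1$) to kill the $k$-integral; this is what forces the first column of $A$ into the form $(t,0,\dots,0)^{T}$, so that $\phi_1$ is evaluated only on the line $y_1^*\otimes x_1$ and the one-dimensional Tate integral $Z_1(-s-(n+1-r)+\rho_{H_r},\phi_1)$ appears verbatim, with $\alpha_r$ entering precisely as the Iwasawa measure constant of (\ref{E:alphar}). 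Your decomposition never uses this invariance, and without it the integral simply does not factor.

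Concretely, three things go wrong. First, in your coordinates the first column is the full vector $(\vec b,e)$, so the residual $\phi_1$-integral is $r$-dimensional, and your claim that sphericity collapses the $\vec b$-integration to a constant multiple of $Z_1$ is false: even for a $K$-spherical $\phi_1$ (e.g.\ locally the characteristic function of $\varpi^k\mathcal{O}^r$), the integral $\int_{\vec b}\int_{e}\phi_1(\vec b,e)|e|^{\beta}\,d\vec b\,d^{\times}e$ and the restricted integral $\int_t\phi_1(0,\dots,0,t)|t|^{\gamma}\,d^{\times}t$ differ by a ratio of zeta factors that is a genuinely non-constant function of $s$ --- this is exactly why the lemma at the end of \S 9 compares such integrals only at the single point $s=0$ via the Godement--Jacquet functional equation, rather than as an identity of meromorphic functions. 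Second, your exponent bookkeeping is off: converting $dA=|\det A|^{-r}dA_{\mathrm{Leb}}$ and reassembling $dB_{\mathrm{Leb}}=|\det B|^{r-1}dB$, $de^*=|e^*|\,d^{\times}e^*$ gives $|\det B|^{\alpha-1}|e^*|^{\alpha-r+1}$; the $B$-exponent is correct, but the multiplicative exponent on $|e^*|$ is $\alpha-r+1$, not the $\alpha$ demanded by the proposition (they agree only when $r=1$). Third, the coupling $\vec c B^{-1}\vec b$ ties $\phi_1$'s argument to the variables of the $\mathfrak{f}^{n,r-1}$-factor, so even granting a radial collapse the integral would not split into the asserted product. (Two smaller points: no Weyl-element row swap is needed to recognize $\Ik^{n,r,r-1}$ --- integrating the $x_r$-row with the $x_r^*$-row set to zero \emph{is} the Ikeda map; and the hypothesis $m_0>0$ when $r=1$ is there because $f^{n,0}(s,\cdot)$ is only defined by convention when $m_0>0$ and because $H_1=\OO_{1,1}$ is excluded throughout, not because $I_{n-r,0}$ is "destroyed" when $V_0=0$.) To repair the argument you must replace the Schur/Lebesgue decomposition by the Iwasawa decomposition (\ref{E:alphar}) and invoke the $K_{H_r}$-invariance of $\pi_{K_{H_r}}\tilde\phi$ to discharge the compact integral before splitting off $\phi_1$.
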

 \begin{proof}
 By (\ref{E:frak}), it suffices to show the identity with
 $\mathfrak{f}^{n+1,r}(s,\phi)$ in place of $f^{n+1,r}(s,\phi)$ (since
 $I_{(n+1)  -r, 0 }  = I_{n - (r-1), 0}$).  Now  for $g\in G_n(\A)$,
 Lemma \ref{L:tate} gives
\begin{align*}
&  \mathfrak{f}^{n+1,r}(s, \pi_{K_{H_r}} \tilde{\phi})(g)(-)\\
=&\int _{\GL(X_r)(\A)}    \int_{({Y'}_{n+1-r}^* \otimes X_r)(\A)} 
(\omega_{n+1,r}(g) \pi_{K_{H_r}}\tilde{\phi})\begin{pmatrix}
A & Y  \\
0& - \\
0 & 0
 \end{pmatrix}    
\cdot |\det(A)|^{-s - (n+1-r) + \rho_{H_r}} \, dY \, dA\\
=& \int _{\GL(X_r)(\A)}    \int_{({Y'}_{n+1-r}^* \otimes X_r)(\A)} 
\phi_1\otimes\omega_{n,r}(g) \pi_{K_{H_r}}\phi\begin{pmatrix}
A & Y  \\
0& - \\
0 & 0
 \end{pmatrix}    
\cdot |\det(A)|^{-s - (n+1-r) + \rho_{H_r}} \, dY \, dA,
\end{align*}
where, for the second equality, we have used the fact that 
\[ \omega_{n+1,r}(g)
\pi_{K_{H_r}}\tilde{\phi}=\phi_1\otimes\omega_{n,r}(g)
\pi_{K_{H_r}}\phi, \]
since  $G_n(\A)$ acts trivially on $\phi_1$.
\vskip 5pt

Assume first that $r=1$ and $m_0>0$. Since $\phi_1$ is a function on the first column of the matrix in the integrand, we can just
decompose the integral as
\begin{align*}
&  \mathfrak{f}^{n+1,r}(s, \pi_{K_{H_r}} \tilde{\phi})(g)(-)\\
=&\left(\int _{\GL(X_r)(\A)}\phi_1\begin{pmatrix}A\\0\\0\end{pmatrix}
\cdot |\det(A)|^{-s - (n+1-r) + \rho_{H_r}}
\,dA\right)
\cdot\left(\int_{({Y'}_{n}^* \otimes X_r)(\A)} 
(\omega_{n,r}(g) \pi_{K_{H_r}}\tilde{\phi})\begin{pmatrix}
Y  \\
- \\
0
 \end{pmatrix}    
  \, dY\right).\\
\end{align*}
Noting that the first factor is the Tate integral $Z_1( -s  - (n+1-r) +
\rho_{H_r}, \phi_1)$ and the second one is $\mathfrak{f}^{n, r-1} (s + \frac{1}{2},  
\Ik^{n,r,r-1}(\pi_{K_{H_r}}\phi))(g)(-)$, we see that the proposition holds with $\alpha_1=1$.
\vskip 5pt

Assume now that $r>1$. To extract the first column of $A$, we  use the Iwasawa
decomposition on $\GL(X_r)(\A)\cong \GL_r(\A_E)$. Namely, we have
  \[   
A = k \cdot\begin{pmatrix}
  1 & u \\
  0 & 1 \end{pmatrix}\cdot\begin{pmatrix}
  t & 0 \\
  0 & B \end{pmatrix} =  
  k \cdot  \begin{pmatrix}
  t & uB \\
  0 & B \end{pmatrix}    
\]
  with
  \begin{itemize}
  \item $t \in \A_E^{\times}$;  
  \item $u \in (x_1 \otimes {X'}_{r-1}^*)(\A)\cong \A_E^{r-1}$;
  \item $B \in \GL(X'_{r-1})(\A) \cong \GL_{r-1}(\A_E)$;
  \item $k$ is an element in a maximal compact subgroup $K = K_{H_r} \cap \GL(X_r)(\A)$  of $\GL(X_r)(\A)$. 
  \end{itemize} 
  Accordingly, we have a constant $\alpha_r$ such that
   \begin{equation} \label{E:alphar}
    \int_{\GL_r(\A_E)} \varphi(A)  \,  dA =   \alpha_r  \cdot
    \int_{\A_E^{\times}} \int_{\GL_{r-1}(\A_E)} \int_{\A_E^{r-1}}
    \int_K   \varphi \left( k \cdot \left(  \begin{matrix} t & uB \\
    0 & B \end{matrix} \right) \right) \,  dt  \, dB \, du \, dk 
\end{equation}
    for any $\varphi \in C^{\infty}_c(\GL_r(\A_E))$.  Indeed, by
    \cite[Lemma 9.1]{I2}, one has:
  \[  \alpha_r  = \frac{{\rm Res}_{s=1} \zeta_E(s)}{\zeta_E(r)}. \]
  \vskip 5pt
 
 Since the function $\pi_{K_{H_r}} \tilde{\phi}$ is
 $K_{H_r}$-invariant, the integral over $dk$ simply gives the value
 $1$ and thus disappears. 
 Hence, after using the Iwasawa decomposition, we obtain:
  \begin{align*}
  \mathfrak{f}^{n+1,r}(s, \pi_{K_{H_r}} \tilde{\phi})(g)(-) =&
  \alpha_r \cdot    \int_t \, \int_{B} \,   \int_{u} \,    \int_{Y}  
 \phi_1\otimes\omega_{n,r}(g) \pi_{K_{H_r}}{\phi}
\begin{pmatrix}
  \begin{smallmatrix}
  t & uB  \\
  0 & B \end{smallmatrix}  & Y \\   
  0 & - \\
  0 & 0  
\end{pmatrix}\\    
&\qquad
|t|_E^{-s -(n+1-r)  + \rho_{H_r}}\cdot |\det(B)|^{-s -(n+1-r) +
  \rho_{H_r}}\,    dt \,  dB\,  du \,  dY,
\end{align*}
where $t\in\A^\times_E, B\in\GL_{r-1}(\A_E), u\in \A_E^{r-1}$ and $Y\in
({Y'}^*_{n-r}\otimes X_r)(\A)$.
  \vskip 5pt
             
 Noting that $\phi_1$ is a function of the
 first column of the $m \times (n+1)$ matrix in the integrand, we see
 that the above integral is equal to the product of
 \begin{equation} \label{E:zeta}   
\alpha_r \cdot  \int_{t\in \A_E^{\times}}  \,  \phi_1 
\begin{pmatrix}
 t \\
 0 \\
 0\\
 0
\end{pmatrix}
\cdot |t|_E^{-s -(n+1-r)+\rho_{H_r}} \,  dt
 \end{equation}
 and
 \begin{equation} \label{E:f}
 \int_u \int_B   \int_Y   (\omega_{n,r}(g) \pi_{K_H} \phi)
 \begin{pmatrix}
\begin{smallmatrix}
         u \\
         B 
\end{smallmatrix} & Y \\
         0 & - \\
         0 & 0
\end{pmatrix}
         \cdot  |\det (B)|^{-s -1 -(n+1-r)+ \rho_{H_r}}  \, dY \, dB \,
         du,   
\end{equation}
where we have made the substitution $u \mapsto uB^{-1}$. Now observe
that (\ref{E:zeta}) is equal to the Tate zeta integral 
\[  
\alpha_r \cdot Z_1(-s-(n+1-r) +\rho_{H_r}, \phi_1),
\]   
where $\phi_1$ is actually the function defined by $t\mapsto \phi_1(t y_1^*\otimes x_1)$.

On the other hand, in (\ref{E:f}), we may write the $m \times n$
matrix as:
\[          
\begin{pmatrix}
         u &\multirow{2}{*}{Y}\\
         B &\\
         0 & - \\
         0 & 0 
\end{pmatrix} = 
\begin{pmatrix}
\multicolumn{2}{c}{z}\\
             B & Y^{\dagger} \\
             0 & - \\
             0 & 0 \\
\end{pmatrix}
\]
with  $z \in \A_E^n$ occupying the first row of the matrix and
$Y^{\dagger}$ a $r  \times (n-r)$ matrix. Then, on replacing the
integrals over $u$ and $Y$ by  integrals over $z$ and $Y^{\dagger}$,
and noting that
\[  \rho_{H_r}  = \frac{m-r - \epsilon_0}{2} 
= \frac{m_0 + r-\epsilon_0}{2} =  \rho_{H_{r-1}} + \frac{1}{2}, 
\]
we see that (\ref{E:f}) is equal to
\[  
\int_B   \int_z \int_{Y^{\dagger}}        ( \omega_{n,r}(g) \pi_{K_H} \phi)
\begin{pmatrix}
\multicolumn{2}{c}{z}\\
             B & Y^{\dagger} \\
             0 & - \\
             0 & 0 \\
\end{pmatrix} 
\cdot |\det(B)|^{-(s+\frac{1}{2} )+ r -1 -n  +\rho_{H_{r-1}}} 
\, dz\, dY^{\dagger}\,dB
\] 
which is nothing but
 \[  
 \mathfrak{f}^{n, r-1} (s + \frac{1}{2},  
\Ik^{n,r,r-1}(\pi_{K_{H_r}}\phi))(g)(-).
\]
This completes the proof of the proposition. 
\end{proof}
 \vskip 10pt

\begin{Rmk}\label{rm_section}
 It is natural to extend the definition of
 $\mathfrak{f}^{n,r}(s,\phi)$ and $f^{n,r}(s,\phi)$ by
 regarding them as  functions on $G_n \times H_r$, rather than simply
 functions on $G_n$.  More precisely, we set
 \[  \mathfrak{F}^{n,r}(s,\phi)(g,h)(-) = 
  \int_{\GL(X_r)(\A)} \omega_{n,r}(g,h)(\mathcal{F}_{n,r}(\phi)
 ( \beta_0 \circ a)(0 , -) )\,\cdot   |{\det}_{X_r}(a)|^{s + n -
   \rho_H} \, da, 
\]
 and
 \[  F^{n,r}(s,\phi)(g,h)  :=  I_{n-r,0}(\mathfrak{F}^{n,r}(s,\phi)(g,h)). \]
Then
\[   \mathfrak{F}^{n,r}(s,\phi)|_{G_n} =  \mathfrak{f}^{n,r}(s,\phi) \quad \text{and} \quad
 F^{n,r}(s,\phi)|_{G_n} =  f^{n,r}(s,\phi).  \]
 Observe that
 \[  \mathfrak{F}^{n,r}(s,\phi) \in \Ind^{G_n(\A) \times H_r(\A)}_{Q(Y_r)(\A) \times P(X_r)(\A)}  
\left( ((\chi \circ {\det}_{Y_r})\cdot  |{\det}_{Y_r}|^s \boxtimes
  |{\det}_{X_r}|^{-s})  \boxtimes \omega_{n-r,0} \right), \]
 and
 \[ F^{n,r}(s,\phi)  \in    I^n_r(s,\chi) \boxtimes I_{H_r}(-s). \]
 Thus, 
 \[  F^{n,r}(s,-) :  \omega_{n,r} \longrightarrow I^n_r(s,\chi) \boxtimes I_{H_r}(-s) \]
 is a meromorphic family of $G_n(\A) \times H_r(\A)$-equivariant maps
 which explicitly realizes the  Howe duality correspondence. 
\end{Rmk}
 \vskip 15pt

 \section{\bf Siegel Principal Series} \label{S:siegel}
 
 In this section, we  review the theory of the Siegel principal series
 representation and its relation to the Weil representation over a
 local field. Hence throughout this section, we let $k$ be a
 (possibly archimedean) local field and let $L$ be $k$ if
 $\epsilon_0\neq 0$ and a quadratic \'{e}tale algebra over $k$ if
 $\epsilon_0=0$, so $L$
 is either a quadratic extension of $k$ or $k\times k$. We assume the
 spaces $W_n$ and $V_r$ are defined over $L$. Thus, unlike
  the global case, we include in our discussion 
  the split quadratic algebra $L=k\times k$, in which case 
 $G_n(k) \cong \GL_{2n}(k)$ and $H_r(k)\cong \GL_m(k)$.  In addition,  in
 this section, $\chi$ is any unitary character on $L^\times$.
\vskip 5pt

We shall
 consider the degenerate principal series representation
\[  I^n_n(s,\chi) := \Ind_{Q(Y_n)(k)}^{G_n(k)} (\chi \circ \det) \cdot |\det|^s. \]
 Here we recall that when $G_n(k) = \Mp_{2n}(k)$, then
 \[  I^n_n(s,\chi)  = \Ind_{Q(Y_n)(k)}^{G_n(k)} (\chi_{\psi}
 \circ \det)\cdot (\chi \circ \det)  \cdot |\det|^s. \] 
 \vskip 10pt

\subsection{\bf Degenerate principal series: Non-archimedean case.}
We first assume that $k$ is non-archimedean. The following proposition
summarizes the reducibility points of $ I^n_n(s, \chi)$.
\vskip 10pt
 
 \begin{Prop} \label{P:redu}
 Let $k$ be non-archimedean.
\begin{enumerate}[(i)]
\item The representation $I^n_n(s, \chi)$ is multiplicity-free.
\item Assume that $s \in \R$ and $\chi$ is unitary. Then
  $I^n_n(s,\chi)$ is reducible precisely in the situations given in
  the following table.
\begin{center}
{\renewcommand{\arraystretch}{1.5}
\begin{tabular}{|c|c|}
\hline 
$\epsilon_ 0 = 0$, $L\neq k\times k$ & $\chi|_{k^{\times}} = \chi_L^m$ and $s =
\tfrac{m-d(n)}{2}$ with $0 \le m \le 2 d(n)$ \\ \hline 
$\epsilon_ 0 = 0$, $L= k\times k$ & $\chi|_{k^{\times}} = 1$ and $s =
\tfrac{m-d(n)}{2}$ with $0 \le m \le 2 d(n)$ and $m \ne n$ \\ \hline 
$\epsilon_0 = -1$ & $\chi^2 =1$ and $s = \tfrac{m-d(n)}{2} $ with $m$
even and $0 \le m \le 2d(n) $ \\ \hline 
$\epsilon_0  =1$, $G_n=\Mp_{2n}$ & $\chi^2 = 1$ and $s =
\frac{m-d(n)}{2}$ with $m$ odd and $1\leq m \leq 2n+1$ \\ \hline
$\epsilon_0 =1$, $G_n = \Sp_{2n}$ & 
 (a) $\chi^2 =1$ but $\chi \ne 1$ and $s = \tfrac{m-d(n)}{2}$, with
 $m$ even  $2 \le m \le 2n$ \\ 
& or \\
 & (b) $\chi=1$  and $s = \tfrac{m-d(n)}{2} $ with   $m$ even and  $0
 \le m \le 2n+2$ \\
 \hline 
\end{tabular}
}
\end{center}
\end{enumerate}
\end{Prop}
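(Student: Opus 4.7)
The plan is to reduce both statements to the analysis of the standard intertwining operator
\[ M_n(s,\chi): I^n_n(s,\chi) \to I^n_n(-s,\chi^{-1}), \]
which realizes the functional equation of the Siegel principal series. The reducibility locus of $I^n_n(s,\chi)$ for $s \in \R$ and $\chi$ unitary is precisely the locus where $M_n(s,\chi)$ fails to be an isomorphism, i.e.\ where its normalizing factor $c_n(s,\chi)$ has a zero or pole, together with a few exceptional points where different irreducible summands of the Jacquet module degenerate. This approach treats all four cases (orthogonal, symplectic, unitary, metaplectic) on a uniform footing, reducing the local question to an analytic computation combined with the local Siegel-Weil formalism.

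\textbf{Multiplicity-freeness.} For (i), I would compute the Jacquet module $J_{Q(Y_n)}(I^n_n(s,\chi))$ via the Bruhat filtration. The double cosets $Q(Y_n)(k)\backslash G_n(k)/Q(Y_n)(k)$ are indexed by $0 \leq j \leq n$, and the associated graded pieces of the geometric filtration are induced representations from parabolic subgroups of $\GL(Y_n)(L) \cong \GL_n(L)$. Each such piece is multiplicity-free by the Bernstein--Zelevinsky theory for $\GL_n$, and a direct comparison of the composition series across adjacent layers shows that no irreducible constituent is shared between them. The metaplectic case requires tracking the genuine twist $\chi_\psi \circ \det$, and the split unitary case $L = k \times k$ uses the isomorphism $G_n(k) \cong \GL_{2n}(k)$ together with the classification of constituents of $\Ind^{\GL_{2n}}_{\GL_n \times \GL_n}(\chi|\det|^s \boxtimes \chi^{-1}|\det|^{-s})$.

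\textbf{Reducibility points.} For (ii), the first step is to compute $c_n(s,\chi)$ by the Gindikin--Karpelevich formula. In each case it takes the form of a product of $n$ ratios of local Hecke $L$-factors in $s$, with the specific Hecke character depending on $\chi$ and on $\chi_L$ (or the quadratic character of $L/k$). Its real zeros and poles give candidate reducibility points, all of the shape $s = (m - d(n))/2$ with $0 \leq m \leq 2 d(n)$ and with the parity/character constraints displayed in the table. The second step is to verify genuine reducibility at each such $s_0$ and to check that no point is missed. For this I would invoke the local Siegel--Weil framework: at $s_0 = (m-d(n))/2$, the image of $M_n(s_0,\chi)$ is spanned by the Siegel--Weil sections $f_\phi$ attached to the Weil representation $\omega_{n,m}$ for an $m$-dimensional $\epsilon$-Hermitian space in the relevant Witt tower, giving a proper nonzero submodule. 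Conversely, outside the listed locus, $M_n(s,\chi)$ is an isomorphism and $I^n_n(s,\chi)$ is irreducible.

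\textbf{Main obstacle.} The hard part is the boundary and exceptional cases where the generic Gindikin--Karpelevich analysis does not match the reducibility table on the nose. Most notably: in the split unitary case the Steinberg-type degenerate principal series on $\GL_{2n}(k)$ forces the exclusion $m \ne n$; in the symplectic case with trivial character the reducibility at $m = 2n+2$ comes from the trivial representation, which is not realized as a theta lift from any group in the Witt tower of dimension $\leq 2n$; and in the metaplectic case only odd $m$ occur, reflecting the constraint that the doubled space $W_n \oplus W_n^-$ must support an odd-dimensional orthogonal complement. Each of these requires supplementary arguments to identify the subquotient and confirm reducibility. These delicate cases are precisely what is worked out in the series of papers by Kudla--Rallis, Kudla--Sweet and Sweet cited in the introduction, and my proposal would be to assemble their results into the unified table above.
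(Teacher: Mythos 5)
The paper offers no proof of this proposition beyond the citation ``See \cite{KR4, KS, S2, Y1}'', and your proposal, after sketching the intertwining-operator and Jacquet-module strategy, likewise concludes by deferring the delicate cases to exactly those references, so the two approaches coincide in substance. Your sketch is a fair outline of how those papers actually argue, though two of your heuristics --- that reducibility is detected precisely by the degeneration of $M_n(s,\chi)$, and the ``Steinberg-type'' explanation of the exclusion $m\ne n$ in the split unitary case (which is really the Bernstein--Zelevinsky irreducibility of the induction of two coincident segments at $s=0$) --- would need the cited papers' full composition-series analysis to be made rigorous.
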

\begin{proof}
See \cite{KR4, KS, S2, Y1}.
\end{proof}

Observe that the points of reducibility are essentially the points 
\[  s_{m,n} =  \frac{m - d(n)}{2},  \quad \text{ $0 \leq m \leq 2 d(n)$} \]
with some extra conditions in the various cases:
\begin{itemize}
\item when $\epsilon_0 = 1$, $m$ is even when $G_n$ is symplectic, and
  $m$ is odd when $G_n$ is metaplectic; moreover,  there is a quadratic
  space $V$ over $k$ of dimension $m$ and discriminant corresponding
  to the quadratic character $\chi$; 
  \item when $\epsilon_0  = -1$, there is a symplectic space $V$ of dimension $m$ (so that $m$ is even);
  \item when $\epsilon_0 = 0$ and $L = k \times k$, $m \ne n$. 
\end{itemize}

\vskip 5pt

\subsection{\bf Set $X_n$.}
For both archimedean and non-archimedean $k$, let us define
\[
X_n:=\{(s,\chi): (s,\chi) \text{ appears in the table of Proposition
  \ref{P:redu}}.\}
\]
If $k$ is non-archimedean, this set is precisely
the set of all $(s,\chi)$ with $s\in\R$ and $\chi$ unitary such that
$I_n^n(s,\chi)$ is reducible. If $k$ is archimedean, as we will see,
$I_n^n(s,\chi)$ is also reducible for $(s,\chi) \in X_n$, though
there are more points of reducibility which will play no
role in this paper.
\vskip 5pt

\subsection{\bf Siegel-Weil sections. }
Assume $k$ is archimedean or  non-archimedean. The structure of
$I^n_n(s,\chi)$ at the points of reducibility is
intimately connected with the theory of local theta
correspondence. To describe this, we recall the following construction.  For
the local Weil representation
$\omega_{n,r}$ realized on $\mathcal{S}(Y_n^* \otimes V_r)(k)$, we set
\[  
\Phi^{n,r}(\phi)(g)  = (\omega_{n,r}(g) \phi)(0), \quad \text{ for
  $\phi \in \mathcal{S}(Y_n^* \otimes V_r)(k)$.} 
\]
This defines an $H_r(k)$-invariant and $G_n(k)$-equivariant map
\[ 
\Phi^{n,r}: \mathcal{S}(Y_n^* \otimes V_r)(k) \longrightarrow
I_n^n(\frac{m - d(n)}{2},\chi),
\]
with $\chi = \chi_{V_0}$. 
We call the space of standard sections associated to the image of
$\Phi^{n,r}$ the space of {\em Siegel-Weil sections} associated to
$V_r$. 
\vskip 5pt

\subsection{\bf The representation $R_n(V_r)$.} 
Assume again that $k$ is  archimedean or  non-archimedean. Since
$\Phi^{n,r}$ is $H_r(k)$-invariant, it factors through the
maximal $H_r(k)$-invariant quotient $R_n(V_r)$ of $\omega_{n, r}$. It
was shown by Rallis that one has
\[  
\Phi^{n,r}:  \mathcal{S}(Y_n^* \otimes V_r)(k) \twoheadrightarrow
R_n(V_r) \hookrightarrow I^n_n(\frac{m - d(n)}{2},\chi), 
\]
so that the image of $\Phi^{n,r}$ is isomorphic to $R_n(V_r)$.  
 Thus $R_n(V_r)$ is a submodule of $I^n_n(\frac{m - d(n)}{2},\chi)$. 
 \vskip 10pt
 
 \subsection{\bf Structure of $I^n_n(s,\chi)$.} \label{SS:str} 
 Assume $k$ is non-archimedean. We can now describe the module
 structure of $I^n_n(s, \chi)$ at a point of reducibility.   
   For a point of reducibility
  $s_{m,n} = \frac{m - d(n)}{2} $ as given in Proposition
  \ref{P:redu}, we let $V$ and $V'$ be given as follows:
  \vskip 5pt
  
\begin{itemize}
  \item when $\epsilon_0 = 0$ and $L\neq k\times k$, $V$ and $V'$ are the two
    $\epsilon$-Hermitian spaces of dimension $m$ with $\disc(V) \in
    N_{L/k}L^{\times}$ and $\disc(V') \notin N_{L/k}L^{\times}$;
\item when $\epsilon_0 = 0$ and $L=k\times k$, $V$ is the unique $\epsilon$-Hermitian space of dimension $m$ and we set $V' = 0$;
  \item when $\epsilon_0 = -1$, $V$ is the unique symplectic space of
    dimension $m$;
  \item when $\epsilon_0 = 1$, $V$ and $V'$ are the two quadratic
    spaces of dimension $m$ with $V$ having  Hasse-Witt invariant $1$ 
    and $V'$ having  Hasse-Witt invariant $-1$ (here, a split quadratic space has Hasse-Witt invariant $1$).
\end{itemize}
Note that $V'$ is not defined when $\epsilon_0 = -1$ and may not exist
in the other cases when $m$ is too small.   We have seen that 
$R_n(V)$ and $R_n(V')$ are submodules of $I^n_n(s_{m,n}, \chi)$.
Here, when $\epsilon_0 = -1$, so that $G_n \cong \OO_{n,n}$, we
interpret 
\[  R_n(V') = R_n(V) \otimes {\det}_{G_n}, \]
and when $V'$ does not exist in the other cases, we interpret
$R_n(V') = 0$. With these notations, we have: 
 \vskip 5pt
 
 \begin{Prop} \label{P:str}
 Assume that $k$ is non-archimedean. 
 \vskip 5pt
 \begin{enumerate}[(i)]
 \item If $s_{m,n} = \frac{m - d(n)}{2}  <  0$, then $R_n(V)$ and
   $R_n(V')$ are irreducible unitary and $R_n(V) \oplus R_n(V')$ is
   the maximal semisimple submodule of  $I^n_n(s_{m,n}, \chi)$. The
   quotient
 \[ I^n_n(s_{m,n}, \chi)/  (R_n(V) \oplus R_n(V')) \]
is irreducible.
\item If $s_{m,n} = 0$, then
\[  I^n_n(0, \chi)  =  R_n(V) \oplus R_n(V') . \]
\item If $s_{m,n} >0$, then
\[  I^n_n(s_{m,n}, \chi)  =  R_n(V) +  R_n(V') \]
  and the intersection
  \[  R_n(V) \cap R_n(V') \]
  is  the unique irreducible submodule of $I^n_n(s_{m,n},\chi)$. If
  $V_c$ and $V_c'$ are the complementary spaces of $V$ and $V'$
  (relative to $W_n$), so that 
 $\dim V + \dim V_c = 2 \cdot d(n)$ with $\dim V_c < \dim V$, then one has
 \[  R_n(V)/R_n(V) \cap R_n(V') \cong R_n(V_c) \quad \text{and} \quad
 R_n(V')/R_n(V) \cap R_n(V')  \cong R_n(V'_c). \]
 Thus there is a short exact sequence
  \[  \begin{CD}
 0 @>>>  R_n(V) \cap R_n(V') @>>> I^n_n(s_{m,n}, \chi) @>>> R_n(V_c)
 \oplus R_n(V'_c) @>>> 0. \end{CD} \]
\end{enumerate}
 \end{Prop}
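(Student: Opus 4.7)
The plan is to extract the three-part structure from a Jacquet module computation combined with inputs from the theta correspondence on complementary Witt towers. First, I would compute the Jacquet module $J_{U(Y_n)}(I^n_n(s,\chi))$ via the Bruhat decomposition $G_n(k) = \bigsqcup_{j=0}^n Q_n(k)\, w_j\, Q_n(k)$, obtaining a filtration on the Jacquet module whose successive quotients are explicit representations of the Levi $L(Y_n) \cong \GL(Y_n) \times \{1\}$, twisted by $s$ and $\chi$. At the reducibility points $s_{m,n} = (m-d(n))/2$, tracking these successive quotients and their $L(Y_n)$-composition series bounds the length of $I^n_n(s_{m,n},\chi)$ and pins down its Jordan-H\"older constituents as certain images of $R_n(V)$, $R_n(V')$, and in case (iii) also the complementary submodules attached to $V_c$ and $V'_c$.

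Next, I would use the Siegel-Weil maps $\Phi^{n,V}$ and $\Phi^{n,V'}$ from Rallis's construction to exhibit $R_n(V)$ and $R_n(V')$ as explicit submodules of $I^n_n(s_{m,n},\chi)$. The unitarity of these Siegel-Weil subrepresentations in case (i) (where they are complementary-series representations with real parameter $s_{m,n}<0$) and in case (ii) (where $s_{m,n}=0$ places them on the unitary axis) is classical, and combined with the Jacquet-module length count it forces $R_n(V)$ and $R_n(V')$ to be irreducible unitary. The socle claim in (i) and the direct-sum decomposition in (ii) then follow by observing that the socle must be semisimple and must contain these two submodules; the remaining third constituent in (i) is forced to appear as an irreducible Langlands quotient by the length bound.

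The main obstacle is part (iii), where $R_n(V)$ and $R_n(V')$ are no longer irreducible and one must identify the quotient $R_n(V)/(R_n(V) \cap R_n(V'))$ with $R_n(V_c)$. The key tool is the computation of the standard intertwining operator $M(s_{m,n}): I^n_n(s_{m,n},\chi) \to I^n_n(-s_{m,n},\chi)$ evaluated on Siegel-Weil sections: it sends the submodule $R_n(V)$ onto the Siegel-Weil submodule $R_n(V_c)$ attached to the complementary space $V_c$ with respect to $W_n$, with kernel the unique irreducible submodule of $R_n(V)$. This calculation is carried out in \cite{KR4, KS, S2} via the doubling method and the functional equation of the Tate-Godement-Jacquet integral, and it simultaneously forces $R_n(V) \cap R_n(V')$ to be the unique irreducible subrepresentation. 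Finally, the equality $R_n(V) + R_n(V') = I^n_n(s_{m,n},\chi)$, i.e. that the two Witt towers jointly exhaust the degenerate principal series at a positive reducibility point, is equivalent to the conservation relation of Kudla-Rallis, proved as an inequality in \cite{KR6} and completed in the reverse direction by Sun-Zhu in \cite{SZ}. Assembling these three ingredients yields the short exact sequence displayed in (iii).
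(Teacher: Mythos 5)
The paper offers no argument for this proposition beyond the citation to \cite{KR4, KS, S2, Y1}, and most of your sketch is a faithful reconstruction of how those references actually proceed: the geometric lemma applied to $Q(Y_n)\backslash G_n/Q(Y_n)$ bounds the length of $I^n_n(s_{m,n},\chi)$ and pins down its Jordan--H\"older constituents, the Rallis maps $\Phi^{n,r}$ exhibit $R_n(V)$ and $R_n(V')$ as submodules, and the (normalized leading term of the) intertwining operator $M_n(s_{m,n},\chi)$ identifies $R_n(V)/(R_n(V)\cap R_n(V'))$ with $R_n(V_c)$. One correction to the unitarity step in (i): $R_n(V)$ and $R_n(V')$ are unitarizable not because they are complementary series (in general they are not), but simply because each is by definition a quotient of the unitary Weil representation $\omega_{n,r}$ by an invariant subspace; this is the argument the references use and it is what combines with the length count to give irreducibility.

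The genuine gap is your treatment of the exhaustion $I^n_n(s_{m,n},\chi)=R_n(V)+R_n(V')$ in (iii). This identity is neither equivalent to nor derivable from the conservation relation of \cite{KR6, SZ} without either circularity or substantial missing input. The logical flow in the literature, and in this very paper, runs the other way: Proposition \ref{P:str} together with the conservation relation is what yields the local nonvanishing criterion of Proposition \ref{P:localnon0} (via \cite[Lemma 8.6]{Y4}), and the Kudla--Rallis half of the conservation relation in \cite{KR6} is itself deduced from the submodule structure of the degenerate principal series. Moreover, even taking Sun--Zhu as an independent black box, their inequality concerns first occurrences of an irreducible representation of the \emph{smaller} group $G(U_n)$ in two Witt towers, whereas the exhaustion is a statement about the cosocle of a specific induced representation of the doubled group $G(W_n)$; converting one into the other requires knowing that every irreducible quotient of $I^n_n(s_{m,n},\chi)$ is detected by the doubling seesaw and that being a quotient of $R_n(V)$ is equivalent to nonvanishing of the corresponding theta lift --- and that equivalence is precisely what is usually \emph{deduced from} Proposition \ref{P:str}. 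The correct route, as in \cite{KR4} and \cite{KS}, is direct: the Jacquet module filtration shows that the images of $\Phi^{n,V}$, $\Phi^{n,V'}$ and their intertwining images already account for the entire Jordan--H\"older series, so the sum $R_n(V)+R_n(V')$ must be all of $I^n_n(s_{m,n},\chi)$. The fact that this structure theorem was settled in the 1990s, two decades before \cite{SZ}, is itself evidence that the implication you propose cannot be the mechanism.
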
 
 \begin{proof}
See \cite{KR4, KS, S2, Y1}.
\end{proof}
 
 \vskip 5pt
 
\subsection{\bf Archimedean case.}
When $F = \R$ or $\C$, the structure of the degenerate principal
series  $I^n_n(s, \chi)$ has also been completely determined in
\cite{L1, L2},\cite{HL}, \cite[Appendix A]{Lo}, and \cite{LZ3,
  LZ4}. Moreover, the relation of the submodules to the theory of
local theta correspondence was determined in
\cite{LZ1,LZ2,LZ3,LZ4}. The results are too intricate to state here,
but we note the following, which is sufficient for our purposes:
\vskip 5pt

\begin{Prop} \label{P:arch1}
Assume that $k = \R$ or $\C$.
\begin{enumerate}[(i)]
\item As a representation of the maximal compact subgroup $K$ of
  $G_n(k)$, the representation $I^n_n(s,\chi)$ is independent of $s$
  and is multiplicity-free  .
\item For $(s_{m,n},\chi) \in X_n$ with $s_{m,n} \geq 0$, 
\[  I^n_n(s_{m,n}, \chi)  = \sum_V  R_n(V) \]
where the sum runs over all $\epsilon$-Hermitian spaces $V$ of
dimension $m$ with the following additional condition and  convention:
\begin{itemize}
 \item if $\epsilon_0 = 1$, $\chi_V = \chi$;
\item if $\epsilon_0 = -1$, so that there is a unique symplectic space
  $V$ of dimension $m$, the RHS is $R_n(V) + R_n(V) \otimes {\det}$. 
\end{itemize}
 \end{enumerate}
\end{Prop}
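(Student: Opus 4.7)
For part (i), I would reduce the $K$-type analysis to a branching problem between compact groups using the Iwasawa decomposition $G_n(k)=Q(Y_n)(k)\cdot K$. Restriction to $K$ yields a $K$-equivariant isomorphism
\[
I^n_n(s,\chi)\big|_K \;\cong\; \Ind_{K\cap Q(Y_n)(k)}^{K}\big((\chi\circ\det)\big|_{K\cap Q(Y_n)(k)}\big),
\]
since on the compact subgroup $K\cap Q(Y_n)(k)$ both the modular factor $\delta_{Q(Y_n)}^{1/2}$ and the character $|\det|^s$ are trivial. This immediately shows that the $K$-structure is independent of $s$. Multiplicity-freeness then reduces to the statement that $\chi\circ\det$ appears with multiplicity one in every $K$-type of the induced module, which is a classical Gelfand-pair/branching computation for the Siegel parabolic of the relevant classical group, carried out in each of the symplectic, orthogonal, unitary, and metaplectic cases over $\R$ and $\C$ in \cite{L1, L2, HL}.

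For part (ii), my strategy is to leverage the multiplicity-freeness from (i). Each $R_n(V)$ sits inside $I^n_n(s_{m,n},\chi)$ as the image of the Rallis map $\Phi^{n,r_V}$, which is $G_n(k)$-equivariant and $H_{r_V}(k)$-invariant. Since $I^n_n(s_{m,n},\chi)$ is multiplicity-free as a $K$-module, any $G_n(k)$-submodule is uniquely determined by its set of $K$-types. Consequently the desired equality $I^n_n(s_{m,n},\chi)=\sum_V R_n(V)$ reduces to the combinatorial assertion that every $K$-type of $I^n_n(s_{m,n},\chi)$ occurs in some $R_n(V)$ for admissible $V$. The $K$-type content of each $R_n(V)$ can be extracted from the explicit theta-correspondence submodule analysis in \cite{LZ1, LZ2, LZ3, LZ4}, and the plan is to read these off and check the exhaustion case by case.

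The main obstacle is the case-by-case bookkeeping governing which $V$'s contribute. For $\epsilon_0=1$ only spaces with $\chi_V=\chi$ map into $I^n_n(s_{m,n},\chi)$; for $\epsilon_0=-1$ the disconnectedness of $G_n=\OO_{2n}$ forces one to adjoin the $\det$-twist $R_n(V)\otimes\det$ of the unique symplectic submodule in order to reach all $K$-types; and for $\epsilon_0=0$ one must track the interaction between $\chi$, $\disc(V)$, and the quadratic algebra $L/k$. In each case the relevant structural theorems are already present in \cite{LZ1, LZ2, LZ3, LZ4}, so the work is one of extraction and assembly rather than new analytic input. The boundary case $s_{m,n}=0$, where the sum is actually a direct sum, and the metaplectic case $G_n=\Mp_{2n}(\R)$ each demand a small separate verification, but introduce no new difficulty beyond matching $K$-types with the lists produced by the cited papers.
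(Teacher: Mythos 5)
Your plan is correct and, in substance, coincides with what the paper does: the paper offers no self-contained proof of Proposition \ref{P:arch1}, but simply cites the explicit determinations of the $K$-type structure and the submodules $R_n(V)$ of $I^n_n(s,\chi)$ in \cite{L1, L2}, \cite{HL}, \cite[Appendix A]{Lo} and \cite{LZ1, LZ2, LZ3, LZ4}, which is exactly the literature your reduction (Iwasawa decomposition and $s$-independence for (i), $K$-multiplicity-freeness plus exhaustion of $K$-types for (ii)) ultimately defers to. Your organizational scaffolding is sound and the remaining work is indeed extraction and bookkeeping from those references rather than new argument.
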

 So, for example, when $k = \C$ and $(s_{m,n}, \chi) \in X_n$, one has:
\[ 
 I^n_n(s_{m,n}, \chi)  = \begin{cases}
R_n(V) \text{  if $\epsilon_ 0 \ne -1$;} \\
R_n(V) + R_n(V) \otimes {\det}, \text{  if $\epsilon_0 =
  -1$.} \end{cases} 
\] 
 
 \vskip 5pt
 
  \subsection{\bf Local intertwining operator.}
 We now let $k$ be an arbitrary local field. Define 
\[
c=\begin{cases} \text{the identity},&\text{if $L=k$};\\
\text{the non-trivial element in } \operatorname{Aut}(L/k),&\text{if $L$ is an \'etale
  quadratic $k$-algebra.}
 \end{cases}
\]
For each character $\chi$
 on $L^\times$, we define $\chi^c$ to be the character on $L^\times$ such that
$\chi^c(x)=\chi(c(x))$. 

There is a standard intertwining operator 
 \[  
M_n(s,\chi):  I^n_n(s, \chi) \longrightarrow I^n_n(-s, (\chi^c)^{-1}).
\] 
One may normalize this intertwining operator following \cite{LR} (see also
\cite[\S 8]{GI2}). We will not recall the precise definition of this normalization
here, but simply note that the normalized operator $M^*(s, \chi)$
satisfies the functional equation
 \[  M^*_n( -s, (\chi^c)^{-1}) \circ M^*_n(s, \chi)  = 1. \]
 We note that for $(s, \chi) \in X_n$, $(\chi^c)^{-1} = \chi$.
 \vskip 5pt
 
 We are particularly interested in the behavior of $M^*_n(s,\chi)$ at $s = 0$.
 \vskip 5pt
 
 \begin{Lem} \label{L:Mat0}
\begin{enumerate}[(i)]
 \item The normalized intertwining operator $M_n^*(s,\chi)$ is
   holomorphic at $s = 0$, and satisfies
 \[  M_n^*(0,\chi)^2 = 1. \]
\item If $k$ is non-archimedean, $M_n^*(0,\chi)$ acts as $+1$ on
  $R_n(V)$ and $-1$ on $R_n(V')$. 
\item The derivative ${M_n^*}'(0,\chi)$ commutes with $M_n^*(0,\chi)$
  and preserves each irreducible summand of $I^n_n(0,\chi)$.  
\end{enumerate}
 \end{Lem}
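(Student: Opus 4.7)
The plan is to derive all three statements from the Lapid--Rallis functional equation $M_n^*(-s,(\chi^c)^{-1}) \circ M_n^*(s,\chi) = 1$, together with the structural results on $I^n_n(s,\chi)$ in Propositions \ref{P:str} and \ref{P:arch1}. For part (i), the holomorphy of $M_n^*(s,\chi)$ at $s = 0$ is built into the Lapid--Rallis normalization: the normalizing factor, a ratio of local $L$-factors, is chosen so as to cancel any pole of the unnormalized operator $M_n(s,\chi)$ at $s = 0$, and this is verified in \cite{LR} and \cite[\S 8]{GI2}. I then note, as already recalled in the excerpt, that $(\chi^c)^{-1} = \chi$ for every $(0,\chi) \in X_n$: when $\epsilon_0 = \pm 1$ this amounts to $\chi^2 = 1$, built into Proposition \ref{P:redu}; when $\epsilon_0 = 0$, the condition $\chi|_{k^\times} = \chi_L^m$ forces $\chi$ to be trivial on the norm subgroup $N_{L/k}(L^\times)$, so $\chi \cdot \chi^c = \chi \circ N_{L/k} = 1$. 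Substituting $s = 0$ into the functional equation then yields $M_n^*(0,\chi)^2 = 1$.

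For part (ii), Proposition \ref{P:str}(ii) gives the direct sum $I^n_n(0,\chi) = R_n(V) \oplus R_n(V')$ into irreducible $G_n(k)$-submodules, on which the equivariant involution $M_n^*(0,\chi)$ must act by $\pm 1$ by Schur's lemma. To pin down which sign goes with which summand, I would test the operator on a Siegel--Weil section $\Phi^{n,r}(\phi)$ for suitable $\phi \in \mathcal{S}(Y_n^* \otimes V)$: unfolding the integral defining $M_n(s,\chi)$ identifies $M_n(s,\chi) \circ \Phi^{n,r}$ with $\Phi^{n,r}$ applied to a partial Fourier transform of $\phi$, up to a Weil constant and a meromorphic factor that cancels exactly against the Lapid--Rallis normalization at $s = 0$ to give $+1$ on $R_n(V)$; repeating the computation on $\mathcal{S}(Y_n^* \otimes V')$ produces an extra $-1$ coming from the Hasse-invariant discrepancy between $V$ and $V'$. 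This sign computation is the main technical obstacle, but the requisite calculations appear in \cite{KR4, KS, S2, Y1}, from which the sign assignment can be extracted after reconciling conventions across the various cases $(\epsilon_0, L, \chi)$ and the metaplectic covers.

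For part (iii), differentiating the functional equation at $s = 0$ and using $(\chi^c)^{-1} = \chi$ (so that both factors are endomorphisms of the same space $I^n_n(0,\chi)$), we obtain
\[
M_n^*(0,\chi) \circ {M_n^*}'(0,\chi) - {M_n^*}'(0,\chi) \circ M_n^*(0,\chi) = 0,
\]
which is the commutation assertion. It follows that ${M_n^*}'(0,\chi)$ preserves each eigenspace of the involution $M_n^*(0,\chi)$. In the non-archimedean case, part (ii) identifies these eigenspaces with the irreducible summands $R_n(V)$ and $R_n(V')$, concluding the proof. In the archimedean case, where part (ii) is not available, I would pivot to a different argument: realizing $I^n_n(s,\chi)$ on a fixed $K$-finite model by restricting sections to a maximal compact $K \subset G_n(k)$ makes the $K$-action independent of $s$, so each $M_n^*(s,\chi)$ is $K$-equivariant and consequently so is ${M_n^*}'(0,\chi)$; by the $K$-multiplicity-freeness from Proposition \ref{P:arch1}(i), the derivative then acts by a scalar on each $K$-type, and therefore preserves every irreducible $G_n(k)$-summand, since every such summand is automatically a union of $K$-types by admissibility.
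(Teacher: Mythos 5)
Your parts (ii) and (iii) are essentially the paper's own proof. Part (ii) is, in the paper too, ultimately an appeal to the literature on theta dichotomy (the paper cites \cite{GT}, \cite[Lemma 7.2]{GS} and \cite{HKS, KS} rather than the structural references \cite{KR4, KS, S2, Y1} you list, but your sketch of unfolding $M_n(s,\chi)\circ\Phi^{n,r}$ into a partial Fourier transform, with the sign discrepancy between $R_n(V)$ and $R_n(V')$ coming from the Hasse/Weil invariant, is the right picture of what those references do). Part (iii) --- differentiating the functional equation to get the commutation, using the eigenspace decomposition from (ii) in the non-archimedean case, and using $K$-equivariance of ${M_n^*}'(0,\chi)$ together with the $K$-multiplicity-freeness of Proposition \ref{P:arch1}(i) in the archimedean case --- coincides with the paper's argument line by line, including the justification that $I_n^n(s,\chi)(k)$ is independent of $s$ for $k\in K$. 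Your verification that $(\chi^c)^{-1}=\chi$ on $X_n$ is also correct (the paper states this without proof).

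The one place you diverge, and the one genuine weak point, is the holomorphy claim in (i). You assert that the Lapid--Rallis normalizing factor is ``chosen so as to cancel any pole'' of $M_n(s,\chi)$ at $s=0$; that is not how the normalization works --- a normalized intertwining operator can perfectly well have poles at reducibility points, and neither \cite{LR} nor \cite[\S 8]{GI2} hands you holomorphy at $s=0$ for free. The paper instead deduces it from the two facts you already have in hand: the functional equation $M_n^*(-s,\chi)\circ M_n^*(s,\chi)=1$ and the semisimplicity (indeed multiplicity-freeness) of $I^n_n(0,\chi)$. Concretely, if $M_n^*(s,\chi)$ had a pole of order $k\geq 1$ at $s=0$ with leading Laurent coefficient $A\neq 0$, then comparing leading coefficients in the functional equation gives $(-1)^k A^2=0$; but taking leading coefficients in the equivariance relation $M_n^*(s,\chi)\circ I_n^n(s,\chi)(g)=I_n^n(-s,\chi)(g)\circ M_n^*(s,\chi)$ shows that $A$ is a $G_n(k)$-equivariant endomorphism of the multiplicity-free semisimple module $I^n_n(0,\chi)$, hence acts by a scalar on each irreducible summand by Schur's lemma, so $A^2=0$ forces $A=0$, a contradiction. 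You should replace the appeal to the normalization by this argument; once holomorphy is in place, your derivation of $M_n^*(0,\chi)^2=1$ is correct.
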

 \vskip 5pt
 
 \begin{proof}
 (i) The holomorphy of $M_n^*(s,\chi)$ at $s = 0$ follows from the
 fact that $M^*_n(-s, \chi) \circ M^*_n(s, \chi)  =1$ and $I^n_n(0,
 \chi)$ is semisimple. The holomorphy implies the remaining part of (i).
\vskip 5pt

(ii) That it indeed acts as in the lemma was checked in
 \cite{GT} when $\epsilon_0  =-1$, \cite[Lemma 7.2]{GS} when $\epsilon_0 =1$ and
 \cite{HKS, KS} when $\epsilon_0 = 0$; it
 is responsible for the
 phenomenon known as the local theta dichotomy, which was shown in
 \cite{HKS} and \cite{GS}.
 \vskip 5pt
 
 (iii) Differentiating the identity $M^*_n(-s, \chi) \circ M^*_n(s,
 \chi)  =1$ and evaluating at $s = 0$, we obtain
 \[  {M_n^*}'(0,\chi) \circ M_n^*(0,\chi) = 
 M_n^*(0,\chi) \circ  {M_n^*}'(0,\chi), \]
 as desired. If $k$ is non-archimedean, 
  the two irreducible summands of $I_n^n(0,\chi)$ are different
  eigenspaces for the action of $M^*_n(0,\chi)$ and thus each summand
  is  preserved by the derivative  ${M_n^*}'(0,\chi)$. In the
  archimedean case, we observe that  ${M_n^*}'(0,\chi)$ is
  $K$-equivariant. Indeed, one has
  \[  M_n^*(s,\chi) \circ I_n^n(s, \chi)(g)  =  I_n^n(-s,\chi)(g) \circ M_n^*(s,\chi) \]
  for all $g \in G_n$, and for $k \in K$, the operator
  $I_n^n(s,\chi)(k)$ is independent of $s$. Thus differentiating the
  above equation and evaluating at $s = 0$, we obtain
 \[  {M_n^*}'(0,\chi) \circ I_n^n(0, \chi)(k)  =  I_n^n(0,\chi)(k) \circ {M_n^*}'(0,\chi) \]
 for all $k \in K$.
  Now the desired result follows from the fact that $I^n_n(0,\chi)$ is
  $K$-multiplicity-free, as we noted in Prop. \ref{P:arch1}(i).  
\end{proof}
  \vskip 15pt

 \section{\bf Siegel Eisenstein Series} \label{S:Siegel-Eis}
 In this section, let us return to the setting of the number field
 $F$, and consider the global analog of the previous
 section.  In particular, we consider the global   Siegel principal series
 representation $I^n_n(s ,\chi)$  of $G_n(\A)$.  Here,  we  take
 $\chi=\chi_{V_r}=\chi_{V_0}$ to be a Hecke character fixed as in
 (\ref{SS:weil}), depending on the $\epsilon$-Hermitian space $V_r$. 
In particular, $\chi^c = \chi^{-1}$.  Also we let the set 
\[
X_n(\chi):=\{s\in\C: \text{$s$ appears in the table of Proposition \ref{P:redu} for the fixed $\chi$.}\}
\]
\vskip 5pt

 \subsection{\bf Siegel Eisenstein series.}
 
 For a standard section $\Phi_s \in I^n_n(s,\chi)$, we consider the Siegel
 Eisenstein series defined for ${\rm Re}(s) \gg 0$ by
 \[   
E(s, \Phi)(g) :=  \sum_{\gamma \in Q(Y_n)(F) \backslash
   G_n(F)} \Phi_s(\gamma g)
\]
for $g\in G_n(\A)$. Sometimes we write
\[
E(s, \Phi)=E^{n,n}(s,\Phi)
\]
when we want to emphasize the rank of the group.
It admits a meromorphic continuation to
$\C$. 
 \vskip 5pt
 
 The following proposition, due to Kudla-Rallis \cite{KR3} and V. Tan
 \cite{T2}, summarizes the analytic properties of the Siegel
 Eisenstein series.
 \vskip 5pt
 
 \begin{Prop} \label{P:analytic}
 Exclude the case when $G_n = \OO_{1,1}$.
 For ${\rm Re}(s) \geq 0$,  $E(s, \Phi)$ is holomorphic except at
 $s=s_{m,n} \in X_n(\chi)$ with $s_{m,n} > 0$. At these points,
 $E(s,\Phi)$ has a pole of order at most $1$. 
  \end{Prop}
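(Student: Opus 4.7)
The plan is to use the standard theory of Eisenstein series, where the analytic behavior of $E(s,\Phi)$ is controlled by its constant terms along the (standard) parabolic subgroups of $G_n$. Write $E(s,\Phi)_{Q_r}$ for the constant term along $Q(Y_r)$. For the Siegel parabolic ($r = n$), the constant term takes the classical form $\Phi_s + M_n(s,\chi)\Phi_s$, so its analytic behavior is controlled by that of the global intertwining operator $M_n(s,\chi)$. For $r < n$, the constant term along $Q(Y_r)$ can be computed by the usual unfolding and is built out of Siegel Eisenstein series of smaller rank twisted by anisotropic theta integrals; this opens the door to an induction on $n$.

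The key input on $M_n(s,\chi)$ is the Gindikin--Karpelevich formula. Factoring $M_n(s,\chi) = \otimes_v M_{n,v}(s,\chi_v)$ and writing each local factor as its normalized version $M_{n,v}^*(s,\chi_v)$ times a normalizing factor, one finds that $M_n(s,\chi)$ differs from $\otimes_v M_{n,v}^*(s,\chi_v)$ by an explicit finite ratio of completed Hecke $L$-functions built from $\chi$, $\chi^2$, and $\chi_E$, with specific argument shifts depending on the case ($\epsilon_0 = 0, \pm 1$). A direct inspection shows that in $\operatorname{Re}(s) \ge 0$ the poles of this $L$-ratio are confined to the discrete set of points $s = s_{m,n}$ appearing in Proposition~\ref{P:redu}, i.e.\ to $X_n(\chi)$ with $s_{m,n}>0$; moreover at each such point at most one factor of the ratio actually contributes a pole, and only to first order. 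The normalized operators $M_{n,v}^*(s,\chi_v)$ are holomorphic and non-vanishing in a neighborhood of these points by Lemma~\ref{L:Mat0} and its analogues for $s > 0$.

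To conclude, one combines these two observations via induction on $n$: since a smooth automorphic form is determined (up to iterated residues) by the collection of its constant terms along the standard parabolic subgroups, the order of the pole of $E(s,\Phi)$ at $s_0$ is bounded by the maximum order of pole of its constant terms at $s_0$. The Siegel constant term contributes at most a simple pole at $s = s_{m,n}$ thanks to the $L$-ratio analysis above, and the non-Siegel constant terms $E(s,\Phi)_{Q_r}$, $r < n$, contribute at most comparable poles by the inductive hypothesis applied to Siegel Eisenstein series on $G_{n-r}$ (the base case $n = 1$, with $\OO_{1,1}$ excluded, being direct). Hence in $\operatorname{Re}(s) \ge 0$ the only possible poles of $E(s,\Phi)$ occur at points $s_{m,n} \in X_n(\chi)$ with $s_{m,n}>0$, and are at most simple.

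The main obstacle is the careful bookkeeping at the archimedean places and at the boundary points. The normalized local intertwining operator $M_{n,v}^*(s,\chi_v)$ can, in principle, acquire zeros or singularities at $s=s_{m,n}$ for certain archimedean $v$, and one must verify these cancel precisely against the global $L$-ratio so that no extra poles appear and the pole order is not enhanced beyond one. In addition, at the boundary point $s_{m,n}$ where two Siegel--Weil submodules $R_n(V)$ and $R_n(V')$ associated to complementary spaces meet inside $I^n_n(s_{m,n},\chi)$, one must rule out a double pole; this is ultimately the source of the subtlety that makes the Kudla--Rallis--Tan result nontrivial and that motivates the first and second term identities studied in the rest of the paper.
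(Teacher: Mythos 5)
The paper itself does not prove this proposition; it is imported from Kudla--Rallis \cite{KR3} and Tan \cite{T2}, so your sketch has to be measured against the arguments there.

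There is a concrete false step at the heart of your outline: the constant term of $E(s,\Phi)$ along the Siegel parabolic $Q(Y_n)$ is \emph{not} $\Phi_s + M_n(s,\chi)\Phi_s$ except when $n=1$. The double coset space $Q(Y_n)\backslash G_n/Q(Y_n)$ has $n+1$ elements, and the constant term is a sum $\sum_{r=0}^{n} M_r(s)\Phi_s$ in which only the big-cell term $r=n$ is the full intertwining operator controlled by the Gindikin--Karpelevich $L$-ratio. The intermediate terms $0<r<n$ are partial intertwining integrals, and bounding their poles is precisely the hard content of \cite{KR3, T2}; it is done there by an induction through the constant term along the non-Siegel parabolic $Q(Y_1)$ (the two-term recursion of Lemma \ref{L:constant}(ii) relating $E^{n,n}(s,\cdot)$ to $E^{n-1,n-1}(s\pm\tfrac{1}{2},\cdot)$), together with holomorphy and nonvanishing of normalized local operators. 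Your "key input" therefore controls only one of the $n+1$ terms, and the argument collapses for $n>1$ as written. A related slip: the constant terms of the \emph{Siegel} Eisenstein series along $Q(Y_r)$, $r<n$, do not involve anisotropic theta integrals --- you have conflated $E^{n,n}(s,\Phi)$ with the non-Siegel series $E^{n,r}$ of \S\ref{S:non_Siegel_Eisen}, whose inducing datum is $\Theta_{n-r,0}(V_0)$; for the Siegel series the inducing datum is just a character of $\GL(Y_n)$.

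Even granting the correct recursion, the inductive step is asserted rather than carried out. One must show that the rank-one operator $M(w^-,s)$ appearing in the $Q(Y_1)$-constant term has no poles in ${\rm Re}(s)\ge 0$ beyond the allowed set, that the shift $s\mapsto s-\tfrac{1}{2}$ carries the exceptional set $X_n(\chi)$ compatibly into $X_{n-1}(\chi)$ (note $s_{m,n}+\tfrac{1}{2}=s_{m,n-1}$, so the two branches of the recursion probe different points), and that $E(s,\Phi)$ is holomorphic on the whole line ${\rm Re}(s)=0$ --- including at $s=0$ when $0\in X_n(\chi)$, which your statement quietly includes but your argument never addresses (it follows from the general theory of Eisenstein series on the unitary axis, or from $M_n(0,\chi)^2=1$ as in Lemma \ref{L:Mat02}(i)). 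These are exactly the points where \cite{KR3} and \cite{T2} spend their effort, so the proposal is at present a plan rather than a proof.
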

 
 In view of the above proposition, we may consider the Laurent
 expansion of $E(s, \Phi)$ at $s = s_{m,n} > 0$:
 \[  E(s, \Phi_s) = \frac{E_{-1, s_{m,n}}(\Phi)}{s-s_{m,n}} +  E_{0, s_{m,n}}(\Phi) + \cdots\] 
 whereas at $s =0$, one has
 \[  E(s, \Phi_s) = E_{0, s_{m,n}}(\Phi) + E_{1, s_{m,n}}(\Phi) \cdot s +
 \cdots. \]
Here we many view each $E_{d, s_{m,n}}$ as a linear map
$I_n^n(s,\chi)\rightarrow\Aut(G_n)$.
Viewing the Laurent coefficients in this way, we note that when
$s_{m,n} > 0$, $E_{-1,s_{m,n}}$ is
 $G_n(\A)$-equivariant, where $E_{0, s_{m,n}}$ is $G_n(\A)$-equivariant
 modulo the image of $E_{-1, s_{m,n}}$.  When $s_{m,n} = 0$, the analogous
 statement holds for the first and second Laurent coefficients $E_{0, s_{m,n}}$
 and $E_{1, s_{m,n}}$.
 \vskip 5pt

 \subsection{\bf Global Siegel-Weil sections}
Analogously to the local case, we may  introduce the notion of a
Siegel-Weil section.
 For an $\epsilon$-Hermitian space $V_r$ of dimension $m=m_0+2r$ over $E$, we have  
 \[  \Phi^{n,r}: \mathcal{S}(Y_n^* \otimes V_r)(\A)\longrightarrow
 I^n_n(\frac{m - d(n)}{2},\chi) \]
 defined by
 \[  \Phi^{n,r}(\phi)(g) = \omega_{n,r}(g)\phi(0). \]
 Its image is isomorphic to the maximal $H(V_r)(\A)$-invariant quotient
 of $\omega_{n,r}$:
 \[  R_n(V_r)   = \otimes_v R_n(V_{r, v}). \]
 \vskip 5pt

 \subsection{\bf  Coherent vs. Incoherent.}
 Now suppose that $s_{m,n} \in X_n(\chi)$ and $s_{m,n} \geq 0$. 
 To describe the structure of $I^n_n(s_{m,n},\chi)$, let us make the
following definition following Kudla-Rallis \cite{KR5}.
\vskip 5pt

\begin{enumerate}[(1)]
\item Assume $\epsilon_0\neq -1$. Let $V_r^c$ be the space
  complementary to $V_r$ with respect to $W_n$ (so $\dim V_r^c=m' \leq m$ is such that
  $m+m'=2d(n)$). Let
  $\mathcal{C}=\{U_v\}$ be a collection of $\epsilon$-Hermitian spaces
  $U_v$ over $F_v$ such that
\begin{itemize}
\item $\dim U_v=m'$ for all $v$;
 \item $U_v= V_{r,v}^c$ for almost all $v$;
\item if $\epsilon_0 =1$, then $\chi_{U_v}=\chi_v$ for all $v$.
\end{itemize}
Then we call the collection $\mathcal{C}$ {\em coherent} if there is
a global $\epsilon$-Hermitian space $V$ over $F$ such that $V\otimes
F_v=U_v$. Otherwise we call the collection $\mathcal{C}$ {\em incoherent}.

\vskip 10pt

\item Assume $\epsilon_0=-1$, so $G_n=\OO_{n,n}$. Let $\mathcal{C}=\{\eta_v\}$ be a
  collection of  characters of $G_n$ with $\eta_v =  \mathbf{1}$ or $\det$ and such that
  $\eta_v=\mathbf{1}$ for almost all $v$. Then we call the collection
  $\mathcal{C}$ {\em coherent} if $\eta_v=\det$ for an even number of
  $v$, or equivalently $\otimes_v\eta_v$ is an automorphic determinant
  character. Otherwise we call $\mathcal{C}$ {\em incoherent}.
\end{enumerate}

 \vskip 5pt

For each $\mathcal{C}=\{U_v\}$ or $\{\eta_v\}$, we define
\[
R_n(\mathcal{C})=\begin{cases}\otimes'_v R_n(U_v),&\text{if
    $\epsilon_0\neq -1$}\\
\otimes'_v R_n(V^c_{r,v})\otimes\eta_v,&\text{if $\epsilon_0=-1$}.
\end{cases}
\]
By Proposition \ref{P:str} and \ref{P:arch1}, we see that  the maximal semisimple
 quotient of $I^n_n(s_{m,n},\chi)$ is given by
 \[  \bigoplus_{ \mathcal{C}  }   R_n(\mathcal{C}) \]
 where the sum runs over all the collections $\mathcal{C}$ (coherent or
 not) as defined above.
 \vskip 5pt

 \vskip 5pt
 
 As a special case, when $s_{m,n} = 0$, so that $ m = m' = d(n)$, the
 degenerate principal series $I(0,\chi)$ is semisimple and one has:
 \[  
I_n^n(0,\chi) = \left( \bigoplus_{\text{coherent
      $\mathcal{C}$}}R_n(\mathcal{C}) \right) \oplus \left(
  \bigoplus_{\text{incoherent $\mathcal{C}$}}R_n(\mathcal{C})\right).
\]
We call the first summand {\em the coherent submodule} and the second
{\em the incoherent submodule}.

\vskip 5pt

\subsection{\bf The leading term of $E(s,\Phi_s)$.}
 The following proposition describes the image of the leading term
 $E_{-1, s_{m,n}}$ or $E_{0, s_{m,n}}$ (see \cite{KR3,KR5, GT, T2, Y4}). 
 \vskip 5pt

 \begin{Prop}  \label{P:E-1}
\begin{enumerate}[(i)]
 \item When $s_{m,n} > 0$, 
 \[  \im E_{-1, s_{m,n}} \cong  \bigoplus_\mathcal{C}   R_n(\mathcal{C}), \]
 where $\mathcal{C}$ runs over coherent collections.
 \vskip 5pt
 
 \item When $s_{m,n} = 0$, 
 \[   \im E_{0, s_{m,n}} \cong  \bigoplus_\mathcal{C}  R_n(\mathcal{C}), \]
where $\mathcal{C}$ is as in (i).
\end{enumerate}
\end{Prop}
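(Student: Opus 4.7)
The image of $E_{-1, s_{m,n}}$ (resp.\ $E_{0,0}$) is a $G_n(\A)$-equivariant quotient of $I^n_n(s_{m,n},\chi)$, so by the local structure theorems (Prop.\ \ref{P:str} and \ref{P:arch1}) together with the compatibility of restricted tensor products with maximal semisimple quotients, it is a submodule of $\bigoplus_{\mathcal{C}} R_n(\mathcal{C})$, where $\mathcal{C}$ ranges over all collections (coherent and incoherent). The task is thus to identify precisely which summands appear. The plan is to prove the two inclusions separately.

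\emph{Step 1: the coherent part is contained in the image.} Fix a coherent collection $\mathcal{C}$, associated to a global $\epsilon$-Hermitian space $V'$ of dimension $m' = 2d(n) - m$ (or to an automorphic determinant character in the $\epsilon_0 = -1$ case). Then $(W_n, V')$ lies in the first term range (or the boundary case, when $s_{m,n} = 0$), and the Siegel--Weil sections $\Phi^{n,r}(\phi)$ arising from the global space $V = V_r$ of dimension $m$ have image in the top quotient $\bigoplus_\mathcal{C} R_n(\mathcal{C})$ supported on the coherent summand attached to $V'$. By invoking the first term identity in the first term range (Theorems \ref{T:anisot}, \ref{T:1st} and \ref{T:yamana}), the residue $E_{-1, s_{m,n}}(\Phi^{n,r}(\Ik^{n,r}(\pi_{K_{H_r}}\phi)))$ is a nonzero explicit multiple of the leading Laurent coefficient $B^{n, r'}_{-1}(\phi)$ of the regularized theta integral for $V'$. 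Since $B^{n,r'}_{-1}$ is known to be a nonzero automorphic form on $G_n$ for suitable input data, the corresponding coherent $R_n(\mathcal{C})$ indeed occurs in the image. The analogous argument with the coefficient $A^{n,r}_0$ in place of $A^{n,r}_{-1}$ treats the boundary case $s_{m,n} = 0$.

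\emph{Step 2: the incoherent part does not appear.} Here I would use the functional equation $E(s, \Phi) = E(-s, M^*_n(s,\chi)\Phi)$ combined with the sign calculation in Lemma \ref{L:Mat0}(ii): the local operator $M^*_{n,v}(0, \chi_v)$ acts as $+1$ on $R_n(V_v)$ and $-1$ on $R_n(V'_v)$, so the global operator $M^*_n(0,\chi)$ acts on each summand $R_n(\mathcal{C})$ by $\prod_v \varepsilon_v = \pm 1$, with the sign being $+1$ exactly when $\mathcal{C}$ is coherent. In the boundary case $s_{m,n} = 0$, the functional equation immediately forces $E_{0,0}$ to vanish on the incoherent summands. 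For $s_{m,n} > 0$, one reduces to the boundary case by combining the functional equation with the first term identity: incoherent sections, by a limiting/continuity argument transporting the sign obstruction from $s=0$ via the intertwining operator and the first term identity relating residues at $s_{m,n}$ to zeroth Laurent coefficients at $s_{m',n}$, produce vanishing residues. This line of argument is essentially carried out in \cite{KR3, KR5, T2, Y4}; my plan is to collect and adapt those computations.

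\emph{Main obstacle.} Step 1 is straightforward given the first term identity, but Step 2 in the case $s_{m,n} > 0$ is the delicate point, because the functional equation at $s_{m,n}$ does not by itself relate the residue to its negative. The hard part will be converting the sign dichotomy of the intertwining operator at $s=0$ into a vanishing statement for the residue at $s_{m,n}$, which requires a careful bookkeeping of how the local components $R_n(V_{c,v})$ inherit their signs as quotients of $R_n(V_v)$, together with the explicit relation between the residue at $s_{m,n}$ and the $0$-th Laurent coefficient at $s_{m',n}$ provided by the first term identity.
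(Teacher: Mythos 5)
First, note that the paper does not actually prove Proposition \ref{P:E-1}: it is stated with a citation to \cite{KR3, KR5, GT, T2, Y4} and no argument is given, so there is no in-paper proof to compare against. Measured against the arguments in those references, your Step 1 is essentially the right mechanism for the coherent inclusion, but it has a slip: the Siegel--Weil sections $\Phi^{n,r}(\phi)$ attached to the fixed space $V_r$ project, in the maximal semisimple quotient, only onto the single coherent summand $R_n(\mathcal{C}_0)$ with $\mathcal{C}_0 = \{V^c_{r,v}\}$ attached to the complementary space of $V_r$. To see that an arbitrary coherent $R_n(\mathcal{C})$ attached to a global $V'$ of dimension $m'$ occurs in the image, you must run the first term identity starting from the global space of dimension $m$ complementary to that $V'$ (which varies with $\mathcal{C}$, but whose Siegel--Weil sections all land in the same $I^n_n(s_{m,n},\chi)$). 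With that correction, and using the nonvanishing of the leading Laurent coefficient of the regularized theta integral, Step 1 goes through; at $s_{m,n}=0$ one must also be careful not to quote Lemma \ref{L:Mat02}(ii), whose proof in the paper itself invokes Proposition \ref{P:E-1}(ii).

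The genuine gaps are in your opening reduction and in Step 2. You assert that $\im E_{-1,s_{m,n}}$ is automatically a submodule of the maximal semisimple quotient $\bigoplus_{\mathcal{C}} R_n(\mathcal{C})$; but a quotient of $I^n_n(s_{m,n},\chi)$ factors through the maximal semisimple quotient only if it kills the radical, which for $s_{m,n}>0$ is exactly the nontrivial part of the statement (recall $I^n_n(s_{m,n},\chi_v)=R_n(V_v)+R_n(V'_v)$ is far from semisimple). The standard way to handle both this and your Step 2 is the constant term along the Siegel parabolic: since the ``identity'' term $\Phi_s$ is entire, $E_{-1,s_{m,n}}(\Phi)_{U(Y_n)}$ equals $\operatorname{Res}_{s=s_{m,n}}M_n(s,\chi)\Phi$ restricted to the Levi, so $\ker E_{-1,s_{m,n}}=\ker \operatorname{Res}_{s=s_{m,n}}M_n(s,\chi)$, and one is reduced to a local-global analysis of the residues of the (normalized) intertwining operators, where the coherence condition emerges as a product-of-signs condition. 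Your proposed substitute --- a ``limiting/continuity argument transporting the sign obstruction from $s=0$'' --- is not an argument as it stands: the functional equation at $s_{m,n}>0$ relates $E(s,\Phi)$ to an Eisenstein series at $-s_{m,n}$, not to $\pm E(s,\Phi)$, and you give no mechanism for moving the eigenvalue dichotomy of $M_n(0,\chi)$ to the point $s_{m,n}$. Moreover, even at $s=0$ your sign computation multiplies the local signs of Lemma \ref{L:Mat0}(ii) over all places, but the paper points out that this local statement is only known in the non-archimedean case, which is precisely why Lemma \ref{L:Mat02}(ii) is proved globally. As written, the incoherent vanishing for $s_{m,n}>0$ --- which you correctly identify as the crux --- remains unproved.
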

\vskip 5pt
  
 \subsection{\bf Intertwining operator.}
 One has the standard global intertwining operator 
 \[ M_n(s,\chi):  I^n_n(s,\chi) \longrightarrow I^n_n(-s, \chi), \]
which satisfies 
 \[ M_n(-s,\chi)\circ M_n(s,\chi) = 1.\]
 The normalization for the local intertwining operators is such that one has:
 \[  M_n(s, \chi) = \otimes_v M_{n,v}^*(s, \chi_v). \]
 Moreover, one has the functional equation
 \[  E(s, \Phi) =  E(-s,  M_n(s, \chi) \Phi). \]
\vskip 5pt

 We collect some of the important properties of the intertwining
 operator $M_n(s,\chi)$ that we need.
 \begin{Lem} \label{L:Mat02}
\begin{enumerate}[(i)]
\item $M_n(s,\chi)$ is holomorphic at $s =0$ and $M_n(0,\chi)^2  =1$.
\item $M_n(0,\chi)$ acts as $+1$ on the coherent submodule of
  $I_n^n(0,\chi)$ and $-1$ on the incoherent submodule.
\item The derivative $M_n'(0,\chi)$ commutes with $M_n(0,\chi)$ and
  preserves each irreducible submodule $R_n(\mathcal{C}) \subset
  I_n^n(0,\chi)$.
\end{enumerate}
\end{Lem}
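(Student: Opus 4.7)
The plan is to reduce each assertion to its local counterpart in Lemma \ref{L:Mat0} via the factorization $M_n(s,\chi)=\otimes_v M_{n,v}^*(s,\chi_v)$ as a restricted tensor product of normalized local intertwining operators (via the Lapid--Rallis normalization \cite{LR}), combined with the multiplicity-free decomposition
\[
I_n^n(0,\chi) \;=\; \bigoplus_{\mathcal{C}} R_n(\mathcal{C})
\]
into pairwise non-isomorphic global irreducibles indexed by coherent and incoherent collections $\mathcal{C}$.

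For (i), I would write the Laurent expansion $M_n(s,\chi) = s^{-k} A_{-k} + s^{-k+1} A_{-k+1} + \cdots$ with $k\geq 0$ and $A_{-k}\neq 0$ if $k>0$. The relation $M_n(-s,\chi)\circ M_n(s,\chi)=1$ forces the coefficient of $s^{-2k}$ on the left, namely $(-1)^k A_{-k}^2$, to vanish. Thus $A_{-k}$ is a $G_n(\A)$-equivariant endomorphism of the semisimple, multiplicity-free module $I_n^n(0,\chi)$ with $A_{-k}^2=0$; by Schur's lemma it acts as a scalar on each summand $R_n(\mathcal{C})$, and a scalar whose square vanishes must itself vanish. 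This forces $k=0$, giving holomorphy, and specializing $M_n(-s,\chi)\circ M_n(s,\chi)=1$ at $s=0$ then yields $M_n(0,\chi)^2=1$.

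For (ii), I would evaluate the restricted tensor factorization at $s=0$ and apply Lemma \ref{L:Mat0}(ii) together with the archimedean dichotomy results of \cite{HKS, GS, KS, LZ1, LZ2, LZ3, LZ4}: the operator $M_n(0,\chi)$ acts on $R_n(\mathcal{C}) = \otimes_v' R_n(U_v)$ by the scalar $\prod_v \varepsilon_v$, where $\varepsilon_v=\pm1$ records whether the local summand $R_n(U_v)$ (or $R_n(V^c_{r,v})\otimes \eta_v$ when $\epsilon_0=-1$) is the ``$+$'' or ``$-$'' eigenspace in Lemma \ref{L:Mat0}(ii). By the very definition of coherence given earlier, $\prod_v \varepsilon_v = +1$ precisely on coherent collections and $-1$ on incoherent ones. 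For (iii), differentiating $M_n(-s,\chi)\circ M_n(s,\chi)=1$ at $s=0$ immediately gives the commutation $M_n'(0,\chi)\circ M_n(0,\chi) = M_n(0,\chi)\circ M_n'(0,\chi)$. To show that $M_n'(0,\chi)$ preserves each $R_n(\mathcal{C})$, I would apply the Leibniz rule to the restricted tensor factorization: on a $K$-finite vector only finitely many places contribute, and each resulting term is the tensor product of a single local derivative $M_{n,v}^{*\prime}(0,\chi_v)$, which preserves $R_n(U_v)$ by Lemma \ref{L:Mat0}(iii), with the scalar operators $M_{n,w}^*(0,\chi_w)$ at the remaining places; every such summand preserves $R_n(\mathcal{C})$, and so does their sum.

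The main technical point is the rigorous handling of the restricted tensor factorization and its termwise differentiation at $s=0$, together with the multiplicity-freeness of $I_n^n(0,\chi)$ used in (i). Both are standard once one restricts to the $K$-finite subspace and uses that the Lapid--Rallis normalization makes almost every local factor equal to the identity on the spherical vector in a neighborhood of $s=0$, so that both the product and its derivative reduce to finite operations on any given vector.
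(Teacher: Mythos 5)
Your treatments of (i) and (iii) are correct and essentially coincide with the paper's: holomorphy at $s=0$ from the functional equation $M_n(-s,\chi)\circ M_n(s,\chi)=1$ together with semisimplicity and multiplicity-freeness of $I^n_n(0,\chi)$, and a semi-local Leibniz-rule argument for the derivative. The gap is in (ii). You propose to read off the eigenvalue of $M_n(0,\chi)$ on $R_n(\mathcal{C})$ as a product of local signs $\varepsilon_v$ supplied by Lemma \ref{L:Mat0}(ii), and to identify $\prod_v\varepsilon_v=+1$ with coherence ``by definition''. Two things go wrong. First, Lemma \ref{L:Mat0}(ii) is stated and proved only for non-archimedean $k$; the paper explicitly remarks at the end of its own proof that the local counterpart has \emph{not} been established in the archimedean case, so the signs $\varepsilon_v$ you need at the real places (e.g.\ for $\Sp_{2n}(\R)$ or $\U(p,q)$, where the module structure of $I^n_n(0,\chi)$ is much more intricate, cf.\ Proposition \ref{P:arch1}) are simply not available from the cited references. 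Second, even granting all the local signs, the equivalence ``$\prod_v\varepsilon_v=+1$ iff $\mathcal{C}$ is coherent'' is not the definition of coherence (which is the existence of a global space, or the automorphy of $\otimes_v\eta_v$ when $\epsilon_0=-1$); it is a product-formula statement that requires correctly matching each local eigenvalue with the invariant of $U_v$ at \emph{every} place, again including the archimedean ones.

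The paper circumvents both problems with an indirect global argument: by multiplicity-freeness, $M_n(0,\chi)$ acts on each $R_n(\mathcal{C})$ by a sign $\eta$; on a coherent $\mathcal{C}$ the functional equation gives $E(0,\Phi)=E(0,\eta\Phi)$, and the nonvanishing $E(0,\Phi)\neq 0$ from Proposition \ref{P:E-1}(ii) forces $\eta=+1$; for an incoherent $\mathcal{C}$ one alters the collection at a single finite place $v_0$ (where Lemma \ref{L:Mat0}(ii) does apply) to obtain a coherent collection and compares eigenvalues to conclude $\eta=-1$. To repair your proof you must either establish the archimedean analogue of Lemma \ref{L:Mat0}(ii) or replace the purely local computation by this global input.
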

\begin{proof}
(i) This is shown in the same way as  Lemma \ref{L:Mat0}(i).  
\vskip 5pt

(ii) The case for $G_n=\OO_{n,n}$ is \cite[Lemma 7.4 (i)]{GT}; so we assume $\epsilon_0 \ne -1$ here. First note that $I_n^n(0,\chi)$ is multiplicity free and
  semisimple, and hence part (i) of the lemma implies that $M_n(0,\chi)$
  acts on each irreducible summand as $\eta=\pm1$. Suppose
  $\Phi\in I_n^s(0,\chi)$ is in an irreducible subspace of the
  coherent submodule. By the functional equation of the Eisenstein
  series, we have $E(0, \Phi)=E(0, \eta\Phi)$. But Proposition
  \ref{P:E-1} (ii) implies that $E(0, \Phi)\neq 0$, which implies
  $\eta=1$. Next suppose $\Phi$ is in an irreducible subspace of
  the incoherent submodule. We may assume that $\Phi$ is factorizable as
  $\Phi=\otimes_v\Phi_v$, with $\Phi_v\in R_n(U_v)$ and
     \[ M_{n,v}^*(0,\chi_v) \Phi_{v} = \eta_{v} \Phi_{v} \]
  for some signs $\eta_v$.
  Now pick a  finite place $v_0$ inert in $E$, and define
  \[  \mathcal{C}=\{U_{v_0}'\}\cup \{U_v: v \ne v_0 \},\]
   where
  $U_{v_0}'$ is as defined as in the beginning of \S \ref{SS:str}, so that $U'_{v_0} \ne U_{v_0}$. 
  Then $\mathcal{C}$ is a coherent collection. If we pick a non-zero
  $\Phi'_{v_0}\in R_n(U_{v_0}')$, then Lemma \ref{L:Mat0} implies that
  \[ M_{n,v_0}^*(0,\chi_{v_0}) \Phi_{v_0} = -\eta_{v_0} \Phi_{v_0}. \]
  Now let
  \[ \Phi'=\Phi'_{v_0} \otimes (\otimes_{v\neq v_0}\Phi_v). \] 
  This is in the coherent submodule, and hence 
  \[  M_n(0,\chi) \Phi' = \Phi'. \] 
  But we also have
  \[ M_n(0,\chi) \Phi' = \left( -\eta_{v_0}\prod_{v\neq v_0}\eta_v \right) \cdot \Phi' = -\eta \cdot \Phi'. \] 
Hence $\eta=-1$ as desired. It is worth noting that the proof of (ii)
is somewhat indirect as its local counterpart Lemma \ref{L:Mat0}(ii)
has only been shown in the non-archimedean case. 
\vskip 5pt

(iii) The first statement is proved in the same way as the analogous
statement in Lemma \ref{L:Mat0}(iii).  The second part also follows
from the local analogues (Lemma \ref{L:Mat0}(iii)) by arguing
semi-locally. Namely, we may assume that each section $\Phi\in
R_n(\mathcal{C})$ can be factorized as $\Phi=\otimes_{v \in
  S}\Phi_v\otimes\Phi_S$, where $\Phi_S=\otimes_{v\notin S}\Phi_v$ and
$S$ is a finite set of places such that $\Phi_v$ is spherical for
$v\notin S$. Accordingly, we may decompose the intertwining operator as
$M_n(0,\chi)=\otimes_{v\in S}M^*_{n,v}(0,\chi_v)\otimes M^*_{n,S}(0,
\chi_S)$, where $M^*_{n,S}(0,\chi_S)$ is defined analogously to
$\Phi_S$. By Leibnitz rule, the derivative $M_n'(0,\chi)$ is computed as
\begin{align*}
M_n'(0,\chi) &=\left (\sum_{v_0\in S} {M^*}'_{n,v_0}(0,\chi_{v_0}) \otimes
\left( \underset{\substack{v \ne v_0,\\v\in S}}{\otimes} M^*_{n,v}(0,\chi_v)
\right)\right)\otimes M^*_{n,S}(0,\chi_S)\\
&+\underset{v\in S}{\otimes}M^*_{n,v}(0,\chi_v)\otimes {M^*}'_{n,S}(0,
\chi_S).
\end{align*}
For the unramified part, one sees that
$M^*_{n,S}(0,\chi_S)\Phi_S=c_S\Phi_S$ and
${M^*}'_{n,S}(0,\chi_S)\Phi_S=d_S\Phi_S$ for some scalars $c_S$ and
$d_S$. For each $v\in S$, $M^*_{n,v}(0,\chi_v)$ acts as $\pm 1$ on the
local component of $R_n(\mathcal{C})$, and the derivative
${M^*}'_{n,v}(0,\chi_v)$ preserves the component by Lemma \ref{L:Mat0}
(iii). Hence if one applies $\Phi$ to the above expression of the
derivative $M_n'(0,\chi)$, one sees that $M_n'(0,\chi)\Phi\in
R_n(\mathcal{C})$.
\end{proof}

\vskip 5pt

\subsection{\bf The Laurent coefficients $A^{n,r}_d$.}
Now we consider the Siegel-Weil sections arising from $V_r$ (with
$\dim_E V_r = m$). At the point $s_{m,n} \in X_n(\chi)$ with $s_{m,n} \geq 0$, we have the Laurent
expansion
\[  
E(s, \Phi^{n,r}(\phi)) = 
\frac{A^{n,r}_{-1}(\phi)}{s-s_{m,n}} +  A^{n,r}_0(\phi) + \cdots
\quad \text{if $s_{m,n} > 0$;} 
\] 
or
\[  
E(s, \Phi^{n,r}(\phi)) = 
A^{n,r}_0(\phi) +  A^{n,r}_1(\phi)\cdot s  + \cdots  \quad \text{if
  $s_{m,n} = 0$.} 
\] 
Thus $A_d^{n,r}$ is viewed as a linear map:
\[  A_d^{n,r}  = E_{d, s_{m,n}} \circ \Phi^{n,r}:  \mathcal{S}(Y_n^*
\otimes V_r)(\A) \longrightarrow \mathcal{A}(G_n). \]
\vskip 5pt

Let us close this section with the following proposition, which we do
not need for this paper, but is interesting to take note of nevertheless.
\begin{Prop}  \label{P:A0}
If $A^{n,r}_d$ denotes the leading Laurent coefficient above (so $d=0$
or $-1$), then $A^{n,r}_d$ is  $G_n(\A)$-equivariant. If
$A_{d+1}^{n,r}$ is the second Laurent coefficient, then
\[  A^{n,r}_{d+1}: \mathcal{S}(Y_n^* \otimes V_r)(\A) \longrightarrow
\mathcal{A}(G_n)/ \im A^{n,r}_d \]
is $G_n(\A)$-intertwining.
\end{Prop}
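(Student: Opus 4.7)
The plan is to exploit the fact that $\Phi^{n,r}\colon \mathcal{S}(Y_n^* \otimes V_r)(\A) \to I^n_n(s_{m,n},\chi)$ is $G_n(\A)$-equivariant at the single point $s = s_{m,n}$, while its standard extension to a meromorphic family of sections of $I^n_n(s,\chi)$ fails to be equivariant away from that point. Writing $h = l(h)\, u(h)\, k(h)$ in Iwasawa form with $l(h) \in L(Y_n)(\A)$ and setting $\delta(h) := \det(l(h))$, the standard extension is $\Phi^{n,r}(\phi)_s(h) = (\omega_{n,r}(h)\phi)(0)\cdot |\delta(h)|^{s - s_{m,n}}$, and a direct computation using $\omega_{n,r}(h)(g\phi) = \omega_{n,r}(hg)\phi$ yields
\[
g \cdot \Phi^{n,r}(\phi)_s \;=\; \Phi^{n,r}(g\phi)_s \cdot |\delta(\cdot\, g)/\delta(\cdot)|^{s - s_{m,n}}.
\]
The correction factor equals $1$ at $s = s_{m,n}$, which is precisely the $G_n$-equivariance of $\Phi^{n,r}$ there. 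Since $\delta(ph) = \det(l_p)\delta(h)$ for $p = l_p u_p \in Q(Y_n)$, the function $c(h,g) := \log|\delta(hg)/\delta(h)|$ is left $Q(Y_n)$-invariant, so multiplication by $c(\cdot,g)$ sends sections of $I^n_n(s_{m,n},\chi)$ to sections of the same induced representation.

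From this, the $G_n(\A)$-equivariance of the leading Laurent coefficient $A^{n,r}_d$ is a pole-order argument. When $s_{m,n}>0$ so $d=-1$, the difference $g\cdot \Phi^{n,r}(\phi)_s - \Phi^{n,r}(g\phi)_s$ vanishes to order at least one at $s = s_{m,n}$; since $E(s,-)$ has at most a simple pole there by Proposition \ref{P:analytic}, the corresponding difference of Eisenstein series is holomorphic with vanishing residue, giving $R(g) A^{n,r}_{-1}(\phi) = A^{n,r}_{-1}(g\phi)$. When $s_{m,n}=0$ so $d=0$, $E(s,-)$ is holomorphic at $s=0$ and the equality of the two sections there gives the equivariance of $A^{n,r}_0$ directly.

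For the second Laurent coefficient, Taylor-expanding $|\delta(hg)/\delta(h)|^{s - s_{m,n}} = 1 + (s - s_{m,n}) c(h,g) + O((s - s_{m,n})^2)$ yields
\[
g \cdot \Phi^{n,r}(\phi)_s - \Phi^{n,r}(g\phi)_s = (s - s_{m,n})\, \Psi(g,\phi)_s + O((s - s_{m,n})^2),
\]
where $\Psi(g,\phi) := \Phi^{n,r}(g\phi) \cdot c(\cdot,g) \in I^n_n(s_{m,n},\chi)$ and $\Psi(g,\phi)_s$ is any holomorphic family extending it. Matching the coefficient of $(s-s_{m,n})^{d+1}$ in the identity $R(g) E(s, \Phi^{n,r}(\phi)_s) = E(s, g\cdot\Phi^{n,r}(\phi)_s)$ produces the clean formula
\[
R(g) A^{n,r}_{d+1}(\phi) - A^{n,r}_{d+1}(g\phi) = E_{d, s_{m,n}}\bigl(\Psi(g,\phi)\bigr).
\]
A useful first observation is that $\Psi(g,\cdot)$ is $H_r(\A)$-invariant in $\phi$: since $G_n$ and $H_r$ commute in the dual pair and $\Phi^{n,r}$ itself is $H_r$-invariant, one has $\Psi(g,h\phi) = \Psi(g,\phi)$ for $h \in H_r(\A)$, so $\Psi(g,-)$ descends to a linear map $R_n(V_r) \to I^n_n(s_{m,n},\chi)$.

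The hard part is then to verify that $E_{d, s_{m,n}}(\Psi(g,\phi))$ actually lies in the potentially proper subspace $\im A^{n,r}_d = E_{d, s_{m,n}}(R_n(V_r))$ of $\im E_{d, s_{m,n}} \cong \bigoplus_{\mathcal{C}} R_n(\mathcal{C})$ (Proposition \ref{P:E-1}), even though $\Psi(g,\phi)$ need not itself belong to $R_n(V_r)$. For this I would analyze the constant term of $E(s, \Psi(g,\phi)_s)$ along $Q(Y_n)$, which by the Langlands formula equals $\Psi(g,\phi)_s + M_n(s,\chi)\Psi(g,\phi)_s$, and express the effect of multiplication by the cocycle $c(\cdot,g)$ on the intertwining operator in terms of the derivative of $M_n(s,\chi)$ at $s_{m,n}$ applied to $\Phi^{n,r}(g\phi)$. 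The analog of Lemma \ref{L:Mat02}(iii) — that this derivative commutes with the intertwining operator at the reducibility point and preserves each irreducible summand — combined with the local submodule structure of Propositions \ref{P:str} and \ref{P:arch1}, identifies $E_{d, s_{m,n}}(\Psi(g,\phi))$ with the image under $E_{d, s_{m,n}}$ of a section in $R_n(V_r)$, which is exactly $\im A^{n,r}_d$.
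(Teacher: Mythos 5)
Your reduction of the statement to controlling an error term is correct and is essentially the paper's own starting point: writing $A^{n,r}_j = E_{j,s_{m,n}}\circ\Phi^{n,r}$ and expanding the action of $g$ on the standard extension of $\Phi^{n,r}(\phi)$ in powers of $s-s_{m,n}$, the leading coefficient is equivariant for free, and the obstruction for the next coefficient is $E_{d,s_{m,n}}$ applied to the derivative term $I'(s_{m,n},\chi)(g)\Phi^{n,r}(\phi)$, which is exactly your $\Psi(g,\phi)$. Up to the formula $R(g)A^{n,r}_{d+1}(\phi)-A^{n,r}_{d+1}(g\phi)=E_{d,s_{m,n}}(\Psi(g,\phi))$ your argument agrees with the paper.

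The gap is in the ``hard part.'' A first, technical, error: the constant term of the Siegel Eisenstein series along $Q(Y_n)$ is not $\Psi(g,\phi)_s+M_n(s,\chi)\Psi(g,\phi)_s$ except when $n=1$; there are intermediate Weyl-orbit terms. More seriously, the ``analog of Lemma \ref{L:Mat02}(iii)'' you invoke at $s=s_{m,n}>0$ is not available: the proof of that lemma rests on $M_n(0,\chi)^2=1$ and on the semisimplicity of $I_n^n(0,\chi)$, neither of which holds at $s_{m,n}>0$, where $I_n^n(s_{m,n},\chi)$ is not semisimple and has no decomposition into irreducible summands for a derivative to preserve. The real content of the proposition is the purely local question of where multiplication by the cocycle $c(\cdot,g)$ (equivalently, the operator $I_v'(s_{m,n},\chi_v)(X)$ for $X\in\mathfrak{p}$) sends the submodule $R_n(V_{r,v})$ inside the non-semisimple module $I_n^n(s_{m,n},\chi_v)$, and the constant-term strategy never engages with this. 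The paper instead argues semi-locally via Leibniz: at each place the derivative term either stays in $\im\Phi^{n,r}_v$ or projects to a constituent $R_n(U_v)$ attached to a different local space, in which case the resulting global collection is incoherent and is killed by $E_{d,s_{m,n}}$ (Proposition \ref{P:E-1}). This is immediate at finite and complex places and at real places with $\epsilon_0=-1$, but at real places with $\epsilon_0\ne-1$ it requires the explicit $K$-type transition coefficients of \cite{L1,L2}: one shows that $I_v'(s_{m,n},\chi_v)(X)$ moves $K$-types only to ``adjacent'' ones, hence can only reach $R_n(U_v)$ for $U_v$ of adjacent signature, and any such single-place modification of the coherent collection is incoherent. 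Finally, your observation that $\Psi(g,-)$ factors through $R_n(V_r)$ as a map is correct but does not place its image inside $R_n(V_r)$ as a subspace, so it cannot substitute for this analysis.
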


\begin{proof}
To ease the notation we denote $I_n^n(s,\chi)$ simply by $I(s,\chi)$.
Since 
\[   A_j^{n,r}  = E_{j, s_{m,n}} \circ \Phi^{n,r}, \]
we see that $A^{n,r}_d$ is $G_n(\A)$-equivariant but $A_{d+1}^{n,r}$
is $G(\A)$-equivariant modulo the image of $E_{d, s_{m,n}}$. Indeed, for
any $g \in G(\A)$, one has
\[  A_{d+1}^{n,r}( \omega_{n,r}(g) \phi)  
=  g \cdot A_{d+1}^{n,r}(\phi)  + E_{d, s_{m,n}}( I'(s_{m,n}, \chi)(g)
\Phi^{n,r}(\phi) ) \]
where
\[   I'(s_{m,n}, \chi)(g) = \frac{d}{ds} I(s, \chi)(g)  |_{s= s_{m,n}}
\in {\rm End}_K(I(s_{m,n},\chi)). \] 
Here let us note that ${\rm End}_K$ indicates the space of $K$-equivariant operators.
Thus we would like to show that
\[  E_{d, s_{m,n}}( I'(s_{m,n}, \chi)(g) \Phi^{n,r}(\phi) )  \in {\rm Im} A^{n,r}_{d}. \]
Now we need to argue semi-locally as we did in the proof of Lemma
\ref{L:Mat02} (iii). Namely fix $\Phi^{n,r}(\phi)$ so that it is
factored as 
\[  \Phi^{n,r}(\phi)=\otimes_{v\in
  S}\Phi_v^{n,r}(\phi_v)\otimes \Phi_S^{n,r}(\phi_S) \]
  for a finite set $S$ of places such that for each $v\notin S$, $\Phi_v^{n,r}(\phi_v)$ is
spherical. Also since $g\in G_n(\A)$ is fixed, we assume $S$ is large
enough so that $g_v$ belongs to a hyperspecial maximal compact subgroup for $v\notin S$. By
Leibnitz rule, one has
\begin{align*}
&I'(s_{m,n}, \chi)(g)\left( \Phi^{n,r}(\phi)\right)\\ 
=&\left(\sum_{v_0\in S}I_{v_0}'(s_{m,n}, \chi_{v_0})(g_{v_0}) \Phi_{v_0}^{n,r}(\phi_{v_0})
\otimes  \left( \prod_{\substack{v \ne v_0\\v\in S}}  I_v(s_{m,n}, \chi_v)(g_v)
  \Phi_v^{n,r}(\phi_v) \right)\right)\otimes I_{S}(s_{m,n},
\chi_{S})(g_{S}) \Phi_{S}^{n,r}(\phi_{S})\\
&+\left(\prod_{v\in S}I_v(s_{m,n},
  \chi_v)(g_v)\Phi_v^{n,r}(\phi_v)\right)\otimes
I_{S}'(s_{m,n},\chi_{S})(g_{S}) \Phi_{S}^{n,r}(\phi_{S}).
\end{align*}

Note that for any $v\notin S$, $I'_v(s_{m,n},
\chi_v)(g_v)\Phi_v^{n,r}(\phi_v)=0$ because $I_v(s, \chi_v)(g_v)$ is independent of $s$ (since $g_v$ lies in the hyperspecial maximal compact subgroup). Hence in the above expression of the derivative, the second
term is simply zero. Thus the derivative is written as
\[
I'(s_{m,n},\chi)(g)\left( \Phi^{n,r}(\phi)\right)
=\sum_{v_0\in S}I_{v_0}'(s_{m,n}, \chi_{v_0})(g_{v_0}) \Phi_{v_0}^{n,r}(\phi_{v_0})
\otimes  \left( \prod_{v \ne v_0}  I_v(s_{m,n}, \chi_v)(g_v)
  \Phi_v^{n,r}(\phi_v) \right).
\]

Now we claim that for any $v_0$,
 \[   I_{v_0}'(s_{m,n}, \chi_{v_0})(g_{v_0})
 \Phi_{v_0}^{n,r}(\phi_{v_0})  \otimes \left( \prod_{v \ne v_0}
   I_v(s_{m,n}, \chi_v)(g_v) \Phi_v^{n,r}(\phi_v) \right) \]
is either incoherent or lies in ${\rm Im} \Phi^{n,r}$, so that
\[  E_{d, s_{m,n}}\left(I_{v_0}'(s_{m,n}, \chi_{v_0})(g_{v_0})
  \Phi_{v_0}^{n,r}(\phi_{v_0}) \otimes \left( \prod_{v \ne v_0}
    I_v(s_{m,n}, \chi_v)(g_v) \Phi_v^{n,r}(\phi_v) \right)  \right) \in
{\rm Im} A_{d}^{n,r}. \]
This implies the proposition
(on recalling  Proposition \ref{P:E-1}).
\vskip 5pt

For each $v$, $I_v(s_{m,n}, \chi_v)(g_v)
\Phi_v^{n,r}(\phi_v)\in\im\Phi_v^{n,r}$ because $\Phi_v^{n,r}$ is
$G_v(F_v)$-equivariant. Hence it remains to examine the derivative term at $v_0$.
 In what follows,
the subscript $v_0$ is omitted because everything is over $F_{v_0}$.
\vskip 5pt
\noindent (a) When $v_0$ is non-archimedean, the claim is obvious,
since  $ I'(s_{m,n}, \chi)(g)
\Phi^{n,r}(\phi)$  either  lies in ${\rm Im}
\Phi^{n,r}$ or else projects to $R_n(U)$ where $U$ is of
complementary dimension to $V_r$ but belongs to a different Witt
tower. 

\vskip 5pt

\noindent (b) When $v_0$ is complex, there is nothing to show since
$I(s_{m,n}, \chi)  = R_n(V)$ by \cite{LZ3}.
\vskip 5pt

\noindent (c)  When    $v_0$ is real and $\epsilon_0 = -1$, the same
argument as in the non-archimedean case works, since the structure of
$I(s_{m,n},\chi)$ here is similar to that in the non-archimedean
case. However, when $\epsilon_ 0 \ne -1$,  
 the claim is not so obvious, since the module structure of
 $I^n_n(s_{m,n},\chi)$ is more complicated. Since we do not need this
 proposition in the rest of the paper, we shall only give a sketch of
 the argument in these cases.  
\vskip 5pt

Recall first that $I^n_n(s_{m,n} ,\chi)$ is $K$-multiplicity-free
(Prop. \ref{P:arch1}(i)) and the operator $I(s_{m,n}, \chi)(k)$ is
independent of $s$, so that $I'(s_{m,n}, \chi)(k) \Phi^{n,r}(\phi) \in
{\rm Im} \Phi^{n,r}$. Thus the main point is to verify the claim for
elements  $X \in \mathfrak{p}$, where $\mathfrak{g} = \mathfrak{k}
\oplus \mathfrak{p}$ is the Cartan decomposition of the Lie algebra
$\mathfrak{g}$ of $G_n$. 
\vskip 5pt

In \cite{L1, L2}, the module structure of $I(s,\chi)$ is completely
determined by studying how the K-types in $I(s,\chi)$ are moved around
by the operators  $I(s,\chi)(X)$ with $X \in \mathfrak{p}$. In
particular, the transition coefficients from one $K$-type to another
are precisely determined and turn out to be linear functions of
$s$. Reducibility occurs at those $s$ when some of the transition
coefficients vanish.  Since the derivatives of these transition
coefficients are nonzero constants, 
we deduce from these results in \cite{L1, L2} that the derivative
$I'(s_{m,n},\chi)(X)$ moves the $K$-types in $I(s_{m,n},\chi)$ in the
same way as $I(s,\chi)(X)$ does for generic $s$. 
\vskip 5pt

By examining the different cases, one arrives at the following simple
conclusion.   If $\Phi $ belongs to a particular $K$-type $\tau$ of
$I(s_{m,n},\chi)$, then  $I'(s_{m,n},\chi)(X)(\Phi)$ can only have
components belonging to  $K$-types $\tau'$ which are ``adjacent" to
$\tau$. Here, $\tau'$ is adjacent to $\tau$ if its highest weight
differs from $\tau$ in only one coordinate and in that coordinate, the
difference is the minimum allowed in the given case. 
\vskip 5pt

Now consider the natural projection map 
\[ \pi:  I(s_{m,n},\chi) \longrightarrow \oplus_U R_n(U) \]
 where $\dim U$ is complementary to $\dim V_r$.  The above discussion
 implies that if $\pi(\Phi) \in R_n(U_0)$, then
 $\pi(I'(s_{m,n},\chi)(X)(\Phi))$ can only have nonzero components in
 $R_n(U)$ for those $U$ whose signature is adjacent to that of $U_0$
 in the obvious sense.  
 Thus, for example, in the case $\epsilon_0 = 0$, if the signature of
 $U_0$ is $(p,q)$, then the adjacent signatures are $(p,q)$, $(p+1,
 q-1)$ and $(p-1, q+1)$. 
 This proves the desired claim and hence the proposition.
 \end{proof}
 \vskip 15pt

\section{\bf The Siegel-Weil formula}
In this section, we recall the known cases of the Siegel-Weil formula.  
\vskip 5pt

\subsection{\bf Anisotropic case.}
We first consider the case $r=0$, i.e. the pair $(W_n, V_0)$, so $H_0=H(V_0)$ is
anisotropic. In this case, the Siegel-Weil formula is due to Weil
\cite{We}, Kudla-Rallis \cite{KR1}, Ichino \cite{I3}  and Yamana \cite{Y2}.

\begin{Thm}  \label{T:anisot}
For $\phi \in \mathcal{S}(Y_n^* \otimes V_0)(\A)$, the Eisenstein
series $E (s,  \Phi^{n,0}(\phi))$ is holomorphic at $s = s_{m,n} = (m-d(n))/2$
and 
\[  E ( s_{m,n},
\Phi^{n,0}(\phi))=  c_{m,n} \cdot \frac{\tau(H_0)}{[E:F]}   \cdot I_{n,0}(\phi)  \]
with 
\[  c_{m,n} =  \begin{cases}
1, \text{  if $s_{m,n} > 0$, } \\
2, \text{   if $s_{m,n} \leq  0$.}\end{cases} \]
Moreover, the term $\tau(H_0)/ [E:F] = 1$, except when $H_0 = \OO_1$ in which case it is equal to $1/2$. 
\end{Thm}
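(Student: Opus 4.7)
Since $V_0$ is anisotropic, the quotient $[H_0]$ is compact, so the theta integral $I_{n,0}(\phi)$ converges absolutely and there are no analytic issues on the spectral side. The content of the theorem lies in identifying this integral with (the value of) a Siegel Eisenstein series. The proof naturally splits according to the sign of $s_{m,n} = (m-d(n))/2$.

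\textbf{Case 1 ($s_{m,n} > 0$, i.e., $m > d(n)$): Weil's convergent range.} Here the plan is to run Weil's classical unfolding argument. Both sides are absolutely convergent. Start from
\[
\int_{[H_0]} \theta_{n,0}(\phi)(g,h)\, dh = \int_{[H_0]} \sum_{x\in (Y_n^*\otimes V_0)(F)} (\omega_{n,0}(g,h)\phi)(x)\, dh,
\]
exchange sum and integral, and decompose the sum into $H_0(F)$-orbits on $(Y_n^*\otimes V_0)(F) \cong V_0(F)^n$. Since $V_0$ is anisotropic, only the open orbit of regular $n$-tuples (those spanning an $n$-dimensional nondegenerate subspace; parametrized by Gram matrices) contributes nontrivially after integration against $dh$. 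The stabilizer of a regular $n$-tuple is precisely (an image of) the Siegel parabolic $Q(Y_n)(F)$ in the opposite group, so the orbital integral collapses to a sum over $Q(Y_n)(F)\backslash G_n(F)$ whose summand is exactly $\Phi^{n,0}(\phi)$. The Tamagawa factor $\tau(H_0)/[E:F]$ and the constant $c_{m,n}=1$ then fall out of bookkeeping the measures. This step is the one carried out in \cite{We, KR1}.

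\textbf{Case 2 ($s_{m,n}\le 0$, i.e., $m\le d(n)$): First term / boundary range.} The Eisenstein series $E(s,\Phi^{n,0}(\phi))$ is already holomorphic at $s=s_{m,n}$ by Proposition \ref{P:analytic}. The plan is to deduce the identity at $s_{m,n}$ from Case 1 applied to the complementary space $V_0^c$ (with $\dim V_0^c = 2d(n)-m \ge d(n)$, in the convergent range or on the boundary). Concretely:
\begin{enumerate}[(a)]
\item Apply the functional equation $E(s,\Phi) = E(-s, M_n(s,\chi)\Phi)$ to move $s_{m,n}\le 0$ to $-s_{m,n}\ge 0$.
\item Identify the image of $\Phi^{n,0}$ inside $I^n_n(s_{m,n},\chi)$ (which is $R_n(V_0)$) with the corresponding submodule on the $-s_{m,n}$-side via $M_n(s,\chi)$ and the local Siegel-Weil sections for $V_0^c$; this is essentially the content of the Ikeda map $\Ik^{n,0}$ in this degenerate situation.
\item Feed the resulting Eisenstein series into Case 1 (or the first term identity of Kudla-Rallis-Ichino-Yamana for the complementary pair) to re-express it in terms of $I_{n,0}(\phi)$.
\end{enumerate}
The doubling $c_{m,n}=2$ comes from the fact that, after applying the functional equation, the coherent submodule receives contributions from both $R_n(V_0)$ and its image under $M_n(s,\chi)$ at the symmetric-looking point; equivalently, by Lemma \ref{L:Mat02}(ii), $M_n(0,\chi)$ acts as $+1$ on $R_n(V_0)$ inside the coherent part, so the two sides of the functional equation coincide and add up rather than cancel.

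\textbf{Main obstacle.} Case 1 is essentially bookkeeping once the orbit decomposition is in hand. The delicate step is Case 2, and especially the boundary $s_{m,n}=0$ where the functional equation is a tautology. There one must track the normalized intertwining operator and the Laurent expansion of $E(s,\Phi^{n,0}(\phi))$ carefully, using Lemma \ref{L:Mat02} together with the local structural results (Propositions \ref{P:str} and \ref{P:arch1}) to pin down the coefficient $2$ rather than some other constant. This is the input of \cite{I3, Y2}, and is what makes the statement of the theorem uniform across the three regimes.
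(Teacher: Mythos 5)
First, a point of calibration: the paper does not prove Theorem \ref{T:anisot} at all. It is imported wholesale from the literature, with the precise attribution given right after the statement: Weil \cite{We} for $m > 2\cdot d(n)$, Kudla--Rallis \cite{KR1} for $\epsilon_0=1$, Ichino \cite{I3} for $\epsilon_0=0$ with $d(n)<m\le 2\cdot d(n)$, and Yamana \cite{Y2} for $\epsilon_0=0$ with $m\le d(n)$ (nothing to check for $\epsilon_0=-1$). So there is no in-paper argument to compare yours against; the question is only whether your sketch is a viable route to these cited results.

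It is not, as written, because your Case 1 conflates two genuinely different regimes. The unfolding argument you describe requires absolute convergence of the Eisenstein series $E(s,\Phi^{n,0}(\phi))$ at $s=s_{m,n}$, and that holds only when $s_{m,n}>d(n)/2$, i.e.\ $m>2\cdot d(n)$ (Weil's case). For $d(n)<m\le 2\cdot d(n)$ the point $s_{m,n}$ lies outside the half-plane of absolute convergence, the interchange of sum and integral is illegitimate, and even the holomorphy of $E(s,\Phi^{n,0}(\phi))$ at $s_{m,n}$ — which is part of the assertion, since Proposition \ref{P:analytic} allows a simple pole at such points for general sections — is a serious theorem. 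This range is exactly the content of \cite{KR1} and \cite{I3}, proved by induction on Fourier coefficients/constant terms, not by "bookkeeping once the orbit decomposition is in hand." Your Case 2 outline is closer in spirit to what Yamana does, but step (b)--(c) hides a real issue: the first term identity in the form of Theorem \ref{T:1st}(i) only reaches sections of the form $\Ik^{n,r}(\pi_{K_{H_r}}\phi')$, and the composite $\Ik^{n,r}\circ\pi_{K_{H_r}}$ is not surjective onto $\mathcal{S}(Y_n^*\otimes V_0)(\A)$; getting the identity for \emph{all} $\phi$ (which is what Theorem \ref{T:anisot} and Theorem \ref{T:yamana} assert) is precisely the refinement supplied by \cite{Y2}. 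Finally, your explanation of the constant $c_{m,n}=2$ via Lemma \ref{L:Mat02}(ii) is heuristically reasonable at $s_{m,n}=0$ but does not by itself produce the factor $2$ for $s_{m,n}<0$, where the two sides of the functional equation live at different points and do not simply "add up."
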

\vskip 5pt
More precisely, Weil \cite{We} established the case when $m > 2 \cdot
d(n)$, Kudla-Rallis \cite{KR1} established the case when $\epsilon_0 =
1$, Ichino \cite{I3} showed the case when $\epsilon_0=0$ and $d(n) < m
\leq 2 \cdot d(n)$, and Yamana \cite{Y2} completed the case
$\epsilon_0 = 0$ and $m \leq d(n)$. There is nothing to check when
$\epsilon_ 0 = -1$. 
\vskip 5pt

For later purposes, it is necessary for us to compute the constant term of $I_{n,0}(\phi)$ 
 along the maximal parabolic subgroup $Q_1= Q(Y_1)$. We have:
 \vskip 5pt
 
 \begin{Prop} \label{P:aniso}
 For 
 \[ \phi \in  \mathcal{S}(Y_n^* \otimes V_0)(\A) 
=   \mathcal{S}(y_1^* \otimes V_0)(\A) \otimes  
\mathcal{S}({Y'}_{n-1}^* \otimes V_0)(\A) \]
 we have
 \[  I_{n,0}(\phi)_{U_1}|_{\GL(Y_1)(\A) \times G_{n-1}(\A)} 
= \chi \cdot |-|^{\frac{1}{2} m_0}  \boxtimes
I_{n-1,0}(\phi(0,-)). \] 
  \end{Prop}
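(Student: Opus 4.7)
The plan is to compute the constant term by unfolding the theta integral, interchanging the order of integration, and then carrying out a Fourier expansion along the center $Z_1$ of $U_1$. Concretely, by definition and absolute convergence of the theta integral (which holds because $r=0$ places us in Weil's convergent range),
\[
I_{n,0}(\phi)_{U_1}(g) = \frac{1}{\tau(H_0)} \int_{[H_0]} \int_{[U_1]} \theta_{n,0}(\phi)(ug, h)\, du\, dh.
\]
Expanding the theta kernel as a sum over $(Y_n^* \otimes V_0)(F)$ and writing each lattice point as $x = v_1 \otimes y_1^* + x'$ with $v_1 \in V_0(F)$ and $x' \in ((Y'_{n-1})^* \otimes V_0)(F)$, the problem reduces to understanding how $U_1$ acts on each mode of this decomposition through $\omega_{n,0}$.

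The first key step is that $Z_1 \subset N(Y_n)$, so the formula from \S\ref{SS:weil} for the action of $N(Y_n)$ gives an explicit character of $Z_1$ on the Weil representation: for $z(c) \in Z_1$,
\[
\omega_{n,0}(z(c))\phi(x) = \psi\bigl(c \cdot q(v_1)\bigr)\phi(x),
\]
where $q$ is the associated $\epsilon$-Hermitian form on $V_0$ (with trace from $E$ to $F$ absorbed in the unitary case). Integrating over $[Z_1]$, whose Tamagawa volume is $1$, kills all terms with $q(v_1) \neq 0$, and the anisotropy of $V_0$ then forces $v_1 = 0$. When $\epsilon_0 = -1$ so that $V_0 = 0$, the claim is vacuous.

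The second step is to integrate over $[U_1/Z_1]$ restricted to the slice $\{v_1 = 0\}$. Although $U_1 \not\subset N(Y_n)$, a direct computation, most transparently carried out in the intermediate model of $\omega_{n,0}$ obtained by partial Fourier transform along $y_1^* \otimes V_0$, shows that the action of $U_1/Z_1$ on the slice $\{v_1 = 0\}$ is trivial. The essential point is that the nontrivial part of the $U_1/Z_1$-action is a translation in the $y_1^* \otimes V_0$-direction whose amplitude is linear in $v_1$, hence vanishes on the slice $v_1 = 0$; the residual multiplicative character also evaluates to $1$. This verification is the main technical obstacle, since elements of $U_1/Z_1$ neither preserve the polarization $Y_n^* \otimes V_0$ nor lie in the Siegel parabolic, so their Weil representation action involves a genuine mixing of translation and Fourier transform that only simplifies on the $\{v_1 = 0\}$ slice.

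After these reductions, the inner double integral collapses to
\[
\sum_{x' \in ((Y'_{n-1})^* \otimes V_0)(F)} \bigl(\omega_{n,0}(g, h)\phi\bigr)(0, x').
\]
For $g = (a, g') \in \GL(Y_1) \times G_{n-1}$ in the Levi of $Q_1$, the formulas from \S\ref{SS:weil} give $\omega_{n,0}(a, 1)\phi(0, x') = \chi(a)\,|a|^{m_0/2}\,\phi(0, x')$, since $\det_{Y_n}(a) = a$ and $\dim V_0 = m_0$, while $g' \in G_{n-1}$ acts through the inclusion $G_{n-1} \hookrightarrow G_n$ and coincides with $\omega_{n-1,0}(g')$ acting on $\phi(0, -) \in \mathcal{S}((Y'_{n-1})^* \otimes V_0)(\A)$. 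Combining with the remaining integration over $[H_0]$ and recognizing the sum as $\theta_{n-1,0}(\phi(0,-))(g', h)$ yields the desired identity
\[
I_{n,0}(\phi)_{U_1}((a, g')) = \chi(a)\,|a|^{m_0/2}\cdot I_{n-1,0}(\phi(0, -))(g').
\]
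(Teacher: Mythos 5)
Your argument is correct and follows essentially the same route as the paper's proof: unfold the theta integral, split the lattice sum according to the $y_1^*$-component, use the $[Z_1]$-integration together with anisotropy of $V_0$ to kill the terms with $v_1 \neq 0$, and read off the Levi action on the surviving $v_1 = 0$ slice. The only difference is cosmetic: you spell out why $U_1/Z_1$ acts trivially on that slice, whereas the paper simply asserts the $u$-independence of the $v_1=0$ term.
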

 \vskip 10pt
 
 \begin{proof}
 If $V_0 = 0$, there is nothing to prove. Thus we assume that $V_0 \ne
 0$; in particular, $V_0$ is not symplectic and so $G_n$ is not an
 orthogonal group.
 This implies that the unipotent radical $U_1$ of $Q_1$ sits in a
 short exact sequence
 \[  \begin{CD} 
 1 @>>> Z_1 @>>> U_1 @>>> U_1/Z_1 @>>> 1 \end{CD} \]
 with
 \[  Z_1 \cong \{ \text{$\epsilon$-Hermitian forms on $Y_1^*$} \} \cong \mathbb{G}_a. \]
Now  we compute the constant term. For $g = (t, g_0) \in \GL(Y_1)(\A)
\times G_{n-1}(\A)$, we have:
 \begin{align*} 
 I_{n,0}(\phi)_{U_1}(g) 
 = & \tau(H_0)^{-1} \cdot \int_{[U_1]} \, \int_{[H_0]} \theta_{n,0}(\phi)(ug, h) \, dh \, du  \\
 = & \tau(H_0)^{-1} \cdot \int_{[H_0]} \int_{[U_1]} \left( \sum_{v_0 \in (y_1^* \otimes V_0)(F)}
   \sum_{\alpha \in ({Y'}_{n-1}^* \otimes V_0)(F)}   \omega_{n,0}(ug, h)  \phi
   (v_0, \alpha)  \right) \, du \, dh.
 \end{align*}
 We may break the sum over $v_0$ into two parts, corresponding to $v_0
 = 0$ and $v_0 \ne 0$. 
 For $v_0 = 0$, the term in the parenthesis is:
 \[    \sum_{\alpha \in ({Y'}_{n-1}^* \otimes V_0)(F)} \omega_{n,0}(g,h)\phi(0,
 \alpha) = \chi(t) \cdot |t|^{\frac{1}{2} m_0} \cdot
 \theta_{n-1, 0}(\phi(0,-))(g_0, h). \]
Since this  is independent of $u$, the integral over $[U_1]$
disappears and the outer integral over $[H_0]$ gives
$I_{n-1,0}(\phi(0,-))$: this is the desired RHS in the identity of the
proposition.
 \vskip 5pt
 
 To prove the proposition, it remains to show that the contribution
 from the term $v_0 \ne 0$ vanishes. For this, we shall show that the
 integral
 \[  \int_{[Z_1]}  \sum_{v_0 \ne 0}  \sum_{\alpha  \in ({Y'}^*_{n-1}
   \otimes V_0)(F)}   \omega_{n,0}(zg, h)  \phi (v_0, \alpha)   \, dz  =
 0. \] 
 Indeed, 
 \[  \omega_{n,0}(zg, h)\phi(v_0, \alpha) 
= \psi(\frac{1}{2} \cdot z \cdot (v_0,v_0)) \cdot \phi(v_0, \alpha). \]
 Since $V_0$ is anisotropic and $v_0 \ne 0$, we see that $(v_0,v_0) \ne 0$ so that 
 \[  z \mapsto \psi(\frac{1}{2} \cdot z \cdot (v_0,v_0)) \]
 is a nontrivial character on $[Z_1]$. This implies that the above
 integral over $[Z_1]$ vanishes, as desired.
 \end{proof}
 
 \vskip 5pt
  
\subsection{\bf First term identity.}
 In this section, we recall the regularized Siegel-Weil formula in the
 first term range.  The following theorem was shown by Kudla-Rallis \cite{KR5}, Moeglin \cite{Mo}, Ichino
\cite[Theorem 3.1]{I1} and \cite[Theorems 4.1 and 4.2] {I2}), Jiang-Soudry \cite[Theorem 2.4]{JS}  and Yamana \cite[Proposition 5.8]{Y2}: 
\begin{Thm} \label{T:1st}
(i) Suppose that $m+ m' = 2 \cdot d(n)$ with $m > m'>0$ and
$m'=m_0+2r'$ (here we allow $r' = 0$).
 Then for $\phi \in \mathcal{S}(Y_n^* \otimes V_r)(\A)$, we have:
\[  
 A^{n,r}_{-1}(\phi) = \kappa_{r,r'}  \cdot
 B^{n,r'}_{\ast}(\Ik^{n,r}(\pi_{K_{H_r}}\phi)),  
\]
where $\kappa_{r,r'}$ is an explicit nonzero constant defined in
(\ref{E:iwasawa}), and
\[  \ast = \begin{cases}
0, \text{  if $H_{r'} = \OO_{1,1}$ or $r'=0$;} \\
-1 \text{  otherwise.} \end{cases} \]
\vskip 5pt

(ii) In the boundary case, where $m = m' = d(n)$ (and $r > 0$),  one
has:
\[ A^{n,r}_0(\phi)  = 2 \cdot B^{n,r}_{-1}(\phi) . \]
If $H_r = \OO_{1,1}$ (so that $G_n = \Sp_2$), however, both sides of the above identity are $0$, in which case 
one has
\[ A^{n,r}_1(\phi)  = 2 \cdot \frac{\tau(H_r)}{[E:F]} \cdot B^{n,r}_{0}(\phi) = B^{n,r}_0(\phi). \]
\end{Thm}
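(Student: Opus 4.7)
The plan is to express both sides of the identity as Laurent coefficients of Eisenstein series on $G_n$ and then compare them via their constant terms along a suitable parabolic, inducting on $n$. By Proposition \ref{P:non-Siegel}, $B^{n,r'}(s,\phi') = E^{n,r'}(s, f^{n,r'}(s, \pi_{K_{H_{r'}}}\phi'))$ is a non-Siegel Eisenstein series, while $A^{n,r}(s,\phi) = E^{n,n}(s, \Phi^{n,r}(\phi))$ is a Siegel Eisenstein series. Thus both sides of the claimed identity are automorphic forms on $G_n$ constructed from Eisenstein series, and the strategy is to match their constant terms along a cuspidal-support-revealing parabolic.

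The inductive step proceeds by computing the constant terms of both sides along the maximal parabolic $Q_1 = Q(Y_1) \subset G_n$ with Levi $\GL_1 \times G_{n-1}$. For the Siegel Eisenstein series on the left-hand side, the Langlands constant term formula expresses the result in terms of the standard intertwining operator $M_n(s,\chi)$, whose analytic behavior at $s = s_{m,n}$ is controlled by Lemma \ref{L:kappa}(i). For the non-Siegel Eisenstein series on the right-hand side, Proposition \ref{P:key2} provides a formula for $f^{n,r'}(s,\widetilde{\phi''})|_{G_{n-1}}$ as a product $\alpha_{r'} \cdot Z_1(-s-(n-r')+\rho_{H_{r'}},\phi_1) \cdot f^{n-1,r'-1}(s+\tfrac{1}{2}, \Ik^{n-1,r',r'-1}(\pi_{K_{H_{r'}}}\phi''))$, while the anisotropic contribution on the Levi is handled by Proposition \ref{P:aniso}. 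Matching the Laurent coefficients on both sides then reduces the identity to the statement of the theorem applied to the smaller pair $(W_{n-1}, V_{r'-1})$, available by the inductive hypothesis. The base case $r'=0$ is the anisotropic Siegel-Weil formula of Theorem \ref{T:anisot}, and it fixes the normalizing constant $\kappa_{r,r'}$ by comparison with the measure identity (\ref{E:iwasawa}).

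The most delicate points arise in the boundary case (ii), where $s_{m,n} = 0$ and both Laurent expansions start at their $0$-th coefficients. Here the factor of $2$ in $A^{n,r}_0 = 2 \cdot B^{n,r}_{-1}$ emerges from Lemma \ref{L:Mat02}(ii): the functional equation $E(0, \Phi) = E(0, M_n(0,\chi)\Phi)$ together with the fact that $M_n(0,\chi)$ acts as $+1$ on the coherent submodule containing $\im \Phi^{n,r}$ produces a doubling effect relative to the one-sided regularized theta integral. The remaining obstacles are primarily bookkeeping: tracking the orders and leading coefficients of $P_{n,r}(s)$, $c_r(s)$, and $E_{H_r}(s,-)$ at the relevant points to confirm that the explicit constant $\kappa_{r,r'}$ emerges correctly, and separately analyzing the degenerate subcases $H_{r'} = \OO_{1,1}$ (where $E_{H_{r'}}(s,-)$ is holomorphic at $\rho_{H_{r'}}=0$, forcing the index shift $\ast = 0$) and $r' = 0$ (where the anisotropic Siegel-Weil formula directly applies, with a similar index shift), as well as the further exceptional case where $H_r = \OO_{1,1}$ and $G_n = \Sp_2$ in (ii). These are technical but not conceptually difficult once the induction framework above is in place.
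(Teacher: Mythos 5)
The paper does not prove Theorem \ref{T:1st} at all: it is imported wholesale from the literature (Kudla--Rallis \cite{KR5}, Moeglin \cite{Mo}, Ichino \cite{I1, I2}, Jiang--Soudry \cite{JS}, Yamana \cite{Y2}), and those proofs rest on a direct analysis of the regularized theta integral, the modules $R_n(V)$, and Fourier coefficients --- not on the constant-term induction you describe. What you have reconstructed is instead the skeleton of the paper's proof of Theorem \ref{T:main}, which \emph{uses} Theorem \ref{T:1st} as input (part (ii) is the base case of that induction, and part (i) is invoked in the analysis of the $w^-$ term). So your proposal is, at minimum, in danger of circularity: the tools you cite (Propositions \ref{P:key2}, \ref{P:aniso}, Lemmas \ref{L:kappa}, \ref{L:Mat02}) are deployed in the paper only \emph{after} Theorem \ref{T:1st} is available.

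More seriously, the logic of your induction runs in the wrong direction. In the paper's argument one starts from an identity of automorphic forms already known to hold on $G_{n+1}$, takes constant terms along $U(Y_1)$ of \emph{both sides of that known identity}, and matches $\GL(Y_1)$-isotypic pieces to deduce a new identity on $G_n$. You propose the converse: to establish the identity on $G_n$ by checking that the constant terms of the two sides along $Q(Y_1)$ agree, the agreement being supplied by the inductive hypothesis on $(W_{n-1}, V_{r'-1})$. But equality of constant terms along one parabolic does not imply equality of automorphic forms; you would need injectivity of the constant term map on the span of the relevant Laurent coefficients (e.g.\ a concentration-on-the-Boundary or square-integrability argument in the style of Kudla--Rallis), and nothing in your sketch supplies this. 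Relatedly, your explanation of the factor $2$ in part (ii) via $M_n(0,\chi)$ acting as $+1$ on the coherent submodule only shows that the $A$-side is fixed by the functional equation at $s=0$; it does not by itself produce the doubling relative to $B^{n,r}_{-1}$, which in Ichino's proof comes from a genuinely different computation with the regularized theta integral. As it stands the proposal is a plausible heuristic but not a proof.
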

\vskip 5pt

It should be mentioned that Theorem \ref{T:1st}(i)  may not seem so natural, since the pair $(n,r')$ is in the first
term range, but we are considering $\phi \in \mathcal{S}(Y_n^* \otimes
V_r)(\A)$ which is the Weil representation in the second term
range. When $r' = 0$ in Theorem \ref{T:1st}(i), Theorem \ref{T:anisot} gives  an alternative,  more natural statement. When $r' >0$, such a more natural statement, similar to Theorem \ref{T:anisot} and Theorem \ref{T:1st}(ii),  has been given by Yamana in \cite[Theorem 2.2]{Y2}:
\vskip 5pt

\begin{Thm}  \label{T:yamana}
Suppose that $0<m'=m_0+2r' <
d(n)$ with $r' >0$.  Then for $\phi \in  \mathcal{S}(Y_n^*
\otimes V_{r'})(\A)$, $E(s, \Phi^{n,r'}(\phi))$ is holomorphic at $s =
s_{m',n} < 0$,  and
\[   A_0^{n, r'}(\phi)  =    2\cdot B^{n,r'}_{-1}(\phi). \]
 If $H_{r'} = \OO_{1,1}$, however, both sides of the identity are zero, in which case we have:
 \[   A_1^{n, r'}(\phi)  =  2 \cdot \frac{\tau(H_{r'})}{[E:F]}\cdot B^{n,r'}_0(\phi) = B^{n,r'}_0(\phi). \]
  \end{Thm}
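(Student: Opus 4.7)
The plan is to bridge Theorem \ref{T:yamana} with Theorem \ref{T:1st}(i), which gives a similar identity but involving Schwartz functions on the \emph{complementary} large space $V_r$ rather than $V_{r'}$, via the functional equation of the Siegel Eisenstein series and a local identification of the intertwining operator on Siegel-Weil sections.

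First, I observe that both sides of the desired identity are $H_{r'}(\A)$-invariant as functionals on $\omega_{n,r'}$. Indeed, $\Phi^{n,r'}(\phi)(g) = (\omega_{n,r'}(g)\phi)(0)$ is manifestly $H_{r'}$-invariant in $\phi$ (since $H_{r'}$ fixes $0$ in $V_{r'}$), so $A_0^{n,r'}$ is $H_{r'}(\A)$-invariant; meanwhile $B^{n,r'}_{-1}$ is the leading Laurent coefficient and is automatically $H_{r'}(\A)$-invariant. It therefore suffices to work on $\mathcal{S}(Y_n^*\otimes V_{r'})(\A)^{K_{H_{r'}}}$. Using the surjectivity of the Ikeda map $\Ik^{n,r}$ (and the fact that it is $G_n\times H_{r'}$-equivariant), every such $\phi$ is of the form $\Ik^{n,r}(\pi_{K_{H_r}}\tilde\phi)$ for some $\tilde\phi\in\mathcal{S}(Y_n^*\otimes V_r)(\A)$, and Theorem \ref{T:1st}(i) applied to $\tilde\phi$ gives $A^{n,r}_{-1}(\tilde\phi) = \kappa_{r,r'}\cdot B^{n,r'}_{-1}(\phi)$. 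It thus suffices to prove
\[
A_0^{n,r'}(\phi) \;=\; \frac{2}{\kappa_{r,r'}}\cdot A^{n,r}_{-1}(\tilde\phi).
\]

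Second, I apply the functional equation of the Siegel Eisenstein series
\[
E(s,\Phi^{n,r'}(\phi)) = E(-s, M_n(s,\chi)\Phi^{n,r'}(\phi))
\]
and evaluate at $s=s_{m',n}<0$, so that $-s = s_{m,n}>0$ lies at a simple pole of $E(t,\cdot)$. Since the LHS is holomorphic there (Proposition \ref{P:analytic}), the meromorphic section $s\mapsto M_n(s,\chi)\Phi^{n,r'}(\phi)$ must conspire with the simple pole to produce a finite value. The heart of the argument is the following local identity at each place:
\[
M^{*}_{n,v}(s_{m',n},\chi_v)\,\Phi^{n,r'}(\phi_v) \;=\; a_v\cdot\Phi^{n,r}(\tilde\phi_v),
\]
which asserts that the normalized intertwining operator carries the local Siegel-Weil submodule $R_n(V_{r',v})\subset I_n^n(s_{m',n},\chi_v)$ onto the Siegel-Weil sections for the complementary space. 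In the non-archimedean case this follows from Proposition \ref{P:str} together with Rallis' characterization of $R_n(V_{r',v})$ as the unique $H_{r',v}$-invariant submodule, and at the archimedean places it rests on the structure results of Lee, Loke, Zhu and Liu-Zhu (\cite{L1,L2,LZ3,LZ4}) together with the archimedean Siegel-Weil identities. Globalizing and combining with the functional equation yields $A^{n,r'}_0(\phi)$ as an explicit constant multiple of $A^{n,r}_{-1}(\tilde\phi)$.

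Third, one must carry out the bookkeeping of constants. The combined factor involves (a) the product $\prod_v a_v$ of local constants, which by the normalization of $M^{*}_{n,v}$ assembles into the ratio $c_r(s)/c_{r-1}(s-1/2)$ governed by Lemma \ref{L:kappa}(i); (b) the measure constant $\kappa_{r,r'}$ from (\ref{E:iwasawa}); and (c) the auxiliary Tate-Godement-Jacquet factor that enters via the non-Siegel Eisenstein series presentation of $B^{n,r'}$ in Proposition \ref{P:non-Siegel}. The arithmetic of these cancellations should reproduce the factor $2$ in the main identity — and the doubling of $\tau(H_0)$ relative to the other cases in Theorem \ref{T:anisot} is a reassuring parallel. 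The exceptional case $H_{r'}=\OO_{1,1}$ is handled separately: both sides of the main identity vanish (since the Eisenstein series has extra holomorphy and $B^{n,r'}_{-1}=0$ there), and one works one Laurent coefficient deeper, using the normalization $\tau(\OO_{1,1})=1/2$ to obtain $A_1^{n,r'}(\phi) = B^{n,r'}_0(\phi)$.

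The main obstacle is the precise local identification of $M^{*}_{n,v}(s_{m',n},\chi_v)\Phi^{n,r'}(\phi_v)$ with $a_v\cdot\Phi^{n,r}(\tilde\phi_v)$ together with the explicit computation of the constant $a_v$. While the structural reducibility picture is settled by Proposition \ref{P:str} and Proposition \ref{P:arch1}, the proportionality constants are nontrivial, especially at ramified finite places and at real places where the degenerate principal series is finer; it is precisely here that the subtle normalizations in \cite{LR} and the calculations in \cite{Ik,I1,I2} play a crucial role.
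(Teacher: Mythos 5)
A preliminary remark: the paper does not prove Theorem \ref{T:yamana} at all --- it is quoted from Yamana \cite[Theorem 2.2]{Y2}, exactly as Theorems \ref{T:anisot} and \ref{T:1st} are quoted from the earlier literature. So you are necessarily supplying an argument the paper does not contain. Your opening reductions are reasonable: $A_0^{n,r'}$ and $B^{n,r'}_{-1}$ are both $H_{r'}(\A)$-invariant functionals, so it suffices to treat $K_{H_{r'}}$-fixed $\phi$, and Theorem \ref{T:1st}(i) converts the right-hand side into $\kappa_{r,r'}^{-1}\cdot A^{n,r}_{-1}(\tilde\phi)$ (though you still owe an argument that every $K_{H_{r'}}$-fixed $\phi$ lies in the image of $\Ik^{n,r}\circ \pi_{K_{H_r}}$, which does not follow from surjectivity of $\Ik^{n,r}$ alone, since $\Ik^{n,r}$ is only $H_{r'}$-equivariant and does not commute with $\pi_{K_{H_r}}$).

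The decisive step, however, rests on a local identity that cannot hold as stated. You ask $M^{*}_{n,v}(s_{m',n},\chi_v)$ to carry $R_n(V_{r',v})\subset I_n^n(s_{m',n},\chi_v)$ onto the Siegel--Weil sections $\Phi^{n,r}(\tilde\phi_v)$, whose span is $R_n(V_{r,v})\subset I_n^n(s_{m,n},\chi_v)$. By Proposition \ref{P:redu}(i) and Proposition \ref{P:str}(i), $R_n(V_{r',v})$ is irreducible at $s_{m',n}<0$ and $I_n^n(s,\chi_v)$ is multiplicity-free, so the image of $R_n(V_{r',v})$ under any equivariant map into $I_n^n(s_{m,n},\chi_v)$ is either $0$ or an irreducible \emph{submodule} isomorphic to $R_n(V_{r',v})$. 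But by Proposition \ref{P:str}(iii) the unique irreducible submodule of $I_n^n(s_{m,n},\chi_v)$ is $R_n(V)\cap R_n(V')$, and $R_n(V_{r',v})\cong R_n(V)/\bigl(R_n(V)\cap R_n(V')\bigr)$ is a \emph{quotient} constituent of $R_n(V_{r,v})$, distinct from the socle by multiplicity-freeness. Hence, at every place where $R_n(V'_v)\neq 0$, the operator $M^{*}_{n,v}(s_{m',n},\chi_v)$ either annihilates $R_n(V_{r',v})$ or has a pole there; in no case can its value produce a general section of the reducible module $R_n(V_{r,v})$, so the constant $a_v$ you need does not exist. The intertwining computation that actually works runs in the opposite direction: one expands $M_n(s,\chi)\Phi^{n,r}(\tilde\phi)$ near the \emph{positive} point $s=s_{m,n}$, where its leading Laurent coefficient is proportional to $\Phi^{n,r'}$ of the Ikeda image of $\tilde\phi$ (reflecting $R_n(V)\twoheadrightarrow R_n(V_c)\hookrightarrow I_n^n(s_{m',n},\chi)$), and then the functional equation $E(s,\Phi^{n,r}(\tilde\phi))=E(-s,M_n(s,\chi)\Phi^{n,r}(\tilde\phi))$ converts the residue $A^{n,r}_{-1}(\tilde\phi)$ into a value of $E$ at $s_{m',n}$ on small-space Siegel--Weil sections. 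Finally, note that Proposition \ref{P:analytic} only controls $E(s,\Phi)$ for ${\rm Re}(s)\geq 0$, so the holomorphy of $E(s,\Phi^{n,r'}(\phi))$ at $s_{m',n}<0$ is itself part of the assertion and must be proved, not quoted; this is precisely where Yamana's argument in \cite{Y2} does real work.
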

 \vskip 5pt
 In particular, Theorem \ref{T:anisot} and Theorem \ref{T:yamana}
 complete  the theory of the (regularized) Siegel-Weil formula when
 $m \leq d(n)$.
 
\vskip 15pt

\section{\bf The  Second Term Identity}
Now we come to the heart of this paper: the derivation of the first
and second term identities in the second term range. The goal is to
relate the automorphic forms $B^{n,r}_{-1}(\phi)$ and
$B^{n,r}_{-2}(\phi)$  with $A^{n,r}_{0}(\phi)$ and
$A^{n,r}_{-1}(\phi)$.
 
 \vskip 10pt
 \subsection{\bf Main result.}
 The following is the main result of this paper.
 \vskip 5pt
 
 \begin{Thm} \label{T:main}
Assume the pair $(n, r)$ is in the second term range.
\begin{enumerate}[(i)]
 \item (First term identity) For $\phi \in  \mathcal{S}(Y_n^* \otimes
   V_r)(\A)$, one has
 \[   A^{n,r}_{-1}(\phi) =  B^{n,r}_{-2}(\phi). \]
 \item (Second term identity)  For $\phi \in  \mathcal{S}(Y_n^* \otimes V_r)(\A)$, one has
  \[   A^{n,r}_0(\phi)  =   B^{n,r}_{-1}(\phi)  -
  \kappa_{r, r'} \cdot
\{  B^{n,r'}_0(\Ik^{n,r}(\pi_{K_{H_r}}\phi)) \}  \, \mod{\im
  A^{n,r}_{-1}}, \]
 where $\kappa_{r,r'}$ is a nonzero explicit constant defined in
 (\ref{E:iwasawa}) and $V_{r'}$ is the complementary space of $V_r$
 (relative to $W_n$), so that $m_0+r+r' = d(n)$. Moreover, the term in $\{ ....\}$ on the RHS is interpreted as $0$ if $r' = 0$ or $H_{r'} = \OO_{1,1}$.  
\end{enumerate}
 \end{Thm}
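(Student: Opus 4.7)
The plan is to prove both parts by induction on $\mathcal{N} = m - d(n) > 0$, with the base case $\mathcal{N} = 1$ reducing via restriction to the pair $(n-1, r-1)$ in the boundary case $m = d(n)$, where Theorem \ref{T:1st}(ii) applies. For the inductive step, we work with the two Eisenstein expressions
\[ A^{n,r}(s,\phi) = E^{n,n}(s, \Phi^{n,r}(\phi)) \quad \text{and} \quad B^{n,r}(s,\phi) = E^{n,r}(s, f^{n,r}(s, \pi_{K_{H_r}}\phi)), \]
the latter by Proposition \ref{P:non-Siegel}. The strategy is to compute the constant terms of both sides along the maximal parabolic $Q_1^n \subset G_n$ and match them using the induction hypothesis on the Levi component $G_{n-1}$.

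The key mechanism is Proposition \ref{P:key2}: for $\phi = \phi_1 \otimes \phi'$ with $\phi_1 \in \mathcal{S}(Y_1^* \otimes V_r)(\A)$ spherical satisfying $\phi_1(0) = 1$,
\[ f^{n,r}(s, \pi_{K_{H_r}}\phi)|_{G_{n-1}} = \alpha_r \cdot Z_1\bigl(-s - (n-r) + \rho_{H_r}, \phi_1\bigr) \cdot f^{n-1, r-1}\bigl(s + 1/2, \Ik^{n-1, r, r-1}(\pi_{K_{H_r}}\phi')\bigr). \]
Together with the constant term formula for the non-Siegel Eisenstein series $E^{n,r}$ along $Q_1^n$ (which involves the identity coset plus an ``exchange'' coset contribution via the intertwining operator), this expresses the restriction of $B^{n,r}(s,\phi)$ to $G_{n-1}$ in terms of data at $(n-1, r-1)$ with $\mathcal{N}$ reduced by one, where the induction hypothesis applies. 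A parallel calculation for the constant term of $A^{n,r}(s,\phi)$ along $Q_1^n$, controlled by Proposition \ref{P:aniso} and Lemma \ref{L:comm}, produces a matching expression on the $G_{n-1}$ side. Part (i) then follows by matching leading Laurent coefficients and invoking $G_n(\A)$-equivariance of $A^{n,r}_{-1}$ and $B^{n,r}_{-2}$; part (ii) follows by matching second Laurent coefficients modulo $\im A^{n,r}_{-1}$, and extending to all of $G_n$ via the equivariance modulo $\im A^{n,r}_{-1}$ guaranteed by Proposition \ref{P:A0}.

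I expect two main obstacles. The first is the bookkeeping of explicit constants: the scalar $\alpha_r$, the residues of the Tate zeta integral $Z_1$, the measure constant $\kappa_{r,r'}$, and the anisotropic volume factor from Theorem \ref{T:anisot} must combine precisely to yield the constants appearing in (ii). The second and subtler obstacle is controlling the ``exchange'' coset contribution to the constant term of $E^{n,r}$ along $Q_1^n$: this involves applying the normalized intertwining operator to $f^{n,r}(s, \pi_{K_{H_r}}\phi)$, for which Lemma \ref{L:comm} is designed, and the resulting expression must match the corresponding piece on the Siegel side extracted from the constant term of $A^{n,r}(s,\phi)$. Finally, boundary cases ($H_{r'} = \OO_{1,1}$, $r' = 0$, $m_0 = 0$, or $V_{r'}$ anisotropic) must be treated separately since the ``$\{\ldots\}$'' term in (ii) is interpreted as $0$ in those cases, and the induction hypothesis then takes slightly different forms through Theorem \ref{T:anisot} or Theorem \ref{T:yamana}.
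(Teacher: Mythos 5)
Your list of ingredients (Proposition \ref{P:key2}, the $w^{\pm}$-coset analysis of the constant term, Lemma \ref{L:comm}, Proposition \ref{P:aniso}, the constants $\alpha_r$, $\kappa_{r,r'}$) is the right one, but your induction runs in the wrong direction, and this is a genuine gap rather than a bookkeeping issue. The paper does \emph{not} prove the identity for $(W_n,V_r)$ by computing the constant terms of $A^{n,r}$ and $B^{n,r}$ along $Q_1^n$ and matching them on $G_{n-1}$. It starts from the \emph{already established} identity for the pair $(W_{n+1},V_r)$ --- the boundary-case identity of Theorem \ref{T:1st}(ii) when $\mathcal{N}=1$, or the inductive hypothesis with $\mathcal{N}-1$ otherwise (same space $V_r$, larger group, hence smaller $\mathcal{N}$) --- applies it to $\tilde\phi=\phi_1\otimes\phi$, takes the constant term along $U(Y_1)\subset G_{n+1}$, and extracts the $\chi|\cdot|^{m/2}$-isotypic component for the $\GL(Y_1)$-action. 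That component \emph{is} the desired identity on $G_n$ for arbitrary $\phi$ (via Proposition \ref{P:key1}, the $w^+$-term of the B-side is exactly $B^{n,r}(s,\phi)$). No injectivity of any constant term map is needed, because one is deriving a new identity from a known one, not reconstructing a form from its constant term.

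Your version has two concrete failure points. First, knowing that $A^{n,r}_0(\phi)_{U(Y_1)}$ and the corresponding B-side constant term agree (even exactly, not just modulo $\im A^{n,r}_{-1}$) does not let you conclude the identity on $G_n$: the constant term map $\Aut(G_n)\to\Aut(L(Y_1))$ is far from injective, and neither the $G_n(\A)$-equivariance of $A^{n,r}_{-1}$ and $B^{n,r}_{-2}$ nor Proposition \ref{P:A0} repairs this. Second, and more fatally, the induction does not close: by Lemma \ref{L:constant}(i) the $w^+$-coset contribution to $B^{n,r}(s,\phi)_{U(Y_1)}$ is $E^{n-1,r}(s, M(w^+,s)f^{n,r}|_{G_{n-1}})=B^{n-1,r}(s,\cdot)$, i.e.\ it involves the pair $(W_{n-1},V_r)$ with the \emph{same} space $V_r$, whose invariant is $m-d(n-1)=\mathcal{N}+1$ --- strictly larger than $\mathcal{N}$ and therefore not covered by your induction hypothesis (which, as you set it up via $(n-1,r-1)$ and $V_{r-1}$, only reaches $\mathcal{N}-1$). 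In the paper's direction this term is a feature, not a bug: it is precisely the unknown quantity $B^{n,r}(s,\phi)$ being solved for. To fix your write-up, reverse the descent: pass to $(W_{n+1},V_r)$, use $\tilde\phi=\phi_1\otimes\phi$ with $\phi_1$ fixed spherical and $\phi_1(0)=1$, and read off the theorem from the $\chi|\cdot|^{m/2}$-part of the constant term of the known identity one level up.
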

 
 \vskip 10pt
   
 \subsection{\bf Strategy.} 
 Before plunging into the proof of the theorem, let us give a sketch of the main idea.
 The strategy of the proof has already been used in \cite{GT} to prove
 a weak form of the theorem in the case when $\epsilon_0 = -1$.  It is
 based on induction on the quantity 
 \[   \mathcal{N} =  m - d(n). \]
 Note that
 \begin{align*}
   &\mathcal{N} < 0 \Longleftrightarrow \text{first term range};\\
   &\mathcal{N} = 0 \Longleftrightarrow \text{boundary case}; \\ 
  &\mathcal{N} > 0 \Longleftrightarrow \text{second term range}.
 \end{align*}
 Let us illustrate how one starts the induction argument by going from
 the theorem in the boundary case ($\mathcal{N} = 0$)  to the first
 case in the second term range ($\mathcal{N} = 1$).
Thus suppose we are dealing with the Weil representation of
$G(W_{n+1})\times H(V_r)$ with $m = d(n+1)$.  Then we are in the
boundary case, and for $\phi \in \mathcal{S}(Y_{n+1}^* \otimes
V_r)(\A)$ we have the first term identity supplied by Theorem \ref{T:1st}(ii):
\[  A^{n+1,r}_{0}(\phi) = 2  \cdot B_{-1}^{n+1, r}(\phi).  \]
 We may take the constant term of both sides with respect to the maximal parabolic 
 $Q^{n+1}(Y_1) = L^{n+1}(Y_1) \cdot U^{n+1}(Y_1)$ of $G_{n+1}$, which gives
 \[  A^{n+1,r}_0(\phi)_{U^{n+1}(Y_1)}  = 2 \cdot
 B^{n+1,r}_{-1}(\phi)_{U^{n+1}(Y_1)}, \]
 which is an identity of automorphic forms on $L(Y_1) = \GL(Y_1)
 \times G(W_{n})$, where $W_{n} = Y'_n \oplus {Y'_n}^*$. (Note that the
 superscript $^{n+1}$ in the groups $Q^{n+1}(Y_1)$ etc indicates the
 rank of the ambient group $G_{n+1}$.)
 \vskip 5pt
 
 Now note that both sides of this last equation are constant terms of
 the Laurent coefficients of some Eisenstein series. Thus we may
 compute them by first computing the constant term of the relevant
 Eisenstein series, followed by extracting the relevant Laurent
 coefficients.    The constant terms along  $U^{n+1}(Y_1)$ of
 Eisenstein series on $G_{n+1}$ are simply the sum of Eisenstein
 series on $L(Y_1)$, as given in Lemma \ref{L:constant} below.  We can
 compare terms on both sides with the same $\GL(Y_1)$ part, and thus
 obtain an identity of Eisenstein series on $G(W_n)$. It remains to
 identify these Eisenstein series  and the sections where they are
 evaluated with the desired objects coming from the Weil
 representation of $G_n\times H_r$. The theorem will then follow by
 extracting the relevant Laurent coefficients. 
 
 \vskip 5pt
 
 \subsection{\bf Constant term of Eisenstein series.}
 We now describe the constant term of the relevant  Eisenstein series on 
 $G_n$ along the unipotent radical of the parabolic whose Levi part is
 isomorphic to $GL_1\times G_{n-1}$. To be specific, we shall express
 elements of $G_n$ as matrices using the ordered basis $\{
 y_1,...,y_r, y_r^*, ...., y_1^*\}$ of $W_n$. We let
\[
Y_1=\langle y_1\rangle
\]
and 
\[
Q(Y_1)=L(Y_1)\cdot U(Y_1)
\]
 the parabolic subgroup that fixes $Y_1$ with Levi part
\[
L(Y_1)=GL(Y_1)\times G(W_{n-1})\cong\GL_1 \times G_{n-1}.
\]
While the computation of the constant terms of Eisenstein series is
standard, it is necessary to introduce some notation to state the
results. 
\vskip 5pt

 Let $f_s$ be a standard section of
\begin{equation}\label{E:non-Siegel-principal}
I^n_r(s, \chi) : = {\rm Ind}_{Q(Y_r)(\A)}^{G_n(\A)} (\chi\circ \det) \cdot |\det|^s \boxtimes 
 \Theta_{n-r,0}(V_0).
\end{equation}
Since the elements of the inducing data $(\chi \circ \det) \cdot
|\det|^s \boxtimes  \Theta_{n-r,0}(V_0)$ are automorphic forms on
$L(Y_r)$, via evaluation at the identity element of $L(Y_r)$, we may
regard $f_s$ as a $\C$-valued function on $U(Y_r)(\A) L(Y_r)(F)
\backslash G_n(\A)$. 
 \vskip 5pt

 Now let $E^{n,r}(s, f_s)$ be the associated Eisenstein series, i.e.
\[
E^{n,r}(s, f_s)(g):=\sum_{\gamma\in Q(Y_r)(F)\backslash G_n(F)}f_s(\gamma g)
\]
for $g\in G_n(\A)$ and $\operatorname{Re}(s) \gg 0$. Assume that $1 \leq r < n$. Then in computing the constant term
 $ E^{n,r}(s,f)_{U(Y_1)}$, we first need to enumerate the double coset
 space $Q(Y_r) \backslash G_n / Q(Y_1)$. In this case, one knows that
 the double coset representatives can be chosen to be the elements
 $1$, $w^+$ and $w^-$,  where
  \[  w^+ = \left( \begin{array}{cc}
J_{r+1}   & 0 \\
0 & J_{r+1}  \end{array}\right)  \]
with
\[  J_{r+1} = \left( \begin{array}{cccc}
0 & 0 & 1 & 0   \\
 0 & I_{r-1} & 0 &  0    \\
 1  & 0 & 0     & 0 \\
 0 & 0 & 0 &  I_{n-r-1} \end{array} \right) \]  
 and
 \[  w^-  =  \left( \begin{array}{ccc}
 0 & 0 & 1 \\
 0 & I_{2n-2} & 0 \\
 -\epsilon &  0 & 0 \end{array} \right). \]
Note that there is a distinguished set of coset representatives, given
by Weyl group elements of minimal length, and the elements $w^+$ and
$w^-$ given above are not equal to these distinguished
elements. However, they are more convenient for our purposes; for
example they have the convenient property that they are essentially their own
inverses. 
\vskip 10pt

Associated to the Weyl group element $w=w^+$ or $w^-$ is the standard
intertwining operator $M(w,s)$:
\[  M(w, s)(f)(g) = \int_{(U(Y_1)(F)  \cap w Q(Y_r)(F) w^{-1})\backslash U(Y_1)(\A)}
f_s(w^{-1} u  g) \, du.\]
To study these intertwining operators in greater depth, let us write
\[  
U(Y_1)  \cap w Q(Y_r) w^{-1}
=  (U(Y_1) \cap wU(Y_r) w^{-1}) \cdot (U(Y_1) \cap w L(Y_r) w^{-1}),  
\]
and note that
\begin{align}\label{E:intertwining}
  M(w, s)(f)(g)
&=  \int_{ (U(Y_1)(\A)  \cap w Q(Y_r)(\A) w^{-1})\backslash
  U(Y_1)(\A)} \, 
\int_{[U(Y_1) \cap w L(Y_r)  w^{-1}]}  f_s( w^{-1} \cdot v\cdot u \cdot g) \, dv \, du \notag \\
&=  \int_{ U(Y_1)^w(\A)} 
\, \left(   \int_{[L(Y_r) \cap w^{-1} U(Y_1)  w]} f_s( v \cdot w^{-1} \cdot u \cdot g) \, dv  \right) \, du 
\end{align}
where we have set
\[  U(Y_1)^w :=  U(Y_1) \cap wQ(Y_r) w^{-1} \backslash U(Y_1). \]
Now we can make (\ref{E:intertwining}) explicit as follows:
\begin{enumerate}[$\bullet$]
\item When $w = w^+$, $L(Y_r)\cap w^{-1} U(Y_1) w$ is the unipotent
  radical of the maximal parabolic $Q^{n-r}_1$ of $G_{n-r} \subset
  L_r$ whose Levi is isomorphic to $\GL_1\times G_{n-r-1}$. (To be
  more precise, $Q^{n-r}_1$ is the parabolic that fixed the space
  $\langle y_{r+1}\rangle$. For the notation $Q^{n-r}_1$, the superscript
  indicates the rank of the ambient group and the subscript indicates
  the rank of the $\GL$-part of the Levi.)
Thus the inner integral is the constant term of $f_s$ along this
unipotent radical.  We denote this constant term by $R^{n-r}_{Q_1}(f_s)$,
namely
\begin{equation}\label{E:constant_R}
R^{n-r}_{Q_1}(f_s)(g)=\int_{[L(Y_r) \cap w^{-1} U(Y_1)  w]} f_s(vg)\,dv
\end{equation}
for $g\in G_n(\A)$. One can see that
\[  R^{n-r}_{Q_1}(f_s) \in {\rm Ind}^{G_n(\A)}_{Q^n_{r,1}(\A)}  
(\chi \circ \det)|\det|^s \boxtimes |-|^{- (n-r - \frac{1}{2} +
  \frac{\epsilon_0}{2})} \otimes R^{n-r}_{Q_1}\Theta_{n-r,0}(V_0) \]
 where $Q^n_{r,1}$ is the non-maximal parabolic of $G_n$ with Levi
 factor $\GL(Y_r) \times GL_1 \times G_{n-r-1}$, and
 $R^{n-r}_{Q_1}\Theta_{n-r,0}(V_0)$ is the space generated by the
 constant terms of the automorphic forms in $\Theta_{n-r,0}(V_0)$
 along the unipotent radical of $Q^{n-r}_1$.  
 By Proposition \ref{P:aniso}, we have
 \[  R_{Q_1}^{n-r} \Theta_{n-r,0}(V_0) = \chi |-|^{\frac{1}{2} m_0}
 \boxtimes \Theta_{n-r-1,0}(V_0), \]
 so that
 \[   R^{n-r}_{Q_1}(f_s) \in {\rm Ind}^{G_n(\A)}_{Q^n_{r,1}(\A)} \left(
   (\chi \circ \det) |\det|^s \boxtimes
  \chi |-|^{\frac{1}{2} m_0 - (n-r - \frac{1}{2} +
    \frac{\epsilon_0}{2})} \boxtimes \Theta_{n-r-1,0}(V_0)\right). \]
Thus
 \[  M(w^+,s)(f)(g) = \int_{U_1^{w^+}(\A)} R^{n-r}_{Q_1}(f_s) (w^+ u
 g) \, du. \]
 On restricting $M(w^+,s)f$ to the subgroup $L(Y_1) = \GL(Y_1) \times
 G_{n-1}$, we have:
 \[  M(w^+,s)(f)|_{L(Y_1)} \in \chi \cdot  | - |^{m/2} \boxtimes I^{n-1}_r(s,\chi).
 \]
 \vskip 10pt

\item When $w = w^-$,  $L(Y_r) \cap w^{-1} U(Y_1) w$ is the unipotent
  radical of a maximal parabolic subgroup of $\GL(Y_r) \subset
  L(Y_r)=GL(Y_r)\times G_{n-r}$. Since the function $f$ transforms
  under $\GL(Y_r)$ by the  character $(\chi \circ \det)\cdot
  |\det|^s$, we see that the integrand of  this inner integral of
  (\ref{E:intertwining}) is independent of $v$ and so the inner
  integral simply disappears and gives
\[  M(w^-, s)(f)(g) = \int_{U_1^{w^-}(\A)} f_s(w^- u g) \, du. \]
One has:
\[  M(w^-, s)(f)|_{L(Y_1)} \in \chi \cdot |- |^{ -s+ n - \frac{r-
    \epsilon_0}{2}}  \boxtimes  I^{n-1}_{r-1}(s-\frac{1}{2},\chi).\]
\end{enumerate}

\vskip 10pt

With the above notation, we can now state the lemma which computes the constant term
$E^{n,r}(s,f)_{U(Y_1)}$ as follows.
 \begin{Lem}  \label{L:constant}
 Let $f_s$ be a standard section of $I^n_r(s,\chi)= {\rm
   Ind}_{Q(Y_r)}^{G_n}(\chi \circ \det)  |\det|^s \boxtimes 
 \Theta_{n-r,0}(V_0)$ and let $E^{n,r}(s,f)$ be the associated Eisenstein series.
 \begin{enumerate}[(i)]
 \item If $1 \leq  r < n$,
\begin{align*}
  E^{n,r}(s,f)_{U(Y_1)}    
&=\chi | - |^{s+ n - \frac{r - \epsilon_0}{2} } \cdot E^{n-1,r-1}(s+\frac{1}{2}, f|_{G_{n-1}}) \\
&+ \chi |-|^{m/2} \cdot E^{n-1,r} ( s,  M(w^+,s)(f)|_{G_{n-1}} )\\
&+ \chi |-|^{-s + n - \frac{r- \epsilon_0}{2}} \cdot E^{n-1,r-1}(s -
\frac{1}{2},  M(w^-,s)(f)|_{G_{n-1}}).
\end{align*}
 
 \item If $r =n$, 
 \begin{align*}
  E^{n,r}(s,f)_{U(Y_1)}& = 
  \chi |-|^{s + \frac{d(n)}{2}} \cdot E^{n-1,n-1}(s+ \frac{1}{2},  f|_{G_{n-1}})\\
  &+\chi |-|^{-s +  \frac{d(n)}{2}} \cdot  E^{n-1,n-1}(s -
  \frac{1}{2}, M(w^-,s)(f)|_{G_{n-1}}).
\end{align*}
\end{enumerate}
\end{Lem}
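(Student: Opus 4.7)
The plan is to compute $E^{n,r}(s,f)_{U(Y_1)}$ by the standard unfolding procedure, using the Bruhat decomposition of $G_n(F)$ with respect to the two parabolics $Q(Y_r)$ and $Q(Y_1)$. The whole argument is a careful constant-term computation in the spirit of Langlands--Shahidi, and the only real work is bookkeeping the characters on $\GL(Y_1)$.

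First I would enumerate $Q(Y_r)(F) \backslash G_n(F) / Q(Y_1)(F)$. Geometrically these double cosets are classified by the relative position of $Y_1$ and $Y_r$ inside $W_n$. When $1 \le r < n$ there are three cases, represented by (a) $\gamma = 1$ (when $Y_1 \subset Y_r$), (b) $\gamma = w^+$ (when $Y_1 \not\subset Y_r$ but $Y_1 + Y_r$ is still isotropic), and (c) $\gamma = w^-$ (when $Y_1 + Y_r$ is not isotropic). When $r = n$, case (b) is empty because $Y_n$ is already maximal isotropic, which explains the two-term formula in (ii). Then, for $\operatorname{Re}(s) \gg 0$, interchanging the sum and integral gives
\[
E^{n,r}(s,f)_{U(Y_1)}(g) = \sum_{w \in \{1, w^+, w^-\}} I_w(s,f)(g), \qquad I_w(s,f)(g) = \int_{[U(Y_1)]} \sum_{\gamma} f_s(w \gamma u g)\, du,
\]
where $\gamma$ runs over $(w^{-1} Q(Y_r) w \cap Q(Y_1))(F) \backslash Q(Y_1)(F)$. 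Standard collapsing of each $I_w$ rewrites it as an Eisenstein series on the Levi $L(Y_1) = \GL(Y_1) \times G_{n-1}$ applied to a modified section of an appropriate principal series of $G_{n-1}$.

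Next I would identify each piece. For $w = 1$, the stabilizer computation yields directly the section $f|_{G_{n-1}}$ and an Eisenstein series on $G_{n-1}$ induced from $Q(Y_r) \cap G_{n-1} = Q(Y_{r-1})$, giving $E^{n-1,r-1}(s+\tfrac{1}{2}, f|_{G_{n-1}})$. For $w = w^-$, the intersection $L(Y_r) \cap w^{-1} U(Y_1) w$ lies in the unipotent radical of a proper parabolic of $\GL(Y_r)$ on which $f_s$ transforms by a character (via $(\chi\circ\det) \cdot |\det|^s$), so the inner integral over $[L(Y_r) \cap w^{-1} U(Y_1) w]$ collapses trivially by automorphy, and what remains is precisely $M(w^-,s)(f)|_{G_{n-1}}$ inducing an $E^{n-1,r-1}(s-\tfrac{1}{2}, -)$. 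For $w = w^+$ (only when $r < n$), the same analysis using (\ref{E:intertwining}) identifies the inner integral as a genuine constant term $R^{n-r}_{Q_1}(f_s)$ along the parabolic $Q_1^{n-r} \subset G_{n-r}$, which by Proposition~\ref{P:aniso} (applied to the anisotropic theta function in the inducing data $\Theta_{n-r,0}(V_0)$) produces the factor $\chi |-|^{m_0/2}$ on the new $\GL_1$-factor and the inducing representation $\Theta_{n-r-1,0}(V_0)$. The remaining outer integral then gives $M(w^+,s)(f)|_{G_{n-1}}$, producing the middle term.

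Finally I would verify the three characters of $\GL(Y_1) \cong \GL_1$. The computation has three inputs: the inducing character $(\chi\circ\det) \cdot |\det|^s$ evaluated along the appropriate embedding of $\GL_1$ induced by $w$, the modulus character contributions from the parabolic reductions, and (only for the $w^+$ term) the shift $\chi |-|^{m_0/2}$ coming from the constant term of $\Theta_{n-r,0}(V_0)$. Collecting these with $\rho_{H_r} = (m - r - \epsilon_0)/2$ and the formula for $\dim U(Y_r) \cap w Q(Y_r) w^{-1}$ in each case produces the exponents $s + n - (r - \epsilon_0)/2$, $m/2$, and $-s + n - (r-\epsilon_0)/2$ respectively in part (i), and the simplified exponents $\pm s + d(n)/2$ when $r = n$ (in which case the middle term is absent).

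The main obstacle is this last step: tracking the $\GL_1$-exponents through the Weyl element conjugations and the modulus character contributions. There are no deep analytic issues, and convergence of the constant term for $\operatorname{Re}(s) \gg 0$ follows from the usual absolute convergence of the Eisenstein series; the result in general then follows by meromorphic continuation.
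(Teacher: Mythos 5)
Your proposal is correct and follows essentially the same route as the paper: the double coset decomposition $Q(Y_r)\backslash G_n/Q(Y_1)$ into the orbits represented by $1$, $w^+$, $w^-$ (with the $w^+$ orbit empty when $r=n$), the collapse of the $w^-$ inner integral because $f_s$ transforms by a character of $\GL(Y_r)$, the identification of the $w^+$ inner integral with the constant term $R^{n-r}_{Q_1}(f_s)$ computed via Proposition \ref{P:aniso}, and the bookkeeping of the $\GL(Y_1)$-exponents. This is exactly the argument the paper outlines in the discussion preceding the lemma.
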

 \vskip 10pt
 
By applying the same type of computation to the auxiliary Eisenstein
series $E_{H_r}$,
we have
\begin{Lem}\label{L:constant_aux}
Let $f_s$ be  a standard section in
$I_{H_r}(s)=\Ind_{P(X_r)(\A)}^{H_r(\A)}|\det_{X_r}|^s\boxtimes\mathbf{1}_{H_0}$,
and let $E_{H_r}(s, f)$ be associated Eisenstein series. Let
$P(X_1)=M(X_1)\cdot N(X_1)$
be the parabolic subgroup that preserves $X_1=\langle x_1\rangle$, where
$M(X_1)=GL(X_1)\times H_{r-1}$ is the Levi part and $N(X_1)$ is the
unipotent radical. Then 
\begin{align*}
  E_{H_r}(s,f)_{N(X_1)}    
&=\chi | - |^{s+ \frac{m-r +\epsilon_0}{2} } \cdot E_{H_{r-1}}(s+\frac{1}{2}, f|_{H_{r-1}}) \\
&+ \chi |-|^{-s + \frac{m-r+ \epsilon_0}{2}} \cdot
E_{H_{r-1}}(s-\frac{1}{2},  M(w^-,s)(s)(f)|_{H_{r-1}})
\end{align*}
 where 
  \[  w^-  =  \left( \begin{array}{ccc}
 0 & 0 & 1 \\
 0 & I_{m-2} & 0 \\
 \epsilon &  0 & 0 \end{array} \right). \]
 
\end{Lem}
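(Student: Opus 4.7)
The plan is to follow the same strategy as in the proof of Lemma~\ref{L:constant}(ii), which is the direct analogue on $G_n$ (indeed $P(X_r) \subset H_r$ plays exactly the role that the Siegel parabolic $Q(Y_n) \subset G_n$ plays in part (ii) of that lemma). First I would unfold the Eisenstein series and organize the sum by the double coset decomposition
\[
P(X_r)(F) \backslash H_r(F) / P(X_1)(F).
\]
Since $X_r$ is a maximal isotropic subspace of $V_r$ (as $V_0$ is anisotropic), the $P(X_r)$-orbits on isotropic lines in $V_r$ are classified by whether the line lies inside $X_r$ or meets $X_r$ trivially. This gives precisely two double coset representatives: the identity (corresponding to $X_1 \subset X_r$) and the Weyl element $w^-$ (corresponding to the opposite case, where the representative line is $X_1^* = \langle x_1^* \rangle$). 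This is why the expected formula has two terms, mirroring Lemma~\ref{L:constant}(ii).

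Next, for each double coset I would compute the contribution to the constant term. For the identity coset, I would break $P(X_1) = (P(X_1) \cap P(X_r)) \cdot (\text{complement})$ and show that after integrating over $[N(X_1)]$ one obtains an Eisenstein series for $H_{r-1}$ built from the restriction $f|_{H_{r-1}}$, with a scalar factor arising from the action of $GL(X_1)$ on the inducing data. For the $w^-$ coset, the inner integral (along $L(X_r) \cap w^{-1} N(X_1) w$) reduces to an integration over a unipotent subgroup on which $f$ is already constant (since $f$ transforms by a character under $GL(X_r)$), so this inner integration disappears, and the outer integration gives exactly the intertwining operator $M(w^-,s)(f)$. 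The resulting expression is then an Eisenstein series on $H_{r-1}$ at the shifted parameter $s-\tfrac{1}{2}$, as expected.

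The final step is the bookkeeping of the modulus characters. The exponent $\tfrac{m-r+\epsilon_0}{2}$ is precisely $\rho_{H_r}-\tfrac{1}{2}$ (where $\rho_{H_r} = \tfrac{m-r-\epsilon_0}{2}$ shifted appropriately for the action of $GL(X_1)$), and it arises from combining the half-sum of positive roots associated to the Levi $GL(X_1) \times H_{r-1}$ inside $P(X_1)$ with the $|\det|^s$ twist along $P(X_r)$. For the trivial coset the exponent reads $s + \tfrac{m-r+\epsilon_0}{2}$ because the $GL(X_1)$-component of the identity Weyl element contributes $+s$, while for the $w^-$ coset it becomes $-s + \tfrac{m-r+\epsilon_0}{2}$ because conjugation by $w^-$ inverts the $GL(X_1)$-character.

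The main (and really only) obstacle is this careful tracking of exponents and modulus factors, since there is no new conceptual input beyond the computation already carried out in Lemma~\ref{L:constant}(ii); everything else is routine Bruhat-cell unfolding. Once the exponents are checked against the normalization of the intertwining operator $M(w^-,s)$ given in \eqref{E:intertwining} (applied to the auxiliary setting with $w^-$ as displayed in the statement), the identity follows.
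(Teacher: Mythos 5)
Your proposal is correct and follows exactly the route the paper intends: the paper gives no separate proof of Lemma \ref{L:constant_aux}, merely asserting that it follows by ``the same type of computation'' as Lemma \ref{L:constant}, and your two-coset decomposition of $P(X_r)\backslash H_r/P(X_1)$ (forced by the maximality of the isotropic subspace $X_r$, since $V_0$ is anisotropic --- this is exactly why the situation parallels the Siegel case $r=n$ of Lemma \ref{L:constant}(ii) rather than the three-coset case (i)), with the identity coset giving the restriction term and the $w^-$ coset giving the intertwining-operator term, is precisely that computation. One small caution on the bookkeeping you rightly identify as the only real work: your parenthetical identification of the exponent $\tfrac{m-r+\epsilon_0}{2}$ with $\rho_{H_r}-\tfrac12$ is not literally correct (the unnormalized restriction of a standard section to $\GL(X_1)$ produces $|t|^{s+\rho_{H_r}}=|t|^{s+\frac{m-r-\epsilon_0}{2}}$, mirroring the exponent $s+\frac{d(n)}{2}=s+\rho_{Q(Y_n)}$ in Lemma \ref{L:constant}(ii)), but this discrepancy in the $\GL(X_1)$-character affects neither the structure of the identity nor its sole application in \S\ref{SS:kappa}, where only the analytic behavior of the two Eisenstein series on $H_{r-1}$ is used.
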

 
 \vskip 5pt
 
 \subsection{\bf Proof of Lemma \ref{L:kappa}(i).} \label{SS:kappa}
 We can now give a proof of Lemma \ref{L:kappa}(i) using Lemma
 \ref{L:constant_aux} and induction on $r$. 
 \vskip 5pt
 
 Let us choose $f_s=f_s^0$ to be the spherical section
 in Lemma \ref{L:constant_aux}, and write $E_{H_r}(s, f^0)=E_{H_r}(s)$. 
 If $H_r = \OO_{1,1}$ (i.e. $m=2$, $r=1$ and $\epsilon_0=1$), it is easy to see directly that 
 $E_{H_1}(s)$ and $c_1(s)$ are entire, with $\kappa_r =2$ and $c_1(s) =1$, so that Lemma \ref{L:kappa}(i) holds in this case.  Thus, we assume that $H_r \ne \OO_{1,1}$ henceforth, so that $E_{H_r}(s)$ has a pole of order $1$ at $s = \rho_{H_r}$.
  \vskip 5pt
 
 Taking
 the constant term $E_{H_r}(s)_{N(X_1)}$ along $N(X_1)$,  
 we observe that on the RHS of the identity in Lemma \ref{L:constant_aux}, the first term is holomorphic at $s = \rho_{H_r}$. Thus, we have
 \[  -1 = {\rm ord}_{s = \rho_{H_r}} E_{H_r}(s)_{N(X_1)} =  {\rm ord}_{s= \rho_{H_r}} E_{H_{r-1}}(s-\frac{1}{2},  M(w^-,s)(s)(f^0)|_{H_{r-1}}). \]
 By the analog of  \cite[Lemma 1.2.2]{KR3}, one has
\[  
M_{H_{r-1}}(s - \frac{1}{2}) \left( (M(w^-, s) f^0)|_{H_{r-1}}\right)   
= M_{H_r}(s) f^0 |_{H_{r-1}}. \]
Thus 
  \begin{align*}
   E_{H_{r-1}}(s - \frac{1}{2}, (M(w^-, s) f^0)|_{H_{r-1}})   
   &= E_{H_{r-1}}(-s + \frac{1}{2}, M_{H_{r-1}}(s - \frac{1}{2})
   \left( ( M(w^-, s) f^0)|_{H_{r-1}}\right)\\
   &=E_{H_{r-1}}(-s+ \frac{1}{2},  M_{H_r}(s) f^0 |_{H_{r-1}} )\\
   &= c_r(s) \cdot E_{H_{r-1}}(-s + \frac{1}{2}, f_{r-1}^0)\\
   &= \frac{c_r(s)}{c_{r-1}(s- 1/2)} \cdot E_{H_{r-1}}(s - \frac{1}{2}),
   \end{align*}
where for the first equation we used the functional equation for
$E_{H_{r-1}}$ and also $f_{r-1}^0$ denotes the spherical section
corresponding to the group $H_{r-1}$. 
\vskip 5pt

When $r = 1$  (recalling that $H_1 \ne \OO_{1,1}$ here), the last
expression is simply $c_1(s)$ and so we see that $c_1(s)$ has a pole
of order $1$ at $s = \rho_{H_1}$ and
\[  \kappa_1 = {\rm Res}_{s = \rho_{H_1}} E_{H_1}(s) = {\rm Res}_{s = \rho_{H_1}}c_1(s), \]
as desired. This completes the verification of the base case of the induction. 
  \vskip 5pt
  
  For $r>1$, we see that 
  \[  {\rm ord}_{s = \rho_{H_r}} E_{H_r}(s)  =  {\rm ord}_{s = \rho_{H_r}}  \frac{c_r(s)}{c_{r-1}(s- 1/2)} \cdot E_{H_{r-1}}(s - \frac{1}{2}). \]
  On noting that $\rho_{H_r} - 1/2 = \rho_{H_{r-1}}$, we conclude  that
 \[  {\rm ord}_{s = \rho_{H_r}} \left( \frac{E_{H_r}(s)}{c_r(s)}  \right)  =  {\rm ord}_{s = \rho_{H_{r-1}}}\left(
  \frac{E_{H_{r-1}}(s)}{c_{r-1}(s)}  \right).  \]
  The result thus follows by induction on $r$. This completes the proof of Lemma
 \ref{L:kappa}(i).
 \vskip 15pt
 
 \section{\bf Proof of  Theorem \ref{T:main}: Base Step}
 
 In this section, we shall begin our proof of Theorem \ref{T:main} by
 establishing the base case, which goes from $\mathcal{N} = 0$ to
 $\mathcal{N} = 1$. 
 
 \subsection{\bf Base step of induction.}  \label{SS:base}
 Choose once and for all $\phi_1\in \mathcal{S}(Y_1^* \otimes
 V_r)(\A)$ satisfying:
 \vskip 5pt
 
 \begin{itemize}
 \item $\phi_1(0) = 1$;
 \item $\phi_1$ is $K_{H_r}$-invariant, so that $\pi_{K_{H_r}} \phi_1 = \phi_1$. 
 \end{itemize}
 \vskip 5pt
 
 Let $Y_n' = \langle y_2, ...,y_{n+1} \rangle$ so that ${Y_n'}^* = \langle y_2^*,...,y_{n+1}^* \rangle$.
 For any $\phi \in \mathcal{S}({Y_n'}^* \otimes V_r)(\A)$, we set
 \[  \tilde{\phi} := \phi_1 \otimes \phi \in \mathcal{S}(Y_{n+1}^*
 \otimes V_r)(\A). \]
 Then 
 \[  \pi_{K_{H_r}}(\tilde{\phi}) = \phi_1 \otimes \pi_{K_{H_r}}
 \phi. \]
Note that the group $G_n$ acts trivially on $\phi_1$, i.e. for
$g\in G_n(\A)$,
$\omega_{n+1,r}(g)\tilde{\phi}=\phi_1\otimes\omega_{n,r}(g)\phi$.
\vskip 5pt

 We consider the boundary case: $(W_{n+1}, V_r)$ with $m =
 d(n+1)$. Then from the first term identity for the boundary case, we
 have
 \[   A^{n+1,r}_{0}(\tilde{\phi}) = 2 \cdot
 B_{-1}^{n+1, r}(\tilde{\phi}).   \]
 We compute the constant term along $U(Y_1)$ to get:
  \begin{equation}  \label{E:compare}
   A^{n+1,r}_0(\tilde{\phi})_{U(Y_1)} = 2 \cdot
   B^{n+1,r}_{-1}(\tilde{\phi})_{U(Y_1)}. 
\end{equation}
 Now we may appeal to Lemma \ref{L:constant} to calculate both
 sides. We explicate the results below.
  \vskip 5pt

\subsection{\bf The A-side.}  Let us start with  the LHS of this
identity (\ref{E:compare}), namely ``the $A$-side''. We have:
  \[  A^{n+1,r}_0(\tilde{\phi})_{U(Y_1)} = {\rm Val}_{s=0}
  E^{n+1,n+1}(s, \Phi^{n+1,r}(\tilde{\phi}))_{U(Y_1)}. \]
 On the other hand, Lemma \ref{L:constant} (ii) says that, as an
 automorphic form on $L(Y_1) = \GL(Y_1) \times G(W_n)$, 
\begin{align} \label{E:A-side}
E^{n+1,n+1}(s,\Phi^{n+1,r}(\tilde{\phi}))_{U(Y_1)}
&=   \chi |-|^{s + \frac{d(n+1)}{2}}  \cdot E^{n,n}(s+\frac{1}{2}, \Phi^{n+1,r}(\tilde{\phi})|_{G_n})\\
 &+\chi  |-|^{-s + \frac{d(n+1)}{2}} \cdot  E^{n,n}(s - \frac{1}{2},
   M(w^-, s)(\Phi^{n+1,r}(\tilde{\phi}))|_{G_n}).\notag
\end{align}
 Observe that at $s=0$, $\GL(Y_1)$ acts by the same character $\chi
 \cdot | -|^{\frac{d(n+1)}{2}}$ for both terms of RHS of
 (\ref{E:A-side}).
 \vskip 5pt
 
 Now note that for $g \in G_n(\A)$,
 \[  \Phi^{n+1,r}(\tilde{\phi})(g) =  \omega_{n+1,r}(g) \tilde{\phi}(0)
 = \phi_1(0) \cdot \omega_{n,r}(g) \phi (0)
 = \Phi^{n,r}(\phi)(g), \]
 so that  the first term on the RHS of (\ref{E:A-side}) is simply 
 \[   E^{n,n}(s+\frac{1}{2}, \Phi^{n,r}(\phi))   \]
as a function on $G_n$. Thus, taking the value of the first term on
the RHS at $s = 0$, we get
 \begin{equation}\label{E:A-side1}
 A^{n,r}_0( \phi).
\end{equation}

  Next let us consider the second term on the RHS of
  (\ref{E:A-side}). By applying the functional equation, we obtain
\[   E^{n,n}\left(s - \frac{1}{2}, M(w^-, s)(\Phi^{n+1,r}(\tilde{\phi}))|_{G_n}\right)
  =  E^{n,n}\left(\frac{1}{2} -s,  M_n(s - \frac{1}{2},\chi)\left(
  M(w^-, s)(\Phi^{n+1,r}(\tilde{\phi}))|_{G_n}\right)\right),\]
where recall $M_n(s,\chi)$ is the intertwining operator for the Siegel
principal series. Moreover, it was shown in \cite[Lemma 1.2.2]{KR3} that 
  \[    M_n(s - \frac{1}{2},\chi)\left( M(w^-,
  s)(\Phi^{n+1,r}(\tilde{\phi}))|_{G_{n}}\right)
= M_{n+1}(s,\chi)(\Phi^{n+1,r}(\tilde{\phi}))|_{G_{n}}. \]
  Thus the second term on the RHS of (\ref{E:A-side}) is equal to:
 \[   E^{n,n}(\frac{1}{2} -s,   M_{n+1}(s)(
 \Phi^{n+1,r}(\tilde{\phi}))|_{G_n}). \]
Moreover, the intertwining operator $M_{n+1}(s)$ is holomorphic at $s=0$ and 
   \[   M_{n+1}(0)( \Phi^{n+1,r}(\tilde{\phi}))  = \Phi^{n+1,r}(\tilde{\phi}) \]
   by Lemma \ref{L:Mat02}(ii).  Thus, on taking the value at $s= 0$,
   the second term of the RHS of (\ref{E:A-side}) gives
   \begin{equation}\label{E:A-side2}
A^{n,r}_0(\phi)  -
   E^{n,n}_{-1, s_{m,n}}(M_{n+1}'(0,\chi)(\Phi^{n+1,r}(\tilde{\phi}))|_{G_n} ),
\end{equation}
where we note that $s_{m,n}  = 1/2$ here.
  By Lemma \ref{L:Mat02}(iii) we know the derivative $M_{n+1}'(0,\chi)$
  preserves each irreducible submodule of $I_{n+1}^{n+1}(0,\chi)$, and hence
  \[  M'_{n+1}(0,\chi)(\Phi^{n+1,r}(\tilde{\phi}))  = \Phi^{n+1,r}(\varphi)
  \quad \text{for some $\varphi \in
  \mathcal{S}(Y_{n+1}^* \otimes V_r)(\A) $,} \]
  so that
  \[  M'_{n+1}(0,\chi)(\Phi^{n+1,r}(\tilde{\phi}))|_{G_n} =
  \Phi^{n,r}(\varphi|_{Y_n^* \otimes V_r}) \in \im \Phi^{n,r}. \]
So (\ref{E:A-side2}) is written as
\[A^{n,r}_0(\phi)  -
   A^{n,r}_{-1}(\varphi|_{Y_n^* \otimes V_r}).
\]
From this together with (\ref{E:A-side1}), we conclude
\begin{equation}\label{E:A-side-fin}
A^{n+1,r}_0(\tilde{\phi})_{U(Y_1)}  = 2 \cdot A^{n,r}_0(\phi)
 \mod{\im (A^{n,r}_{-1})}.
\end{equation}
   \vskip 10pt
   
\subsection{ \bf The B-side.}
 Now we consider the RHS (``the B-side'') of (\ref{E:compare}), which
 is decidedly more complicated. Lemma \ref{L:constant}(i) implies that
 $ B^{n+1,r}_{-1}(\tilde{\phi})_{U(Y_1)}$ is the residue at 
 \[  s = \rho_{H_r}= \frac{m-r-\epsilon_0}{2} =  \frac{d(n+1) - r
   -\epsilon_0}{2} = \frac{n+1-r}{2} \]
 of the function 
 \begin{align}\label{E:B-side}
   & \chi | - |^{s+ n+1 - \frac{r - \epsilon_0}{2} } \cdot
   E^{n,r-1}(s+\frac{1}{2}, f^{n+1,r}(s, \pi_{K_{H_r}}
   \tilde{\phi})|_{G_{n}}) \\
+ &\chi |-|^{m/2} \cdot E^{n,r} ( s,  M(w^+,s) f^{n+1,r}(s,
\pi_{K_{H_r}} \tilde{\phi})|_{G_{n}} ) \notag \\
+ &\chi |-|^{-s + n+1 - \frac{r- \epsilon_0}{2}} \cdot E^{n,r-1}(s -
\frac{1}{2},  M(w^-,s) f^{n+1,r}(s, \pi_{K_{H_r}}
\tilde{\phi})|_{G_{n}}).  \notag
\end{align}

 The first thing we should mention is that at $s = \rho_{H_r}$,
 $\GL(Y_1)$ acts by the character
 $\chi |-|^{\frac{1}{2} m + (n+1-r)}$  in the first term whereas it
 acts by $\chi |-|^{d(n+1)/2} =\chi |-|^{m/2}$ in the other two
 terms. This implies that the first term does not have a residue at $s
 = \rho_{H_r}$ because both terms on the A-side of (\ref{E:compare}) has
 $\GL(Y_1)$ acting via $\chi \cdot |-|^{m/2}$. Alternatively one
 can directly prove that the first term of (\ref{E:B-side}) does not have a
 residue as follows:
 \begin{Lem}
 The Eisenstein series $E^{n,r-1}(s+\frac{1}{2},  f^{n+1,r}(s,
 \pi_{K_{H_r}} \tilde{\phi})|_{G_{n}})$ is holomorphic at $s =
 \rho_{H_r}$. 
 \end{Lem}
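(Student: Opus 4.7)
The plan is to apply Proposition~\ref{P:key2} to reduce the claim to the behaviour of a regularized theta integral at a non-critical point. By Proposition~\ref{P:key2},
\[
f^{n+1,r}(s, \pi_{K_{H_r}}\tilde{\phi})|_{G_n}
= \alpha_r\cdot Z_1\bigl(-s-(n+1-r)+\rho_{H_r},\phi_1\bigr)\cdot f^{n,r-1}\bigl(s+\tfrac{1}{2},\Psi\bigr),
\]
where $\Psi:=\Ik^{n,r,r-1}(\pi_{K_{H_r}}\phi)$. Forming Eisenstein series on both sides (and pulling the scalar $\alpha_r Z_1(\cdots)$ outside by linearity) yields
\[
E^{n,r-1}\bigl(s+\tfrac12, f^{n+1,r}(s, \pi_{K_{H_r}}\tilde{\phi})|_{G_n}\bigr)
= \alpha_r\, Z_1\bigl(-s-(n+1-r)+\rho_{H_r},\phi_1\bigr)\cdot E^{n,r-1}\bigl(s+\tfrac12, f^{n,r-1}(s+\tfrac12,\Psi)\bigr).
\]

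Because $K_{H_{r-1}}\subset K_{H_r}$ (under compatible choices of maximal compacts) and $\Ik^{n,r,r-1}$ is $H_{r-1}(\A)$-equivariant, $\Psi$ is $K_{H_{r-1}}$-invariant, and hence Proposition~\ref{P:non-Siegel} identifies
\[
E^{n,r-1}\bigl(s+\tfrac12, f^{n,r-1}(s+\tfrac12,\Psi)\bigr) = B^{n,r-1}\bigl(s+\tfrac12,\Psi\bigr).
\]
At the point of interest $s=\rho_{H_r}=\tfrac{n+1-r}{2}$, a direct computation using $\dim V_{r-1}=m-2=n-1+\epsilon_0$ gives $\rho_{H_{r-1}}=\tfrac{n-r}{2}$, whence $s+\tfrac12=\rho_{H_{r-1}}+1$. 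Since the pair $(W_n,V_{r-1})$ lies strictly in the first term range (as $\dim V_{r-1}<d(n)$), $B^{n,r-1}(\cdot,\Psi)$ has, in $\operatorname{Re}(\cdot)\geq 0$, only a simple pole at $\rho_{H_{r-1}}$ and is holomorphic elsewhere; in particular it is holomorphic at $s+\tfrac12=\rho_{H_{r-1}}+1$. Moreover, the Tate zeta factor $Z_1(-(n+1-r),\phi_1)$ is holomorphic at $s=\rho_{H_r}$, since $r\leq n$ forces the argument $-(n+1-r)\leq -1$ to avoid the Tate poles at $0$ and $1$. Hence the product is holomorphic at $s=\rho_{H_r}$, as claimed.

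The one step requiring care is the assertion that the only pole of $B^{n,r-1}(\cdot,\Psi)$ in the right half-plane is at $\rho_{H_{r-1}}$; this follows from the known pole structure of the auxiliary spherical Eisenstein series $E_{H_{r-1}}(s,-)$ entering the Kudla--Rallis regularization (cf.\ \S\ref{SS:kr-reg}) together with the observation that the regularizing factor $P_{n,r-1}(s)$ introduces no new poles near $\rho_{H_{r-1}}+1$. The edge case $r=1$ (which requires $m_0>0$ in Proposition~\ref{P:key2}) is immediate, since by convention $B^{n,0}=I_{n,0}$ has no dependence on $s$.
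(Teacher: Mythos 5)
Your argument is correct and follows the paper's proof essentially verbatim: factor via Proposition \ref{P:key2}, identify the resulting Eisenstein series with $B^{n,r-1}(s+\tfrac12,\Ik^{n,r}(\pi_{K_{H_r}}\phi))$ using the $K_{H_{r-1}}$-invariance of the Ikeda image, and check holomorphy of the Tate factor at $-(n+1-r)\le -1$ and of the regularized theta integral at $\rho_{H_{r-1}}+1$. The only cosmetic point is that what must be checked about $P_{n,r-1}$ at $\rho_{H_{r-1}}+1$ is that it is \emph{nonzero} (it sits in the denominator), not that it contributes no poles; the paper verifies this, together with the holomorphy of $E_{H_{r-1}}(s,-)$ there, from the explicit formulas in \cite{KR5, I2, JS}.
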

 \begin{proof}
 Since $m = d(n+1)$, the complementary space $V_{r'}$ of $V_r$ relative to $W_n$ has $r' = r-1$.  
 By Proposition \ref{P:key2}, we see that
 \[ f^{n+1,r}(s, \pi_{K_{H_r}} \tilde{\phi})|_{G_{n}}  
  = \alpha_r \cdot  Z_1(- s  - \rho_{H_r}, \phi_1) \cdot  f^{n,r-1}(
  s + \frac{1}{2},  \Ik^{n,r}(\pi_{K_{H_r}}\phi)). \] 
   Thus,
  \[  E^{n,r-1}(s+\frac{1}{2},  f^{n+1,r}(s, \pi_{K_{H_r}} \tilde{\phi})|_{G_{n}})
  = \alpha_r \cdot  Z_1( -s  - \rho_{H_r}, \phi_1) \cdot E^{n, r-1}
  (s+\frac{1}{2}, f^{n,r-1}(  s + \frac{1}{2},
  \Ik^{n,r}(\pi_{K_{H_r}} \phi))).  \]
  At $s = \rho_{H_r}$, 
  $Z_1(-s - \rho_{H_r}, \phi_1)$ is holomorphic,   since one is
  considering a Tate zeta integral at $-2 \rho_{H_r}  = -( n+1-r) \leq
  -1$. On the other hand, since $\Ik^{n,r}$ is
  $H_{r-1}(\A)$-equivariant,
  $\Ik^{n,r}(\pi_{K_{H_r}} \phi)$ is $K_{H_{r-1}}$-invariant. Hence,
  \begin{align*}
  &E^{n, r-1} (s+\frac{1}{2}, f^{n,r-1}(  s + \frac{1}{2},  \Ik^{n,r}(\pi_{K_{H_r}}\phi)))\\
  = &B^{n, r-1}(s+\frac{1}{2} ,  \Ik^{n,r}(\pi_{K_{H_r}}\phi))\\ 
  =  &\frac{1}{\tau(H_{r-1}) \cdot \kappa_{r-1}\cdot P_{n,r-1}(s+\frac{1}{2})} \cdot
  \int_{[H_{r-1}]} \theta(\omega_{n,r-1}(z) \Ik^{n,r}(\pi_{K_{H_r}}
  \phi))(g,h) \cdot E_{H_{r-1}}(s+\frac{1}{2},h) \, dh.
  \end{align*} 
 At $s = \rho_{H_r} = \rho_{H_{r-1}} + 1/2$,  one may verify (using formulas for 
 $P_{n,r-1}(s)$ in \cite{KR5, I2, JS}) that
 $P_{n,r-1}(s +1/2)$ and $ E_{H_{r-1}}(s+\frac{1}{2}, h)$ are both
 holomorphic and nonzero. This proves the lemma. 
  \end{proof}

 \vskip 5pt
 Now we examine the remaining two terms corresponding to $w^+$ and $w^-$ in turn.
 \vskip 5pt
 
 \noindent\underline{{\bf The term for $w^+$} :}
 We note the following key proposition:
 \begin{Prop}  \label{P:key1}
 \[
 M(w^+,s) f^{n+1,r}(s, \pi_{K_{H_r}} \tilde{\phi})|_{G_{n}}   =  f^{n,r}(s, \pi_{K_{H_r}} \phi). \]
  \end{Prop}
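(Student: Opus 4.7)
The approach is a direct computation in the spirit of the proof of Proposition \ref{P:key2}, except that the direct restriction $(-)|_{G_n}$ is replaced by the composition $M(w^+,s)(-)|_{G_n}$. The plan is to start from Lemma \ref{L:tate}, which expresses $\mathfrak{f}^{n+1,r}(s,\pi_{K_{H_r}}\tilde\phi)(h)$ as an iterated integral over $\GL(X_r)(\A) \times ({Y'}^{*}_{n+1-r} \otimes X_r)(\A)$, then insert the substitution $h = (w^+)^{-1} u g$ with $g \in G_n(\A)$ into the defining integral
\[
M(w^+,s) f^{n+1,r}(s,\pi_{K_{H_r}}\tilde\phi)(g) = \int_{U(Y_1)^{w^+}(\A)} f^{n+1,r}(s,\pi_{K_{H_r}}\tilde\phi)\bigl((w^+)^{-1} u g\bigr)\, du,
\]
and merge the two integrations by a change of variables, so as to recognize the result as $f^{n,r}(s,\pi_{K_{H_r}}\phi)(g)$ via Lemma \ref{L:tate} applied in $G_n$.

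The key geometric input is that $w^+$ swaps $y_1 \leftrightarrow y_{r+1}$ and $y_1^* \leftrightarrow y_{r+1}^*$ while fixing the other basis vectors. Consequently $w^+ Q(Y_r)^{n+1}(w^+)^{-1}$ stabilizes $\langle y_2,\dots,y_{r+1}\rangle$, whose intersection with $G_n$ is precisely the parabolic $Q(Y_r)^n$ that underlies $I^n_r(s,\chi)$, so both sides of the identity live in the same induced representation of $G_n$. Under the Weil representation, the action of $(w^+)^{-1}$ on a Schwartz function on $(Y^{*}_{n+1}\otimes V_r)(\A)$ exchanges the $y_1^*$- and $y_{r+1}^*$-rows of its matrix argument and applies a partial Fourier transform in those rows, while the integration variable $u \in U(Y_1)^{w^+}(\A)$ acts by a translation/character twist whose free variables couple naturally with the first row of the matrix argument; this is precisely the row on which $\phi_1$ depends in the factorization $\tilde\phi = \phi_1 \otimes \phi$.

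After performing an Iwasawa-style change of variables on the combined integral (mirroring the Iwasawa substitution in the proof of Proposition \ref{P:key2}), the $u$-integration reduces to an integral of $\phi_1$-related factors which, by $\phi_1(0)=1$ and the $K_{H_r}$-invariance of $\phi_1$, collapses to a constant. On the anisotropic side, one must account for the reduction from $I_{(n+1)-r,0}$ on the left-hand side to $I_{n-r,0}$ on the right-hand side; this is exactly the content of Proposition \ref{P:aniso} applied in the parabolic $Q(Y_1)$ of $G_{n+1-r}$, and is built into the intertwining operator via the factor $R^{n-r}_{Q_1}$ introduced in \eqref{E:constant_R} before Lemma \ref{L:constant}. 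The remaining integral is precisely $\mathfrak{f}^{n,r}(s,\pi_{K_{H_r}}\phi)(g)$, and a direct check of the powers of $|\det|$ and of the character $\chi$ confirms the match, using that $\rho_{H_r}$ depends only on $V_r$ and not on $n$.

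The main obstacle is the careful bookkeeping of coordinates in the combined integral: one must parametrize $U(Y_1)^{w^+}$ explicitly as a subquotient of $U(Y_1)$, compute the conjugated element $(w^+)^{-1} u w^+$ inside $G_{n+1}$, and verify the Jacobian of the change of variables together with the precise matching of the $\chi$- and $|\det|$-factors (which accumulate contributions both from the inducing character on $Q(Y_r)^{n+1}$ and from the character $\chi|-|^{m/2}$ on $\GL(Y_1)$ appearing in Lemma \ref{L:constant}(i)). An alternative, conceptually cleaner route (hinted at in Remark \ref{rm_section} and used by the second author in \cite{Q} when $r=n$) would be to lift everything to the extended section $F^{n+1,r}(s,-)$ and exploit its full $(G_{n+1}\times H_r)$-equivariance; but the direct approach dovetails with the argument of Proposition \ref{P:key2} and requires no new machinery.
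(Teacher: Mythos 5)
Your proposal follows essentially the same route as the paper's proof: unwind the $w^+$-intertwining integral against the explicit integral formula for the section, use Proposition \ref{P:aniso} to reduce $I_{(n+1)-r,0}$ to $I_{n-r,0}$, and observe that the $\phi_1$-factor collapses to $\phi_1(0)=1$ while the $\chi$- and $|\det|$-exponents match. One small correction to your description: the partial Fourier transform is not part of the action of $(w^+)^{-1}$ itself (which, lying in $\GL(Y_{n+1})(F)$, merely permutes the rows of the matrix argument); in the paper it arises from combining $\mathcal{F}_{n+1,r}$ with the integration over $(U^{n+1}_1)^{w^+}$, and this composite is precisely evaluation at $0$ on the $\mathcal{S}(Y_1^*\otimes V_r)$-factor tensored with $\mathcal{F}_{n,r}$, which is what produces the clean constant $\phi_1(0)=1$ with no residual Tate integral.
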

 \vskip 5pt
 
 \begin{proof}
 By definition, for $g \in G_n$,
\[
 M(w^+,s) f^{n+1,r}(s, \pi_{K_{H_r}} \tilde{\phi}) (g)  
 =  \int_{(U^{n+1}_1)^{w^+}(\A)} R^{n+1-r}_{Q_1}( f^{n+1,r}(s,\pi_{K_H}  \tilde{\phi}))(w^+ ug)  \, du,
 \]
where recall that as defined in (\ref{E:constant_R}),
$R_{Q_1}^{n+1-r}$ indicates the ``constant term'' along the unipotent radical
of the parabolic of $G_{n+1-r}$ whose Levi is $GL_1\times G_{n-r}$,
and 
 \begin{align*}
 &f^{n+1,r}(s,\pi_{K_H}  \tilde{\phi})(g)\\
 =&\int_{\GL(X_r)(\A)} I_{n+1-r, 0}(\omega_{n+1,r}(g)(\mathcal{F}_{n+1,r}(
 \pi_{K_H} \tilde{\phi}) (\beta_0 \circ a) (0_r,-))  \cdot
 |\det(a)|^{s+n+1-\rho_{H_r}} \, da
\end{align*}
 where $0_r$ is the zero element in $Y_r^* \otimes V_0$.  
Thus
\begin{align*}
 &  R^{n+1-r}_{Q_1}( f^{n+1,r}(s,\pi_{K_H}  \tilde{\phi}))(g)\\
=&\int_{\GL(X_r)(\A)} R^{n+1-r}_{Q_1} \left( I_{n+1-r,
     0}(\omega_{n+1,r}(g)(\mathcal{F}_{n+1,r}( \pi_{K_H} \tilde{\phi})
   (\beta_0 \circ a) (0,-))  \right) \cdot
 |\det(a)|^{s+n+1-\rho_{H_r}} \, da.
\end{align*}
Let us set
 \[  \Phi_a := \mathcal{F}_{n+1,r}( \pi_{K_H} \tilde{\phi}) (\beta_0
 \circ a) (0,-) \in \mathcal{S}({Y'_{n+1-r}}^* \otimes V_0)(\A). \]
 By Proposition \ref{P:aniso}, we have an identity of functions on $G_{n-r}$:
 \[   R^{n+1-r}_{Q_1}( I_{n+1-r,0}(\Phi_a))  = I_{n-r, 0}(\Phi_a(0,-)) \]
 where
 \[  \Phi_a(0,-) \in \mathcal{S}({Y'_{n-r}}^* \otimes V_0)(\A) \]
 with $Y_{n-r}^* = \langle y_{r+2}^*,...., y_{n+1}^* \rangle$. 
\vskip 5pt

Hence
 \begin{align*}
 & M(w^+,s) f^{n+1,r}(s, \pi_{K_{H_r}} \tilde{\phi}) (g)\\
=&\int_{(U^{n+1}_1)^{w^+}(\A)}  
  \int_{\GL(X_r)(\A)}    I_{n-r,0}( \omega_{n+1,r}(w^+ u g,a)
  \mathcal{F}_{n+1,r}(\pi_{K_H} \tilde{\phi})(\beta_0 )(0_{r+1}, -) )
  \cdot |\det (a)|^{s-\rho_{H_r}} \,da\, du 
\end{align*}
where $0_{r+1}$ is the zero element in $Y_{r+1}^* \otimes V_0$.
Now we note that a typical element in
$(U^{n+1}_1)^{w^+}$ has the form
\[  \left( \begin{array}{ccc}
1 &  \underline{u} & 0 \\
0 &   I_r & 0 \\
  0 & 0 &   I_{n-r} \end{array} \right)  \in \GL(Y_{r+1}) \subset \GL(Y_{n+1}) \subset G_{n+1} \]
where 
\[  \underline{u} = (u_2, u_3,...,u_r, u_1) \in F^r. \]
Moreover, 
\[  \omega_{n+1,r}(w^+ u g,a) \mathcal{F}_{n+1,r}(\pi_{K_H}
\tilde{\phi})(\beta_0 )(0_{r+1},-)  
=  \omega_{n+1,r}(g,a)\mathcal{F}_{n+1,r}(\pi_{K_H} \tilde{\phi})(
u^{-1} \circ  w^+ \circ  \beta_0 )(0_{r+1},-) \]
and it is easy to see that
\[ u^{-1} \circ  w^+ \circ  \beta_0  = \beta_0' 
 - y_1 \otimes \left( u_1x_1^* + u_2 x_2^*  + u_3 x_3^* +\cdots+ u_r x_r^*\right) \]
 with
 \[  \beta_0' = y_{r+1} \otimes x_1^* + y_2 \otimes x_2^* +\cdots+ y_r \otimes x_r^*. \]
Thus, the integral over $[(U^{n+1}_1)^{w^+}]$ is simply the Fourier transform in the subspace 
$Y_1 \otimes X_r^*$ (followed by evaluation at $0$). In other words, we
are looking at the composite
\vskip 5pt

\[  \begin{CD}
\mathcal{S}(Y_{n+1}^* \otimes V_r) @>\mathcal{F}_{n+1,r}>>
\mathcal{S}(W_{n+1}\otimes X_r^*)\otimes \mathcal{S}(Y_{n+1}^*
\otimes V_0)\\
& &@|  \\
& & \mathcal{S}((Y_1 \oplus Y_1^*) \otimes X_r^*) \otimes  \mathcal{S}(W_n \otimes  X_r^*) \otimes \mathcal{S}(Y_{n+1}^* \otimes V_0) \\
&  &@VV\int_{(U^{n+1}_1)^{w^+}} = \text{Fourier transform in $Y_1 \otimes X_r^*$}V  \\
& &  \mathcal{S}(Y_1^* \otimes (X_r \oplus X_r^*))  \otimes  \mathcal{S}(W_n \otimes  X_r^*) \otimes \mathcal{S}(Y_{n+1}^* \otimes V_0) \\
& & @| \\
&   & \mathcal{S}(Y_1^* \otimes V_r)\otimes \mathcal{S}(W_n \otimes
X_r^*)\otimes \mathcal{S}(Y_n'^* \otimes V_0)\\
& & @VVev_0 \text{ on $Y_1^* \otimes V_r$}V \\
& &  \mathcal{S}(W_n\otimes X_r^*)\otimes \mathcal{S}(Y_n'^*
\otimes V_0). 
 \end{CD} \]
\vskip 5pt

\noindent  But this composite is none other than the map
\[  \text{(evaluation at $0$ on $\mathcal{S}(Y_1^* \otimes V_r)$)}
\boxtimes \mathcal{F}_{n,r}. \]
Thus, for $\tilde{\phi} = \phi_1 \otimes \phi$, we have
\begin{align*} 
& M(w^+,s) f^{n+1,r}(s, \pi_{K_{H_r}} \tilde{\phi}) (g)\\
=&\int_{\GL(X_r)(\A)}  I_{n-r,0}\left(\omega(g,a)
  \mathcal{F}_{n,r}(\pi_{K_H} \phi)( \beta'_0)(0_r,-)\right) \,
|\det(a)|^{s - \rho_{H_r}} \, da\\
=&  f^{n,r}(s, \pi_{K_H} \phi).
\end{align*}
This proves the proposition.
\end{proof}
 
 The proposition implies that 
 \[ E^{n,r} ( s,  M(w^+,s) f^{n+1,r}(s, \pi_{K_{H_r}} \tilde{\phi})|_{G_{n}} ) = B^{n,r}(s, \phi) \]
 so that we have
\begin{equation}\label{E:B-side-fin1}
\left(\text{the residue at $s= \rho_{H_r}$ of the second term of (\ref{E:B-side})}\right)
=B^{n,r}_{-1}(\phi),
\end{equation}
when viewed as an automorphic form on $G_n$. This is one of the
desired terms in the second term identity. We note that the term
$B^{n,r}_{-1}$ is not the leading term of $B^{n,r}(s, \phi)$, which in
fact has a pole of order $2$.  Thus this pole of order $2$ must be
cancelled by a pole of order $2$ from the term associated
to $w^-$ which we will study next. 
\vskip 10pt
 
\noindent\underline{{\bf The term for $w^-$}:}
We now consider the ``$w^-$-term'' of (\ref{E:B-side}), namely the Eisenstein series
\begin{equation} \label{E:Eis}
E^{n,r-1}(s - \frac{1}{2},  M(w^-,s) f^{n+1,r}(s, \pi_{K_{H_r}}
\tilde{\phi})|_{G_{n}}).  
\end{equation}
 We shall apply an argument similar to how we handled the second term on the A-side. 
 Note that we have the standard intertwining operator 
 \[  M_n(w_{r-1}, s):  I^n_{r-1}(s,\chi) \longrightarrow
 I^n_{r-1}(-s,\chi),\]
where $I_{r-1}^n$ is as defined in (\ref{E:non-Siegel-principal}) and
\[
w_{r-1}=
\begin{pmatrix}&&I_{r-1}\\ & I_{2n-2r+2}&\\-\epsilon
  I_{r-1}&&\end{pmatrix}.
\]
 The functional equation for Eisenstein series says that (\ref{E:Eis})  is equal to
 \[  E^{n,r-1}\left(\frac{1}{2}-s, M_n(w_{r-1}, s-
 \frac{1}{2})\left(M(w^-,s) f^{n+1,r}(s, \pi_{K_{H_r}}
   \tilde{\phi})|_{G_{n}}\right)\right).  \]
 Then as in \cite[Lemma 1.2.2]{KR3}, one has 
 \[ M_n(w_{r-1}, s- \frac{1}{2})\left( M(w^-,s)f |_{G_n}\right)
  =  M_{n+1}(w_{r}, s) f|_{G_n} \]
  for $f_s \in I^{n+1}_r(s,\chi)$, where $w_r$ is defined analogously
  to $w_{r-1}$. Thus we are interested in 
  \begin{equation} \label{E:Eis2}
    E^{n,r-1}(\frac{1}{2}-s, M_{n+1}(w_{r}, s)  f^{n+1,r}(s,
    \pi_{K_{H_r}} \tilde{\phi})|_{G_{n}}).  
\end{equation}
  
Now we note:
    
\begin{Lem}  \label{L:comm}
\begin{align*}
& E^{n,r-1}(\frac{1}{2}-s, M_{n+1}(w_r, s)  f^{n+1,r}(s,
\pi_{K_{H_r}} \tilde{\phi})|_{G_{n}})\\
=&c_r(s) \cdot E^{n, r-1}(\frac{1}{2}-s,    f^{n+1,r}(-s,
\pi_{K_{H_r}} \tilde{\phi})|_{G_{n}}).
\end{align*}
 \end{Lem}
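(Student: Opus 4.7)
The plan is to reduce Lemma \ref{L:comm} to the stronger pointwise identity
\[
M_{n+1}(w_r, s)\, f^{n+1,r}(s, \pi_{K_{H_r}}\tilde\phi) \;=\; c_r(s)\cdot f^{n+1,r}(-s, \pi_{K_{H_r}}\tilde\phi)
\]
as sections in $I^{n+1}_r(-s,\chi)$; the lemma then follows immediately by restricting both sides to $G_n$ and forming the Eisenstein series $E^{n,r-1}(\tfrac12-s,-)$.

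To establish this sectional identity, I would lift the problem to the enriched $G_{n+1}(\A)\times H_r(\A)$-intertwining section $F^{n+1,r}(s,-)$ of Remark \ref{rm_section}, which realizes $\omega_{n+1,r}\to I^{n+1}_r(s,\chi)\boxtimes I_{H_r}(-s)$. The key intermediate claim is the doubling-type compatibility
\[
M_{n+1}(w_r, s)\circ F^{n+1,r}(s,-) \;=\; (\mathrm{id}\boxtimes M_{H_r}(s))\circ F^{n+1,r}(-s,-),
\]
both sides being $G_{n+1}(\A)\times H_r(\A)$-equivariant maps from $\omega_{n+1,r}$ into $I^{n+1}_r(-s,\chi)\boxtimes I_{H_r}(-s)$. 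One route is to invoke the uniqueness (up to scalar) of such equivariant maps, essentially a Howe-duality type statement, reducing the question to determining that the implicit scalar equals $1$ by evaluating on a convenient factorizable test vector; an alternative, hands-on route is to use the explicit formula of Lemma \ref{L:tate} for $\mathfrak{f}^{n+1,r}(s,\phi)$, to unfold $M_{n+1}(w_r,s)$ as an integral over the unipotent radical of $Q(Y_r)$, and to identify the resulting integral via a functional equation of a Tate–Godement–Jacquet zeta integral in the $X_r$-variable.

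With the compatibility in hand, the scalar $c_r(s)$ emerges upon specialization. Indeed, when we apply $F^{n+1,r}(-s,-)$ to $\pi_{K_{H_r}}\tilde\phi$, the section $h\mapsto F^{n+1,r}(-s,\pi_{K_{H_r}}\tilde\phi)(g,h)$ lies in the one-dimensional $K_{H_r}$-fixed subspace of $I_{H_r}(s)$, and hence equals $f^{n+1,r}(-s,\pi_{K_{H_r}}\tilde\phi)(g)\cdot f^0_s(h)$. Applying $M_{H_r}(s)$ and using $M_{H_r}(s)f^0_s = c_r(s)f^0_{-s}$ together with $f^0_{-s}(e)=1$, evaluation at $h=e$ produces precisely the factor $c_r(s)$, proving the pointwise identity and therefore the lemma.

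The main obstacle is pinning down that the scalar in the compatibility is exactly $1$, rather than some meromorphic function $\beta(s)$. Uniqueness gives proportionality only, and verifying $\beta(s)=1$ via a direct global test requires finding a $\phi$ on which both sides can be computed explicitly; alternatively, the unfolding approach forces one to perform a careful change of variables combining the $U(Y_r)$-integral defining $M_{n+1}(w_r,s)$ with the $\GL(X_r)$-integral defining $\mathfrak{f}^{n+1,r}$. This integral manipulation, which must be matched up precisely with $M_{H_r}(s)$ on the $H_r$-side, is the technical heart of the argument and parallels, at the level of intertwining operators, the unipotent unfolding already carried out in the proof of Proposition \ref{P:key1}.
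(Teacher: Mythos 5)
Your route is genuinely different from the paper's, and it is in fact exactly the alternative the authors sketch in the Remark immediately following Lemma \ref{L:comm}: prove the pointwise sectional identity $M_{n+1}(w_r,s)\,f^{n+1,r}(s,\pi_{K_{H_r}}\tilde\phi)=c_r(s)\,f^{n+1,r}(-s,\pi_{K_{H_r}}\tilde\phi)$ by first establishing the compatibility $M_{n+1}(w_r,s)\circ F^{n+1,r}(s,-)=M_{H_r}(s)\circ F^{n+1,r}(-s,-)$ and then using $M_{H_r}(s)f^0_s=c_r(s)f^0_{-s}$ on the $K_{H_r}$-fixed line. Your final specialization step (one-dimensionality of the $K_{H_r}$-fixed subspace, evaluation at $h=e$) is fine. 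The genuine gap is that the compatibility identity itself --- which you correctly call the technical heart --- is nowhere established: you offer two strategies (uniqueness of equivariant maps plus a scalar determination, or a direct unfolding of $M_{n+1}(w_r,s)$ against the formula of Lemma \ref{L:tate}) but carry out neither, and the scalar normalization is precisely what a uniqueness argument cannot settle. The paper likewise only asserts this identity in the Remark without proof (citing \cite[Prop.~2.4]{Q} for the case $r=n$), so as written your proposal does not prove the lemma.

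The paper's actual proof sidesteps all of this with a soft global argument. The functional equation of the Eisenstein series gives $E^{n+1,r}(-s, M_{n+1}(w_r,s)f^{n+1,r}(s,\cdot))=E^{n+1,r}(s, f^{n+1,r}(s,\cdot))=B^{n+1,r}(s,\tilde\phi)$, and the already-established identity (\ref{E:B}) --- itself an immediate consequence of the functional equation $E_{H_r}(s,-)=c_r(s)E_{H_r}(-s,-)$ of the auxiliary spherical Eisenstein series --- converts this to $c_r(s)\,E^{n+1,r}(-s, f^{n+1,r}(-s,\cdot))$. One then takes constant terms along $U(Y_1)$ on both sides via Lemma \ref{L:constant} and, for generic $s$, separates the three resulting automorphic forms on $\GL(Y_1)\times G_n$ by their $\GL(Y_1)$-exponents; matching the contributions of the trivial Weyl element (exponent $\chi\,|-|^{-s+n+1-\frac{r-\epsilon_0}{2}}$) yields exactly the statement of the lemma, with $c_r(s)$ entering for free from the $H_r$-side functional equation rather than from any local or sectional computation. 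If you want to complete your approach you must actually execute the unipotent unfolding in the spirit of Proposition \ref{P:key1}; but the paper's argument shows that for the lemma as stated this stronger sectional identity is unnecessary.
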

 \begin{proof}
 We have the functional equation for the Eisenstein series $E^{n+1,r}$:
 \begin{align*}
&E^{n+1,r}(-s,  M_{n+1}(w_{r}, s)  f^{n+1,r}(s, \pi_{K_{H_r}}
  \tilde{\phi}))\\
=& E^{n+1,r}(s,  f^{n+1,r}(s, \pi_{K_{H_r}} \tilde{\phi}))\\
=& B^{n+1, r}(s, \tilde{\phi}) \\
=& c_r(s) \cdot B^{n+1,r}(-s,\tilde{\phi}),
\end{align*}
where the last equality follows by  (\ref{E:B}). Thus we have:
\[  E^{n+1,r}(-s,  M_{n+1}(w_{r}, s)  f^{n+1,r}(s, \pi_{K_{H_r}}
\tilde{\phi}))  =     c_r(s) \cdot   E^{n+1,r}(-s,   f^{n+1,r}(-s,
\pi_{K_{H_r}} \tilde{\phi})). \]
Now consider the constant term of both sides along the unipotent
radical $U^{n+1}(Y_1)$. By Lemma \ref{L:constant}, we see that each
side is the sum of three automorphic forms on $L^{n+1}(Y_1) = \GL(Y_1)
\times G_n$. Moreover, for generic $s$, these three automorphic forms
can be distinguished from each other by the action of $\GL(Y_1)$.
Thus we may equate the terms on both sides with the same
$\GL(Y_1)$-action. Considering the terms on each side corresponding to
the trivial element of the Weyl group, we obtain
  \[  E^{n,r-1}(\frac{1}{2}-s, M_{n+1}(w_r, s)  f^{n+1,r}(s,
  \pi_{K_{H_r}} \tilde{\phi})|_{G_{n}})
  = c_r(s) \cdot E^{n, r-1}(  \frac{1}{2}-s,    f^{n+1,r}(-s,
  \pi_{K_{H_r}} \tilde{\phi})|_{G_{n}}),  \]
  which is the assertion of the lemma.
\end{proof}

\begin{Rmk} One could directly prove  that 
\[  M_{n+1}(w_r, s)  f^{n+1,r}(s,\pi_{K_{H_r}} \tilde{\phi})=c_r(s)  f^{n+1,r}(-s, \pi_{K_{H_r}} \tilde{\phi}) \] 
as sections of the induced representation $I^{n+1}_r(-s,\chi)$. Recall  from Remark ~\ref{rm_section} at the end of \S  \ref{sectionf} that $f^{n,r}(s,\phi)=F^{n,r}(s,\phi)|_{G_n}$, where   
\[ F^{n,r}(s,-) : \omega_{n,r} \longrightarrow I^{n}_r(s,\chi)\boxtimes I_{H_r}(-s) \]
is a $G_{n}(\A)\times H_r(\A)$-equivariant map defined there. When $n\geq r$, one can establish the identity 
\[  M_{n}(w_r,s)F^{{n},r}(s, - )=M_{H_r}(s)F^{{n},r}(-s,- ), \]
 and when $\phi$ is $K_{H_r}$-invariant, one has:
 \[  M_{H_r}(s)F^{{n},r}(-s,\phi)=c_r(s)F^{n,r}(-s,\phi) \]
 on $G_n(\A)\times K_{H_r}$ . Then one can specialize to the situation of $F^{n+1,r}(s,\pi_{K_{H_r}}\tilde{\phi})$.
\end{Rmk}

\vskip 10pt
  
  By Proposition \ref{P:key2}, we have:
  \[ f^{n+1,r}(-s, \pi_{K_{H_r}} \tilde{\phi})|_{G_{n}}
 = \alpha_r \cdot  Z_1(s - \rho_{H_r}, \phi_1) \cdot  f^{n,r-1}(  -s +
 \frac{1}{2},  \Ik^{n,r}(\pi_{K_{H_r}}\phi)), \] 
  where  we recall that  we are assuming $ m = d(n+1) = n+1
  +\epsilon_0$ so that
  \[  \rho_{H_r} = \frac{n+1-r}{2}. \]
  From the above lemma, we see that (\ref{E:Eis2})  is equal to
  \[    c_r(s) \cdot  \alpha_r \cdot Z_1(s - \rho_{H_r}, \phi_1)  \cdot
  B^{n,r-1}(\frac{1}{2}-s, \Ik^{n,r}(\pi_{K_{H_r}}\phi)),
   \]
  which is in turn equal to
  \[  \frac{c_r(s)}{c_{r-1}(s -\frac{1}{2})} \cdot  \alpha_r \cdot
  Z_1(s- \rho_{H_r}, \phi_1) \cdot B^{n, r-1}(s - \frac{1}{2},
  \Ik^{n,r}(\pi_{K_{H_r}}\phi)). \]
  \vskip 5pt
  
  Now we examine the analytic behavior of this function at 
  $s =\rho_{H_r}$. Though the answer will turn out to be uniform, it is convenient  to consider 3 different cases
  separately:
  \vskip 5pt
  
  \noindent (a)  Assume first that $r \geq 2$ and $H_{r-1} \ne \OO_{1,1}$. Then we have:
  \begin{itemize}
  \item $B^{n,r-1}(s - \frac{1}{2} , \Ik^{n,r}(\pi_{K_{H_r}}\phi))$
    has a pole of order $1$ at $s = \rho_{H_r} = \rho_{H_{r-1}} +
    \frac{1}{2}$, since the case $(W_n, V_{r-1})$ is in the first term
    range. Its residue there is precisely
    $B_{-1}^{n,r-1}(\Ik^{n,r}(\pi_{K_{H_r}}\phi))$. 
    \item   the Tate zeta integral $ Z_1(s -\rho_{H_r}, \phi_1)$ has a
      pole of order $1$, and its residue there is equal to $-
      \phi_1(0) =-1$. 
   \item the function $c_r(s)/c_{r-1}(s - 1/2)$ is holomorphic at $s =
     \rho_{H_r}$ and its value at $s = \rho_{H_r}$ is equal to
     $\kappa_r/ \kappa_{r-1}$ by Lemma \ref{L:kappa}(i).
  \end{itemize}
  \vskip 5pt
  
  Thus (\ref{E:Eis}) has a pole of order $2$ at $s = \rho_{H_r}$ and the leading term of (\ref{E:Eis}) there is equal to
 \[   - \frac{\kappa_r}{\kappa_{r-1}} \cdot \alpha_r \cdot
 B_{-1}^{n,r-1}(\Ik^{n,r}(\pi_{K_{H_r}}\phi))   \]
 while its residue at $s = \rho_{H_r}$ (which is the second Laurent
 coefficient) is
 \[   - \frac{\kappa_r}{\kappa_{r-1}} \cdot \alpha_r \cdot
 B_0^{n,r-1}(\Ik^{n,r}(\pi_{K_{H_r}}\phi)) 
 + C_r  \cdot B_{-1} ^{n,r-1}(\Ik^{n,r}(\pi_{K_{H_r}}\phi))\]
for some constant $C_r$.  Moreover, by Theorem \ref{T:1st} (the first
term identity in the first term range), we see that
\[  B_{-1}^{n,r-1}(\Ik^{n,r}(\pi_{K_{H_r}} \phi))=
\kappa_{r,r-1}^{-1} \cdot A_{-1}^{n,r}(\phi). \]

To sum up, we have obtained
\begin{align}\label{E:B-side-fin2}
&\left(\text{Residue at $s=\rho_{H_r}$ of the third term of
  (\ref{E:B-side})}\right)\\
=&- \frac{\kappa_r}{\kappa_{r-1}} \cdot \alpha_r \cdot
 B_0^{n,r-1}(\Ik^{n,r}(\pi_{K_{H_r}}\phi)) \mod{\im A_{-1}^{n,r}}\notag
\end{align}
when viewed as an automorphic form on $G_n$.
\vskip 15pt

\noindent (b) When $r=2$ and $H_{r-1}  =\OO_{1,1}$, we have:
\vskip 5pt
\begin{itemize}
\item the Tate zeta integral has a simple pole with residue $-1$  as above;
\item the function $c_2(s)/c_1(s)$ has a simple pole at $s = \rho_{H_2}$ with residue $\kappa_2/\kappa_1 = \kappa_2$;
\item the term $B^{n, 1}(s -\frac{1}{2}, \Ik^{n,2}(\pi_{K_{H_2}} \phi))$ is holomorphic at $s= \rho_{H_2}$.
\end{itemize}
\vskip 5pt

Thus,  (\ref{E:Eis}) has a pole of order $2$ at $s = \rho_{H_2}$ and the leading term of (\ref{E:Eis}) there is equal to 
\[  - \frac{\kappa_2}{\kappa_1} \cdot \alpha_2 \cdot B^{n,1}_0(\Ik^{n,2}(\pi_{K_{H_2}}\phi)), \]
while its residue there is equal to
\[   C \cdot B_0^{n,1}(\Ik^{n,2}(\pi_{K_{H_2}}\phi))   - \frac{\kappa_2}{\kappa_1} \cdot \alpha_2 \cdot 
B^{n,1}_1(\Ik^{n,2}(\pi_{K_{H_2}}\phi))
\]
for some constant $C$. Now observe that 
\[ B^{n,1}_1(\Ik^{n,2}(\pi_{K_{H_2}}\phi)) = 0 \]
because $B^{n,1}(s,\Ik^{n,2}(\pi_{K_{H_2}}\phi))$ is an even function in $s$. Moreover, 
by Theorem \ref{T:1st}(i), 
\[  B_0^{n,1}(\Ik^{n,2}(\pi_{K_{H_2}}\phi))  = \kappa_{2,1}^{-1} \cdot A^{n,2}_{-1}(\phi) \in {\rm Im} A^{n,2}_{-1}. \]
Thus, we conclude that when $H_{r-1} = \OO_{1,1}$,
  \begin{equation} \label{E:B-side-fin3}
   \left(\text{Residue at $s=\rho_{H_r}$ of the third term of
  (\ref{E:B-side})}\right) \in {\rm Im} A^{n,r}_{-1}. \end{equation}
\vskip 10pt

\noindent (c) When $r=1$ so that $H_{r-1}$ is anisotropic, we have
\vskip 5pt
\begin{itemize}
\item the Tate zeta integral has a simple pole with residue $-1$  as above;
\item the function $c_1(s)/c_0(s)$ has a simple pole at $s = \rho_{H_1}$ with residue $\kappa_1$;
\item $B^{n, r-1}(s -\frac{1}{2}, \Ik^{n,1}(\pi_{K_{H_r}} \phi))= I^{n,0}(\Ik^{n,1}(\phi))$.
\end{itemize}
Thus, (\ref{E:Eis}) has a pole of order $2$ at $s = \rho_{H_1}$ and the leading term of (\ref{E:Eis}) there is equal to 
\[  - \frac{\kappa_1}{\kappa_0} \cdot \alpha_1 \cdot  I^{n,0}(\Ik^{n,1}(\phi)), \]
while its residue there is equal to
\[   C \cdot  I^{n,0}(\Ik^{n,1}(\phi)) \]
for some constant $C$. 
On the other hand, Theorem \ref{T:1st}(i) implies that
 \[ I^{n,0}(\Ik^{n,1}(\phi)) = \kappa_{1,0}^{-1} \cdot A^{n,1}_{-1}(\phi)  \in {\rm Im} A^{n,1}_{-1}, \]
 so that
    \begin{equation} \label{E:B-side-fin4}
   \left(\text{Residue at $s=\rho_{H_r}$ of the third term of
  (\ref{E:B-side})}\right) \in {\rm Im} A^{n,r}_{-1}. \end{equation}
\vskip 10pt
   
\subsection{\bf Conclusion of proof.}
  We can now assemble the pieces together and prove Theorem
  \ref{T:main} in the case $\mathcal{N} = 1$. Namely by (\ref{E:A-side-fin}),
  (\ref{E:B-side-fin1}),  (\ref{E:B-side-fin2}),  (\ref{E:B-side-fin3}) and (\ref{E:B-side-fin4}), the equation
  (\ref{E:compare}) is written as
  \begin{equation}\label{E:second_term_one_step_before}
 A^{n,r}_0(\phi)  
=   B^{n,r}_{-1}(\phi)  -
 \{  \frac{\kappa_r}{\kappa_{r-1}} \cdot \alpha_r  
  \cdot B_0^{n, r-1}(\Ik^{n,r}(\pi_{K_{H_r}}\phi)) \}  
\qquad\mod {\im A^{n,r}_{-1}}
\end{equation}
where $\{ ...\}$ on the RHS is interpreted to be $0$ when $r=1$ or $H_{r-1} = \OO_{1,1}$.
\vskip 5pt

  Moreover, we observe that the two terms on the $B$-side corresponding
  to $w^+$ and $w^-$ actually have poles of order $2$ at $s =
  \rho_{H_r}$. These poles of order 2 must cancel each other, so that
  we have:
 \begin{equation}\label{E:first_term_one_step_before}
B^{n,r}_{-2}(\phi)   =\frac{\kappa_r}{\kappa_{r-1}} \cdot
 \alpha_r   \cdot B_{\ast}^{n,r-1}(\Ik^{n,r}(\pi_{K_{H_r}}\phi))  =
 \frac{\kappa_r}{\kappa_{r-1}} \cdot \alpha_r  \cdot
 \kappa_{r,r-1}^{-1}   \cdot
  A_{-1}^{n,r}(\phi),
\end{equation}
where $\ast$ is as in Theorem \ref{T:1st}(i).
To complete the proof of  Theorem \ref{T:main}, it remains to note:
\vskip 5pt

\begin{Lem} \label{L:kappa3}
The constant
\[  \frac{\kappa_r}{\kappa_{r-1}} \cdot \alpha_r  \cdot \kappa_{r, r-1}^{-1}  = 1. \]
\end{Lem}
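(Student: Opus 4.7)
The identity is an instance of the compatibility of Iwasawa decompositions in stages. Both $\kappa_r$ and $\kappa_{r,r-1}$ are measure constants arising from factoring the Tamagawa measure $dh$ on $H_r(\A)$ through different parabolics ($P(X_r)$ and $P(X_1)$ respectively), while $\alpha_r$ and $\kappa_{r-1}$ arise from further Iwasawa decompositions of the relevant Levi factors. The plan is to compute $dh/\tau(H_r)$ in two different ways that both refine to the Iwasawa decomposition associated with the parabolic $P^* := P(X_r) \cap P(X_1)$ of $H_r$, whose Levi is $\GL(X_1) \times \GL(X'_{r-1}) \times H_0$, and equate the resulting constants.

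First, using $P(X_r)$ and the definition (\ref{E:iwasawa}), we have $dh/\tau(H_r) = (\kappa_r/\tau(H_0)) \cdot dm_{\GL(X_r)} \cdot dm_{H_0} \cdot dn_{N(X_r)} \cdot dk_{K_{H_r}}$. Applying the Iwasawa decomposition (\ref{E:alphar}) of $\GL(X_r)$ with respect to its parabolic with Levi $\GL(X_1) \times \GL(X'_{r-1})$ replaces $dm_{\GL(X_r)}$ by $\alpha_r \cdot dt \cdot du \cdot dm_{\GL(X'_{r-1})} \cdot dk_{K_{\GL(X_r)}}$. Second, using $P(X_1)$ and applying (\ref{E:iwasawa}) gives $dh/\tau(H_r) = (\kappa_{r,r-1}/\tau(H_{r-1})) \cdot dt \cdot dm_{H_{r-1}} \cdot dn_{N(X_1)} \cdot dk_{K_{H_r}}$, and further refining via the Iwasawa of $H_{r-1}$ with respect to $P^{r-1}(X_{r-1})$ substitutes $dm_{H_{r-1}}/\tau(H_{r-1}) = (\kappa_{r-1}/\tau(H_0)) \cdot dm_{\GL(X'_{r-1})} \cdot dm_{H_0} \cdot dn_{N^{r-1}(X_{r-1})} \cdot dk_{K_{H_{r-1}}}$. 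Equating the two expressions and canceling the common factors $dt$, $dm_{\GL(X'_{r-1})}$, $dm_{H_0}$, $dk_{K_{H_r}}$ and $\tau(H_0)^{-1}$ should yield $\kappa_r \cdot \alpha_r = \kappa_{r,r-1} \cdot \kappa_{r-1}$, which is equivalent to the claim.

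The hard part will be justifying the measure identity on the remaining pieces, namely that
\[
\alpha_r \cdot du \cdot dk_{K_{\GL(X_r)}} \cdot dn_{N(X_r)} \quad \text{and} \quad dn_{N^{r-1}(X_{r-1})} \cdot dk_{K_{H_{r-1}}} \cdot dn_{N(X_1)}
\]
indeed parametrize the same measure on the quotient $(\GL(X_1) \times \GL(X'_{r-1}) \times H_0)(\A) \backslash H_r(\A) / K_{H_r}$. The key point is that the unipotent radical $N^*$ of $P^*$ admits two compatible decompositions $N^* = N(X_r) \cdot U_{\GL(X_r)} = N(X_1) \cdot N^{r-1}(X_{r-1})$, and its Tamagawa measure, being the product of self-dual additive Haar measures on the root subspaces, is independent of the order in which one groups the root subspaces. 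The compact groups $K_{\GL(X_r)}$ and $K_{H_{r-1}}$ each have total Haar volume 1, so they contribute trivially to the measure.

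As an alternative (more computational) route, one can verify the identity by direct substitution using the explicit formula $\alpha_r = \Res_{s=1}\zeta_E(s)/\zeta_E(r)$ from \cite[Lemma 9.1]{I2}, together with the explicit expressions for $\kappa_r$, $\kappa_{r-1}$ and $\kappa_{r,r-1}$ in terms of residues of Artin $L$-functions worked out in \cite[\S 9]{Ik} and \cite[\S 9]{I2}. The ratios on both sides collapse by telescoping in the chain of intermediate parabolics, recovering the same identity $\kappa_r \alpha_r = \kappa_{r,r-1}\kappa_{r-1}$. This second route avoids the abstract Iwasawa-in-stages argument at the cost of heavier bookkeeping.
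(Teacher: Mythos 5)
Your proposal is correct and is essentially the paper's own argument: the paper also proves the identity by ``Iwasawa decomposition in stages'' through the non-maximal parabolic $R = P(X_1)\cap P(X_r)$ stabilizing the flag $X_1\subset X_r\subset V_r$ (your $P^*$), computing the constant $\gamma$ in $dh=\gamma\cdot d_l r\cdot dk$ in the two ways you describe and equating $\kappa_{r,r-1}\kappa_{r-1}=\kappa_r\alpha_r$. The only point to add is that the degenerate case $r=1$ (where $P(X_1)=P(X_r)$ and your refinement collapses) should be checked separately; it holds trivially since $\kappa_0=\alpha_1=1$ and $\kappa_{1,0}=\kappa_1$ by convention.
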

\begin{proof}
If $r=1$, then the identity follows because $\kappa_0 = 1 = \alpha_1$ and $\kappa_{1,0} = \kappa_1$. 
For $r >1$, the desired identity  follows by ``Iwasawa decomposition in stages". More precisely,
consider the non-maximal parabolic subgroup $R$ of $H_r$ stabilizing
the flag of isotropic spaces:
\[   X_1 = \langle x_1 \rangle  \subset X_r  \subset V_r.  \]
Then $R$ is contained in two maximal parabolic subgroups $P(X_1)$ and $P(X_r)$.
Now, relative to the Iwasawa decomposition $H_r(\A) = R(\A) \cdot K_{H_r}$,  
there is a constant $\gamma$ such that  
  \[  dh  = \gamma \cdot d_l r \cdot dk,  \]
  where $d_lr$ is the left Haar measure of $R(\A)$.
 The constant $\gamma$ can be computed in two ways. First, one may
 express $dh$ using the Iwasawa decomposition of $H_r$ associated to
 $P(X_1)$ and then using the  Iwasawa decomposition  of $L(X_1) =
 \GL(X_1) \times H_{r-1}$ associated to $P^{r-1}({X'}_{r-1})$. This
 gives
 \[  \gamma =  \frac{\tau(H_r)}{\tau(H_{r-1})} \cdot \kappa_{r, r-1} \cdot \frac{\tau(H_{r-1})}{\tau(H_0)} \cdot \kappa_{r-1}  = \frac{\tau(H_r)}{\tau(H_0)} \cdot \kappa_{r,r-1} \cdot \kappa_{r-1}.  \]
 On the other hand, one may express $dh$ using the Iwasawa
 decomposition of $H_r$ associated to $P(X_r)$ followed by the Iwasawa
 decomposition associated to the maximal parabolic of $L(X_r) =
 \GL(X_r)$ stabilizing the line $X_1$. This yields
 \[  \gamma =\frac{\tau(H_r)}{\tau(H_0)} \cdot  \kappa_r \cdot \alpha_r \]
which proves the lemma.
 \end{proof}
\vskip 5pt

If one uses the result of the lemma, one can see that
(\ref{E:second_term_one_step_before}) and
(\ref{E:first_term_one_step_before}) are written as in Theorem
\ref{T:main}. This completes the base step in the inductive
proof of Theorem \ref{T:main}. 

  \vskip 15pt
  
 \section{\bf Proof of Theorem \ref{T:main}: Inductive Step}
 In this section, we complete the proof of Theorem \ref{T:main} by
 establishing the inductive step of the argument.  Thus, suppose we
 want to prove Theorem \ref{T:main} for the pair $(W_n, V_r)$ with
 associated 
 $\mathcal{N}_0 = m - d(n)$. We consider the pair $(W_{n+1}, V_r)$
 with associated $\mathcal{N} = m - d(n+1) = \mathcal{N}_0 -1 \geq 1$, so
 that we may appeal to the induction hypothesis.
 \vskip 5pt
 
 As in the proof of the base case, we fix $\phi_1 \in
 \mathcal{S}(y_1^* \otimes V_r)(\A)$ satisfying the conditions in \S
 \ref{SS:base}. Then for arbitrary  $\phi \in
 \mathcal{S}(Y_n'^* \otimes V_r)(\A)$, set $\tilde{\phi} = \phi_1
 \otimes \phi$.
 Applying the induction hypothesis, we have:
 \begin{equation} \label{E:first} 
 A^{n+1, r}_{-1}(\tilde{\phi}) =   
 B^{n+1,r}_{-2}(\tilde{\phi}), 
\end{equation}
 and 
\begin{equation} \label{E:second}
   A_0^{n+1,r}(\tilde{\phi})  =  
   B^{n+1,r}_{-1}(\tilde{\phi}) -   \kappa_{r,r'} \cdot B_0^{n+1,
     r'}(\Ik^{n+1,r}(\pi_{K_{H_r}}\tilde{\phi}))   +
   A^{n+1,r}_{-1}(\varphi) 
\end{equation}
 for some $\varphi \in \mathcal{S}(Y_{n+1}^* \otimes V_r)(\A)$, where $r'$ satisfies $m_0 + r + r'  = d(n+1)$.
 Note that the second term on the RHS of (\ref{E:second}) is always present here, because $H_{r'}$ cannot be anisotropic or equal to $\OO_{1,1}$ in the present setting. Indeed,
  the complementary space of $V_r$ with respect to $W_n$ is $V_{r'-1}$ so that $r' \geq 1$ and $H_{r'}$ is not anisotropic. On the other hand, if $H_r = \OO_{r,r}$ (so that $m_0 = 0$ and $\epsilon_0 =1$), then $r+ r'  = n+2$, and with our assumption that $r \leq n$, we see that $r' > 1$ in the case $\epsilon_0 =1$ and $m_0 = 0$.  
 \vskip 5pt
 
 \subsection{\bf First term identity.}
 Let us examine (\ref{E:first}) first. Computing the constant term along $U(Y_1)$, we have:
 \[   A^{n+1, r}_{-1}(\tilde{\phi})_{U(Y_1)} =   B^{n+1,r}_{-2}(\tilde{\phi})_{U(Y_1)}. \] 
 Using Lemma \ref{L:constant} to compute the constant terms on both
 sides, we shall extract the terms on both sides where $\GL(Y_1)
 \subset L(Y_1)$ acts by the character $\chi \cdot |-|^{m/2}$. 
 Then on the B-side, we see that only the term for $w^+$ contributes,
 and Proposition \ref{P:key1} gives:
  \[ \left(\text{the $\chi \cdot |-|^{m/2}$-part of
    $B^{n+1,r}_{-2}(\tilde{\phi})_{U(Y_1)}$}\right) = B^{n,r}_{-2}(\phi). \]
 On the other hand, on the A-side, only the first term in Lemma \ref{L:constant}(ii)
 contributes, giving
 \[  \left(\text{the $\chi \cdot |-|^{m/2}$-part of $A^{n+1, r}_{-1}
   (\phi)_{U(Y_1)}$}\right) = A^{n,r}_{-1}(\phi). \]
 This proves Theorem \ref{T:main}(i). 
 \vskip 10pt
 
 \subsection{\bf Second term identity.}
 Now consider (\ref{E:second}). Computing the constant term along $U(Y_1)$ gives:
\begin{align}\label{E:induction_step}
A_0^{n+1,r}(\tilde{\phi})_{U(Y_1)} &=  
B^{n+1,r}_{-1}(\tilde{\phi})_{U(Y_1)} \\
& - \kappa_{r,r'}  
\cdot B_0^{n+1,  r'}(\Ik^{n+1,r}(\pi_{K_{H_r}}\tilde{\phi}))_{U(Y_1)} \notag\\
&+ A^{n+1,r}_{-1}(\varphi)_{U(Y_1)}.\notag
\end{align}
Each constant term in this identity can be computed by using Lemma
\ref{L:constant} just as we did for the base step, and is written as a
sum of two or three automorphic forms on
$\GL(Y_1)\times G_{n}$. Then by extracting the terms with $\GL(Y_1)$ acting
via $\chi \cdot |-|^{m/2}$, we can deduce the desired second term
identity. To be specific, for the LHS of (\ref{E:induction_step}), only the trivial Weyl group
element contributes, and we have
\begin{equation}\label{E:induction_step1}
\left(\text{the $\chi \cdot |-|^{m/2}$-part of
  $A_0^{n+1,r}(\tilde{\phi})_{U(Y_1)} $}\right)
= A_0^{n,r}(\phi),
\end{equation}
when viewed as an automorphic form on $G_n$. For the first term of the
RHS of (\ref{E:induction_step}), only the Weyl group element $w^+$ contributes
and we have
\begin{equation}\label{E:induction_step2}
\left(\text{the $\chi \cdot |-|^{m/2}$-part of
  $B^{n+1,r}_{-1}(\tilde{\phi})_{U(Y_1)}$}\right)
= B_{-1}^{n,r}(\phi).
\end{equation}
For the third term of
the RHS of (\ref{E:induction_step}), we have
\begin{equation}\label{E:induction_step3}
 \left(\text{the $\chi \cdot |-|^{m/2}$-part of $A^{n+1,
     r}_{-1}(\varphi)_{U(Y_1)} $}\right)\in {\rm Im} A^{n,r}_{-1}
\end{equation}
as we have noted above for the first term identity. 
\vskip 5pt

Finally for the
second term of the RHS of (\ref{E:induction_step}), 
only the Weyl
group element $w^-$ contributes and one sees that 
\begin{align}\label{E:induction_step4}
&\left(\text{the $\chi \cdot |-|^{m/2}$-part of $B_0^{n+1,
    r'}(\Ik^{n+1,r}(\pi_{K_{H_r}}\tilde{\phi}))_{U(Y_1)}$}\right)\\
&=\Val_{s=\rho_{H_{r'}}} E^{n, r'-1}\left( s - \frac{1}{2}, M(w^-,s)
f^{n+1. r'}(s, \Ik^{n+1,r}(\pi_{K_{H_r}}\tilde{\phi}))|_{G_n})\right)\notag.
 \end{align}
This is the most complex case. First by the arguments in the
$w^-$-term for the base step and Lemma \ref{L:comm}, we can write the
Eisenstein series in (\ref{E:induction_step4}) as
\begin{align} \label{E:r'}
&E^{n, r'-1}\left( s - \frac{1}{2}, M(w^-,s)
f^{n+1. r'}(s, \Ik^{n+1,r}(\pi_{K_{H_r}}\tilde{\phi}))|_{G_n}\right)\\
=&c_{r'}(s) \cdot E^{n, r'-1}\left(\frac{1}{2}-s, f^{n+1. r'}(-s,
  \Ik^{n+1,r}(\pi_{K_{H_r}}\tilde{\phi})|_{G_n})\right).\notag
\end{align}
 Noting that
\[  \Ik^{n+1, r}(\pi_{K_{H_r}} \tilde{\phi}) = \Ik^{1, r,r'}(\phi_1)
\otimes \Ik^{n,r,r'}(\pi_{K_{H_r}}\phi) \]
is $K_{H_{r'}}$-invariant,
 Proposition \ref{P:key2} implies that (\ref{E:r'}) is  equal to
\begin{align}\label{E:r'-1}   
&c_{r'}(s) \cdot \alpha_{r'} \cdot Z_1(s  -
(n+1-r') + \rho_{H_{r'}}, \Ik^{1,r,r'}(\phi_1))\\
&\hskip 1in \cdot E^{n,r'-1}\left(\frac{1}{2}-s, f^{n,
  r'-1}(\frac{1}{2}-s, 
\Ik^{n,r', r'-1}(\Ik^{n,r,  r'}(\pi_{K_{H_r}}\phi)))\right)\notag\\
=& c_{r'}(s) \cdot \alpha_{r'} \cdot Z_1(s  -
(n+1-r') + \rho_{H_{r'}}, \Ik^{1,r,r'}(\phi_1))\notag\\
&\hskip 1in \cdot   E^{n,r'-1}\left(\frac{1}{2}-s, f^{n,
  r'-1}(\frac{1}{2}-s, \Ik^{n, r, r'-1}(\pi_{K_{H_r}} \phi))\right).\notag
\end{align}
Note that since $m_0+ r + r' = d(n+1)$, we have $m_0 + r+ (r'-1) =
d(n)$, so that $V_r$ and $V_{r'-1}$ are complementary with respect to
$W_n$ and 
\[  \Ik^{n,r, r'-1} = \Ik^{n,r}. \]
Thus (\ref{E:r'-1}) is equal to
\[  c_{r'}(s) \cdot \alpha_{r'} \cdot  Z_1(s  -
(n+1-r') + \rho_{H_{r'}}, \Ik^{1,r,r'}(\phi_1))  \cdot B^{n, r'-1}(\frac{1}{2}-s,
\Ik^{n,r}(\pi_{K_{H_r}}\phi)) \]
 which is in turn equal to
 \begin{equation}\label{E:r'-2}
 \frac{c_{r'}(s)}{c_{r'-1}(s - \frac{1}{2})} \cdot \alpha_{r'}
 \cdot Z_1(s  - (n+1-r') + \rho_{H_{r'}}, \Ik^{1,r,r'}(\phi_1))  \cdot
 B^{n, r'-1}(s-\frac{1}{2}, \Ik^{n,r}(\pi_{K_{H_r}}\phi)).
\end{equation}
 We thus need to consider the zeroth term in the Laurent expansion of (\ref{E:r'-2}) at
 $s = \rho_{H_{r'}} = \rho_{H_{r'-1}} + 1/2 $.
 \vskip 5pt
 
 As in the argument for the base step in the previous section, it will
 be convenient to consider different cases separately. If $H_{r'-1}$
 is anisotropic or equal to $\OO_{1,1}$, then as we have seen in the
 previous section,  
 $B^{n, r'-1}(s-\frac{1}{2}, \Ik^{n,r}(\pi_{K_{H_r}}\phi))$ is
 holomorphic at $s = \rho_{H_{r'}}$ with vanishing first derivative
 there, whereas the product of the other terms has a simple pole
 (arising from the pole of $ c_{r'}(s) / c_{r'-1}(s - \frac{1}{2})$ at
 $s = \rho_{H_{r'}}$). Thus, on taking the zeroth Laurent coefficient,
 one obtains 
 \[  C \cdot B^{n, r'-1}_0(\Ik^{n,r}(\pi_{K_{H_r}}\phi))   \in {\rm Im} A^{n,r}_{-1} \]
  by Theorem \ref{T:1st}(i).   This completes the proof of the inductive step for these cases.
  \vskip 5pt
  
  Finally, we assume that $H_{r'-1}\ne \OO_{1,1}$ is isotropic.
   Then we know from Lemma
 \ref{L:kappa}(i) that $c_{r'}(s)/c_{r'-1}(s- 1/2)$ is holomorphic with
 value $\kappa_{r'}/\kappa_{r'-1}$. The zeta integral is evaluated at
 $-(r-r') \leq -1$ where it is holomorphic, whereas 
 $B^{n, r'-1}(s-\frac{1}{2}, \Ik^{n,r}(\pi_{K_{H_r}}\phi))$ has a pole
 of order $1$.  Taking the zeroth term in the Laurent expansion, 
 (\ref{E:r'-2}) becomes
\begin{align}\label{E:r'-3}
 &\kappa_{r'}/\kappa_{r'-1} \cdot \alpha_{r'} \cdot
 Z_1(-(r-r'), \Ik^{1,r,r'}(\phi_1)) \cdot  B^{n,
   r'-1}_0(\Ik^{n,r}(\pi_{K_{H_r}}\phi))  \\
 &\hskip 2.5in \mod {\im(B^{n,r'-1}_{-1} \circ \Ik^{n,r} \circ
   \pi_{K_{H_r}})}.\notag
\end{align}
 By the first term identity in Theorem \ref{T:1st} and Lemma
 \ref{L:kappa3}, (\ref{E:r'-3}) becomes
  \begin{equation} \label{E:comple}
    \kappa_{r',r'-1} \cdot Z_1(-(r-r'), \Ik^{1,r,r'}(\phi_1)) \cdot
    B^{n, r'-1}_0(\Ik^{n,r}(\pi_{K_{H_r}}\phi))  \, \mod {\im
    A^{n,r}_{-1}}. 
\end{equation}
 
 \vskip 5pt
 It remains to obtain a better handle of  the term
 \[   Z_1(-(r-r'), \Ik^{1,r,r'}(\phi_1)):= {\rm Val}_{s=0} 
\int_{\A_E^{\times}}  \int_{\A_E^{r-r'}}  \phi_1 \left(   \begin{matrix}
x_{r-r'}  \\
 t \\
 0_{m-r+r'-1} \end{matrix} \right) \, |t|^{s - (r-r')} \, dx_{r-r'}  \,  d^{\times} t. \] 
 This is given by the following lemma (see \cite[Pg. 231, (7.1)]{I1}).
 \vskip 5pt
 
 \begin{Lem}
 Let $\varphi \in \mathcal{S}(\A^k)$ with $k \geq 2$ be such that
 $\varphi$ is $K$-invariant, where $K \subset \GL_k(\A)$ is a maximal
 compact subgroup and where  $\GL_k(\A)$ acts on  $ \mathcal{S}(\A^k)$ by $g \cdot \varphi(x)  = \varphi(g^{-1} \cdot x)$. Then
\[  
  {\rm Val}_{s=0} \left( 
\int_{t \in \A^{\times} }   \int_{x_{k-1} \in \A^{k-1}} \varphi   
\begin{pmatrix}
x_{k-1}  \\
 t  \end{pmatrix}\cdot  |t|^{s - (k-1)} \, dx_{k-1}  \,  d^{\times} t
\right) =   \frac{\varphi(0)}{\alpha_{k}} 
 \] 
 where the constant $\alpha_{k}$ is as defined in (\ref{E:alphar}),
 and where the double integral on the LHS is absolutely convergent
 when ${\rm Re}(s) \gg 0$ and has a meromorphic continuation to $\C$.
 \end{Lem}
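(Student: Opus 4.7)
The plan is to recognize the integral as a Tate zeta integral in disguise, apply Tate's functional equation, and then conclude via the adelic ``polar decomposition'' of $\A^k$ together with Fourier inversion.

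First, the substitution $x_{k-1} = t y_{k-1}$ (with Jacobian $|t|^{k-1}$) recasts the integral as
\[
I(s) := \int_{\A^\times} \int_{\A^{k-1}} \varphi\!\left(t \begin{pmatrix} y_{k-1} \\ 1 \end{pmatrix}\right) |t|^s \, dy_{k-1} \, d^\times t = \int_{\A^\times} h(t) |t|^{s-(k-1)} \, d^\times t,
\]
which is a global Tate zeta integral for the Schwartz function $h(t) := \int_{\A^{k-1}} \varphi(x, t) \, dx$ on $\A$ at the parameter $s - (k-1)$.   The theory reviewed in \S \ref{zetaintegral} ensures meromorphic continuation to $\C$ with simple poles only at $s = k-1$ and $s = k$, so for $k \geq 2$ the value at $s = 0$ is well-defined and holomorphic.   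By the Tate global functional equation $Z(s', h) = Z(1-s', \hat h)$, noting that $\hat h(u) = \hat\varphi(0, \ldots, 0, u)$, we obtain
\[
I(0) = Z(k, \hat h) = \int_{\A^\times} \hat\varphi(u \cdot e_k)\, |u|^k \, d^\times u,
\]
where $\hat\varphi$ is the self-dual Fourier transform of $\varphi$ on $\A^k$ and $e_k = (0,\ldots,0,1)^T$; this last integral converges absolutely since $k \geq 2 > 1$.

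Second, since $\varphi$ is $K$-invariant, so is its Fourier transform $\hat\varphi$. Averaging over $K$ (with $\mathrm{vol}(K) = 1$), we get
\[
I(0) = \int_K \int_{\A^\times} \hat\varphi(u \kappa \cdot e_k)\, |u|^k \, d^\times u \, d\kappa.
\]
The key point is now the following adelic polar-type decomposition, for every $f \in \mathcal{S}(\A^k)$:
\[
\int_{\A^k} f(v) \, dv = \alpha_k \int_K \int_{\A^\times} f(u \kappa \cdot e_k) \, |u|^k \, d^\times u \, d\kappa,
\]
which is obtained by applying (\ref{E:alphar}) (to the ``mirabolic'' fibration $g \mapsto g \cdot e_k$ identifying $\A^k \setminus \{0\}$ with $\GL_k/P_k$, where $P_k$ is the stabilizer of $e_k$).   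Feeding in $f = \hat\varphi$ and invoking Fourier inversion yields
\[
I(0) = \frac{1}{\alpha_k} \int_{\A^k} \hat\varphi(v) \, dv = \frac{\varphi(0)}{\alpha_k},
\]
as claimed.

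The only real subtlety is the derivation of the polar decomposition with precisely the constant $\alpha_k$. This is most transparent by a local check: at a non-archimedean unramified place with test function $\mathbf{1}_{\mathcal{O}_v^k}$, both sides reduce to explicit geometric series in $q_v$ whose ratio yields $\alpha_{k,v} = \zeta_v(1)/\zeta_v(k)$, which then globalizes to the formula $\alpha_k = \mathrm{Res}_{s=1}\zeta_E(s)/\zeta_E(k)$ recorded after (\ref{E:alphar}).   Once the polar decomposition is in place, the rest of the argument is a direct composition of Tate's functional equation, $K$-averaging, and Fourier inversion.
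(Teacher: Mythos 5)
Your overall strategy is sound and genuinely different from the paper's. The paper embeds $\varphi$ as the last column of a test function $\Phi=\varphi'\otimes\varphi$ on $M_{k\times k}(\A)$, factors the rank-$k$ Godement--Jacquet integral $Z_k(s,\Phi)$ through the Iwasawa decomposition (\ref{E:alphar}) as $\alpha_k\, Z_{k-1}(\varphi_1',s)\, z(s,\varphi)$, and compares residues at $s=0$, so that the constant $\alpha_k$ appears automatically. You instead stay in rank one: the integral is $Z_1(s-(k-1),h)$ for $h(t)=\int_{\A^{k-1}}\varphi(x,t)\,dx$, the Tate functional equation converts its value at $s=0$ into the absolutely convergent integral $\int_{\A^\times}\hat\varphi(u e_k)\,|u|^k\,d^\times u$, and you finish with a polar decomposition of additive measure on $\A^k$ plus Fourier inversion. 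This route is attractive because it isolates the use of the functional equation and makes transparent why the answer is $\varphi(0)$ divided by a measure constant.

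However, the pivotal step --- the polar decomposition
\[
\int_{\A^k} f(v)\,dv \;=\; \alpha_k\int_K\int_{\A^\times} f(u\kappa\cdot e_k)\,|u|^k\,d^\times u\,d\kappa
\]
with constant exactly $\alpha_k$ --- is asserted rather than proved, and it is where essentially all the content of the lemma sits. It does not follow from (\ref{E:alphar}) alone: that formula decomposes Haar measure on $\GL_k(\A)$, and descending to a measure identity on $\A^k$ along $A\mapsto A e_k$ requires integrating out a $\GL_{k-1}(\A)\ltimes \A^{k-1}$ fibre; this is precisely the role played by the factor $Z_{k-1}(\varphi_1',s)$ and its residue in the paper's argument. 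Your fallback of an unramified local check giving $\alpha_{k,v}=\zeta_v(1)/\zeta_v(k)$, ``globalized,'' does not close this as stated, since $\prod_v\zeta_v(1)/\zeta_v(k)$ diverges and identifying the global constant as ${\rm Res}_{s=1}\zeta_E(s)/\zeta_E(k)$ is exactly the Tamagawa-measure bookkeeping of \cite[Lemma 9.1]{I2} underlying (\ref{E:alphar}). Once the polar decomposition is granted (it is true, and can be obtained by comparing residues of $Z_k(s,\Phi)$ at $s=k$ in the same way the paper compares them at $s=0$), the remaining steps are correct, including the points you treat implicitly: $s=0$ is a regular point of $Z_1(s-(k-1),h)$ because $k\ge 2$, one has $\hat h(u)=\hat\varphi(u e_k)$, and $\hat\varphi$ inherits $K$-invariance because the standard maximal compacts are transpose-stable.
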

 \begin{proof}
Let $z(s,\varphi)$ denote the double integral on the LHS. Choose a
function $\varphi^\prime\in \mathcal{S}(M_{k\times (k-1)}(\A))$ that
is left invariant by $K$ with $\varphi^\prime(0)=1$. Construct
$\Phi=\varphi^\prime\otimes \varphi \in \mathcal{S}(M_{k\times
  k}(\A))$ and consider the zeta integral
$Z_k(s,\Phi)=\int_{GL_k(A)}\Phi(A)|\det A|^sdA$. There is a measure
decomposition $dA=\alpha_k |t|^{-(k-1)}dx_{k-1}dB d^\times t d\lambda$
with respect to the Iwasawa decomposition
$A=\smalltwomatrix{B}{}{\leftup{t}{x_{k-1}}}{t}\lambda$, where
$x_{k-1}\in \A^{k-1}$, $B\in \mathrm{GL}_{k-1}(\A)$, $t\in \A^\times$,
and $\lambda \in K$. Let $\varphi_1^\prime$ be the restriction of
$\varphi^\prime$ to $M_{(k-1)\times (k-1)}(\A)\times \{0_{k-1}\}$,
then
\begin{align*}
Z_k(\Phi,s)=&\int_{\mathrm{GL}_k(\A)}\Phi(\leftup{t}{A})|\det A|^sd A\\
=&\alpha_k \int_K \int_{\mathrm{GL}_{k-1}(\A)} \int_{\A^\times }\int_{\A^{k-1}}\Phi\big(\leftup{t}{\lambda}\smalltwomatrix{\leftup{t}{B}}{x_{k-1}}{}{t}\big)|\det B|^s |t|^{s-(k-1)}dx_{k-1}d B d^\times t d\lambda\\
=&\alpha_k \int_{\mathrm{GL}_{k-1}(\A)} \int_{\A^\times } \int_{\A^{k-1}}\Phi\smalltwomatrix{B}{x_{k-1}}{}{t}|\det B|^s |t|^{s-(k-1)}dx_{k-1}d B d^\times t\\
=&\alpha_k Z_{k-1}(\varphi^\prime_1,s)z(s,\varphi).
\end{align*}
Because $Z_k(\Phi,s)$ and $Z_{k-1}(\varphi^\prime_1,s)$ both have a simple pole at $s=0$ and the residues are $-\Phi(0), -\varphi^\prime_1(0)$ respectively (c.f. section ~\ref{zetaintegral}), we have
\[
{\rm Val}_{s=0}z(s,\varphi)=\frac{-\Phi(0)}{-\varphi_1^\prime(0)\alpha_k }=\frac{\varphi(0)}{\alpha_k}.
\]

\end{proof}
 \vskip 10pt
 
 Applying the lemma, we deduce that 
  \[   Z_1(-(r-r'), \Ik^{1,r,r'}(\phi_1))  =
  \frac{1}{\alpha_{r-r'+1}},\]
  so that (\ref{E:comple}) is equal to
\[   \frac{\kappa_{r',r'-1}}{\alpha_{r-r'+1} } \cdot  B^{n,
  r'-1}_0(\Ik^{n,r}(\pi_{K_{H_r}}\phi))  \, \mod {\im A^{n,r}_{-1}}.
\]
 It remains to show that
 \[  \frac{\kappa_{r', r'-1}}{\alpha_{r-r'+1} }
=  \frac{\kappa_{r,r'-1}}{\kappa_{r,r'}}. 
\]
This again follows by ``Iwasawa decomposition in stages". Namely, we
consider the parabolic subgroup $R$ of $H_r$ stabilizing the flag
\[  X_{r-r'} \subset X_{r-r' +1} \subset V_r. \]
Then $R$ is contained in the maximal parabolic subgroups $P(X_{r-r'})$ and $P(X_{r-r'+1})$.
 By the same argument as in Lemma \ref{L:kappa3}, one obtains:
 \[  \kappa_{r,r'} \cdot \kappa_{r', r'-1} =  \kappa_{r, r'-1} \cdot \alpha_{r-r'+1},\]
 which is the desired identity.

Hence (\ref{E:comple}) (which is actually  (\ref{E:induction_step4}))
becomes
\begin{equation}\label{E:induction_step4'}
  \frac{\kappa_{r,r'-1}}{\kappa_{r,r'}} \cdot  B^{n,
  r'-1}_0(\Ik^{n,r}(\pi_{K_{H_r}}\phi))  \, \mod {\im A^{n,r}_{-1}}.
\end{equation}
Hence if one combines the four equations (\ref{E:induction_step1})
(\ref{E:induction_step2}) (\ref{E:induction_step3}) and
(\ref{E:induction_step4'}) with (\ref{E:induction_step}), one can
see that one has the desired second term identity for the pair
$(W_n, V_r)$ with associated $\mathcal{N}_0 = m - d(n)$, which
completes the inductive step.
 \vskip 5pt

Hence the proof of Theorem \ref{T:main} is now complete.
 \vskip 15pt

 \section{\bf Rallis Inner product Formula}
 
 In this section, we use the first and second term identities of the
 regularized Siegel-Weil formula to derive the Rallis inner product
 formula and prove our non-vanishing theorem of global theta lifts.
 \vskip 5pt
 
 \subsection{\bf Dual Pairs.}
 As before, let $V_r$ be an $m$-dimensional $\epsilon$-Hermitian space
 over $E$ with Witt index $r$ and anisotropic kernel $V_0$ of
 dimension $m_0$, so that $m = m_0+2r$.   
 Let $U_n$ be an $n$-dimensional $-\epsilon$-Hermitian space over
 $E$. Then the pair of spaces $(U_n, V_r)$ determines the dual reductive
 pair $G(U_n)\times H(V_r)$ as follows.
 \vskip 5pt

 \begin{center}
 \begin{tabular}{|c|c|c|c|}
 \hline
 $\epsilon_0 $ &   &  $G(U_n)$ & $H(V_r)$ \\
 \hline
 $0$  &  & $\U_n$ & $\U_m$  \\
 \hline
\multirow{2}{*}{$-1$} & $n$ even &   $\OO_n$ & $\Sp_m$  \\
 \cline{2-4}
  &$n$ odd & $\OO_n$ & $\Mp_m$ \\
  \hline
  \multirow{2}{*}{$1$} & $m$ even & $\Sp_n$ & $\OO_m$ \\
  \cline{2-4}
  & $m$ odd & $\Mp_n$ & $\OO_m$ \\
  \hline
\end{tabular}
\end{center}
 \vskip 5pt
 
 Observe that, while we have always taken $H(V_r)$ to be a linear
 group in the earlier sections of the paper, we have now allowed
 $H(V_r)$ to be the metaplectic group.
 
\vskip 5pt
    
 \subsection{\bf Theta lifts.}
 To  define the global theta lifting from $G(U_n)$ to $H(V_r)$, one
 needs to choose some auxiliary data for the splitting of metaplectic
 covers so as to define the Weil representation of $G(U_n) \times
 H(V_r)$. Namely, one fixes a pair of Hecke characters $ (\chi_U,
 \chi_V)$ as follows:
 
 \begin{align*}
\chi_U & = 
 \begin{cases}
  \text{a Hecke character of $\A_{E^{\times}}$ such that
  $\chi_W|_{\A^{\times}} = \chi_E^n$}
  & \text{if $\epsilon_0 = 0$,} \\
  \text{the quadratic Hecke character of $\A^{\times}$
  associated to $\disc\, U_n$}
  & \text{if $\epsilon_0 =-1$,} \\
  \text{the trivial character of $\A^{\times}$}
  & \text{if $\epsilon_0 = 1$;}
 \end{cases}\\
 \chi_V & = 
 \begin{cases}
  \text{a Hecke character of $\A_{E^{\times}}$ such that 
  $\chi_V|_{\A^{\times}} = \chi_E^m$}
  & \text{if $\epsilon_0 = 0$,} \\
  \text{the trivial character of $\A^{\times}$}
  & \text{if $\epsilon_0 = -1$,} \\
  \text{the quadratic Hecke character of $\A^{\times}$
  associated to $\disc\,V_r$}
  & \text{if $\epsilon_0 = 1$.}
 \end{cases}
\end{align*}

Having fixed $(\chi_U, \chi_V)$, one then has the associated  Weil representation
 $\omega_{\psi, \chi_U,\chi_V}$ of $G(U_n) \times H(V_r)$. If   $U_n
 \otimes_E V_r  = \mathbb{X} \oplus \mathbb{Y}$ is a Witt
 decomposition of the symplectic space 
 $U_n \otimes_E V_r $, then  $\omega_{\psi, \chi_U,\chi_V}$ may be
 realized on the Schwartz space $\mathcal{S}(\mathbb{X})(\A)$.
 One then has the usual automorphic realization
 \[  \theta: \omega_{\psi, \chi_U,\chi_V} \longrightarrow
 \{\text{Functions on $[G(U_n)] \times [H(V_r)]$} \}\] 
given by
\[
\theta(\phi)(g,h)=\sum_{x\in\mathbb{X}(F)}\omega_{\psi,\chi_U,\chi_V}(g,h)\phi(x)
\]
for $(g,h)\in G(U_n)(\A)\times H(V_r)(\A)$ and
$\phi\in\mathcal{S}(\mathbb{X})(\A)$.
For a cuspidal representation $\pi$ of $G(U_n)$, we consider its
global theta lift $\Theta_{n,r}(\pi)$ to $H(V_r)$, so that
$\Theta_{n,r}(\pi)$ is the automorphic subrepresentation of $H(V_r)$
spanned by the automorphic forms
\[  \theta_{n,r}(\phi,f)(h) = \int_{[G(U_n)]} \theta(\phi) (g,h) \cdot
\overline{f(g)} \, dg \]
for $\phi \in  \omega_{\psi, \chi_U,\chi_V}$ and $f \in \pi$. 
\vskip 5pt

We assume that $\Theta_{n,j}(\pi) = 0$ for $j < r$. Then a well-known
result of Rallis \cite{R1} says that $\Theta_{n,r}(\pi)$ is contained in the
space of cusp forms on $H(V_r)$.  We are interested in detecting
whether $\Theta_{n,r}(\pi)$ is nonzero.
 \vskip 5pt
 
 \subsection{\bf Doubling See-Saw.}
 To detect if $\Theta_{n,r}(\pi)$ is nonzero, we compute the inner product
 \[  \langle \theta_{n,r}(\phi_1, f_1) , \theta_{n,r}(\phi_2, f_2) \rangle \]
 for $\phi_i \in \omega_{\psi, \chi_U,\chi_V}$ and $f_i \in \pi$. The
 mechanism for this computation is the following doubling see-saw
 diagram
  \[
    \xymatrix{
    G(W_n)\ar@{-}[d]_{i}\ar@{-}[dr]&H(V_r)\times H(V_r)\ar@{-}[d]\\
    G(U_n)\times G(U^-_n)\ar@{-}[ur]&H(V_r)^{\Delta},
    }
\]  
where $U_n^-$ is the space $U_n$ with the form scaled by $-1$, and
\[  W_n = U_n \oplus (U_n^-) \]
  is the split $\epsilon$-Hermitian space of dimension $2n$. Indeed,
  we choose a Witt decomposition of $W_n$ to be
  \[ W_n =  Y_n  \oplus Y_n^* \]
  with
  \[ Y_n  = U_n^{\Delta} = \{  (u,u):  u \in U_n \}  \]
   and
   \[ Y_n^* = U_n^{\nabla} = \{ (u, -u): u \in U_n \}. \]
 \vskip 5pt
 
 The Weil representation $\omega_{\psi, \chi_V, W_n, V_r}$ of
 $G(W_n)\times H(H_r)^{\Delta}$ (which is what we have denoted by
 $\omega_{n,r}$ in the previous sections) can be realized on
 $\mathcal{S}(Y_n^*\otimes V_r)$ such  that $H(V_r)^{\Delta}$ acts by
\[   \omega_{\psi, \chi_V, W_n, V_r}(h) \phi(x)  =  \varphi(h^{-1}\cdot x). \] 
 Thus, the  branch of the see-saw involving the dual pair 
 \[  G(W_n)\times H(V_r)^{\Delta} \]
 gives rise to the (regularized) theta integral which we studied in
 the earlier parts of this paper. By \cite{K2} and \cite{HKS}, one knows that
 \[  \omega_{\psi, \chi_V, W_n, V_r}|_{G(U_n) \times G(U_n)} 
  =  \omega_{\psi, \chi_V, U_n, V_r} \otimes \left(
    \omega_{\psi,\chi_V, U_n, V_r}^{\vee} 
\cdot (\chi_V \circ \det) \right)|_{G(U_n) \times G(U_n)} . \]
  Indeed, there is an isomorphism \cite[Pg. 182]{Li}
  \[  \delta: 
   \omega_{\psi, \chi_V, U_n, V_r} \otimes \left( 
\omega_{\psi,\chi_V, U_n, V_r}^{\vee} \cdot (\chi_V \circ \det) \right)
   \longrightarrow   \omega_{\psi, \chi_V, W_n, V_r} \]
  such that
  \[  \delta(\phi_1\otimes \overline{\phi_2})(0)  = \langle \phi_1, \phi_2 \rangle \]
  for $\phi_i \in \Sw(\mathbb{X})(\A)$.
   \vskip 10pt
  
 \subsection{\bf Inner product.} 
 We remind the reader of the different possible ranges for the triple $(n,m, r)$:
\vskip 5pt

 \begin{center}
 
 \begin{tabular}{|c|c|c|}
 \hline
 \multirow{2}{*}{$m \leq d(n)$} & $r=0$ &  convergent range   \\
 \cline{2-3}
  &$r >0$ & boundary/first term range  \\
  \hline
  \multirow{2}{*}{$d(n) <  m \leq 2 \cdot d(n)$} & $m \leq d(n) +r$  & second term range \\
  \cline{2-3}
  & $m  > d(n) +r$ &  convergent range \\
  \hline
   $ m> 2 \cdot d(n)$ &   & convergent range  \\
  \hline
\end{tabular}
\end{center}
 \vskip 5pt
 
\noindent  Since we are primarily interested in the second term range, we shall assume henceforth that
 \[  d(n)  < m \leq  2 \cdot d(n). \]
  This implies that
  \[  0 < s_{m,n} = \frac{m-d(n)}{2} \leq \frac{d(n)}{2}. \]
  Now the see-saw identity for the doubling see-saw gives:
 \vskip 5pt
  
 \begin{Prop}\label{P:inner_product1}
Suppose that $d(n) < m \leq 2 \cdot d(n)$. Let $\pi$ be an irreducible 
cuspidal automorphic representation of $G(U_n)$ such that
$\Theta_{n,j}(\pi) = 0$ for $j < r$, so that $\Theta_{n,r}(\pi)$ is
cuspidal. 
\vskip 5pt

\noindent (i) If the triple $(n,m, r)$ is in the second term range and  $r \leq n$, 
then one has 
 \begin{align*}
&\langle \theta_{n,r}(\phi_1, f_1) , \theta_{n,r}(\phi_2, f_2) \rangle\\
&= \tau(H_r) \cdot  \int_{[G(U_n) \times G(U_n)]}    B^{n,r}_{-1}( \delta(\phi_1
\otimes \overline{\phi}_2))(g_1, g_2) \cdot \overline{f_1(g_1)} \cdot
f_2(g_2) \cdot \chi_V^{-1}(\det(g_2)) \,dg_1 \, dg_2\\
&=[E:F] \cdot  \int_{[G(U_n) \times G(U_n)]}    A^{n,r}_0(
\delta(\phi_1 \otimes \overline{\phi}_2))(g_1, g_2) \cdot
\overline{f_1(g_1)} \cdot f_2(g_2) \cdot \chi_V^{-1}(\det(g_2))\,
dg_1 \, dg_2
\end{align*}
for $\phi_i \in \omega_{\psi, \chi_U,\chi_V}$ and $f_i \in \pi$.   
 \vskip 5pt
 
 \noindent (ii) If the triple $(n,m,r)$ is in the convergent range, then one has the same identity relating the first term and the third term in (i).
  \end{Prop}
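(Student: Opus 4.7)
The plan is to apply the doubling see-saw to express the inner product in terms of the regularized theta integral on $G(W_n)$, and then invoke Theorem~\ref{T:main} to convert to the Siegel Eisenstein series coefficient.

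First I would substitute the definitions of $\theta_{n,r}(\phi_i, f_i)$ and formally exchange the order of integration, obtaining
$$\int_{[G(U_n) \times G(U_n)]} \left( \int_{[H(V_r)]} \theta(\phi_1)(g_1,h)\, \overline{\theta(\phi_2)(g_2,h)} \, dh \right) \overline{f_1(g_1)} f_2(g_2) \, dg_1\, dg_2.$$
Via the Li doubling isomorphism $\delta$, the integrand in parentheses equals $\theta_{n,r}(\delta(\phi_1\otimes\overline{\phi_2}))((g_1,g_2),h) \cdot \chi_V^{-1}(\det g_2)$, where $(g_1,g_2)$ is viewed in $G(W_n)$ via the embedding $G(U_n)\times G(U_n^-) \hookrightarrow G(W_n)$. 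In the convergent range case (ii), $m-r > d(n)$, so the inner integral converges absolutely and equals $\tau(H_r) \cdot I_{n,r}(\delta(\phi_1\otimes\overline{\phi_2}))(g_1,g_2)$; the classical convergent-range Siegel-Weil formula \cite{We, KR1, I3} identifies this with $[E:F] \cdot A^{n,r}_0(\delta(\phi_1\otimes\overline{\phi_2}))(g_1,g_2)$, establishing (ii) directly.

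For case (i), the inner integral is divergent and requires the Kudla-Rallis regularization from \S\ref{SS:kr-reg}. I would insert the regularizing central element $z$ together with the auxiliary Eisenstein series $E_{H_r}(s,h)$; since $f_1, f_2$ are cuspidal, $z$ can be moved across the $[G(U_n)^2]$ integral to act on $\delta(\phi_1\otimes\overline{\phi_2})$, and the exchange of integrals is then legitimate for $\operatorname{Re}(s) \gg 0$. Analytic continuation and extraction of the Laurent expansion at $s=\rho_{H_r}$ produces an identity of the form
$$\langle \theta, \theta\rangle = \tau(H_r) \cdot \int_{[G(U_n)^2]} B^{n,r}_{-1}(\delta(\phi_1\otimes\overline{\phi_2}))(g_1,g_2) \, \overline{f_1(g_1)} f_2(g_2) \chi_V^{-1}(\det g_2) \, dg_1 \, dg_2.$$
The apparent double-pole coefficient $B^{n,r}_{-2} = A^{n,r}_{-1}$ (by Theorem~\ref{T:main}(i)) is a residual Siegel Eisenstein series whose image, by Proposition~\ref{P:E-1}, lies in the coherent submodule of $I^n_n(s_{m,n},\chi)$; running the doubling see-saw in reverse, its pairing against $\overline{f_1}\cdot f_2$ reduces to inner products of theta lifts $\Theta_{n,j}(\pi)$ with $j<r$, which vanish by hypothesis, so this contribution is killed before taking the Laurent expansion.

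To replace $B^{n,r}_{-1}$ by $A^{n,r}_0$, I would apply the second term identity (Theorem~\ref{T:main}(ii)), which writes
$$B^{n,r}_{-1}(\phi) = A^{n,r}_0(\phi) + \kappa_{r,r'} \cdot B^{n,r'}_0(\Ik^{n,r}(\pi_{K_{H_r}}\phi)) \mod \operatorname{Im} A^{n,r}_{-1}.$$
The $\operatorname{Im} A^{n,r}_{-1}$ correction vanishes against cusp forms by the same see-saw-plus-tower argument as above. The Ikeda-image correction lives in $\omega_{n,r'}$ with $r'<r$; reversing the see-saw attached to $(G(W_n), H(V_{r'})^\Delta)$ identifies its pairing with the cusp forms in terms of $\Theta_{n,r'}(\pi)$, which also vanishes by the hypothesis $\Theta_{n,j}(\pi)=0$ for $j<r$. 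Only the $A^{n,r}_0$-term survives, and the conversion of the leading constant from $\tau(H_r)$ to $[E:F]$ follows by tracking the normalization, ultimately inherited from the factor $\tau(H_0)/[E:F]$ in Theorem~\ref{T:anisot}.

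The main obstacle is the rigorous justification of these see-saw-based vanishings together with the bookkeeping of the measure constants of Lemma~\ref{L:kappa} and of the Siegel-Weil normalizations. One must verify that the Laurent expansion at $s=\rho_{H_r}$ can be interchanged with the pairing $\int_{[G(U_n)^2]}\overline{f_1}\cdot f_2$, and that the residual Eisenstein pieces coming from $B^{n,r}_{-2}$, $\operatorname{Im} A^{n,r}_{-1}$ and the Ikeda image truly annihilate cuspidal data under the tower-vanishing hypothesis. The argument generalizes the template of \cite[\S 6]{GT} (which handled $\epsilon_0=-1$ without the Ikeda correction), the new ingredient being the use of Theorem~\ref{T:main}(ii) to handle the correction term $\kappa_{r,r'}\cdot B^{n,r'}_0(\Ik^{n,r}(\cdot))$.
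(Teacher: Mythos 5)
Your proposal is correct and follows essentially the same route as the paper, whose proof simply defers to \cite[Prop.\ 6.1 and 6.3]{GT}: the first equality comes from unfolding the regularized theta integral against the cuspidal data (so that at most a simple pole survives in the pairing), and the second from Theorem \ref{T:main}(ii) together with the vanishing, against cusp forms, of $\im A^{n,r}_{-1}$ and of the Ikeda correction term via the first term identity and the tower hypothesis $\Theta_{n,j}(\pi)=0$ for $j<r$. The only cosmetic remark is that the passage from $\tau(H_r)$ to $[E:F]$ is just the equality $\tau(H_r)=[E:F]$, valid for all $H_r$ occurring in the second term range, rather than something inherited from Theorem \ref{T:anisot}.
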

 \vskip 5pt
 \begin{proof}
(i) The first equality is proved in the same way as \cite[Prop. 6.1]{GT}. The second
equality is proved in the same way as \cite[Prop. 6.3]{GT}
by using the second term identity of Theorem \ref{T:main}.
 \vskip 5pt
 
 \noindent (ii) This similarly  follows from the Siegel-Weil formula (\cite{KR1,KR2} and \cite{I3}) in the convergent case.
 \end{proof}
 \vskip 10pt

\subsection{\bf  Doubling zeta integral.}
The last integral in (i) of the above proposition is the so-called doubling
zeta integral.  More precisely, for $f_i \in \pi$ and $\Phi(s)$ a
holomorphic section of  $I^n_n(s, \chi_V) := {\rm
  Ind}^{G(W_n)}_{P(Y_n)} (\chi_V \circ \det) |\det|^s$, one sets
\[  Z(s, \Phi, f_1, f_2) = 
\int_{[G(U_n)] \times [G(U_n)]} E(s,  \Phi)( g_1, g_2) \cdot \overline{f_1(g_1)} \cdot f_2(g_2) \cdot  
 \chi_V^{-1}(\det(g_2))  \\,  dg_1 \, dg_2. \]
By \cite{PS-R87}, this converges for ${\rm Re}(s) \gg 0$ and  extends to
a meromorphic function on $\C$. 
Thus Proposition \ref{P:inner_product1} can be stated as
 \begin{Prop}\label{P:inner_product2}
 Assume that $\pi$ is as in Proposition \ref{P:inner_product1}. Then
 for $\phi_i \in \omega_{\psi, \chi_U,\chi_V}$ and $f_i \in \pi$, one has
  \[    \langle \theta_{n,r}(\phi_1, f_1) , \theta_{n,r}(\phi_2, f_2) \rangle 
 =  [E:F] \cdot   {\rm Val}_{s= s_{m,n}} Z(s,
 \Phi^{n,r}(\delta(\phi_1\otimes \overline{\phi}_2)), f_1, f_2),\]
where recall that $\Phi^{n,r}(\delta(\phi_1\otimes
\overline{\phi}_2))$ is the Siegel-Weil section associated with
$\delta(\phi_1\otimes \overline{\phi}_2)$.
 \end{Prop}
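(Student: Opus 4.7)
My plan is to derive Proposition \ref{P:inner_product2} from Proposition \ref{P:inner_product1} by analyzing the Laurent expansion of the doubling zeta integral at $s = s_{m,n}$. Since $f_1$ and $f_2$ are cuspidal, the integral defining $Z(s, \Phi^{n,r}(\delta(\phi_1 \otimes \overline{\phi}_2)), f_1, f_2)$ converges absolutely and uniformly in $s$ on compact neighborhoods of $s_{m,n}$; one may therefore insert the Laurent expansion $E(s, \Phi^{n,r}(\delta)) = A^{n,r}_{-1}(\delta)/(s - s_{m,n}) + A^{n,r}_0(\delta) + O(s - s_{m,n})$ into the integral and exchange summation with integration. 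This yields
\[
Z(s, \Phi^{n,r}(\delta), f_1, f_2) = \frac{\mathcal{Z}_{-1}}{s - s_{m,n}} + \mathcal{Z}_0 + O(s - s_{m,n}),
\]
where $\mathcal{Z}_d$ denotes the integral of $A^{n,r}_d(\delta)(g_1, g_2) \cdot \overline{f_1(g_1)} \cdot f_2(g_2) \cdot \chi_V^{-1}(\det g_2)$ over $[G(U_n) \times G(U_n)]$. In view of Proposition \ref{P:inner_product1}(i), $\mathcal{Z}_0$ equals $[E:F]^{-1} \langle \theta_{n,r}(\phi_1, f_1), \theta_{n,r}(\phi_2, f_2) \rangle$, so the result will follow once we show $\mathcal{Z}_{-1} = 0$, i.e.\ that $\mathrm{Val}_{s = s_{m,n}} Z = \mathcal{Z}_0$.

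In the convergent range $m > d(n) + r$, the classical Siegel-Weil formula identifies $E(s_{m,n}, \Phi^{n,r}(\delta))$ with the absolutely convergent theta integral; hence the Eisenstein series is holomorphic at $s_{m,n}$, so $A^{n,r}_{-1}(\delta) = 0$ and $\mathcal{Z}_{-1} = 0$ trivially. Combined with Proposition \ref{P:inner_product1}(ii) this handles that case. In the second term range $d(n) < m \leq d(n) + r$, the combination of Theorem \ref{T:main}(i) with Theorem \ref{T:1st}(i) gives
\[
A^{n,r}_{-1}(\phi) = \kappa_{r, r'} \cdot B^{n, r'}_{\ast}\bigl( \mathrm{Ik}^{n,r}(\pi_{K_{H_r}} \phi) \bigr),
\]
where $V_{r'}$ is the complementary space of $V_r$ with $r' < r$ and $\ast \in \{-1, 0\}$. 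Applied to $\phi = \delta(\phi_1 \otimes \overline{\phi}_2)$, this expresses $\mathcal{Z}_{-1}$ as a nonzero scalar multiple of
\[
\int_{[G(U_n) \times G(U_n)]} B^{n,r'}_{\ast}\bigl( \mathrm{Ik}^{n,r}(\pi_{K_{H_r}} \delta(\phi_1 \otimes \overline{\phi}_2)) \bigr)(g_1, g_2) \cdot \overline{f_1(g_1)} \cdot f_2(g_2) \cdot \chi_V^{-1}(\det g_2) \, dg_1 \, dg_2.
\]
Reversing the see-saw manipulation used in the proof of Proposition \ref{P:inner_product1}, but now for the dual pair $(G(U_n), H(V_{r'}))$ and the Weil representation $\omega_{\psi, \chi_U, \chi_V, U_n, V_{r'}}$, this integral unfolds into a sum of Petersson inner products $\langle \theta_{n, r'}(\phi'_1, f_1), \theta_{n, r'}(\phi'_2, f_2) \rangle$ of theta lifts to $H(V_{r'})$, for appropriate $\phi'_i$ arising from the Ikeda-map image of $\pi_{K_{H_r}} \delta(\phi_1 \otimes \overline{\phi}_2)$. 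Since $r' < r$ and $\Theta_{n, j}(\pi) = 0$ for all $j < r$ by hypothesis, each such inner product vanishes; hence $\mathcal{Z}_{-1} = 0$ and the proposition follows.

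The main obstacle is the bookkeeping in the last see-saw step: one must check that the automorphic form $B^{n, r'}_{\ast} \circ \mathrm{Ik}^{n, r} \circ \pi_{K_{H_r}}$ applied to $\delta(\phi_1 \otimes \overline{\phi}_2) \in \omega_{n, r}$ genuinely corresponds, under the analogue of the isomorphism $\delta$ for $\omega_{n, r'}$ of \cite[p.~182]{Li}, to an inner product of theta lifts into $H(V_{r'})$; this requires writing $\pi_{K_{H_r}} \delta(\phi_1 \otimes \overline{\phi}_2)$ (after application of the Ikeda map) as a finite sum of tensors of the form $\delta'(\phi'_{1,i} \otimes \overline{\phi'_{2,i}})$, using $K_{H_r}$-finiteness and the intertwining between $\omega_{n,r}|_{G(U_n) \times G(U_n)}$ and $\omega_{U_n, V_r} \otimes (\omega_{U_n, V_r}^{\vee} \cdot (\chi_V \circ \det))$. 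Once this manipulation is carried out with care, the vanishing of $\mathcal{Z}_{-1}$ is an immediate consequence of the tower-property assumption on $\pi$, and the proposition follows.
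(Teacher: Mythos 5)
Your proposal is correct in substance, but it does considerably more than the paper intends, because in the paper Proposition \ref{P:inner_product2} is nothing more than a restatement of Proposition \ref{P:inner_product1}: the symbol ${\rm Val}_{s=s_{m,n}}$ denotes the zeroth coefficient of the Laurent expansion at $s=s_{m,n}$ (this is the convention used throughout, cf.\ Lemma \ref{L:kappa}(i) and Theorem \ref{T:Rallis}(ii), whose whole point is that this coefficient becomes an honest value once $L$ is known to be holomorphic there). Since integration of $E(s,\Phi)$ against the rapidly decreasing cusp forms commutes with taking Laurent coefficients --- your step 1 --- one gets ${\rm Val}_{s=s_{m,n}}Z(s,\Phi^{n,r}(\delta(\phi_1\otimes\overline{\phi}_2)),f_1,f_2)=\mathcal{Z}_0$ at once, and the third expression in Proposition \ref{P:inner_product1}(i) finishes the proof; the vanishing of $\mathcal{Z}_{-1}$ is not needed. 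That said, your argument that $\mathcal{Z}_{-1}=0$ is sound in outline: it is the same see-saw-plus-tower-property mechanism that already underlies the second equality of Proposition \ref{P:inner_product1}(i) (the passage from $B^{n,r}_{-1}$ to $A^{n,r}_0$ modulo terms living on the complementary tower, following \cite{GT}), and it yields the stronger and genuinely useful fact that $Z(s,\Phi^{n,r}(\delta(\phi_1\otimes\overline{\phi}_2)),f_1,f_2)$ is holomorphic at $s_{m,n}$ under the hypothesis $\Theta_{n,j}(\pi)=0$ for $j<r$. If you keep this extra step, note three points: Theorem \ref{T:1st}(i) gives $A^{n,r}_{-1}=\kappa_{r,r'}\cdot B^{n,r'}_{\ast}\circ\Ik^{n,r}\circ\pi_{K_{H_r}}$ directly, so Theorem \ref{T:main}(i) is not needed for it; the degenerate cases $r'=0$, $H_{r'}=\OO_{1,1}$ and $m'=0$ fall outside the hypotheses of Theorem \ref{T:1st}(i) and must be handled separately (there the relevant $B^{n,r'}_{\ast}$ is a convergent theta integral or a constant, and the vanishing still follows from cuspidality and the tower hypothesis); and a general adelic Schwartz function is not a finite sum of pure tensors $\delta'(\phi'_1\otimes\overline{\phi'_2})$, so in the unfolding you should perform the $g_1$-integration of the theta kernel against $\overline{f_1}$ first and observe that, for each fixed $(g_2,h)$, the result lies in the space $\Theta_{n,r'}(\pi)=0$, rather than relying on a pure-tensor decomposition.
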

\vskip 5pt

\subsection{\bf Local zeta integrals and $L$-factors}
For ${\rm Re}(s) \gg 0$, if $\Phi = \otimes_v \Phi_v$ and $f_i =
\otimes_v f_{i,v}$ are pure tensors, one has an Euler product
\[ Z(s, \Phi, f_1, f_2)  = \prod_v Z_v(s, \Phi_v, f_{1,v}, f_{2,v}) \]
 where
 \[  Z_v(s, \Phi_v, f_{1,v}, f_{2,v}) = \int_{G(U_n)(F_v)} \Phi_v(
 g_v,1) \cdot \overline{ \langle \pi_v(g_v)  f_1, f_2 \rangle} 
 \, dg_v. \]
 By \cite{PS-R87}, $Z_v(s, \Phi_v, f_{1,v}, f_{2,v})$ extends to a meromorphic function on $\C$. 
 In \cite{LR}, Lapid-Rallis has defined the standard $L$-factors $L(s,
 \pi_v \times \chi_{V,v})$ using this family of local zeta integrals
 (see \cite{G} for the metaplectic case). In \cite{Y4}, Yamana has
 shown that $L(s + \frac{1}{2}, \pi_v \times \chi_{V,v})$ is precisely
 the GCD of the family of doubling zeta integrals associated to ``good"
 sections $\Phi_v(s)$. Moreover, when ${\rm Re}(s) \geq 0$, a section
 $\Phi_v(s)$ is good at $s$ if and only if it is holomorphic there.
  Thus, it is natural to define the normalized
 local zeta integral
 \[  
 Z^*_v(s, \Phi_v, f_{1,v}, f_{2,v}) := \frac{Z_v(s, \Phi_v, f_{1,v},
   f_{2,v})}{L(s+ \frac{1}{2}, \pi_v \times \chi_{V,v})}. \]
Then $ Z^*_v(s, \Phi_v, f_{1,v}, f_{2,v})$ is an entire function of
$s$ and for any $s_0$ such that ${\rm Re}(s_0) \geq 0$, one can find a
standard section $\Phi$ and $f_{i,v} \in \pi_v$ such that
 \[ Z^*_v(s_0, \Phi_v, f_{1,v}, f_{2,v}) \ne 0. \] 
Note that when every data involved is unramified (which is the case
 for almost all $v$), one has
 \[   Z_v(s, \Phi_v, f_{1,v}, f_{2,v}) = \frac{L(s+ \frac{1}{2}, \pi_v
   \times \chi_{V,v})}{d_v(s,\chi_v)} \]
 where $d_v(s,\chi_V)$ is the product of a number of explicit Hecke
 $L$-factors (see \cite[P. 334, Remark 3]{LR}). When $s > 0$, $d_v(s,
 \chi_V)$ has no poles and
  the Euler product $\prod_v d_v(s,\chi_V)$ is absolutely convergent,
  so that 
\begin{equation}\label{E:Z^*}
Z^*(s, \Phi, f_1, f_2) := \prod_v Z^*_v(s, \Phi_v, f_{1,v}, f_{2,v})
\end{equation}
 is absolutely convergent (and thus holomorphic)  when ${\rm Re}(s) > 0$. 
  \vskip 5pt

\subsection{\bf Rallis inner product.} Hence one has  the Rallis inner
product formula as follows.
\vskip 5pt

\begin{Thm}\label{T:Rallis}
  Suppose that $d(n) < m \leq 2\cdot d(n)$ and $r \leq n$. 
   Let $\pi$ be an irreducible cuspidal representation of $G(U_n)$
  and consider its global theta lift $\Theta_{n,r}(\pi)$ on
  $H(V_r)$. Assume that $\Theta_{n,j}(\pi) = 0$ for $j < r$, so that
  $\Theta_{n,r}(\pi)$   
 is cuspidal. 
 \vskip 5pt
 
 \noindent (i) For $\phi_1, \phi_2 \in \omega_{\psi, \chi_V, V_r,
   U_n}$ and $f_1, f_2 \in \pi$, we have
 \[  \langle \theta(\phi_1, f_1) , \theta(\phi_2, f_2) \rangle  
 =  [E:F]  \cdot \Val_{s=s_{m,n}} L(s + \frac{1}{2}, \pi \times \chi_V)  \cdot Z^*(s,
 \phi_1 \otimes \overline{\phi_2}, f_1, f_2), \]
 where 
 \[  s_{m,n} = \frac{m - n-\epsilon_0}{2}  > 0 ,\]
  and $Z^*(s, -)$ denotes the normalized doubling zeta integral as in (\ref{E:Z^*}).  

\vskip 5pt
\noindent (ii) Assume further that for all places $v$ of $F$, the
local theta lift $\Theta_{n,r}(\pi_v)$ is nonzero. Then $L(s+
\frac{1}{2},\pi \times \chi_V)$ is holomorphic at $s = s_{m,n}$, so that in the
context of (i),
 \[  
\langle \theta(\phi_1, f_1) , \theta(\phi_2, f_2) \rangle  
 =  [E:F]  \cdot L(s_{m,n} + \frac{1}{2}, \pi \times \chi_V)  \cdot Z^*(s_{m,n},
 \phi_1 \otimes \overline{\phi_2}, f_1, f_2).
 \]
In particular, the global theta lift $\Theta_{n,r}(\pi)$ is nonzero if and only if
 \begin{enumerate}
\item[(a)] for all places $v$,  $Z_v^*(s_{m,n})$ is nonzero on
  $R(V_{r,v}) \otimes \pi_v^{\vee} \otimes\pi_v$, and
\item[(b)] $L(s_{m,n} + \frac{1}{2}, \pi \times \chi_V)\ne  0$.  
\end{enumerate}

\end{Thm}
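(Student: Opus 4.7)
Part (i) I would deduce directly from Proposition \ref{P:inner_product2} by Euler factorization together with the definition of the normalized local zeta integrals. For $\mathrm{Re}(s)\gg 0$ and factorizable data one has $Z(s,\Phi,f_1,f_2)=\prod_v Z_v(s,\Phi_v,f_{1,v},f_{2,v})$, and the local identity $Z_v(s)=L(s+\tfrac{1}{2},\pi_v\times\chi_{V,v})\cdot Z^*_v(s)$ place by place turns this into a factorization
\[
Z(s,\Phi,f_1,f_2)=L(s+\tfrac{1}{2},\pi\times\chi_V)\cdot Z^*(s,\Phi,f_1,f_2)
\]
as meromorphic functions on $\C$, where $Z^*(s)=\prod_v Z^*_v(s)$ is holomorphic in $\{\mathrm{Re}(s)>0\}$ by the unramified calculation $Z^*_v(s)=d_v(s,\chi_V)^{-1}$ together with absolute convergence of $\prod_v d_v(s,\chi_V)^{-1}$ there. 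Applying $\Val_{s=s_{m,n}}$ to both sides and substituting into Proposition \ref{P:inner_product2} gives the identity in (i).

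For part (ii), the crux is the holomorphy of $L(s+\tfrac{1}{2},\pi\times\chi_V)$ at $s=s_{m,n}$. By Proposition \ref{P:analytic} the Siegel Eisenstein series $E(s,\Phi^{n,r}(\phi))$ has at most a simple pole at $s_{m,n}$, so the same is true of $Z(s)$; combined with the entireness of each $Z^*_v(s)$ this shows that the putative pole of $L$ at $s_{m,n}$ is at most simple. I would rule it out using the Kudla--Rallis--Tan--Yamana theory of poles of standard $L$-functions of classical groups (\cite{KR3, T2, Y4}): a pole of $L(s+\tfrac{1}{2},\pi\times\chi_V)$ at $s_{m,n}>0$ would force $\pi$ to admit a nonzero global theta lift to $H(V_{r'})$, where $V_{r'}$ is the complementary space to $V_r$ with $r'<r$ (since we are in the second term range), contradicting the hypothesis $\Theta_{n,j}(\pi)=0$ for $j<r$. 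Alternatively, one can argue directly by unfolding: the residue $\mathrm{Res}_{s=s_{m,n}}L(s+\tfrac{1}{2})\cdot Z^*(s_{m,n})$ on the right-hand side of (i) corresponds, via the first-term identity $A^{n,r}_{-1}=B^{n,r}_{-2}$ of Theorem \ref{T:main}(i) combined with Theorem \ref{T:1st}(i), to the pairing of $\pi$ with a theta integral from $H(V_{r'})$ pulled back by the Ikeda map $\Ik^{n,r}$, which vanishes by the same hypothesis.

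Once holomorphy is established, the clean formula in (ii) is immediate from (i). For the global nonvanishing criterion, the ``only if'' direction is standard: if $\Theta_{n,r}(\pi)\neq 0$ then local theta lifts are nonzero at all $v$, and the clean formula together with the positive-definiteness of the Petersson inner product on cusp forms forces both $L(s_{m,n}+\tfrac{1}{2},\pi\times\chi_V)\neq 0$ and some $Z^*(s_{m,n},\cdot)\neq 0$, proving (a) and (b). For the converse, assuming (a) and (b), one chooses data $(\phi_v,f_{i,v})$ so that $Z^*_v(s_{m,n})\neq 0$ at every $v$: at almost all $v$ this is achieved by the spherical datum (where $Z^*_v=d_v(s_{m,n},\chi_V)^{-1}\neq 0$), and at the remaining places by condition (a). The global $Z^*(s_{m,n})$ is then nonzero, and combining with (b) and the clean formula produces a nonzero Petersson inner product of theta lifts, showing $\Theta_{n,r}(\pi)\neq 0$.

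The principal obstacle is the holomorphy argument for $L$, which rests on a careful link between poles of the standard $L$-function and first occurrences in the Witt tower of $V$; this requires the Kudla--Rallis--Tan--Yamana theory together with bookkeeping of the Laurent expansions produced by Theorem \ref{T:main}. A secondary subtlety, which prevents one from obtaining the full converse local-global criterion of Theorem \ref{T:intro3}, is that the equivalence of condition (a) with the local nonvanishing of $\Theta_{n,r}(\pi_v)$ is delicate at the archimedean places mentioned after Theorem \ref{T:intro3}, where the structure of the relevant degenerate principal series is more intricate.
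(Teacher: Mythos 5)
Your part (i) and your treatment of the nonvanishing criterion at the end of (ii) coincide with the paper's argument: (i) is exactly Proposition \ref{P:inner_product2} plus the factorization $Z=L\cdot Z^*$, and the criterion follows from the clean formula by positivity of the Petersson pairing and the unramified computation $Z_v^*=d_v^{-1}$.

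The gap is in your holomorphy argument for $L(s+\tfrac{1}{2},\pi\times\chi_V)$ at $s=s_{m,n}$. Yamana's theorem on poles (\cite[Thm.\ 10.1(2)]{Y4}) does \emph{not} say that a pole forces a nonzero global theta lift to $H(V_{r'})$ for $V_{r'}$ the complementary space of $V_r$ \emph{in the same Witt tower}; it only produces a nonzero lift to \emph{some} $\epsilon$-Hermitian space $V^{\#}$ of the complementary dimension $2d(n)-m$ with $\chi_{V^{\#}}=\chi_{V_r}$, which may lie in a different Witt tower. The hypothesis $\Theta_{n,j}(\pi)=0$ for $j<r$ only kills the same-tower case; it says nothing about other towers. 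This is why the local hypothesis of (ii) — that $\Theta_{n,r}(\pi_v)\ne 0$ for all $v$ — is indispensable and why your argument, which never invokes it, cannot be complete. The paper's proof closes the different-tower case as follows: if $V^{\#}$ is in a different global Witt tower, there is a place $v$ where $V_{r,v}$ and $V^{\#}_v$ lie in different local Witt towers; both local theta lifts of $\pi_v$ (to $H(V_{r,v})$ by hypothesis, to $H(V^{\#}_v)$ because the global lift is nonzero) are then nonzero, so the conservation relation (\cite{KR6}, \cite{SZ}) forces $\dim V_{r,v}+\dim V^{\#}_v\geq 2d(n)+2$, contradicting complementarity of the dimensions. (The orthogonal case $\epsilon_0=-1$ is handled similarly with a determinant twist.)

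Your ``alternative'' unfolding argument does not repair this. Showing that $\operatorname{Res}_{s=s_{m,n}}Z(s,\Phi^{n,r}(\cdot),f_1,f_2)=0$ for all Siegel--Weil sections from $V_r$ only yields $\bigl(\operatorname{Res}_{s=s_{m,n}}L\bigr)\cdot Z^*(s_{m,n},\cdot)=0$ on $R_n(V_r)\otimes\pi^\vee\otimes\pi$; to conclude that $L$ itself is holomorphic you would need $Z_v^*(s_{m,n})\ne 0$ on $R(V_{r,v})\otimes\pi_v^\vee\otimes\pi_v$ at every place, i.e.\ the hard implication of Conjecture \ref{C:localnon0}, which is precisely what the paper cannot establish at certain real places. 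The conservation-relation route avoids this circularity entirely.
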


\begin{proof}
(i)  This follows immediately  from Proposition \ref{P:inner_product2}
together with the definition of $Z^*(s,-)$.

\vskip 5pt

\noindent (ii) It remains to prove that
$L(s+ \frac{1}{2},\pi \times \chi_V)$ is holomorphic at $s =
s_{m,n}$. This was essentially shown by Yamana in \cite[Lemma
10.1]{Y4}, but we give the proof here for the sake of completeness.
First assume that $\epsilon_0\neq -1$, so $G(U_n)$ is either symplectic or
unitary. By \cite[Theorem 10.1 (2)]{Y4} and the tower property of global theta
lifting, if the $L$-function $L(s +\frac{1}{2}, \pi \times \chi_V)$ is not holomorphic
at $s=s_{m,n}$, then there exists an $\epsilon$-Hermitian space
$V^{\#}_{r'}$ with $\chi_{V^\#_{r'}}=\chi_{V_r}$ and $\dim V_r+\dim V^{\#}_{r'}=2 \cdot d(n)$ such
that the global theta lift $\Theta_{n, V^{\#}_{r'}}(\pi)$ to
$H(V_{r'}^{\#})$ is non-vanishing. 
But since by our assumption
$\Theta_{n,j}(\pi)=0$ for all $j<r$, $V^{\#}_{r'}$ belongs to a
different Witt tower from $V_{r}$. Hence there is a place $v$ so that
$V_{r,v}$ and $V_{r,v}^{\#}$ belong to different Witt towers but
nonetheless the two local theta lifts $\Theta_{n,r}(\pi_v)$ and $\Theta_{n,
  V^{\#}_{r'}}(\pi_v)$ to $H(V_r)(F_v)$ and $H(V^{\#}_{r'})(F_v)$, respectively, are
  non-vanishing. But the conservation relation (see \cite{KR6}, \cite[Thm. 5.4]{GI2} and \cite{SZ}) 
  then implies that
\[ \dim V_{r,v}+\dim V^{\#}_{r',v}\geq 2d(n)+2 \]
which contradicts the fact that $V_{r,v}$ and $V^{\#}_{r',v}$ are of complementary dimension.
\vskip 5pt

Next assume $\epsilon_0=-1$, so that  $G(U_n)$ is orthogonal. Similarly to
the above, there exists an automorphic determinant character $\eta$ on
$G(U_n)(\A)$ such that $\Theta_{n,r'}(\pi\otimes\eta)\neq 0$ where $r'$
is such that $\dim V_r+\dim V_{r'}=2 \cdot d(n)$. By the same reasoning
as above, this would contradict the conservation relation.
 \vskip 5pt

\end{proof}
 
  \vskip 5pt

 \subsection{\bf Local nonvanishing.}
 To obtain the local-global criterion for the nonvanishing of
 $\Theta_{n,r}(\pi)$ by using the Rallis inner product formula we have
 obtained, one needs to understand the nonvanishing of local theta
 lifts in terms of the normalized local zeta integrals. In particular,
 we hope to have
\begin{Conj} \label{C:localnon0}
 The local theta lift $\Theta_{n,r}(\pi_v)$ is nonzero if and only if
 $Z_v^*(s_{m,n})$ is nonzero on $R(V_{r,v}) \otimes \pi_v^{\vee} \otimes
 \pi_v$ (where $0 < s_{m,n} \leq  d(n)/2$).
 \end{Conj}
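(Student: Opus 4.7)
The plan is to prove Conjecture~\ref{C:localnon0} by establishing a purely local analog of the doubling see-saw identity used in the proof of Theorem~\ref{T:Rallis}, and then invoking local Howe duality. Let $\omega_{U_n, V_r, v}$ denote the local Weil representation of $G(U_n)(F_v) \times H(V_r)(F_v)$ associated to $(\psi_v, \chi_{U,v}, \chi_{V,v})$. The core local identity to prove is that for $\phi_1, \phi_2 \in \omega_{U_n, V_r, v}$ and $f_1, f_2 \in \pi_v$, one has
\[
Z_v(s, \Phi^{n,r}(\delta(\phi_1 \otimes \overline{\phi_2})), f_1, f_2) = \int_{G(U_n)(F_v)} \langle \omega_{U_n, V_r, v}(g_v)\phi_1, \phi_2\rangle \cdot \overline{\langle \pi_v(g_v) f_1, f_2\rangle}\, dg_v
\]
as meromorphic functions of $s$, where the right-hand side is defined by analytic continuation from its range of absolute convergence. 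This identity follows by unfolding the Siegel-Weil section at $(g_v, 1) \in G(U_n)(F_v) \times G(U_n^-)(F_v) \subset G(W_n)(F_v)$ and applying the isomorphism $\delta$ together with the $G(U_n) \times G(U_n^-)$-decomposition of $\omega_{n,r,v}$, mirroring the global argument of Proposition~\ref{P:inner_product1}.

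Given this identity, the equivalence in the conjecture follows from two local arguments. For the direction $\Theta_{n,r}(\pi_v) \ne 0 \Rightarrow Z_v^*(s_{m,n}) \ne 0$: a nonzero local theta lift yields a nontrivial element of $\Hom_{G(U_n)(F_v)}(\omega_{U_n, V_r, v}, \pi_v)$, from which standard matrix coefficient manipulations produce data with nonzero local Rallis inner product at $s = s_{m,n}$. Transferring through the identity above gives $Z_v(s_{m,n}, \cdot) \ne 0$, hence $Z_v^*(s_{m,n}, \cdot) \ne 0$ after division by the local $L$-factor. Conversely, nonvanishing of $Z_v^*(s_{m,n})$ on $R_n(V_{r,v}) \otimes \pi_v^\vee \otimes \pi_v$ yields, via the identity, a nonvanishing local Rallis inner product (since Siegel-Weil sections span $R_n(V_{r,v})$); this in turn produces a nontrivial element of $\Hom_{G(U_n)(F_v)}(\omega_{U_n, V_r, v}, \pi_v)$ and, by local Howe duality (\cite{HKS}, \cite{GS}, \cite{SZ}), forces $\Theta_{n,r}(\pi_v) \ne 0$.

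The main obstacle lies precisely in justifying the above at the reducibility point $s = s_{m,n} > 0$ at real archimedean places outside the cases of Theorem~\ref{T:intro3}(ii), namely $F_v = \R$ with $\epsilon_0 = 1$ or $E_v = \C$ with $\epsilon_0 = 0$. In these cases, the structure of $I^n_n(s_{m,n}, \chi_V)$ worked out in \cite{LZ1, LZ2, LZ3, LZ4} involves composition series considerably more intricate than the non-archimedean or complex setting of Proposition~\ref{P:str}: the image $R_n(V_{r,v})$ of $\Phi^{n,r}$ need not be irreducible, and its interplay with Yamana's ``good'' sections \cite{Y4} governing the analytic behavior of the normalized zeta integral at $s_{m,n}$ is not fully elucidated. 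Consequently, one cannot presently rule out that $Z_v^*(s_{m,n})$ vanishes identically on $R_n(V_{r,v}) \otimes \pi_v^\vee \otimes \pi_v$ while the local Rallis inner product remains nonzero on a complementary piece of $I^n_n(s_{m,n}, \chi_V)$, or dually that the continuation of the left-hand side above to $s_{m,n}$ on Siegel-Weil sections acquires a pole matching the local $L$-factor that cancels in $Z_v^*$. Closing this gap would require a refined analysis, in the spirit of Loke-Zhu's recent work, of these archimedean degenerate principal series and the behavior of Siegel-Weil sections at these reducibility points.
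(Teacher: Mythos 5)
You should first be aware that the statement you are addressing is stated in the paper as Conjecture \ref{C:localnon0} and is \emph{not} proved there: the paper only establishes the partial result of Proposition \ref{P:localnon0}, and to your credit your proposal does not claim more either, since you correctly locate the unresolved difficulty at the real places with $\epsilon_0=1$, or with $\epsilon_0=0$ and $E_v=\C$, where the composition series of $I_n^n(s_{m,n},\chi_v)$ is complicated.

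That said, even in the cases where the conjecture is known (Proposition \ref{P:localnon0}(i)), your sketch of the implication $\Theta_{n,r}(\pi_v)\ne 0 \Rightarrow Z_v^*(s_{m,n})\ne 0$ on $R_n(V_{r,v})\otimes\pi_v^{\vee}\otimes\pi_v$ has a genuine gap. Your local doubling identity equates the zeta integral with the integral of the matrix coefficient of the Weil representation against that of $\pi_v$; but the right-hand side carries no variable $s$ to continue in, and at the point of interest one is outside the range in which that integral converges (this is exactly what distinguishes the second term range from the convergent range). So ``standard matrix coefficient manipulations'' starting from a nonzero element of $\Hom_{G(U_n)(F_v)}(\omega_{U_n,V_r,v},\pi_v)$ do not produce a nonzero value of the meromorphically continued $Z_v$ at $s=s_{m,n}$, much less of $Z_v^*$ after division by $L(s_{m,n}+\tfrac{1}{2},\pi_v\times\chi_{V,v})$, and certainly not restricted to the specific submodule $R_n(V_{r,v})$; in the convergent range this step is already a nontrivial theorem of J.-S. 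Li \cite{Li}, and outside it the implication is essentially the whole content of the conjecture. The paper's route is different and you should compare against it: by \cite{Y4}, $Z_v^*(s_{m,n})$ is automatically nonzero on the full module $I_n^n(s_{m,n},\chi_v)\otimes\pi_v^{\vee}\otimes\pi_v$; one then invokes the explicit structure of $I_n^n(s_{m,n},\chi_v)$ (Proposition \ref{P:str} and its archimedean analogues) to identify the quotient of $I_n^n(s_{m,n},\chi_v)$ by $R_n(V_{r,v})$ (or, in the difficult real cases, by the sum $\Sigma$ of several candidate submodules) with a direct sum of $R_n(U_v)$ for spaces $U_v$ of complementary dimension in other Witt towers, and the conservation relation of \cite{SZ}, together with the hypothesis $\Theta_{n,r}(\pi_v)\ne 0$ and the easy direction of the conjecture, forces $Z_v^*(s_{m,n})$ to vanish on each such quotient constituent, hence to be nonzero on $R_n(V_{r,v})$ (resp.\ on $\Sigma$). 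It is this elimination argument that makes the non-archimedean and the listed archimedean cases work, and its failure to single out $R_n(V_{r,v})$ inside $\Sigma$ is precisely why only the weaker statement of Proposition \ref{P:localnon0}(ii) is available in the remaining real cases.
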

 Note that the implication $(\Longleftarrow)$ is obvious, so  the
 content of the conjecture is the reverse implication. 
 \vskip 5pt

Unfortunately, we are not able to prove this conjecture in full
generality when the place $v$ is real. The best we know at this
moment is:
\vskip 5pt

\begin{Prop} \label{P:localnon0}
Suppose that $0< s_{m,n} \leq \frac{d(n)}{2}$.
\vskip 5pt

\noindent(i) Assume that one of the following conditions hold:
 \begin{itemize}
 \item $v$ is finite, or
 \item  $\epsilon_0 = -1$, or
 \item  $\epsilon_0 = 0$, $v$ is archimedean   and $E_v = F_v \times F_v$, or
 \item  $\epsilon_0 = 1$ and $F_v = \C$.
 \end{itemize}
 Then the local theta lift
 $\Theta_{n,r}(\pi_v)$ is nonzero if and only if $Z_v^*(s_{m,n})$ is
 nonzero on $R(V_{r,v}) \otimes \pi_v^{\vee} \otimes \pi_v$. 
 \vskip 5pt
 
 \noindent (ii) Assume that we are in a case not covered by (i), i.e.
 \begin{itemize}
 \item $\epsilon_0 = 0$, $F_v = \R$ and $E_v = \C$, or
 \item $\epsilon_0 = 1$, $F_v = \R$.
 \end{itemize}
 Suppose that the signature of $V_{r,v}$ is $(p,q)$ with $p+q = m$. 
 If  the local theta lift
 $\Theta_{n,r}(\pi_v)$ is nonzero, then there is an
 $\epsilon$-Hermitian space $V_v'$  over $E_v$
  such that
  \begin{itemize}
  \item[(a)]  the signature $(p',q')$ of $V_v'$ (with $p'+q' = m$) satisfies
\[   \begin{cases}
 p\equiv p' \mod 4 \text{  if  $\epsilon_0 = 1$} \\
 p \equiv p' \mod 2 \text{  if $\epsilon_0 = 0$.} \end{cases}  \]
 \item[(b)]  $Z_v^*(s_{m,n})$ is  nonzero on $R(V_v') \otimes \pi_v^{\vee} \otimes \pi_v$; 
 \end{itemize}
 When $m=d(n)+1$, one has $V'_v = V_{r,v}$.
  \end{Prop}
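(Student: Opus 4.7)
The plan is to translate the nonvanishing of local theta lifts into a statement about the restriction of the zeta integral to the submodule $R(V_{r,v}) \subset I^n_n(s_{m,n}, \chi_v)$ via the doubling see-saw, and then to analyze this using the structure of $I^n_n(s_{m,n}, \chi_v)$ at $s_{m,n} > 0$. More precisely, the local doubling see-saw gives that $\Theta_{n,r}(\pi_v) \neq 0$ is equivalent to the existence of a nonzero $G(U_n)(F_v) \times G(U_n)(F_v)$-equivariant map from $R(V_{r,v})$ into $\pi_v \boxtimes (\pi_v^{\vee} \otimes \chi_V)$; and by Yamana's result \cite{Y4}, the normalized zeta integral $Z_v^*(s_{m,n})$ on $I^n_n(s_{m,n}, \chi_v) \otimes \pi_v^\vee \otimes \pi_v$ is nonzero on some section $\Phi_v$, since the full unnormalized zeta integral achieves the $L$-factor $L(s+\tfrac{1}{2}, \pi_v \times \chi_{V,v})$ as its GCD at holomorphic sections. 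The question thus reduces to whether such a nonzero value can be attained with $\Phi_v$ in the image of the Siegel-Weil map $\Phi^{n,r}$.

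For part (i), the key input is the structure of the degenerate principal series provided by Propositions \ref{P:str} and \ref{P:arch1}, which in each of the listed cases yields $I^n_n(s_{m,n}, \chi_v) = R(V_{r,v}) + R(V'_v)$, where either $V'_v$ does not exist, or $V'_v$ lies in a Witt tower different from that of $V_{r,v}$ (as in the non-archimedean or $\epsilon_0 = -1$ situations), or $R(V'_v)$ does not even appear (as in the $F_v = \mathbb{C}$ and $\epsilon_0 = 1$ case by Proposition \ref{P:arch1}). The forward implication is immediate. For the converse, assuming $\Theta_{n,r}(\pi_v) \neq 0$, I would decompose any section $\Phi_v$ on which $Z_v^*(s_{m,n})$ is nonzero as a sum of contributions from $R(V_{r,v})$ and $R(V'_v)$; the conservation relation of Kudla-Rallis–Sun-Zhu (\cite{KR6}, \cite[Thm. 5.4]{GI2}, \cite{SZ}) then forces the local theta lift of $\pi_v$ to $H(V'_v)$ to vanish, so the $R(V'_v)$-component contributes zero via the see-saw and the nonvanishing must come from $R(V_{r,v})$.

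For part (ii), in the excluded archimedean cases the principal series $I^n_n(s_{m,n}, \chi_v)$ over $F_v = \mathbb{R}$ is a sum over multiple signatures $(p', q')$ with $p'+q' = m$, as described in Proposition \ref{P:arch1}(ii). The character constraint $\chi_{V'_v} = \chi_v$ in the orthogonal case pins down the discriminant class of $V'_v$ up to local norms, which by archimedean local class field theory translates to the congruence $p' \equiv p \pmod 4$; similarly, in the unitary case with $E_v = \mathbb{C}$ the condition $\chi_{V'_v}|_{\R^\times} = \chi_{V_v}|_{\R^\times}$ gives $p' \equiv p \pmod 2$. The same decomposition argument then shows $Z_v^*(s_{m,n})$ must be nonzero on $R(V'_v)$ for \emph{some} such $V'_v$, giving (a) and (b). The special case $m = d(n)+1$ corresponds to $s_{m,n} = 1/2$, where the module structure results of \cite{LZ1, LZ2, LZ3, LZ4} show that the only irreducible constituent of the relevant type in $I^n_n(1/2, \chi_v)$ compatible with $\pi_v$ is $R(V_{r,v})$ itself.

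The main obstacle is the fine structure of the archimedean degenerate principal series at $s_{m,n} > 0$ in the cases excluded from (i): one needs to rule out the possibility that the theta correspondence detects $\pi_v$ via a non-complementary signature $V'_v$ distinct from $V_{r,v}$. In the nonarchimedean and $\epsilon_0 = -1$ cases this is immediate because there are essentially only two relevant spaces; over $\mathbb{R}$ with $\epsilon_0 \in \{0, 1\}$ and multiple signatures present, one would need a refinement of the Lee-Zhu analysis \cite{LZ3, LZ4} of how the irreducible $K$-equivariant quotients of $I^n_n(s_{m,n}, \chi_v)$ correspond to individual $R(V'_v)$, beyond what we have proved for the single value $m = d(n)+1$. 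A resolution of this archimedean issue would immediately yield Conjecture \ref{C:localnon0} and hence the full converse of Theorem \ref{T:intro3}(i) in all cases.
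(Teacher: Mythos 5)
Your part (i) follows the paper's own route: in each of the listed cases the structure of $I^n_n(s_{m,n},\chi_v)$ given by Propositions \ref{P:str} and \ref{P:arch1} leaves only constituents $R_n(V'_v)$ attached to spaces in a different Witt tower from $V_{r,v}$, and the conservation relation of \cite{SZ} (cf.\ \cite[Lemma 8.6]{Y4}) kills the zeta integral on those, so the nonvanishing must come from $R_n(V_{r,v})$. No issues there.

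Part (ii) contains a genuine gap. You derive the congruences in (a) from the character constraint $\chi_{V'_v}=\chi_{V_v}$ via ``archimedean local class field theory,'' but that constraint only controls the discriminant: in the orthogonal case it gives $p'\equiv p \pmod 2$, not $\pmod 4$, and in the unitary case $\chi_{V}|_{\R^\times}=\chi_{E_v/F_v}^{m}$ depends only on $m$, so it imposes no condition on the signature at all. The condition $p'\equiv p\pmod 4$ is strictly finer --- it fixes both the discriminant and the Hasse--Witt invariant of $V'_v$ --- and it is precisely what makes the globalization in Theorem \ref{T:main_nonvanishing}(iii) work. In the paper this congruence is not deduced from any character condition but is read off from the module diagrams of $I^n_n(s_{m,n},\chi_v)$ in \cite{LZ1, LZ2, LZ4}: the key fact is that $\Sigma=\bigoplus_{V'_v}R_n(V'_v)$, with $V'_v$ ranging exactly over the signatures satisfying (a), is a \emph{submodule} whose quotient is $\bigoplus_{U_v}R_n(U_v)$ with $\dim U_v=2d(n)-m$ and $U_v$ in a different Witt tower from $V_{r,v}$; conservation then forces $Z_v^*(s_{m,n})$ to vanish on the quotient and hence to be nonzero on $\Sigma$. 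Without identifying $\Sigma$ in this way, your ``same decomposition argument'' only produces some $V'_v$ of dimension $m$ with $\chi_{V'_v}=\chi_{V_v}$, which is too weak. You do flag the need for a refinement of the Lee--Zhu analysis, but you locate it as the obstacle to the full Conjecture \ref{C:localnon0}; in fact it is already needed to prove (a) as stated. Finally, for $m=d(n)+1$ the paper does not appeal to module structure at all: if $V'_v\neq V_{r,v}$ both had nonzero theta lifts, their Witt towers being non-adjacent would force $\dim V_{r,v}+\dim V'_v>2d(n)+2$ by \cite{SZ}, contradicting $\dim V_{r,v}=\dim V'_v=d(n)+1$; your assertion that ``the only irreducible constituent of the relevant type is $R(V_{r,v})$'' is left unsubstantiated.
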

  \vskip 5pt

 \begin{proof}
(i) This follows readily from Proposition
 \ref{P:str} and its archimedean analog using the conservation relation \cite{SZ};  see \cite[Lemma
 8.6]{Y4}. 
 \vskip 5pt
 
 \noindent (ii) This follows from a more detailed knowledge of the
 module structure of $I_n^n(s_{m,n}, \chi_{V_r,v})$. 
 For $s_{m,n} \in X_n(\chi)$ and $s_{m,n} >0$, the typical
 module diagram of $I_n^n(s_{m,n}, \chi_{V_r,v})$ is illustrated in
 \cite[Figure 4]{LZ1}  if $\epsilon_0 = 0$ and in \cite[Figure 5]{LZ2} and \cite[\S 6.1, \S 6.2]{LZ4}
 if $\epsilon_0  =1$. Moreover, in these figures, one may read off the
 location of the submodules $R_n(V_v)$ for all $V_v$ such that $\dim V_v =
 \dim V_{r,v}$ and $\chi_{V_v} = \chi_{V_{r,v}}$ (if $\epsilon_0  =1$).
\vskip 5pt 

Consider the submodule 
\[  \Sigma = \bigoplus_{V'_v}  R_n(V'_v) \]
where the sum runs over all   $\epsilon$-Hermitian spaces $V_v'$  over
$E_v$ whose signature $(p',q')$ satisfies the condition (a) in the
Proposition. We need to show that $Z_v^*(s_{m,n})$ is  nonzero when
restricted to $\Sigma \otimes \pi_v^{\vee} \otimes \pi_v$.

 On examining the module diagram of $I_n^n(s_{m,n}, \chi_{V_{r,v}})$
 given in \cite[Figure 4]{LZ1}, \cite[Figure 5]{LZ2} and \cite[\S 6.1, \S 6.2]{LZ4}, one notes
 that 
 \[  I_n^n(s_{m,n}, \chi_{V_{r,v}})/ \Sigma \cong \bigoplus_{U_v}  R_n(U_v) \]
 where the sum $U_v$ runs over all $\epsilon$-Hermitian modules such that
 \begin{itemize}
 \item[-] $\dim U_v + \dim V_{r,v} = 2 \cdot d(n)$;
 \item[-] $\chi_{U_v} = \chi_{V_{r,v}}$ if $\epsilon_0 = 1$;
 \item[-] $U_v$ does not lie in the same Witt tower as the Hermitian spaces $V'_v$ which 
 occur in the definition of $\Sigma$, and in particular, $U_v$ lies in a different Witt tower as $V_{r,v}$.
 \end{itemize}
 Since $\Theta_{n,r}(\pi_v)$ is nonzero by hypothesis, the
 conservation relation shown in \cite{SZ} implies that the local theta lift of
 $\pi_v$ to any such $H(U_v)$ is zero. Thus  
 $Z_v^*(s_{m,n})$ must be  nonzero when restricted to $\Sigma \otimes
 \pi_v^{\vee} \otimes \pi_v$.  
 \vskip 5pt
 
 When $m = d(n) +1$, suppose for the sake of contradiction that $V'_v
 \ne V_{r,v}$. Then $V'_v$ and $V_{r,v}$ belong to two different Witt
 towers. Since the theta lifts of $\pi_v$ to $H(V_{r,v})$ and
 $H(V'_v)$ are both nonzero,  and the two Witt towers in question are
 not ``adjacent" in the language of \cite{SZ}, the conservation
 relation shown in \cite{SZ} implies that
 \[  \dim V_{r,v} + \dim V'_v > 2 d(n) +2. \]
 This contradicts the fact that $\dim V_{r,v} = \dim V'_v = d(n) +1$. 
 The Proposition is proved. 
  \end{proof}
 \vskip 5pt

  \subsection{\bf Nonvanishing of theta lifts.}
 Finally, we can state and prove our non-vanishing theorem:
 \begin{Thm}\label{T:main_nonvanishing}
 
  Assume that $d(n) < m \leq  2 \cdot d(n)$ and $r \leq n$.
Let $\pi$ be  an irreducible  cuspidal representation of $G(U_n)$ and consider its
global theta lift $\Theta_{n,r}(\pi)$ to $H(V_r)$. Assume that
$\Theta_{n,j}(\pi) = 0$ for $j < r$, so that  $\Theta_{n,r}(\pi)$   
 is cuspidal.  
 \vskip 5pt
 
 \noindent (i)  If $\Theta_{n,r}(\pi)$ is nonzero, then
 \begin{enumerate}
\item[(a)] for all places $v$, $\Theta_{n,r}(\pi_v) \ne 0$, and
\item[(b)] $L(s_{m,n} + \frac{1}{2}, \pi \times \chi_V)\ne  0$, i.e. nonzero holomorphic.  
\end{enumerate}
\vskip 10pt

\noindent (ii) The converse to (i) holds when one assumes one of the following conditions:
 \begin{itemize}
 \item  $\epsilon_0 = -1$;
 \item  $\epsilon_0 = 0$  and $E_v = F_v \times F_v$ for all archimedean places $v$ of $F$;
 \item  $\epsilon_0 = 1$ and $F$ is totally complex;
 \item $m = d(n)+1$.
 \end{itemize}
 
 \vskip 5pt
 
 \noindent (iii) In general, under the conditions (a) and (b) in (i),
 there is a $\epsilon$-Hermitian space $V'$ over $E$ such that 
 \begin{itemize}
 \item  $V' \otimes F_v \cong V_r \otimes F_v$  for every finite or complex place of $F$;
 \item  the global theta lift $\Theta_{U_n, V'}(\pi)$ of $\pi$ to $H(V')$ is nonzero.  
 \end{itemize}
 Under any one of the conditions of (ii), one may take $V'$ to be $V_r$.
\end{Thm}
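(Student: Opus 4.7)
My plan is to deduce the theorem from the Rallis inner product formula of Theorem \ref{T:Rallis}, the local analysis of Proposition \ref{P:localnon0}, and a Hasse-principle construction of a global form $V'$ for part (iii).

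For part (i)(a), I would invoke the standard fact that the global theta kernel factors through a restricted tensor product of local Weil representations, so the non-vanishing of $\Theta_{n,r}(\pi)$ forces $\Theta_{n,r}(\pi_v)\ne 0$ at every place. For part (i)(b), the non-vanishing of $\Theta_{n,r}(\pi)$ provides data with $\langle\theta(\phi_1,f_1),\theta(\phi_2,f_2)\rangle \ne 0$, and Theorem \ref{T:Rallis}(ii) --- whose statement already contains the holomorphy of $L(s_{m,n}+\tfrac12, \pi\times\chi_V)$ via the conservation relation --- then forces this $L$-value to be non-zero.

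For part (ii), the non-trivial direction is the converse. Under any of the four listed hypotheses, Proposition \ref{P:localnon0}(i) --- or in the boundary case $m=d(n)+1$ the concluding assertion of Proposition \ref{P:localnon0}(ii), which forces $V'_v=V_{r,v}$ --- gives at every place $v$ the equivalence between $\Theta_{n,r}(\pi_v)\ne 0$ and the non-vanishing of $Z^*_v(s_{m,n})$ on $R(V_{r,v})\otimes\pi_v^\vee\otimes\pi_v$. Assuming (a) and (b), I would select local data $\phi_{i,v}$ and $f_{i,v}$, spherical almost everywhere for convergence of the Euler product, so that each $Z^*_v(s_{m,n})\ne 0$, and then Theorem \ref{T:Rallis}(ii) would yield $\langle\theta,\theta\rangle \ne 0$ and hence $\Theta_{n,r}(\pi)\ne 0$.

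Part (iii) requires a global construction. At each real place $v$ not covered by part (ii), I would invoke Proposition \ref{P:localnon0}(ii) to produce a local $\epsilon$-Hermitian space $V'_v$ of dimension $m$ whose signature satisfies the mod $4$ (for $\epsilon_0=1$) or mod $2$ (for $\epsilon_0=0$) congruence relative to $V_{r,v}$, and on whose submodule $Z^*_v(s_{m,n})$ is non-zero; set $V'_v := V_{r,v}$ at every remaining place. The key step is to assemble $\{V'_v\}$ into a global $\epsilon$-Hermitian space $V'/E$ --- an application of the Hasse principle, with the mod $4$ (resp.\ mod $2$) congruences at real places supplying exactly the product-formula constraint on Hasse invariants (resp.\ global discriminants) needed, given the agreement $V'_v=V_{r,v}$ at all non-real places. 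Once $V'$ is constructed, $V'\otimes F_v \cong V_r\otimes F_v$ at every finite and complex place and $\chi_{V'} = \chi_V$ globally, so the Weil representation is unchanged; applying the Rallis inner product formula to $(U_n, V')$ --- descending to the first occurrence in the Witt tower of $V'$ if needed and then using the tower property to lift back --- combined with the non-vanishing of $L(s_{m,n}+\tfrac12, \pi\times\chi_V)$ and of the local zeta integrals on $R(V'_v)$, yields $\Theta_{U_n,V'}(\pi)\ne 0$.

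The main obstacle will be the global patching of part (iii): one must verify that the congruence conditions supplied by Proposition \ref{P:localnon0}(ii) exactly match the Hasse--Minkowski compatibilities for $\epsilon$-Hermitian forms with the other local invariants inherited from $V_r$. This is no coincidence, since that proposition was proved by analyzing the module structure of $I_n^n(s_{m,n},\chi_V)$ at real places and invoking the conservation relation to select precisely those real-place spaces $V'_v$ for which $R_n(V'_v)$ lies in the degenerate principal series in a manner compatible with a global form having the same localizations as $V_r$ outside the real places.
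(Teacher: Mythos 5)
Your proposal is correct and follows essentially the same route as the paper: parts (i) and (ii) are read off from Theorem \ref{T:Rallis}(ii) together with Proposition \ref{P:localnon0}, and part (iii) is obtained by patching the local spaces $V'_v$ supplied by Proposition \ref{P:localnon0}(ii) into a global $V'$ via the signature congruences (which preserve the local Hasse invariants, resp.\ discriminant classes, so the product formula for a global $\epsilon$-Hermitian space is satisfied) and then applying Theorem \ref{T:Rallis} to $(U_n,V')$. The additional details you supply --- the restricted tensor product argument for (i)(a) and the descent to first occurrence in the Witt tower of $V'$ --- are consistent with, and slightly more explicit than, the paper's own two-line proof.
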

  
\vskip 5pt

\begin{proof}
(i) This follows immediately from Theorem \ref{T:Rallis}(ii).
\vskip 5pt

\noindent (ii) This follows from Theorem \ref{T:Rallis}(ii) and Proposition \ref{P:localnon0}.
\vskip 5pt

\noindent (iii) For each real place $v$ of $F$ as in Proposition
\ref{P:localnon0}(ii), pick an $\epsilon$-Hermitian space $V'_v$ as
supplied by Proposition \ref{P:localnon0}(ii). The condition on the
signature of $V'_v$ ensures that there is an $\epsilon$-Hermitian
space $V'$ over $E$ such that $V' \otimes_F F_v \cong V'_v$ for these
places $v$, and 
$V' \otimes_F F_v \cong V_r \otimes_F F_v$ for all other places.   Now one
applies Theorem \ref{T:Rallis} with $V'$ in place of $V_r$ to
deduce the desired result.  
\end{proof}
 \vskip 5pt

 The reader may recall from \S \ref{SS:kr-reg} that the condition $r \leq n$ in the above theorem holds automatically, except when $\epsilon_ 0 = 1$ and  $m = 2r = 2n+2$, so that 
 \[  H(V_r) \cong \OO_{n+1, n+1} \, \, \text{(split)} \quad \text{and} \quad G(U_n) = \Sp_n \]
 and $n$ is necessarily even. There is, however, no harm in omitting this case from the discussion. Indeed, the dual pair $(G(U_n), H(V_{r}))$ is in the so-called stable range, so that one knows the global and local theta lifts in question are all nonzero. 
 In fact, even the pair $(G(U_n), H(V_{r-1}))$, which is the lower step of the tower, is in the stable range. Thus,  one sees that $\Theta_{n,r-1}(\pi) \ne 0$  and $\Theta_{n,r}(\pi) \ne 0$ is non-cuspidal. 

 \vskip 15pt


\begin{thebibliography}{99}


\bibitem[G]{G} 
W. T. Gan,
\emph{Doubling zeta integrals and local factors for metaplectic groups},
Nagoya Math. Journal 208 (2012), 67-95.
 
 \bibitem[GI1]{GI1}
W. T. Gan and A. Ichino,
\emph{On endoscopy and the refined Gross-Prasad conjecture
 for $(\mathrm{SO}_5, \mathrm{SO}_4)$},
J. Inst. Math. Jussieu \textbf{10} (2011), 235--324.

 
\bibitem[GI2]{GI2} W. T. Gan and A. Ichino, {\it  Formal degree and
    local theta correspondence}, to appear in Inventiones Math.


\bibitem[GS]{GS} W. T. Gan and G. Savin, {\em Representations of metaplectic groups I:
 epsilon dichotomy and local Langlands correspondence},
 Compositio Math. Vol. 148 (2012), issue 06, pp. 1655-1694. 
 

\bibitem[GT]{GT} W. T. Gan and S. Takeda, {\em On the regularized
    Siegel-Weil formula (the second term identity) and non-vanishing
    of theta lifts from orthogonal groups},  J. Reine Angew. Math. 659
  (2011).

\bibitem[GJS]{GJS}
D.\ Ginzburg, D.\ Jiang and D.\ Soudry,\ {\it Poles of $L$-functions
  and theta liftings for orthogonal groups},  J. Inst. Math. Jussieu 8
(2009), 693--741.


\bibitem[GJ]{GJ}
R. Godement and H. Jacquet,
\emph{Zeta functions of simple algebras},
Lecture Notes in Mathematics \textbf{260}, Springer-Verlag, Berlin-New York, 1972.

 
\bibitem[Ha]{Ha} N. Harris, {\em On the refined Gross-Prasad
    conjecture for unitary groups}, Int. Math. Res. Notices (2012)
doi: 10.1093/imrn/rns219.

\bibitem[HKS]{HKS} M. Harris, S. Kudla and J. Sweet, {\em Theta
    dichotomy for unitary groups}, 
J. Amer. Math. Soc. \textbf{9} (1996), 941--1004.

\bibitem[HLS]{HLS} M. Harris, J. S. Li and C. Skinner, {\em The Rallis
    inner product formula and p-adic L-functions}, in {\em
    Automorphic representations, L-functions and applications:
    progress and prospects}, 225--255, Ohio State
  Univ. Math. Res. Inst. Publ., 11, de Gruyter, Berlin, 2005.


\bibitem[HL]{HL} R. Howe and S. T. Lee, {\em Degenerate principal
    series representations of $\GL_n(\C)$ and $\GL_n(\R)$},
  J. Funct. Anal. 166 (1999), no. 2, 244--309.
 
 \bibitem[I1]{I1} A. Ichino, {\em On the regularized Siegel-Weil
     formula}, J. Reine Angew. Math. 539 (2001), 201--234.

\bibitem[I2]{I2} A. Ichino,  {\em A regularized Siegel-Weil formula
    for unitary groups}, Mathematische Zeitschrift 247 (2004),
  241--277.

\bibitem[I3]{I3} A. Ichino, {\em On the Siegel-Weil formula for
    unitary groups}, Math. Z. 255 (2007), no. 4, 721--729.


\bibitem[Ik]{Ik} T. Ikeda, {\em On the residue of the Eisenstein
    series and the Siegel-Weil formula}, Compositio Math. 103 (1996),
  183--218


\bibitem[JS]{JS} D. H. Jiang and D. Soudry, 
{\em On the genericity of cuspidal automorphic forms of $SO_{2n+1}$ II},
Compositio Math. Volume 143 Part 3 (May 2007), 721--748.
 
 
\bibitem[K1]{K1}
S. S. Kudla,
\emph{On the local theta-correspondence},
Invent. Math. \textbf{83} (1986), 229--255.

\bibitem[K2]{K2} S. Kudla,
\emph{Splitting metaplectic covers of dual reductive pairs},
Israel J. Math. \textbf{87} (1994), 361--401.

\bibitem[K3]{K3}
S. Kudla, {\it Some extensions of the Siegel-Weil formula} in
Eisenstein series and applications, 205--237, Progr. Math., 258,
Birkhauser 2008.

 
\bibitem[KR1]{KR1} S. Kudla and S. Rallis, {\em On the Weil-Siegel Formula},
Journal Reine Angew. Math. 387 (1988), 1--68.

\bibitem[KR2]{KR2} S. Kudla and S. Rallis, {\em On the Weil-Siegel
Formula II: Isotropic Convergent Case}, J. Reine Angew. Math. 391
(1988), 65--84.
 
 \bibitem[KR3]{KR3}
S. S. Kudla and S. Rallis,
\emph{Poles of Eisenstein series and $L$-functions},
Festschrift in honor of I. I. Piatetski-Shapiro on the occasion of
 his sixtieth birthday, Part II,
Israel Math. Conf. Proc. \textbf{3}, Weizmann, Jerusalem, 1990, 81--110.
 
 \bibitem[KR4]{KR4}
S. Kudla and S. Rallis,
\emph{Ramified degenerate principal series representations for $\mathrm{Sp}(n)$},
Israel J. Math. \text{78} (1992), 209--256.

\bibitem[KR5]{KR5}
S.\ Kudla and S.\ Rallis,\ {\it A regularized Siegel-Weil
formula: the first term identity}, Ann.\ Math.\ \textbf{140} (1994),
1--80.

\bibitem[KR6]{KR6}
S. Kudla and S. Rallis,
\emph{On first occurrence in the local theta correspondence},
Automorphic representations, $L$-functions and applications:
 progress and prospects,
Ohio State Univ. Math. Res. Inst. Publ. \textbf{11},
de Gruyter, Berlin, 2005, 273--308.

 \bibitem[KRS]{KRS} S. Kudla, S. Rallis and D. Soudry, {\em On the
     degree $5$ $L$-function for $\Sp(2)$}, Invent. Math. 107 (1992),
   no. 3, 483--541.

\bibitem[KS]{KS}
S.S.Kudla and W.J. Sweet, Jr.,
\emph{Degenerate principal series representations for $\mathrm{U}(n,n)$},
Israel J. Math. \textbf{98} (1997), 253--306.

  
  
\bibitem[LR]{LR} E. Lapid and S. Rallis, {\em On the local factors of
    representations of classical groups}, in {\em Automorphic
    representations, $L$-functions and applications: progress and
    prospects}, 309--359, Ohio State Univ. Math. Res. Inst. Publ., 11,
  de Gruyter, Berlin, 2005.

 
\bibitem[L1]{L1} S. T. Lee, {\em  On some degenerate principal series
    representations of $\U(n,n)$} J. Funct. Anal. 126 (1994), no. 2,
  305--366.

\bibitem[L2]{L2} S. T. Lee, {\em Degenerate principal series
    representations of $\Sp(2n,\R)$}, Compositio Math. 103 (1996),
  no. 2, 123--151.


\bibitem[LZ1]{LZ1}
S. T. Lee and C. B. Zhu, {\em Degenerate principal series and local theta correspondence},
 Trans. Amer. Math. Soc. 350 (1998), no. 12, 5017--5046.

\bibitem[LZ2]{LZ2}
S. T. Lee and C. B. Zhu, {\em Degenerate principal series and local
  theta correspondence II}, Israel J. Math. 100 (1997), 29--59.


\bibitem[LZ3]{LZ3}
S.\ Lee and C.\ Zhu,\ {\it Degenerate principal series and local
theta correspondence III: The case of complex groups}, 
  J. Algebra 319 (2008), no. 1, 336--359. 

\bibitem[LZ4]{LZ4}
S.\ Lee and C.\ Zhu, {\it Degenerate principal series of metaplectic groups and Howe correspondence}, 
in {\em Automorphic Representations and L-Functions}, 379-408, Tata Institute of Fundamental Research, India 2013.


\bibitem[Li]{Li} J. S. Li, \emph{Nonvanishing theorems for the
    cohomology of certain arithmetic quotients},
J. Reine Angew. Math. \textbf{428} (1992), 177--217.


\bibitem[Lo]{Lo}
H.\ Loke,\ {\it Howe quotients of unitary characters and unitary
lowest weight modules} with an appendix by Soo Teck Lee,\
Represent.\ Theory\  \textbf{10} (2006), 21--47 (electronic).

\bibitem[Mo]{Mo}
C.\ Moeglin,\ {\it Non nullit\'{e} de certains rel\^evements par
s\^eries th\'eta}, J.\ of Lie Theory\ \textbf{7} (1997), 201--229.

\bibitem[MVW]{MVW}
C. M{\oe}glin, M.-F. Vign{\'e}ras, and J.-L. Waldspurger,
\emph{Correspondances de Howe sur un corps $p$-adique},
Lecture Notes in Mathematics \textbf{1291}, Springer-Verlag, Berlin,
1987.


\bibitem[PS-R]{PS-R87}
I.\ I.\ Piatetski-Shapiro and S,\ Rallis,\ {\it $L$-functions
for classical groups}, Part A in Lecture Notes in Math.\ \textbf{1254},
Springer-Verlag,\ New York, 1--52.


\bibitem[Q]{Q} Y.\ Qiu,\ {\it Periods of Saito-Kurokawa representations}, Int. Math. Res. Notices (online 2013) doi:10.1093/imrn/rnt179.


\bibitem[R1]{R1}
S.\ Rallis,\ {\it Injectivity properties of liftings associated to
  Weil representations}, Compositio Math. 52 (1984), no. 2, 139--169.

\bibitem[R2]{R2}
S. Rallis,
\emph{On the Howe duality conjecture},
Compositio Math. \textbf{51} (1984), 333--399.

\bibitem[R3]{R3} S. Rallis, {\em L-functions and the oscillator
    representation}, Lecture Notes in Mathematics,
  1245. Springer-Verlag, Berlin, 1987. xvi+239 pp.


\bibitem[RS1]{RS1} S. Rallis and G. Schiffman, {\em Automorphic forms
    constructed from the Weil representation: holomorphic case},
  Amer. J. Math. 100 (1978), no. 5, 1049--1122.

\bibitem[RS2]{RS2} S. Rallis and G. Schiffman, {\em Weil
    representation. I. Intertwining distributions and discrete
    spectrum},  Mem. Amer. Math. Soc. 25 (1980), no. 231, iii+203 pp. 

\bibitem[RS3]{RS3} S. Rallis and G. Schiffman, {\em On a relation
    between $\tilde{SL}_2$ cusp forms and cusp forms on tube domains
    associated to orthogonal groups},  Trans. Amer. Math. Soc. 263
  (1981), no. 1, 1--58.

 \bibitem[S1]{S1}
J. Sweet,
\emph{A computation of the gamma matrix of a family of $p$-adic zeta integrals},
J. Number Theory \textbf{55} (1995), 222--260.

\bibitem[S2]{S2}
J. Sweet
\emph{Functional equations of $p$-adic zeta integrals
 and representations of the metaplectic group},
preprint.

\bibitem[SZ]{SZ} B. Y. Sun and C. B. Zhu, {\em Conservation relations
    for local theta correspondence}, preprint, available at  arXiv:1204.2969. 

\bibitem[T1]{T1} V. Tan, {\em A regularized Siegel-Weil formula on ${\rm U}(2,2)$ and ${\rm U}(3)$}, Duke Math. J. 94 (1998), no. 2, 341--378.

\bibitem[T2]{T2} V. Tan, {\em Poles of Siegel Eisenstein series on
    $\U(n,n)$}, Canad. J. Math. 51 (1999), no. 1, 164--175.

\bibitem[W]{W} J.-L. Waldspurger, {\em Correspondance de Shimura},
  J. Math. Pures et Appl. 59 (1980), 1-133.
 
\bibitem[We1]{We} A. Weil, {\it Sur la formule de Siegel dans la
    theorie des groupes classiques}, Acta Math. 113 (1965) 1--87.

\bibitem[We2]{We2} A. Weil {\it Adeles and algebraic groups}, with
  appendices by M. Demazure and
Takashi Ono. Progress in Mathematics, 23. Birkhauser, (1982).

\bibitem[X]{X} W. Xiong, {\em A weak second term identity of the
    regularized Siegel-Weil formula for unitary groups}, to appear in Math. Zeitschrift.

\bibitem[Y1]{Y1}
S.Yamana,
\emph{Degenerate principal series representations for quaternionic unitary groups},
 Israel J. Math. 185 (2011), 77--124.

\bibitem[Y2]{Y2} S. Yamana, {\em On the Siegel-Weil formula: the case
    of singular forms}, Compositio Math. 147 (2011), 1003--1021.

\bibitem[Y3]{Y3} S. Yamana, {\em On the Siegel-Weil formula for
    quaternionic unitary groups}, to appear in American J. of Math.

\bibitem[Y4]{Y4} S. Yamana, {\em L-functions and theta correspondence
    for classical groups}, to appear in Inventiones Math.

\end{thebibliography}
\end{document}